\def\biglen{20cm} 
\tikzset{
  half plane/.style={ to path={
       ($(\tikztostart)!.5!(\tikztotarget)!#1!(\tikztotarget)!\biglen!90:(\tikztotarget)$)
    -- ($(\tikztostart)!.5!(\tikztotarget)!#1!(\tikztotarget)!\biglen!-90:(\tikztotarget)$)
    -- ([turn]0,2*\biglen) -- ([turn]0,2*\biglen) -- cycle}},
  half plane/.default={1pt}
}
\theoremstyle{definition}
\newtheorem{thm}{Theorem}[section]
\crefname{thm}{Theorem}{Theorems}
\newtheorem{cor}[thm]{Corollary}
\newtheorem{prop}[thm]{Proposition}
\crefname{prop}{Proposition}{Propositions}
\newtheorem{lem}[thm]{Lemma}
\crefname{lem}{Lemma}{Lemmas}
\newtheorem{clm}[thm]{Claim}
\newtheorem{conj}[thm]{Conjecture}
\newtheorem{quest}[thm]{Question}
\newtheorem{defn}[thm]{Definition}
\crefname{defn}{Definition}{Definitions}
\newtheorem{notn}[thm]{Notation}
\newtheorem{rmk}[thm]{Remark}
\newtheorem{obs}[thm]{Observation}
\newtheorem{conv}[thm]{Convention}
\newtheorem*{ack*}{Acknowledgements}
\newcommand{\cooo}{\widetilde{\operatorname{co}}}
\newcommand{\coo}{\widehat{\operatorname{co}}}
\newcommand{\co}{\operatorname{co}}
\newcommand{\pvh}{\textcolor{red}}
\newcommand{\hs}{\textcolor{blue}}
\newcommand{\mt}{\textcolor{green!50!black}}
\title{Sets in $\mathbb{Z}^k$ with doubling $2^k+\delta$ are near convex progressions}
\author{Peter van Hintum, Hunter Spink, Marius Tiba\thanks{We would like to thank our advisor Prof. B\'ela Bollob\'as for his continuous support, Prof. Alessio Figalli for helpful suggestions, and the referee for the incredibly detailed report.}}
\begin{document}

\maketitle

\begin{abstract}
For $\delta>0$ sufficiently small and $A\subset \mathbb{Z}^k$ with $|A+A|\le (2^k+\delta)|A|$, we show either $A$ is covered by $m_k(\delta)$ parallel hyperplanes, or satisfies $|\coo(A)\setminus A|\le c_k\delta |A|$, where $\coo(A)$ is the smallest convex progression (convex set intersected with an affine sub-lattice) containing $A$. This generalizes the Freiman-Bilu $2^k$ theorem, Freiman's $3|A|-4$ theorem, and recent sharp stability results of the present authors for sumsets in $\mathbb{R}^k$ conjectured by Figalli and Jerison.
\end{abstract}

\section{Introduction} 
One of the central questions in additive combinatorics is the inverse sumset problem of characterizing the finite subsets $A$ of abelian groups with small \textit{doubling constant} $|A+A|\cdot|A|^{-1} \le \lambda $ for fixed $\lambda>0$. In this paper, we will consider the inverse sumset problem in torsion-free abelian groups $\mathbb{Z}^k$, which has been studied from a variety of perspectives by Freiman \cite{Freiman}, Green and Tao \cite{GreenTao}, Chang \cite{Chang}, and Sanders \cite{Revisited} among others. 


 Motivated by the fact that for $\widetilde{A}\subset \mathbb{R}^k$ the doubling constant (with respect to volume) is at least $2^k$, we define $d_k(A)=|A+A|-2^k|A|$ for $A\subset \mathbb{Z}^k$. Our main result, \Cref{mainthm}, describes the structure of ``non-degenerate'' $A\subset \mathbb{Z}^k$ slightly beyond the critical doubling threshold, i.e. when $d_k(A)\le \Delta_k|A|$ or equivalently $|A+A|\le (2^k+\Delta_k)|A|$, for some absolute constant $\Delta_k$ depending only on $k$.
 
\begin{center}
\begin{tikzpicture}
\draw[black, fill=gray, fill opacity=0.5] (0,0)--(0.5,1)--(0,0.5)--(-0.5,1)--(0,0);
\node at (-1,0.5) {$\widetilde{A}=$};
\draw[black, fill=gray, fill opacity=0.5] (3,0)--(4,2)--(3.5,1.5)--(3,2)--(2.5,1.5)--(2,2)--(3,0);
\node at (1.5,0.5) {$\widetilde{A}+\widetilde{A}=$};
\draw[dotted] (4,2)--(3,1)--(2,2);
\begin{scope}[xshift=6cm]
\node at (-1,0.5) {$A=$};
\draw[color=gray, draw opacity=0.25] (0,0)--(0.5,1)--(0,0.5)--(-0.5,1)--cycle;
\begin{scope}
\clip (0,0)--(0.5,1)--(0,0.5)--(-0.5,1)--cycle;
\foreach \x in {-10,...,10} \foreach \y in {-20,...,20}
\draw[draw=black] (\x/10,\y/5+\x/10) rectangle ++(0.2pt,0.2pt);
\end{scope}
\draw[color=gray, draw opacity=0.25] (3,0)--(4,2)--(3.5,1.5)--(3,2)--(2.5,1.5)--(2,2)--(3,0);
\begin{scope}
\clip (3,0)--(4,2)--(3.5,1.5)--(3,2)--(2.5,1.5)--(2,2)--(3,0);
\foreach \x in {-50,...,50} \foreach \y in {-50,...,50}
\draw[draw=black] (\x/10,\y/5+\x/10) rectangle ++(0.2pt,0.2pt);
\end{scope}
\node at (1.5,0.5) {$A+A=$};
\end{scope}
\end{tikzpicture}
\end{center}

 For $\widetilde{A}\subset \mathbb{R}^k$ with the Lebesgue measure, the sets with $|\widetilde{A}+\widetilde{A}|-2^k|\widetilde{A}|=0$ are, up to measure $0$ sets, the convex sets. A natural class of discrete sets with similar small doubling properties are \emph{convex progressions}. For $A \subset \mathbb{Z}^k$, define the convex progression $\coo(A)$ to be the intersection of the real convex hull $\widetilde{\co}(A)$ with the affine sub-lattice $\Lambda_A$ spanned by $A$. Many authors require convex progressions to be symmetric, but in this paper we impose no such assumptions. Intuitively, for a convex progression $A=\widetilde{\co}(A)\cap \Lambda_A$, $A+A$ should be approximately the set of points in $\widetilde{\co}(2A)\cap \Lambda_{A+A}$, which since $\Lambda_{A+A}$ is a translate of $\Lambda_A$ has roughly $2^k$-times the number of points as $\widetilde{\co}(A)\cap \Lambda_A$ provided the convex sets are not so thin that the step-size of the lattice affects the approximation of the volumes of $\widetilde{\co}(A)$ and $\widetilde{\co}(2A)$ by suitably normalized lattice point counts.
 
The basic phenomenon that we could hope to expect is that $$d_k(A)|A|^{-1}\text{small implies } |\cooo(A)\setminus A||A|^{-1}\text{ small}.$$
Under some necessary hypotheses, in this paper we will be able to establish that when $d_k(A)|A|^{-1}\le \delta$, we have
\begin{itemize}
    \item Qualitatively, that $|\coo(A)\setminus A||A|^{-1}\le \omega(\delta)$ for some function $\omega(\delta)\to 0$ as $\delta\to 0$, and
    \item Quantitatively, that $|\coo(A)\setminus A||A|^{-1}\le c_k\delta$ for some absolute constant $c_k$.
\end{itemize}
These will be established in \Cref{quali} and \Cref{mainthm} respectively. Surprisingly, most of the work will be to establish \Cref{quali}, which is why we list it separately, and even this natural statement appears to have been unknown prior to our present work.
We now describe certain phenomena we must explicitly account for to make our theorems true.

The first is that there is a threshold $\Delta_k$ such that we can deduce no information about $|\coo(A)\setminus A||A|^{-1}$ if $\delta> \Delta_k$, and in fact it is possible that $|\coo(A)\setminus A||A|^{-1}\to \infty$. For example, if $A$ is the union  $$\{1,\ldots,n\}^k\cup (\{1,\ldots,n\}^k+(0,\ldots,0,N)),$$  then $d_k(A) \approx 3\cdot 2^k|A|$, but $|\coo(A)|=n^{k-1}(n+N)$, and for fixed $n$ we can make $|\coo(A)\setminus A||A|^{-1}$ arbitrarily large by taking $N\to \infty$. Hence we must assume that $\delta\le \Delta_k$ for some constant $\Delta_k$.
Let us introduce the notation that we will use throughout
\begin{align*}a\ll b\text{ meaning }&a\le b\text{ and there exists a fixed increasing function $f$}\\ &\text{depending only on $k$ such that }a\le f(b).\end{align*}
In this $\ll$ notation, the condition is
$$\delta \ll 1.$$
Just under this hypothesis, we can already establish a first important result bounding the pathology of $A\subset \mathbb{Z}^k$. Recall that the \emph{thickness} of a set $A\subset \mathbb{Z}^k$ is the smallest number of parallel hyperplanes required to cover $A$.
\begin{thm}\label{1.1parta}
For $k\geq 1$, there are positive constants $\Delta_k, m_k,\epsilon_k$ depending only on $k$ such that for $A\subset \mathbb{Z}^k$ with $d_k(A)\leq \Delta_k|A|$, either $A$ has thickness at most $m_k$, or $A$ lies in some rank $k$ generalized arithmetic progression $$B=B(n_1,\ldots,n_k;v_1,\ldots,v_k;b):=\left\{b+\sum_{i=1}^k\ell_iv_i: 0 \le \ell_i < n_i\right\}\subset \mathbb{Z}^k,$$
where $v_1,\ldots,v_k\in \mathbb{Z}^k$  independent, and $|A|\ge \epsilon_k |B|$.
\end{thm}

Next, let $h$ be the thickness of $A$. If $h$ does not tend to $\infty$ as $\delta\to 0$, then it is possible to achieve very small doubling without $A$ being very close to a convex progression because in this regime $d_{k-1}(A)=|A+A|-2^{k-1}|A|$ is a more appropriate statistic. For example, if $A=(\{1,\ldots,n_0\}\times\{1,\ldots,2n\}^{k-1}) \cup \{(-1,1,1,\ldots,1)\}$, where $n_0$ is constant and $n$ is much larger than $n_0$, then $A$ needs at least $n_0$ parallel hyperplanes to be covered, $d_k(A)<0$, but $|\coo(A) \setminus A| =   \frac{|A|-1}{2^{k-1} n_0}$. Hence we must assume that $$h^{-1}\ll \delta.$$

Under the hypotheses $h^{-1}\ll \delta \ll 1$, our main theorems \Cref{quali} and \Cref{mainthm} establish the qualitative statement $|\coo(A)\setminus A||A|^{-1}\le \omega(\delta)$ and the quantitative statement $|\coo(A)\setminus A||A|^{-1}\le c_k\delta$ for some constant $c_k$ depending only on $k$, mentioned before.

\begin{thm}\label{quali}\label{qual}
There exist $\ll$ dependencies $h^{-1}\ll \delta \ll 1$ and a function $\omega(\delta)\to0$ as $\delta\to 0$ such that if $A\subset \mathbb{Z}^k$ has $d_k(A) \le \delta |A|$ and the thickness of $A$ is at least $h$, then
$$|\coo(A)\setminus A||A|^{-1}\le \omega(\delta).$$
\end{thm}
A continuous analogue of \Cref{quali} proved by Christ \cite{Christ} and strengthened by Figalli and Jerison \cite{FigJerJems} was a key step in the study of stability results for the Brunn-Minkowski inequality in the non-convex setting. However, our methods are largely different from \cite{Christ} and \cite{FigJerJems}, especially because of phenomena which occur in the discrete setting which have no continuous analogue.
\begin{thm}\label{mainthm}\label{quant}
For $k\geq 1$, there are positive constants $c_k< (4k)^{5k}$, and positive constants $\Delta_k,g_k,m_k$ such that for $A\subset \mathbb{Z}^k$ with $d_k(A) \le \Delta_k|A|$, if the thickness of $A$ is at least $h\ge m_k$, then
$$|\coo(A)\setminus A|\le c_kd_k(A)+g_kh^{-\frac{1}{1+\frac{1}{2}(k-1)\lfloor k/2 \rfloor}}|A|.$$
\end{thm}
\begin{rmk}
\label{rmk:betterexponent}
We make a brief remark on the exponent $-\frac{1}{1+\frac{1}{2}(k-1)\lfloor k/2\rfloor}$ in \Cref{mainthm}, which is almost certainly not optimal. Our proof of \Cref{mainthm} reduces to the case that $\cooo(A)$ is a simplex, where the exponent is $-1$ (see \Cref{quantprop}, the exponent $-1$ is seen to be optimal by taking $A\subset \mathbb{Z}^k$ to be the set of lattice points in the convex hull of $0$ and the scaled standard basis vectors $(h-1)e_1,\ldots,(h-1)e_k$). For a general $A$ we first approximate $\cooo(A)$ from within by a polytope $\widetilde{P}$, and then triangulate $\widetilde{P}$ into simplices via a triangulation of $\partial \widetilde{P}$. To approximate the volume of $\cooo(A)$ by a polytope $\widetilde{P}$ with $|\widetilde{P}|\ge(1-\alpha)|\cooo(A)|$ can be done with $\ell=O(\alpha^{-\frac{2}{k-1}})$ vertices by Gordon, Meyer, and Reisner \cite{BestApproximation}, and Stanley's upper bound theorem \cite{Stanley} then implies that a triangulation of $\partial \widetilde{P}$ has at most $O(\ell^{\lfloor k/2 \rfloor})$ simplicies. These two bounds combine to give the exponent.
\end{rmk}
The following simple corollary of \Cref{mainthm} quantitatively strengthens the conclusion of \Cref{quali} to essentially best possible (up to the constant $c_k$).
\begin{cor}\label{maincor}
There is a positive constant $c_k<(4k)^{5k}$ and $\ll$-dependencies $h^{-1}\ll \delta \ll 1$ such that if $A\subset \mathbb{Z}^k$ has  thickness at least $h$, then $d_k(A)\le \delta|A|$ implies
$$|\coo(A)\setminus A||A|^{-1}\le c_k\delta.$$
\end{cor}
Note that for $A\subset \mathbb{Z}^k$ with thickness at least $h$ with $h^{-1}\ll \delta \ll 1$, we also have from \Cref{1.1parta} that $A$ has density at least $\epsilon_k$ in a $k$-dimensional generalized arithmetic progression. The converse implication (which is much easier) that there is a constant $c_k'$ and $\ll$-dependencies such that for $h^{-1}\ll \delta \ll 1$, if $A\subset \mathbb{Z}^k$ has thickness at least $h$, density at least $\epsilon_k'$ in a $k$-dimensional generalized arithmetic progression, and $|\coo(A)\setminus A|\le \delta |A|$, then $d_k(A)\le c_k'\delta |A|$, can be deduced from \Cref{nonreduced} and \Cref{cvxsd}. In this large-thickness regime, this therefore characterizes sets with small doubling as exactly those close to their convex progression hull with positive density inside some ambient full rank generalized arithmetic progression. For a set $A$ symmetric about a lattice point, the discrete John's theorem of Tao and Vu \cite{TAOVu} implies $\coo(A)$ has positive density in a $k$-dimensional generalized arithmetic progression, so in this case the density condition is superfluous.

\begin{cor}[vH,S,T \cite{HomoBM}]\label{mainfigalli}
There are positive constants $c_k< (4k)^{5k}, \Delta_k$ such that for $\widetilde{A} \subset \mathbb{R}^k$ of positive measure with $|\widetilde{A} +\widetilde{A} | \le (2^k+\Delta_k)|\widetilde{A} |$, we have $ |\cooo(\widetilde{A}) \setminus \widetilde{A}| \le c_k (|\widetilde{A} +\widetilde{A} | - 2^k|\widetilde{A} |).$ Here $|\cdot|$ denotes the outer Lebesgue measure.
\end{cor}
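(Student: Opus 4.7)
\emph{Proof plan.} The strategy is to reduce to Corollary \ref{maincor} by discretizing $\widetilde{A}$ on an increasingly fine sublattice of $\mathbb{R}^k$. After passing to a compact subset of essentially the same measure by inner regularity, for small $\epsilon>0$ we let $A_\epsilon \subset \mathbb{Z}^k$ consist of those lattice points $x$ such that $|(\epsilon x+[-\epsilon/2,\epsilon/2]^k)\cap \widetilde{A}|\ge \tfrac{1}{2} \epsilon^k$. The Lebesgue density theorem (equivalently, $L^1$-convergence of convolutions with $\epsilon^{-k}\mathbf{1}_{[-\epsilon/2,\epsilon/2]^k}$) then yields $|\widetilde{A}_\epsilon \triangle \widetilde{A}| \to 0$ for the rescaled union $\widetilde{A}_\epsilon=\epsilon A_\epsilon+[-\epsilon/2,\epsilon/2]^k$, and in particular $\epsilon^k |A_\epsilon| \to |\widetilde{A}|$. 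Combining the inclusion $\epsilon(A_\epsilon+A_\epsilon)\subset (\widetilde{A}+\widetilde{A})+[-\epsilon,\epsilon]^k$ with outer regularity of the compact set $\widetilde{A}+\widetilde{A}$ gives $\epsilon^k|A_\epsilon+A_\epsilon|\le |\widetilde{A}+\widetilde{A}|+o(1)$.

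These estimates show that $d_k(A_\epsilon)/|A_\epsilon|$ converges to $(|\widetilde{A}+\widetilde{A}|-2^k|\widetilde{A}|)/|\widetilde{A}|$, and in particular is at most the $\Delta_k$ of Corollary \ref{maincor} for $\epsilon$ sufficiently small (after choosing the $\Delta_k$ in the statement of Corollary \ref{mainfigalli} slightly smaller). Since $|\widetilde{A}|>0$, the convex hull of $\widetilde{A}$ has nonempty interior, so $A_\epsilon$ fills out an increasing ball in $\mathbb{Z}^k$ and cannot be covered by $m_k(\delta)$ parallel hyperplanes for any fixed $m_k(\delta)$ and $\epsilon$ sufficiently small; the first alternative of Corollary \ref{maincor} is thereby excluded, and we obtain
\[
|\coo(A_\epsilon)\setminus A_\epsilon|\le c_k\bigl(|A_\epsilon+A_\epsilon|-2^k|A_\epsilon|\bigr).
\]

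Multiplying by $\epsilon^k$ and letting $\epsilon\to 0$: the right-hand side tends to $c_k(|\widetilde{A}+\widetilde{A}|-2^k|\widetilde{A}|)$. For the left-hand side, full-dimensionality gives $\Lambda_{A_\epsilon}=\mathbb{Z}^k$ once $\epsilon$ is small, so $\epsilon\cdot\coo(A_\epsilon)=\cooo(\epsilon A_\epsilon)\cap \epsilon\mathbb{Z}^k$; combined with $\widetilde{A}_\epsilon \to \widetilde{A}$ in $L^1$ and continuity of the convex hull under such approximation (as $\widetilde{A}_\epsilon$ lies in $\widetilde{A}$-plus-a-shrinking-neighborhood by the majority-vote construction), this yields $\epsilon^k|\coo(A_\epsilon)\setminus A_\epsilon|\to |\cooo(\widetilde{A})\setminus \widetilde{A}|$, and the claimed inequality follows in the limit. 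The main obstacle is the simultaneous control of the three discretization limits, particularly the left-hand side, which requires interleaving convex-hull convergence with Lebesgue approximation and excluding pathological losses near the boundary of $\widetilde{A}$.
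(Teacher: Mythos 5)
Your overall architecture --- discretize, apply \Cref{maincor}, let the mesh go to zero --- is the same as the paper's, and the steps controlling $\epsilon^k|A_\epsilon|$, $\epsilon^k|A_\epsilon+A_\epsilon|$ and excluding the hyperplane alternative are sound. The gap is in the final limit for the left-hand side. You need $\liminf_{\epsilon\to 0}\epsilon^k|\coo(A_\epsilon)|\ge|\cooo(\widetilde{A})|$, but the only containment your construction provides (``$\widetilde{A}_\epsilon$ lies in $\widetilde{A}$ plus a shrinking neighborhood'') gives $\limsup_{\epsilon\to 0}\epsilon^k|\coo(A_\epsilon)|\le|\cooo(\widetilde{A})|$, which is the useless direction. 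The needed lower bound is in fact false for your $A_\epsilon$: the majority-vote discretization sees only density points of $\widetilde{A}$, so any part of $\widetilde{A}$ of measure zero (or of low density) that nonetheless enlarges the convex hull is invisible to $A_\epsilon$ while contributing a fixed positive amount to $|\cooo(\widetilde{A})\setminus\widetilde{A}|$. For instance $\widetilde{A}=[0,1]^k\cup\{p\}$ with $p$ slightly outside the cube still satisfies the doubling hypothesis, yet $\cooo(\epsilon A_\epsilon)\to[0,1]^k$ and your limiting inequality would only bound $|[0,1]^k\setminus\widetilde{A}|=0$, not $|\cooo(\widetilde{A})\setminus\widetilde{A}|>0$. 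Passing first to a compact subset by inner regularity makes this worse rather than better, since $\cooo(K)$ can be strictly smaller than $\cooo(\widetilde{A})$ and the bound for $K$ says nothing about $|\cooo(\widetilde{A})\setminus\cooo(K)|$.

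What is missing is exactly the preliminary reduction the paper imports from Figalli--Jerison before discretizing: one must first replace $\widetilde{A}$ by a set whose convex hull is stable under discretization (the paper reduces to a finite union of positive-measure convex polytopes, after which the naive discretization $A_N=(\tfrac{1}{N}\mathbb{Z})^k\cap\widetilde{A}$ satisfies all three limits). The standard device is to adjoin to the compact subset finitely many points $p_1,\dots,p_m\in\widetilde{A}$ with $|\cooo(\{p_1,\dots,p_m\})|\ge|\cooo(\widetilde{A})|-\eta$, so that the approximating set has essentially the same convex hull, the same measure up to $\eta$, and no larger sumset; only then does the limiting argument recover the full left-hand side. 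Without this step your proposal proves the inequality only for the density-point regularization of $\widetilde{A}$, which is a strictly weaker statement than \Cref{mainfigalli}.
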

\begin{proof}
A proof by standard approximation techniques follows exactly as in \cite[p.3 footnote 2]{Semisum}.
\end{proof}
\Cref{mainfigalli}, the sharp stability of the Brunn-Minkowski inequality $|\widetilde{A}+\widetilde{C}|^{1/k}\ge |\widetilde{A}|^{1/k}+|\widetilde{C}|^{1/k}$ for equal sets $\widetilde{A}=\widetilde{C}$, was conjectured by Figalli and Jerison \cite{Semisum} and recently resolved by the authors of the present paper \cite{HomoBM} without any digression to the discrete setting. This result similarly yields a characterization of positive measure $\widetilde{A}\subset \mathbb{R}^k$ with $d_k(\widetilde{A}) \le O(\delta)|\widetilde{A}|$ as equivalently having $|\widetilde{\co}(\widetilde{A})\setminus \widetilde{A}| \le O(\delta)|\widetilde{A}|$.

Without thickness assumptions, Gardner and Gronchi \cite{Gardner} proved for $A,C\subset \mathbb{Z}^{k}$ not lying in hyperplanes an optimal lower bound for $|A+C|$, but the bound is far worse than   predicted by the Brunn-Minkowski inequality for measurable sets in $\mathbb{R}^k$. Under thickness assumptions, the situation is better. For example, by a result of Green and Tao \cite{GreenTao} (following an approach of Bollob\'as and Leader \cite{Bollobas}), if $A,C \subset B(n_1,\ldots,n_k;v_1,\ldots,v_k;b)$ and $|A|,|C| \ge \epsilon |B|$ then $|A+C|\ge (2^k + O(\epsilon\min(n_i))^{-1})\min(|A|,|C|)$. Showing a general form of the Brunn-Minkowski inequality for thick sets is open (see \cite[Conjecture 3.10.12]{Ruzsa}), though progress in this direction has been made by Cifre, Mar\'{\i}a, and Iglesias \cite{Cifre}.

 Proving further discrete analogues of stability results for the Brunn-Minkowski inequality for thick subsets $A,C \subset \mathbb{Z}^k$, such as that of Christ \cite{Christ} and Figalli and Jerison \cite{FigJerAdv} for general sets, or sharp stability results such as Barchiesi and Julin \cite{Barchiesi} for one of the sets being convex and the present authors \cite{1911.11945} for arbitrary two-dimensional sets, would be extremely interesting, and we believe would be a worthwhile goal to pursue.

We now seek to contextualize our results in the context of existing results within the additive combinatorics literature.

First, we recall Freiman's fundamental result \cite{Freiman} on sets with small doubling (later exposited by Bilu \cite{Bilu}).
\begin{thm}[Freiman \cite{Bilu,Freiman}]
There are constants $b_1(\lambda),b_2(\lambda), b_3(\lambda)$ such that for any finite subset $A\subset \mathbb{Z}^k$ with doubling constant less than $\lambda$ can be covered by $b_1(\lambda)$ translates of a generalized arithmetic progression of size at most $b_2(\lambda)|A|$, and of dimension at most $b_3(\lambda)$.
\end{thm}
Freiman originally formulated his theorem in terms of convex progressions (images of sets of the form $\coo(A')$ under affine linear maps) instead of generalized arithmetic progressions, and much of the literature focuses on this formulation. Generalizations of convex progressions were used implicitly by Bourgain \cite{Bourgain},  and Green-Sanders \cite{GreenSanders} (see Sander's extensive survey \cite{Revisited} for more information).

Our \Cref{mainthm} focuses entirely on those $\lambda$ with $\lambda \le 2^k+\Delta_k$, but provides an extremely sharp characterization in this regime. 

Green and Tao \cite{GreenTao} showed we can obtain optimal bounds for the dimension of the generalized arithmetic progressions, at the cost of the number of translates. In particular, for $\lambda\in [2^k,2^{k+1})$ we may take  $b_3(\lambda)=k$.
 
 Our \Cref{1.1parta} shows that when $\lambda \le 2^k+\Delta_k$, then under the non-degeneracy hypothesis that $A$ is not covered by $m_k$ parallel hyperplanes, we can take $b_1(\lambda)=1,b_2(\lambda)=\epsilon_k^{-1},b_3(\lambda)=k$.

\begin{rmk}
Although we focus on subsets of $\mathbb{Z}^k$, we remark briefly that Freiman's theorem has been generalized to arbitrary abelian groups by Green and Ruzsa \cite{GreenRusza}, and the recent literature on approximate groups seeks to describe analogous characterizations in non-abelian groups (see for example the seminal work of Breuillard, Green and Tao \cite{ApproxGroup}).

The constants $b_1(\lambda)$, $b_2(\lambda)$, and $\exp(b_3(\lambda))$ cannot all be brought down to polynomial as shown by Lovett and Regev \cite{counterexample}, but the analogous question reformulated in terms of (symmetric) convex progressions is open (the \emph{polynomial Freiman-Ruzsa conjecture}). Green and Ruzsa \cite{GreenRusza}, Chang \cite{Chang}, Bourgain \cite{Bourgain}, and Green and Tao \cite{GreenTao2} showed the constants could be reduced to $b_1(\lambda)=b_2(\lambda)=\exp(b_3(\lambda))=\exp(O(\lambda^C))$ for some constant $C$, improved by Schoen \cite{Schoen} to $\exp(\exp(O(\sqrt{\log(\lambda)}))$, and finally improved by Sanders \cite{Revisited} to $\exp(O(\log^{3+o(1)}\lambda))$.
\end{rmk}


Next, we recall Green and Tao's improvement \cite{GreenTao} to the classical Freiman-Bilu $2^k$-theorem \cite{Bilu,Freiman}, the central result relating the doubling of a set $A\subset \mathbb{Z}^k$ to its thickness.
\begin{thm}[Freiman-Bilu $2^k$ theorem \cite{Bilu,Freiman,GreenTao}]\label{mainbilu}
Given $\delta>0$, there is a constant $m_k(\delta)$ such that if $A\subset \mathbb{Z}^k$ has $|A+A|\le (2^k-\delta)|A|$, then the thickness of $A$ is at most $m_k(\delta)$.
\end{thm}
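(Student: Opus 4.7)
The plan is to deduce the result from Freiman's theorem together with a quantitative discrete Brunn-Minkowski inequality. First, since $|A+A|\le (2^k-\delta)|A|$ gives a bound on the doubling constant, Freiman's theorem yields a proper generalized arithmetic progression $P = B(n_1,\ldots,n_d; w_1,\ldots,w_d;b)$ of dimension $d=d(\delta,k)$ and size $|P|\le f(\delta,k)|A|$ containing $A$. Composing with the Freiman $2$-isomorphism $\phi : P \to \prod_i\{0,\ldots,n_i-1\}\subset \mathbb{Z}^d$, the problem reduces to studying $A' := \phi(A)\subset \mathbb{Z}^d$ with $|A'+A'|=|A+A|$ and $|A'|=|A|$.

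Next, I would observe that if $A$ is \emph{not} covered by few parallel hyperplanes in $\mathbb{Z}^k$, then $A'$ is spread out in at least $k$ independent coordinate directions inside $\mathbb{Z}^d$. Indeed, any non-zero linear functional $\ell : \mathbb{R}^k \to \mathbb{R}$ pulls back along the real extension $L : \mathbb{R}^d \to \mathbb{R}^k$ of $e_i \mapsto w_i$ to a linear functional on $\mathbb{R}^d$ under which $A'$ takes many distinct values, and one can arrange by pigeonhole that $k$ of the underlying coordinates $n_i$ are large. The crucial ingredient is then a discrete Brunn-Minkowski estimate: for $B \subset \mathbb{Z}^d$ contained in a proper box, if $B$ is not concentrated in few hyperplanes in $k$ independent directions, then $|B+B| \ge (2^k - \tfrac{\delta}{2})|B|$. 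To prove this estimate I would follow Green and Tao's compression strategy: successively apply Kruskal-Katona style compressions along each coordinate to replace $B$ by a down-set $B^{*}$ with $|B^{*}+B^{*}| \le |B+B|$ and $|B^{*}| = |B|$, and then bound $|B^{*}+B^{*}|$ from below using the continuous Brunn-Minkowski inequality applied to the Euclidean convex hull of $B^{*}$, together with a stability version that quantifies the gap.

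The main obstacle is propagating quantitative bounds through these three steps---Freiman's theorem, the reduction to spread-out sets in a box, and the stable Brunn-Minkowski bound---so that $m_k(\delta)$ is finite (and ideally polynomial) as a function of $\delta$. A crude (tower-type) bound already follows from Freiman's theorem with any effective dimension bound; obtaining the polynomial-in-$1/\delta$ dependence of Green-Tao requires a much more careful use of compressions together with a sharp stability form of the Brunn-Minkowski inequality in $\mathbb{R}^k$, which is where I expect most of the technical work to lie.
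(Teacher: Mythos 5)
This statement is not proved in the paper: it is recalled as a known result of Green and Tao (sharpening Freiman--Bilu), and within the paper it also follows formally from \Cref{mainthm} a) together with \Cref{negdk} (if $A$ is not covered by $m_k$ hyperplanes it sits with density $\epsilon_k$ in a $k$-dimensional box $B(n_1,\ldots,n_k)$, whence $|A+A|\ge (2^k-2^{2k}\epsilon_k^{-1}\min\{n_i\}^{-1})|A|$, forcing $\min\{n_i\}\lesssim_k \delta^{-1}$ and hence a hyperplane cover). Your overall strategy --- Freiman's theorem to land in a proper GAP, a Freiman $2$-isomorphism to a box, compressions, and a discrete Brunn--Minkowski lower bound --- is indeed the Green--Tao route, so the architecture is right.

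There are, however, two genuine gaps. First, the pivot of the whole argument is your claim that ``not covered by few parallel hyperplanes in $\mathbb{Z}^k$'' forces $A'=\phi(A)$ to be spread out in $k$ independent \emph{coordinate} directions of the $d$-dimensional box. The functionals you can pull back along $L:\mathbb{R}^d\to\mathbb{R}^k$ form only a $k$-dimensional subspace of $(\mathbb{R}^d)^*$ and need not contain any coordinate functional of the box; conversely, when $d>k$ a coordinate hyperplane of the box maps to a coset of $\langle w_j\rangle_{j\ne i}$, which is generally \emph{not} contained in a hyperplane of $\mathbb{Z}^k$. Moreover, coordinate compressions do not preserve the non-covering property with respect to the pulled-back functionals, so the hypothesis cannot simply be carried through the compression step; and the pigeonhole remark that ``$k$ of the $n_i$ are large'' gives no spreading information about $A'$ itself. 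This transfer is exactly where the work lies in Green--Tao, and your sketch asserts it rather than proves it. Second, the last step does not need (and should not invoke) a stability version of the continuous Brunn--Minkowski inequality: the standard tool is the exact inequality for down-sets, $|B^*+B^*|\ge 2^k|B^*|-O\bigl(\sum_j|\pi_j(B^*)|\bigr)$ (Green--Tao's Lemma 2.8, quoted in this paper inside \Cref{negdk}), whose error term is controlled by coordinate projections and is $o(|B^*|)$ precisely when the set has positive density in a box with $k$ large sides. Locating the difficulty in ``a sharp stability form of Brunn--Minkowski in $\mathbb{R}^k$'' misidentifies where the technical content is.
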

The Freiman-Bilu theorem shows that the correct notion of degeneracy in $\mathbb{Z}^k$ is being covered by a bounded number of parallel hyperplanes, and non-degenerate sets $A$ have doubling constant bounded below by roughly $2^k$. There is a large literature of classifications of subsets $A\subset \mathbb{Z}^k$ with doubling at most $2^k-\delta$ (see e.g. Fishburn \cite{Fishburn}, Freiman \cite{Freiman},  Grynkiewicz and Serra \cite{Grynkiewicz}, and Stanchescu \cite{Stanchescu1,Stanchescu1.5,Stanchescu2,Stanchescu3}). For $k=1$ Freiman's $3|A|-4$ theorem \cite{Freiman}, \Cref{3A-4}, and subsequent improvements by Jin \cite{Jin} go beyond this threshold, but even for $k=2$ there do not appear to have been any such results beyond $2^k-\delta$.

Our \Cref{mainthm} formally implies the Freiman-Bilu theorem, and extends the scope of the theorem beyond the $2^k$ threshold.

Finally, we recall Freiman's $3|A|-4$ theorem, which marked the beginning of the study of inverse problems in additive combinatorics.
\begin{thm}[Freiman's $3|A|-4$ theorem \cite{Freiman}]
\label{3A-4}
Let $A\subset \mathbb{Z}$ be a subset of the integers with $d_1(A)\le |A|-4$. Then $|\coo(A)\setminus A| \le d_1(A)+1$.
\end{thm}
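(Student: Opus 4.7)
The plan is first to reduce to a normalized setting and then to a combinatorial dichotomy.

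Applying an affine bijection of $\mathbb{Z}$ (which preserves $|A|$, $|A+A|$, and $|\coo(A)\setminus A|$), I may assume $A\subseteq\{0,1,\ldots,n\}$ with $\{0,n\}\subseteq A$ and $\gcd(A-A)=1$. Writing $k:=|A|$, we then have $\coo(A)=\{0,1,\ldots,n\}$ and $|\coo(A)\setminus A|=n+1-k$, so the desired inequality $|\coo(A)\setminus A|\le d_1(A)+1$ becomes
\[
|A+A|\ge n+k.
\]

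Observing that $A+A$ automatically contains $A\cup(A+n)$, a union of $2k-1$ elements (the two sets meet only at $n$), and setting $D:=\{0,\ldots,n\}\setminus A$ with $|D|=n+1-k$, the complement $\{0,\ldots,2n\}\setminus(A\cup(A+n))$ decomposes as the disjoint union $D\sqcup(D+n)$ of size $2|D|$. The target inequality is thus equivalent to $|(A+A)\cap(D\sqcup(D+n))|\ge|D|$, and the plan is to prove this via the pointwise dichotomy that for every $d\in D$, \emph{at least one of $d$ or $d+n$ lies in $A+A$}. Granted the dichotomy, the rule sending $d\mapsto d$ when $d\in A+A$, else $d\mapsto d+n$, defines an injection $D\hookrightarrow(A+A)\cap(D\sqcup(D+n))$, delivering the required $|D|$ elements.

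The main obstacle will be proving the dichotomy. I would argue by contradiction: suppose some $d\in D$ has $d,d+n\notin A+A$, and split $A$ as $B\sqcup C$ with $B:=A\cap[0,d]$, $C:=A\cap[d,n]$ (disjoint since $d\notin A$). The condition $d\notin A+A$ forces $B\cap(d-B)=\emptyset$ inside $[0,d]$, whence $|B|\le(d+1)/2$; symmetrically $d+n\notin A+A$ yields $|C|\le(n-d+1)/2$, so $|A|\le(n+2)/2$. Combining the Cauchy--Davenport-type bounds $|B+B|\ge 2|B|-1$, $|B+C|\ge|B|+|C|-1$, $|C+C|\ge 2|C|-1$ with the range-disjointness $B+B\subseteq[0,2d-2]$, $C+C\subseteq[2d+2,2n]$ and the controlled overlap of each with $B+C\subseteq[d+1,d+n-1]$, I would derive a lower bound on $|A+A|$ that, together with the above diameter bound on $|A|$, contradicts $|A+A|\le 3k-4$.

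The delicate point is that the crude overlap bookkeeping only gives $|A+A|\ge 3k+1-n$, which matches the hypothesis $3k-4$ as soon as $n\ge 5$; closing the remaining gap when $n$ is large requires either exploiting the Sidon-type rigidity that $B\sqcup(d-B)$ and $C\sqcup((d+n)-C)$ are symmetric subsets of $[0,d]$ and $[d,n]$ respectively (of sizes $2|B|$ and $2|C|$), or an induction on the diameter $n$, recursively applying the theorem to the strictly lower-diameter pieces $B$ and $C$ after renormalization, to upgrade the $|B+B|$ and $|C+C|$ bounds into the strict inequality needed to finish.
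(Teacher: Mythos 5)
The paper cites Freiman's $3|A|-4$ theorem as a classical result without giving a proof, so there is no in-paper argument to compare against; I will assess your proposal on its own merits.

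Your normalization and the reduction to the dichotomy are both correct and cleanly argued: after scaling so that $A\subseteq\{0,\ldots,n\}$, $\{0,n\}\subseteq A$, $\gcd(A-A)=1$, the desired bound is equivalent to $|A+A|\ge n+k$, and since $A\cup(A+n)\subseteq A+A$ covers $[0,2n]\setminus(D\sqcup(D+n))$ with $2k-1$ elements, the claim would indeed follow from the pointwise dichotomy that for every $d\in D$ at least one of $d,\ d+n$ lies in $A+A$. That part is sound.

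The genuine gap is the proof of the dichotomy, which is the entire content of the theorem and which you do not complete. You correctly derive $|B|\le(d+1)/2$, $|C|\le(n-d+1)/2$, hence $n\ge 2k-2$, and the inclusion--exclusion bound $|A+A|\ge 3k+1-n$. But substituting $n\ge 2k-2$ gives only $|A+A|\ge k+3$, which is nowhere near the $|A+A|\ge 3k-3$ needed to contradict the hypothesis $|A+A|\le 3k-4$. So the crude bookkeeping does not yield a contradiction for any $k\ge 3$, not just for ``large $n$.'' The two suggestions for closing this gap --- exploiting the reflective disjointness of $B$ and $d-B$ (resp.\ $C$ and $d+n-C$), or inducting on the diameter and feeding the theorem back into $B$ and $C$ after renormalization --- are stated only as intentions. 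Neither is executed, and neither is routine: the reflective disjointness gives density bounds you have already used, and the inductive route runs into the issues that $B$ and $C$ need not satisfy the hypothesis $d_1\le|\cdot|-4$, their gcd's need not be $1$, and the resulting lower bounds on $|B+B|$, $|C+C|$ are in terms of the (possibly much smaller) diameters of $B$, $C$ rather than of $d$ and $n-d$. In short, the reduction is a good first move, but the heart of the proof is missing and the sketched repairs do not obviously succeed.
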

This result is sharp, both in the linear bound on $d_1(A)$ in terms of $|A|$ (there are examples of sets $A$ with fixed $d_1(A)=|A|-3$ and $|\coo(A)\setminus A|$ arbitrarily large in terms of $|A|$), and in the linear bound on $|\coo(A)\setminus A|$ in terms of $d_1(A)$. 

\Cref{maincor} generalizes Freiman's $3|A|-4$ theorem to arbitrary dimension, achieving the sharpest possible asymptotics (but not the sharpest possible constants $c_k$, and $\Delta_k$ realizing the bound $\delta \ll 1$, which would be extremely interesting to determine). As remarked earlier, the new restriction in \Cref{maincor} on the thickness of $A$ is necessary, but has no analogue when $k=1$ (as subsets $A\subset \mathbb{Z}$ cannot exhibit lower dimensional degeneracies). 

We finish by mentioning a particularly nice intermediate result we show during the proof of \Cref{qual}, whose optimal constants seem to be an important bottleneck for improving the constant $c_k$. For a real-valued function $f$ on a convex progression $A=\coo(A)$, we define the \emph{infimum-convolution} (see e.g. Str\"{o}mberg's extensive survey \cite{Infconvsurvey}) $f^{\square}:A+A\to \mathbb{R}$ by
$$f^{\square}(z)=\min_{x+y=z}\{f(x)+f(y)\}.$$
\begin{thm}[\Cref{squareprop}]
\label{squarepropintro}
There exist constants $c_{k+1}'<(4k)^{5k}$ and $g_{k+1}'$ such that the following is true. Let $A\subset \mathbb{Z}^k$ be the lattice points inside a simplex with vertices in $\mathbb{Z}^k$, with $A$ of thickness at least $h$, and $f:A\to [0,1]$ a function. Then for $\widehat{f}:A\to [0,1]$ the lower convex hull function, we have
$$\sum_{x\in A}(f-\widehat{f})(x)\le c'_{k+1}\left( 2^{k+1}\sum_{x\in A}f(x)-\sum_{x'\in A+A} f^{\square}(x')\right)+g_{k+1}'h^{-1}|A|.$$
\end{thm}
 \Cref{squarepropintro} follows from \Cref{mainthm} (with the better exponent $h^{-1}$ because $\cooo(A)$ is a simplex, see \Cref{rmk:betterexponent}), applied to the epigraph $A'_{f,N,M}=\{(a,x):a\in A,x \in [Nf(a),M]\}\subset A\times [0,M]$ for $1 \ll N \ll M$ large constants for fixed $A$ (note that the choice of $M$ allows us to avoid having the condition of having small doubling).  
 
 As mentioned in our theorem statements, we can take $c_k,c_k'<(4k)^{5k}$. We also have the lower bound $c_k \ge \frac{2^k}{k}$, attained for example by the set $A$ with $ \frac{1}{n}A:=((\widetilde{T} \times [-2,0]) \cup (V(\widetilde{T}) \times \{1\})) \cap (\frac{1}{n}\mathbb{Z})^k$, where $\widetilde{T}\subset \mathbb{R}^{k-1}$ is a fixed simplex with vertices $V(\widetilde{T})\subset \mathbb{Z}^{k-1}$ and $n$ sufficiently large. Similarly, $c_k'\ge \frac{2^k}{k}$ by taking the functional version of this example, namely with $f:(n\widetilde{T}\cap \mathbb{Z}^k)\to [0,1]$ whose value is $0$ at the vertices and $1$ elsewhere.
  We believe that the optimal values of $c_k,c_k'$ lie closer to the lower bound $\frac{2^k}{k}$.
\begin{quest}
What are the optimal values of $c_k$ and $c_k'$?
\end{quest}

\subsection{Outline of the Paper}
\label{outlineofpaper}
We start by proving  \Cref{1.1parta}, bootstrapping a result of Green and Tao \cite{GreenTao} that $A$ is covered by a bounded number of generalized arithmetic progressions of dimension $k$ and size at most $|A|$, to show that we can reduce to a single generalized arithmetic progression of size $O(|A|)$.

Once we have this result, we are able to work with equivalent reformulations of \Cref{quali} and \Cref{mainthm} involving sets $A$ of positive density $\epsilon_0$ inside thick boxes.

As mentioned before, most of the work is devoted to proving \Cref{quali}. The strategy is to construct in stages a highly structured set $A_{\star}$ from $A$ with the properties that $|A \Delta A_{\star}| =o_{\epsilon_0}(1) |A|$ and $d_k(A_{\star}) =o_{\epsilon_0}(1)|A_{\star}|=o_{\epsilon_0}(1)|A|$ (where for fixed $\epsilon_0$ we have $o_{\epsilon_0}(1)\to 0$ as $\delta \to 0$). The additional structure of $A_{\star}$ enables us to conclude that  $|\coo(A_{\star}) \setminus A_{\star}| =o_{\epsilon_0}(1)|A_{\star}|$, which finally implies that $|\coo(A) \setminus A| =o_{\epsilon_0}(1)|A|$.

At each stage we produce a new set $A_{\text{new}}$ from an existing set $A_{\text{old}}$ which satisfies $|A\Delta A_{\text{old}}|=o_{\epsilon_0}(1)|A|$ and $d_k(A_{\text{old}})=o_{\epsilon_0}(1)|A|$. With a single exception, this is done by throwing away for $x\in \mathbb{Z}^{n-1}$ ``rows'' $R_x:=A_{\text{old}}\cap \mathbb{Z}\times \{x\}$ in the first coordinate direction which are are in some sense unstructured. If we can show $|A_{\text{old}}\setminus A_{\text{new}}|=o_{\epsilon_0}(1)|A|$, then $|A\Delta A_{\text{new}}|=o_{\epsilon_0}(1)|A|$ and hence we have  $d_k(A_{\text{new}})=o_{\epsilon_0}(1)|A|$.

To bound $|A_{\text{old}} \setminus A_{\text{new}}|$ from above, we introduce an operation $(+)$ in order to create a ``reference set'' $A_{\text{old}}(+)A_{\text{old}}\subset A_{\text{old}}+A_{\text{old}}$, whose size we can guarantee to be approximately $2^k|A_{\text{old}}|$. For one dimensional sets $X, Y$ we define $X(+)Y:= (X+\min Y)\cup (Y+ \max X) \subset X+Y$; in general, we define $A_{\text{old}}(+)A_{\text{old}}:=\bigsqcup_{\vec{v}\in \{0,1\}^{k-1}}\bigsqcup_{x}R_x(+)R_{x+\vec{v}} \subset A_{\text{old}}+A_{\text{old}}$.

In order to control the size of the unstructured rows $U:=\bigsqcup_{x\text{ - unstructured}}R_x$, we construct a set $D \subset U+A_{\text{old}} \subset A_{\text{old}}+A_{\text{old}} $ of comparable size to $U$, disjoint from $A_{\text{old}}(+)A_{\text{old}}$. Then, we will obtain $|A_{\text{old}} \setminus A_{\text{new}}|=|U| \approx |D| \le |A_{\text{old}}+A_{\text{old}}| - |A_{\text{old}}(+)A_{\text{old}}| \approx d_k(A_{\text{old}})=o_{\epsilon_0}(1)|A|$.

For the last step in the proof of \Cref{quali} and for the proof of \Cref{mainthm}, we use two versions of an argument inspired by the one used in \cite{HomoBM}. For the last part of \Cref{quali}, we prove that functions on convex domains with small infimum-convolution are close to their convex hulls (which is essentially \Cref{squarepropintro}). For \Cref{mainthm}, we proceed as follows. By choosing an appropriately small $\Delta_k(\epsilon_0),$ \Cref{quali} ensures $|\coo(A)\setminus A|\cdot |A|^{-1}$ is as small as we like. This guarantees a large interior region of $\coo(A+A)$ is contained in $A+A$. We control the size of $\coo(A)\setminus A$ by inductively controlling the size of $\coo(A)\setminus A$ restricted to certain homothetic copies of $\widetilde{\co}(A)$ used to cover a thickened boundary of $\widetilde{\co}(A)$. This will allow us to show that $|\coo(A+A)\setminus (A+A)|\leq (2^k-c_k')|\coo(A)\setminus A|+o_{\epsilon_0}(1)|B|$ for some constant $c_k'$, which allows us to conclude \Cref{mainthm}.

In Section 2, we prove \Cref{1.1parta}. In Section 3, we use \Cref{1.1parta} to establish equivalent versions \Cref{equivquali} and \Cref{equivmainthm} of \Cref{quali} and \Cref{mainthm} respectively involving positive density subsets of boxes. In Section 4, we make some initial definitions, conventions and observations, which will be used throughout the remainder of the paper. Finally, in Sections 5 and 6, we prove  \Cref{equivquali} and \Cref{equivmainthm} with a simultaneous induction on dimension.

\section{Proof of \Cref{1.1parta}}
\label{1.1asection}
We will now prove \Cref{1.1parta}. To do this, we will need the following special case of the main result of Green and Tao \cite{GreenTao}.

\begin{thm}[Special case of \cite{GreenTao}]\label{GTspecialcase}
There exist constants $w_k$ such that for any $A\subset \mathbb{Z}^k$ with $d_k(A)\le |A|$, there exists a generalized arithmetic progression $P$ of dimension at most $k$ with $|P|\le |A|$, along with vectors $x_1,\ldots,x_{w_k}$ such that $$A\subset \bigcup_{i=1}^{w_k}P+x_i.$$
\end{thm}

\begin{proof}[Proof of \Cref{1.1parta}]

We apply \Cref{GTspecialcase}, obtaining a generalized arithmetic progression $P$ and $x_1,\ldots,x_{w_k}\in \mathbb{Z}^k$ such that $A\subset \bigcup_{i=1}^{w_k}P+x_i$. Take $n_k^0$ to be a large threshold, chosen later. If $P$ is contained inside a hyperplane, then $A$ is covered by $w_k$ parallel hyperplanes and we are done. Similarly, if one of the side lengths of $P$ is at most $n_k^0$, then we can cover $P$ by $n_k^0$ parallel hyperplanes, so $A$ can be covered by $w_kn_k^0$ parallel hyperplanes. Therefore, we may assume that $$P=B(n_1,\ldots,n_k;v_1,\ldots,v_k;0)$$ is non-degenerate and $n_i\ge n_k^0$ for all $i$. By applying a linear transformation from $GL_k(\mathbb{Q})$ taking $v_i$ to the standard basis vectors $e_i$ and then scaling up to clear denominators, we may assume that $v_i=be_i$, where  $b\in \mathbb{N}$.

\begin{clm}\label{clm5.2}
There exist a factor $b'$ of $b$ such that $b'\ge w_k!^{-w_k} b$ and the following holds. If we consider the decomposition
$$A=A_1\sqcup \ldots \sqcup A_r$$
with $r\le w_k$ associated to the cosets $y_1,\ldots,y_r\in (
\mathbb{Z}/b'\mathbb{Z})^k$, then after possibly relabeling we have $|A_1|\ge |A_j|$ for all $j$, and for every $p\ne 1$ there exists $j_p \ne 1$ such that
$$y_p+y_{j_p}\ne y_1+y_k$$ for $k\in \{1,\ldots,r\}$.
\end{clm}

\begin{proof}
Set $b_{0}'=b$, and $y_{1,0},\ldots,y_{r_0,0}\in (\mathbb{Z}/b_{0}'\mathbb{Z})^k=( \mathbb{Z}/b\mathbb{Z})^k$ be the distinct representatives of $A$, or equivalently the distinct representatives of $x_1,\ldots,x_{w_k}$. We note that in particular, this implies that $r_0\le w_k$.

We recursively construct factors $b_{j+1}'$ of $b_{j}'$ with $b_{j+1}'\ge w_k!^{-1} b_{j}'$ such that if $y_{1,j},\ldots,y_{r_j,j}\in (\mathbb{Z}/(b'_{j}\mathbb{Z}))^k$ are the distinct representatives of $A$, then the following is true. If we consider the associated coset decomposition
$$A=A_{1,j}\sqcup \ldots \sqcup A_{r_j,j},$$
possibly relabeling so that $|A_{1,j}|\ge |A_{p,j}|$ for $1 \le p \le r_j$, then either for every $p\ne 1$ there exists a $\lambda(p,j)\ne 1$ such that
$$y_{p,j}+y_{\lambda(p,j),j}\ne y_{1,j}+y_{\ell,j}$$ for $1 \le \ell \le r_j$, or else we have $r_{j+1}<r_j$.

Suppose that $y_{p,j}$ does not have the property that there exists $\lambda(p,j)$ such that $y_{p,j}+y_{\lambda(p,j),j}\ne y_{1,j}+y_{\ell,j}$ for all $\ell$. This is equivalent to saying that $y_{p,j}$ has the property that for all $\lambda$, there is an $\ell$ such that $(y_{p,j} - y_{1,j})+ (y_{\lambda,j} -y_{1,j})= y_{\ell,j}-y_{1,j}$. Then the cyclic group generated by $y_{p,j}-y_{1,j}$ lies entirely inside $\{0,y_{2,j}-y_{1,j},\ldots,y_{r_j,j}-y_{1,j}\}$, so has order at most $r_j \le w_k$.  Setting $b_{j+1}'=b_{j}'/\gcd(b_{j}',w_k!)$, we obtain that $y_{p,j}-y_{1,j}=0$ in $( \mathbb{Z}/b_{j+1}'\mathbb{Z})^k$, so $r_{j+1}<r_j$.

As $r_j$ can decrease at most $w_k$ times from $r_0\le w_k$, there exists a $j\le w_k$ for which $r_j=r_{j+1}$. Taking $j_p=\lambda(p,j)$, $b'=b_{j}'$ and $y_p=y_{p,j}$ the distinct representatives of $A$ in $(\mathbb{Z}/b_{j}'\mathbb{Z})^k=(\mathbb{Z}/b'\mathbb{Z})^k$, we obtain the desired result.
\end{proof}

Returning to the proof of \Cref{1.1parta}, let $P'=B(n_1,\ldots,n_k;b'e_1,\ldots,b'e_k;0)$ where $b'$ is furnished by \Cref{clm5.2}. Let $w_k'=w_k \cdot w_k!^{k\cdot w_k}$, and $x_1',\ldots,x_{w_k'}'$ be translation vectors such that $$A\subset \bigcup_{i=1}^{w_k'}P'+x_i'.$$
Because $|P'|=|P|$ and $|P|\le |A|$ (from \Cref{GTspecialcase}) we also have $|P'|\le |A|$.

\begin{clm}\label{clm5.3}
There exists an $x$ such that $A\subset x+(b'\mathbb{Z})^k$.
\end{clm}
Before we begin the proof of the claim we need the following lemma.

\begin{lem}\label{error}
For  $X,Y\subset A$, then
$$|X+Y|\ge 2^k\min(|X|,|Y|)-2^{2k}(w_k'n_k^0)^{-1}w_k'^k|A|.$$
\end{lem}
\begin{proof}
Let $C(X),C(Y)$ be obtained by compressing $X,Y$ in each of the coordinate directions. Then $C(X),C(Y)$ are contained in the down-set $C(A)\subset C(\bigcup P'+x_i')$, which in turn is contained inside a box of side lengths $w_k'n_1,\ldots,w_k'n_k\ge w_k'n_k^0$, which has volume at most $w_k'^k\prod n_i \le w_k'^k|A|$ (as $|P'|\le |A|$). Therefore by \cite[Corollary 2.7]{GreenTao}, we obtain
$$|X+Y|\ge |C(X)+C(Y)|\ge 2^k\min(|X|,|Y|)-2^{2k}(w_k'n_k^0)^{-1}w_k'^k|A|.$$
\end{proof}

\begin{proof}[Proof of \Cref{clm5.3}]Let $$A=A_1\sqcup \ldots \sqcup A_r$$ be the coset decomposition as in \Cref{clm5.2} with $|A_1|$ maximal, and $r \le w_k'$. We want to show that $r=1$.

We have for any $p\ne 1$ that
\begin{align*}|A+A|&\ge |A_p+A_{j_p}|+\sum_i|A_1+A_i|\\
&\ge |A_p|+\sum_{i=1}^r (2^k|A_i|-2^{2k}(w_k'n_k^0)^{-1}w_k'^k|A|)\\
&\ge |A_p|+2^k|A|-2^{2k}(n_k^0)^{-1}w_k'^k|A|,\end{align*}
so averaging over all $p\ne 1$, we obtain
\begin{align}\label{5.3first}d_k(A)\ge \frac{1}{w_k'-1}|A\setminus A_1|-2^{2k}(n_k^0)^{-1}w_k'^k|A|.\end{align} On the other hand, assuming $r \ge 2$ we have
$$|A+A|\ge |A_1+A_2|+|A_1+A_1| \ge |A_1|+2^k|A_1|-2^{2k}(w_k'n_k^0)^{-1}w_k'^{k}|A|$$
so
\begin{align}\label{5.3second}d_k(A)\ge |A|-(2^k+1)|A\setminus A_1|-2^{2k}(w_k'n_k^0)^{-1}w_k'^k|A|.\end{align} Adding $(w_k'-1)(2^k+1)$ of \eqref{5.3first} to \eqref{5.3second}, and using $\Delta_k|A| \ge d_k(A)$, we obtain  $$((w_k'-1)(2^k+1)+1)\Delta_k|A| \ge |A| - ((w_k'-1)(2^k+1)+w_k'^{-1})2^{2k}(n_k^0)^{-1}w_k'^k|A|,$$ which gives the desired contradiction provided $\Delta_k$ is sufficiently small and $n_k^0$ is sufficiently large.
\end{proof}

Returning to the proof of \Cref{1.1parta},  after translating $A$ we can assume that $A\subset (b'\mathbb{Z})^k$, so we may scale down and assume that $b'=1$.

We now show that the boxes are in some sense ``near'' each other.

\begin{clm}
There exists a constant $f_k$ so that for $P''=B(f_kn_1,\dots,f_kn_k;e_1,\dots,e_k;0)$ we have that
$A\subset P''+x$ for some $x$ (we note $f_k=2^{w_k'}-1$ works).
\end{clm}
\begin{proof}
Recall that $x_1',\ldots,x_{w_k'}'$ are the translation vectors for $P'=B(n_1,\dots,n_k;e_1,\dots,e_k;0)$ which cover $A$. Suppose that $|A\cap (P'+x_1)|$ is maximal, so $|A\cap (P'+x_1)|\ge \frac{1}{w_k'}|A|$. Let $A_1=A\cap (P'+x_1)$.

We first show that the width in the $j$-direction is bounded by a fixed multiple of $n_j$ for each $j$. So fix a $j$ and let $\pi_j(x)$ denote the $j$'th coordinate of $x$. For a subset $\mathcal{B}\subset \{1,\ldots,w_k'\}$, let $h_j(\mathcal{B})$ be the difference between the largest and smallest values in $\pi_j(\bigcup_{i\in \mathcal{B}'}(P'+x_i'))$. Suppose we have a set $\mathcal{B}\subset \{1,\ldots,w_k'\}$ containing $1$. If $\mathcal{B}$ is not the whole set and there is no $i \not \in \mathcal{B}$ such that $h_j(\mathcal{B}\cup \{i\})\le 2h_j(\mathcal{B})+n_j$, then taking $\mathcal{B}'$ to be either $\{x_\ell': \pi_j(x_\ell')\ge \min \{\pi_j(x_i')\}_{i \in \mathcal{B}}\}$ or $\{x_\ell': \pi_j(x_\ell')\le \max \{\pi_j(x_i')\}_{i \in \mathcal{B}}\}$ (whichever has $\mathcal{B}'^c$ non-empty), the sets $$Z_1=\bigcup_{i \in \mathcal{B}'} P'+x_i',\quad Z_2=\bigcup_{i\in (\mathcal{B}')^c}P'+x_i'$$ have the following property. Let $z$ be the closest point of $Z_2\cap A$ in the $e_j$-direction to $Z_1$. Then by considering the projections under $\pi_j$, the sets
$$(Z_1\cap A)+(Z_1\cap A),z+A_1,(Z_2\cap A)+(Z_2\cap A)$$
are disjoint. For example, if $\mathcal{B}'$ was equal to the first of these possibilities, then $\min\pi_j(z+A_1)=\min \pi_j(Z_2\cap A)+\min \pi_j(A_1) \ge  \min \pi_j(Z_2\cap A)+\max \pi_j(Z_1)-h_j(\mathcal{B})> 2\max \pi_j(Z_1\cap A)$ and $2\min\pi_j(Z_2\cap A)=2\pi_j(z)> \pi_j(z)+\max\pi_j(Z_1)=\pi_j(z)+\max \pi_j(A_1)=\max \pi_j(z+A_1)$.

By \Cref{error} applied to these sets we obtain
\begin{align*}|A+A|&\ge |(Z_1\cap A)+(Z_1\cap A)|+|(Z_2\cap A)+(Z_2\cap A)|+|z+A_1|\\
&\ge 2^k|Z_1\cap A|+2^k|Z_2\cap A|+|A_1|-2^{2k+1}(w_k'n_k^0)^{-1} w_k'^k|A|\\
&\ge 2^k|A|+\frac{1}{w_k'}|A|-2^{2k+1}(w_k'n_k^0)^{-1} w_k'^k|A|,
\end{align*}
a contradiction provided $\Delta_k$ is sufficiently small and $n^0_k$ sufficiently large. Hence, if $\mathcal{B}$ is not the whole set $\{1,\dots,w_k'\}$, then there is an $i\not\in \mathcal{B}$ such that $h_j(\mathcal{B}\cup\{i\})\leq 2h_j(\mathcal{B})+n_j$.

Start with $\mathcal{B}=\{1\}$ and $h_j(\{1\})=n_j-1$. Repeatedly applying this, we see that $h_j(\{1,\ldots,w_k'\})\le (2^{w_k'}-1)n_j-1$.

Then for $f_k=2^{w_k'}-1$, we deduce there is an $x$ such that $P'+x_i'\subset P''+x$ for all $i$.\end{proof}

Returning to the proof of \Cref{1.1parta}, taking $\epsilon_k\leq f_k^{-k}$, then as $|P'|\le |A|$, we find $A$ is a set of density at least $\epsilon_k$ inside the generalised arithmetic progression $P''+x$.
\end{proof}

\section{Equivalent statements for positive density subsets of boxes}
\label{equivsection}
To prove \Cref{quali} and \Cref{mainthm}, we will work with a slightly different formulation involving positive density subsets of boxes by invoking \Cref{1.1parta}. This new formulation will have new parameters $n_{k,0}$ and $\epsilon_0$ in place of $h$, with dependencies $$n_{k,0}^{-1}\ll \delta \ll \epsilon_0 \le 1.$$

\begin{defn}
Let $B(n_1,\ldots,n_k):=\prod_{i=1}^k \{1,\ldots,n_i\}\subset \mathbb{Z}^k$.
\end{defn}
\begin{thm}[Equivalent Reformulation of \Cref{quali}]
\label{equivquali}
There exist $\ll$-dependencies $n_{k,0}^{-1}\ll \delta \ll \epsilon_0 \le 1$ and a function $\omega(\delta)\to0$ as $\delta\to 0$ such that the following is true. If $A\subset B=B(n_1,\ldots,n_k)$ with $\min \{n_i\} \ge n_{k,0}$, has $|A|\ge \epsilon_0|B|$ and $d_k(A)\le \delta |B|$, then
$$|\coo(A)\setminus A||A|^{-1}\le \omega(\delta).$$
\end{thm}
\begin{proof}[Proof that \Cref{equivquali} is equivalent to \Cref{quali}]
First, note that by replacing $\delta$ with $\delta\epsilon_0^{-1}$, \Cref{equivquali} is equivalent to the same statement except with $d_k(A)\le \delta |B|$ replaced with $d_k(A)\le \delta |A|$. It is this modified statement that we will show is equivalent to \Cref{quali}.

Let us assume the modified statement \Cref{equivquali} is true, and suppose we have an $A$ satisfying the hypotheses of \Cref{quali}. For the constants $m_k$ and $\Delta_k$ from \Cref{1.1parta}, we may choose the $h^{-1}\ll \delta \ll 1$ sufficiently strong so that $\delta \le \Delta_k$ and $h^{-1} \le m_k^{-1}$ so that we may apply \Cref{1.1parta}. This gives a generalized arithmetic progression $B=B(n_1,\ldots,n_k;v_1,\ldots,v_k;b)$ with $v_1,\ldots,v_k$ linearly independent, containing $A$ with $|A|\ge \epsilon_k|B|$. As the vectors $v_1,\ldots,v_k$ and the translation $b$ do not affect these statements, we may assume $B=B(n_1,\ldots,n_k)$. Because $B$, and hence $A$, can be covered with  $\min\{n_i\}$ parallel hyperplanes, we have $\min\{n_i\}\ge h$. Therefore, taking $\epsilon_0=\epsilon_k$ in the modified statement \Cref{equivquali}, we may then choose the $\ll$-dependencies $h^{-1}\ll \delta \ll 1$ sufficiently strong so that they imply the required $\ll$-dependencies in $n_{k,0}^{-1}\ll \delta \ll \epsilon_k=\epsilon_0 \le 1$ for the modified statement \Cref{equivquali} and we conclude.

Conversely, suppose \Cref{quali} is true and we have an $A$ satisfying the hypotheses of the modified statement \Cref{equivquali}. Then by \Cref{hypboxsmall}, every hyperplane intersects $B$, and hence $A$, in at most $\min \{n_i\}^{-1}|B|$ elements. Hence since $|A|\ge \epsilon_0|B|$, we need at least $\min\{n_i\}\epsilon_0$ parallel hyperplanes to cover $A$, so the thickness of $A$ is at least $\min\{n_i\}\epsilon_0\ge n_{k,0}\delta$. Choosing the $\delta\ll \epsilon_0$ dependency sufficiently strong that it implies the $\delta \ll 1$ dependency of \Cref{quali} and the $n_{k,0}^{-1}\ll \delta$ dependency sufficiently strong so that $n_{k,0}\delta$ is at least the smallest $h$ satisfying the $h^{-1}\ll \delta$ dependency of \Cref{quali}, we may therefore apply \Cref{quali} and conclude.
\end{proof}
\begin{thm}[Equivalent reformulation of \Cref{mainthm}]
\label{equivmainthm}
There exists a constant $c_k<(4k)^{5k}$ and $\ll$-dependencies $\delta \ll \epsilon_0 \le 1$ and constants $g_k(\epsilon_0)$ such that if $A\subset B=B(n_1,\ldots,n_k)$ with $|A|\ge \epsilon_0|B|$ and $d_k(A)\le \delta |B|$, then
$$|\coo(A)\setminus A|\le c_kd_k(A)+g_k(\epsilon_0)\min\{n_i\}^{-\frac{1}{1+\frac{1}{2}(k-1)\lfloor k/2 \rfloor}}|A|.$$
\end{thm}
\begin{proof}[Proof that \Cref{equivmainthm} is equivalent to \Cref{mainthm}]
First, note that by replacing $\delta$ with $\delta\epsilon_0^{-1}$, \Cref{equivmainthm} is equivalent to the same statement except with $d_k(A)\le \delta |B|$ replaced with $d_k(A)\le \delta |A|$. It is this modified statement that we will show is equivalent to \Cref{mainthm}.

Let us assume the modified statement \Cref{equivmainthm} is true and suppose we have an $A$ satisfying the hypotheses of \Cref{mainthm}. We may choose $m_k$ sufficiently large and $\Delta_k$ sufficiently small in \Cref{mainthm} so that we may apply \Cref{1.1parta}. This gives a generalized arithmetic progression $B=B(n_1,\ldots,n_k;v_1,\ldots,v_k;b)$ with $v_1,\ldots,v_k$ linearly independent, containing $A$ with $|A|\ge \epsilon_k|B|$. As the vectors $v_1,\ldots,v_k$ and the translation $b$ do not affect these statements, we may assume that $B=B(n_1,\ldots,n_k)$. Taking $\epsilon_0=\epsilon_k$ constant, we may then choose $\Delta_k$ sufficiently small so that the $\delta \ll \epsilon_0=\epsilon_k$ dependency of the modified statement \Cref{equivmainthm} is satisfied. Therefore, we deduce
$$|\coo(A)\setminus A|\le c_kd_k(A)+g_k(\epsilon_k)\min\{n_i\}^{-\frac{1}{1+\frac{1}{2}(k-1)\lfloor k/2 \rfloor}}|A|.$$
Because $B$, and hence $A$, is covered by $\min\{n_i\}$ parallel hyperplanes, we have $\min\{n_i\}\ge h$, and we conclude \Cref{quali} with $g_k=g_k(\epsilon_k)$.

Conversely, suppose \Cref{mainthm} is true and we have an $A$ satisfying the hypotheses of the modified statement \Cref{equivmainthm}. Then by \Cref{hypboxsmall}, every hyperplane intersects $B$, and hence $A$, in at most $\min\{n_i\}^{-1}|B|$ elements. Hence since $|A|\ge \epsilon_0|B|$, we need at least $\min\{n_i\}\epsilon_0$ parallel hyperplanes to cover $A$, so the thickness of $A$ is $\ge \min\{n_i\}\epsilon_0$. For the constant $m_k$ from \Cref{mainthm}, if $\min\{n_i\}\le m_k\epsilon_0^{-1}$ then by taking $g_k(\epsilon_0)$ sufficiently large, we can ensure $c_kd_k(A)+g_k(\epsilon_0)\min\{n_i\}^{-\frac{1}{1+\frac{1}{2}(k-1)\lfloor k/2 \rfloor}}|A| \ge -2^kc_k|A|+(2^kc_k+\epsilon_0^{-1})|A|\ge |B|$ so \Cref{equivmainthm} holds trivially. Hence we may assume that the thickness of $A$ is at least $m_k$. Now, for the constant $\Delta_k$ from \Cref{mainthm}, we may take the $\delta\ll \epsilon_0$ dependency sufficiently strong so that we may assume $d_k(A)\le \Delta_k|A|$. Therefore we may apply \Cref{mainthm}, and conclude the modified statement \Cref{equivmainthm} with $g_k(\epsilon_0)=\max(g_k\epsilon_0^{-\frac{1}{1+\frac{1}{2}(k-1)\lfloor k/2 \rfloor}},(2^kc_k+\epsilon_0)(m_k\epsilon_0^{-1})^{\frac{1}{1+\frac{1}{2}(k-1)\lfloor k/2 \rfloor}})$.
\end{proof}

\section{Definitions, Conventions, and Observations}
In this section, we introduce our definitions and conventions, as well as observations we will be using throughout the remaining sections.
\subsection{Definitions and Conventions}
As was noted in the introduction, we will use the notation
\begin{align*}a\ll b\text{ meaning }&a\le b\text{ and there exists a fixed increasing function $f$}\\ &\text{depending only on $k$ such that }a\le f(b).\end{align*} When we write
$$f\ll g,h$$
we mean that separately $f\ll g$ and $f\ll h$, and when we write $$f\ll g \ll h$$
we mean separately $f\ll g$ and $g\ll h$.
\begin{defn}
For $A'\subset \mathbb{Z}^k$, we introduce the following notation:
\begin{itemize}
    \item $\widetilde{\co}(A')\subset \mathbb{R}^k$ for the convex hull,
    \item $\co(A')=\widetilde{\co}(A')\cap \mathbb{Z}^k$,
    \item  $\Lambda_{A'}=\langle A'-a\rangle+a\subset \mathbb{Z}^k$ for any $a\in A'$, the affine sublattice of $\mathbb{Z}^k$ spanned by $A'$, and
    \item $\coo(A')=\co(A')\cap \Lambda_{A'}$, the smallest convex progression containing $A'$.
\end{itemize}
\end{defn}
\begin{defn}
We say that $A'$ is \emph{reduced} if $\Lambda_{A'}=\mathbb{Z}^k$.
\end{defn}
We will typically denote regions of $\mathbb{R}^k$ with a tilde such as $\widetilde{A'}\subset \mathbb{R}^k$. 
By abuse of notation, we will use $|\cdot |$ to refer both to cardinality of sets, and for volumes of sets. It will be clear with the tilde notation whether we intend to use discrete or continuous volume, and from context what dimension we are considering.
\begin{conv}
When we define a polytope or affine subspace $\widetilde{P}\subset \mathbb{R}^k$, we let $P=\widetilde{P}\cap \mathbb{Z}^k$.
\end{conv}
We recall that for numbers $n_1,\ldots,n_k$, we defined the discrete box
$$B(n_1,\ldots,n_k)=\prod_{i=1}^k \{1,\ldots,n_i\}.$$
We write $B$ instead of $B(n_1,\ldots,n_k)$ when $n_1,\ldots,n_k$ are clear from context.

\begin{defn}
We define the projection $$\pi:\mathbb{Z}^k=\mathbb{Z}\times\mathbb{Z}^{k-1}\to \{0\}\times\mathbb{Z}^{k-1}$$ given by $\pi(x_1,\ldots,x_k)=(0,x_2,\ldots,x_k)$ to be the projection away from the first coordinate.\end{defn}
\begin{defn}
For $\widetilde{A'}\subset \mathbb{R}^k$ a subset such that $\cooo(\widetilde{A'})$ is a polytope with integral vertices, we define $V(\widetilde{A'})\subset \mathbb{Z}^{k}$ to be the vertices of $\cooo(\widetilde{A}')$, and $V_\pi(\widetilde{A'}):=\pi(V(\widetilde{A}'))\subset \{0\}\times \mathbb{Z}^{k-1}$.
\end{defn}
\begin{defn}
A \emph{row} of $A'\subset \mathbb{Z}^k$ is $R_x=\pi^{-1}(x)\cap A'$ for some $x\in \{0\}\times \mathbb{Z}^{k-1}$.
\end{defn}
\begin{conv}
When talking about the rows of a set $A'$, we will use the notation $R_x$ without further clarification. It will always be clear from context which set $A'$ is being referred to.
\end{conv}
\begin{defn}
For $X,Y\subset \mathbb{Z}$, define
$X(+)Y:=(X+\min Y)\cup (Y+\max X)\subset X+Y$ if $X,Y$ are both nonempty, and $\emptyset$ otherwise. For $A'\subset \mathbb{Z}^k$, we define the disjoint union
$$A'(+)A':=\bigsqcup_{\vec{v}\in \{0\}\times\{0,1\}^{k-1}}\bigsqcup_{x\in \{0\}\times \mathbb{Z}^{k-1}}R_x(+)R_{x+\vec{v}}\subset A'+A'.$$
\end{defn}

Finally, we introduce a small constant which we will use to absorb errors into exponents through the paper.
\begin{defn}\label{cdef}
We let $c=10^{-10}$.
\end{defn}

\subsection{Observations}
The first observation guarantees that hyperplanes $\widetilde{H}$ have small intersections with discrete boxes. In particular, large subsets of $B$ cannot be covered by few hyperplanes.
\begin{obs}
\label{hypboxsmall}
Given a hyperplane $\widetilde{H}$ and a box $B=B(n_1,\ldots,n_k)$, we have
$$|\widetilde{H}\cap B|\le \min\{n_i\}^{-1}|B|.$$
In particular, a subset $A'\subset B$ with $|A'|>m\min\{n_1\}^{-1}|B|$ cannot be covered by $m$ hyperplanes.
\end{obs}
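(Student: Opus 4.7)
The plan is straightforward: choose a coordinate direction in which the hyperplane equation is non-degenerate, then slice the box by axis-parallel lines in that direction. Concretely, write $\widetilde{H} = \{x \in \mathbb{R}^k : \langle a, x\rangle = c\}$ with $a = (a_1, \ldots, a_k) \neq 0$. Since $a \neq 0$, we may pick some index $i$ with $a_i \neq 0$.

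Now partition the box $B = B(n_1, \ldots, n_k)$ into the $\prod_{j \neq i} n_j$ axis-parallel lines obtained by fixing the $k-1$ coordinates other than the $i$-th. On each such line, the equation $\sum_j a_j x_j = c$ becomes a linear equation in $x_i$ with nonzero leading coefficient $a_i$, so it has at most one solution. Therefore
$$|\widetilde{H} \cap B| \le \prod_{j \neq i} n_j = \frac{|B|}{n_i} \le \frac{|B|}{\min_j \{n_j\}},$$
which is exactly the desired bound.

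For the ``In particular'' statement, suppose $A' \subset B$ is covered by hyperplanes $\widetilde{H}_1, \ldots, \widetilde{H}_m$. By the union bound and the first part,
$$|A'| \le \sum_{\ell=1}^m |\widetilde{H}_\ell \cap B| \le m \cdot \min\{n_i\}^{-1} |B|,$$
contradicting the assumed strict inequality $|A'| > m \min\{n_i\}^{-1}|B|$. There is really no obstacle here; the argument is elementary and the only small subtlety is to notice that any index $i$ with $a_i \neq 0$ works, so the bound $|B|/n_i$ can be weakened to the uniform bound $|B|/\min_j n_j$.
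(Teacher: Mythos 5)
Your proof is correct and is essentially the paper's argument: choosing $i$ with $a_i\neq 0$ is the same as choosing a coordinate direction $e_i$ not parallel to $\widetilde{H}$, and your line-by-line count is exactly the observation that the projection away from the $i$th coordinate is injective on $\widetilde{H}\cap B$. The union bound for the ``in particular'' clause is the intended (and only) argument.
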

\begin{proof}
There exists an $i$ such that $e_i$ is not parallel to $H$. Let $\pi_i$ be the projection $\mathbb{Z}^k\to \mathbb{Z}^{k-1}$ omitting the $i$'th coordinate. Then $\pi_i(\widetilde{H}\cap B)$ injects into $\pi_i(B)$, and therefore $|\widetilde{H}\cap B|\le n_i^{-1}|B|$.
\end{proof}
The next observation will be used later to assume $A$ is reduced in \Cref{equivmainthm} and \Cref{equivquali}.
\begin{obs}\label{nonreduced}
For all $\epsilon_0>0$, the following holds. Given $n_i$ sufficiently large in terms of $\epsilon_0$, for a subset $A'\subset B=B(n_1,\ldots,n_k)$ with $|A'|\geq \epsilon_0|B|$, we can find a subset $A''\subset B(2^kn_1,\ldots,2^kn_k)$ such that $A''$ is reduced, $|A'|=|A''|$,  $d_k(A')=d_k(A'')$, and $|\coo(A')\setminus A'|=|\co(A'')\setminus A''|$.
\end{obs}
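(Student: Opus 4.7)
The plan is to produce $A''$ as the image $\phi(A')$ of an affine lattice isomorphism $\phi:\Lambda_{A'}\to\mathbb{Z}^k$. Concretely, fix $a_0\in A'$, let $\Lambda_0:=\Lambda_{A'}-a_0$ be the linear sublattice of $\mathbb{Z}^k$, choose a $\mathbb{Z}$-basis $v_1,\dots,v_k$ of $\Lambda_0$, and set $\phi(a_0+\sum c_iv_i):=(c_1,\dots,c_k)$. Since $\phi$ extends to an $\mathbb{R}$-affine bijection $\widetilde{\phi}:\mathbb{R}^k\to\mathbb{R}^k$, it carries $A'$, $A'+A'-2a_0$, and $\widetilde{\co}(A')\cap\Lambda_{A'}$ bijectively to their images. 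Setting $A'':=\phi(A')$ we then immediately obtain that $A''$ is reduced in $\mathbb{Z}^k$, $|A''|=|A'|$, $|A''+A''|=|A'+A'|$ (hence $d_k(A'')=d_k(A')$), and $|\co(A'')\setminus A''|=|\coo(A')\setminus A'|$. The only nontrivial task is to choose the basis so that $A''$ fits inside a box with side length at most $2^kn_i$ in each coordinate.

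First I would bound the index $d:=[\mathbb{Z}^k:\Lambda_0]$: if $A'$ lay in a hyperplane then \Cref{hypboxsmall} would give $|A'|\le\min\{n_i\}^{-1}|B|<\epsilon_0|B|$ for $n_i$ large enough in terms of $\epsilon_0$, so $\Lambda_0$ has full rank, and a routine lattice-point count in $\widetilde{B}$ yields $|A'|\le|\Lambda_0\cap(\widetilde{B}-a_0)|=(1+o(1))|B|/d$, whence $d\le 2/\epsilon_0$. Next, after permuting coordinates so that $n_1\ge\cdots\ge n_k$, I would take $v_1,\dots,v_k$ to be the Hermite normal form basis of $\Lambda_0$: the matrix with rows $v_i$ is lower triangular with positive diagonal $a_1,\dots,a_k$ (with $\prod a_i=d$) and off-diagonal entries $b_{ji}\in[0,a_i)$ for $i<j$. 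For $x\in\widetilde{B}-a_0$ the triangular system $x=\sum c_jv_j$ gives the backward recursion $|c_i|a_i\le|x_i|+\sum_{j>i}|c_j|a_j$, and induction from $i=k$ downwards yields $|c_i|\le|x_i|+\sum_{\ell>i}2^{\ell-i-1}|x_\ell|$; using $|x_\ell|\le n_\ell-1\le n_i-1$ for $\ell\ge i$, this telescopes to $|c_i|\le(n_i-1)\cdot 2^{k-i}$. Hence the $i$th coordinate of $A''$ takes values in a range of at most $2^{k-i+1}(n_i-1)+1\le 2^kn_i$ integers, and after translation and undoing the coordinate permutation $A''$ fits inside $B(2^kn_1,\dots,2^kn_k)$.

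The main obstacle is this last box bound: a generic basis of $\Lambda_0$ would send $\widetilde{B}$ to a parallelepiped with a very long principal axis, overshooting the target box in some direction. Ordering the $n_i$ in decreasing order and taking the HNF basis adapted to this ordering is what keeps the triangular back-substitution from amplifying more than a factor of two per coordinate, producing the bound $2^kn_i$. The other verifications (reducedness, preservation of $|A|$, $d_k$, and $|\coo(A)\setminus A|$) are immediate from $\widetilde{\phi}$ being an $\mathbb{R}$-affine bijection identifying $\Lambda_{A'}$ with $\mathbb{Z}^k$.
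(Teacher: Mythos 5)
Your argument is essentially the paper's proof: both pass to a triangular $\mathbb{Z}$-basis of the sublattice $\Lambda_0$, ordered so that back-substitution amplifies each coordinate by a factor of at most $2$ per step, and both note that the resulting coordinate map is an affine lattice isomorphism, which immediately preserves $|A|$, $d_k$, and $|\coo(A)\setminus A|$ and makes the image reduced. Two small remarks: the auxiliary bound $d\le 2/\epsilon_0$ plays no role in the argument and can be dropped, and in the back-substitution the summand should be $|c_j|a_i$ (coming from $b_{ji}<a_i$) rather than $|c_j|a_j$, though after dividing by $a_i$ this yields the same inequality $|c_i|\le |x_i|+\sum_{j>i}|c_j|$ and so does not affect the conclusion.
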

\begin{proof}
By taking $n_i>\epsilon_0^{-1}$ for all $i$, we first note that $A'$ is not contained inside a hyperplane by \Cref{hypboxsmall}. Take some $a\in A'$. Then $A'-a\subset C:=\prod_{i=1}^k \{-n_i+1,\ldots,n_i-1\}$ and the affine sub-lattice $\Lambda_{A'-a}$ is actually a subgroup $\langle v_1, \hdots, v_k\rangle\subset \mathbb{Z}^k$ generated by linearly independent vectors $v_i=(v_{i,1}, \hdots, v_{i,k})$. 

Without loss of generality, suppose $n_1\le \ldots \le n_k$. By applying row operations to the matrix whose rows are $v_1,\ldots,v_k$ (as in the algorithm for Smith normal form), we may we may assume that $|v_{j, i}| = 0$ for all $j>i$, and $|v_{j,i}| \le |v_{i,i}|$ for all $j \le i$. Because $v_i,\ldots,v_k$ are linearly independent, we have $v_{i,i}\ne 0$. Consider a point $p=p_1v_1 + \hdots + p_kv_k \in C$. We will show by induction on $i$ that $|p_i|\le 2^{i-1}(n_i-1)$.
Indeed, by considering the $i$th coordinate, we have that $$|p_1 v_{1,i} + p_2 v_{2,i}+ \hdots +p_i v_{i,i}| \le n_{i}-1$$ and hence
\begin{align*}|p_i| |v_{i,i}| &\le |p_1| |v_{1,i}| + \ldots + |p_{i-1}| |v_{i-1,i}| + n_{i}-1 \\&\le (|p_1|+ \ldots + |p_{i-1}| + n_{i}-1)|v_{i,i}|\\&\le (1+\sum_{j=0}^{i-2}2^j)(n_i-1)|v_{i,i}|=2^{i-1}(n_i-1)|v_{i,i}|.
\end{align*} This shows that $A'-a\subset \{p_1 v_1 + \ldots p_kv_k \text{ : } |p_i| \le 2^{i-1}(n_{i}-1)  \}$. If we let $$A''':=\{(p_1, \hdots p_k) \text{ : } p_1v_1 + \hdots p_kv_k \in A'-a \}\subset \prod_{i=1}^k \{-2^{i-1}(n_i-1),\ldots,2^{i-1}(n_i-1)\},$$ then $A'''$ is reduced in $\mathbb{Z}^k$ since $\Lambda_{A'-a}=\langle v_1,\ldots,v_k\rangle$, and is obtained from $A'$ by applying an element of $GL_n(\mathbb{Q})$ followed by a translation, so $|A'|=|A'''|$, $d_k(A''')=d_k(A')$, and $|\co(A''') \setminus A'''| =|\coo(A')\setminus A'|.$ We conclude by taking $A''$ to be a suitable translation of $A'''$.
\end{proof}
We now prove an observation lower bounding $d_k$ for subsets of boxes, an easy corollary of a Lemma of Green and Tao \cite{GreenTao}.
\begin{obs}
\label{negdk}
For any subsets $X\subset B=B(n_1,\ldots,n_k)$ and $Y\subset \pi(B)$ we have $$d_{k}(X) \ge -2^{2k}\min\{n_i\}^{-1}|B|\text{, and }d_{k-1}(Y) \ge -2^{2(k-1)}\min\{n_i\}^{-1}n_1^{-1}|B|.$$
More generally, for $X_1,X_2\subset B$ and $Y_1,Y_2\subset \pi(B)$ we have
\begin{align*}|X_1+X_2|&\ge 2^{k}\min(|X_1|,|X_2|)- 2^{2k}\min\{n_i\}^{-1}|B|\text{, and}\\
|Y_1+Y_2|&\ge 2^{k-1}\min(|Y_1|,|Y_2|) -2^{2(k-1)}\min\{n_i\}^{-1}n_1^{-1}|B|.
\end{align*}
\end{obs}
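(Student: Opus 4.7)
The plan is to deduce this observation from the Bollob\'as--Leader/Green--Tao sumset inequality for subsets of a box cited in Section 1.1: for $X_1,X_2\subset B$ with $|X_1|,|X_2|\ge \varepsilon|B|$, one has $|X_1+X_2|\ge\bigl(2^k-C_k(\varepsilon\min\{n_i\})^{-1}\bigr)\min(|X_1|,|X_2|)$ for an absolute constant $C_k$. Both inequalities in the observation are then essentially a renormalization of this, and the $d_k(X)$, $d_{k-1}(Y)$ statements are the self-sum specializations $X_1=X_2$ and $Y_1=Y_2$.

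Concretely, assume without loss of generality $|X_1|\le|X_2|$. I would split into two cases based on the size of $|X_1|$. If $|X_1|\le\min\{n_i\}^{-1}|B|$, the desired right-hand side is
$$2^k|X_1|-2^{2k}\min\{n_i\}^{-1}|B|\le(2^k-2^{2k})\min\{n_i\}^{-1}|B|\le 0\le|X_1+X_2|,$$
so the bound is automatic. Otherwise $\varepsilon:=|X_1|/|B|\ge\min\{n_i\}^{-1}$, so in particular $\varepsilon\min\{n_i\}\ge 1$, allowing Green--Tao to be applied with this $\varepsilon$. Rearranging gives
$$|X_1+X_2|\ge 2^k|X_1|-C_k\min\{n_i\}^{-1}|B|,$$
and a quick bookkeeping check of the Bollob\'as--Leader slicing proof used by Green--Tao yields $C_k\le 2^{2k}$.

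For the statement on $Y_1,Y_2\subset\pi(B)$, I would simply apply the first inequality in dimension $k-1$ to the box $\pi(B)=B(n_2,\ldots,n_k)$, obtaining
$$|Y_1+Y_2|\ge 2^{k-1}\min(|Y_1|,|Y_2|)-2^{2(k-1)}\min\{n_2,\ldots,n_k\}^{-1}|\pi(B)|,$$
and then bound $\min\{n_2,\ldots,n_k\}^{-1}\le\min\{n_1,\ldots,n_k\}^{-1}$ and substitute $|\pi(B)|=|B|/n_1$, which produces precisely $2^{2(k-1)}\min\{n_i\}^{-1}n_1^{-1}|B|$ as the error term.

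The only real obstacle is tracking the constant $2^{2k}$ through Green--Tao's argument. If one prefers not to do this bookkeeping, a self-contained induction on $k$ works: the base case $k=1$ is the elementary $|X_1+X_2|\ge|X_1|+|X_2|-1\ge 2\min(|X_1|,|X_2|)-1$, which beats the claimed bound of $2\min(|X_1|,|X_2|)-4$; for the inductive step one slices $B$ into $n_1$ hyperplanes transverse to $e_1$, applies the $(k-1)$-dimensional hypothesis slice-by-slice to the projections of $X_1,X_2$, and combines using that the sumset of two nonempty $\mathbb{Z}$-sets of sizes $a,b$ has size $\ge a+b-1$ in the first coordinate to recover the factor $2^k=2\cdot 2^{k-1}$ while accumulating at most $2^{2k}\min\{n_i\}^{-1}|B|$ of error.
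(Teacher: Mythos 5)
Your first route and the paper's proof both rest on Green--Tao's compression machinery, but they differ in what is cited. The paper invokes \cite[Lemma 2.8]{GreenTao} directly: that lemma applies to any subsets of a down-set (and $B$, $\pi(B)$ are down-sets after translation), with no density hypothesis, and gives the deficit explicitly as a sum of coordinate projections of $B+B$. Each of the $k$ projections has size at most $2^{k}\min\{n_i\}^{-1}|B|$, and $k\le 2^k$ then yields the constant $2^{2k}$ at a glance. You instead invoke the $\varepsilon$-density corollary quoted informally in Section 1.1, which forces the case split on $|X_1|$ and, more seriously, rests on the assertion that ``a quick bookkeeping check yields $C_k\le 2^{2k}$.'' That constant is hidden behind the $O(\cdot)$ in the Section 1.1 statement and you never perform the check, so as written this is a gap, even though the claim is in fact true.

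Your self-contained inductive alternative is closer to the actual proof of Lemma 2.8, but the sketch omits the compression step that makes it work. Slicing $X_1, X_2$ transverse to $e_1$ and applying the $(k-1)$-dimensional hypothesis to pairs of slices does not combine: for a fixed sum-coordinate value $c$, many pairs $(a,b)$ with $a+b=c$ feed the same fiber of $X_1+X_2$, and their sumsets overlap, so naive summation overcounts. One must first compress in the $e_1$-direction, equivalently reindex to the level sets $E_i=\{x : |\pi^{-1}(x)\cap X|\ge i\}$ exactly as the paper does in the proof of \Cref{alphaprop'}; only then do the $(k-1)$-dimensional bounds glue to the one-dimensional bound $|U+V|\ge |U|+|V|-1$ to recover the factor $2^k$. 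Without that reorganization the induction does not close.
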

\begin{proof}
Because $B$ and $\pi(B)$ are downsets, the result follows from  \cite[Lemma 2.8]{GreenTao}, and the trivial estimates that the size of each coordinate projection of $B+B$ and $\pi(B)+\pi(B)$ have sizes at most $2^k\min\{n_i\}^{-1}|B|$ and $2^{k-1}\min\{n_i\}^{-1}n_1^{-1}|B|$, respectively.
\end{proof}
We frequently need the following observation when considering $A'(+)A'$ to show it has size roughly $2^k|A'|$ as described in \Cref{outlineofpaper}.
\begin{obs}
\label{Surfobs}
Let $Y\subset \pi(B)$ with $B=B(n_1,\ldots,n_k)$, and let $0\ne \vec{v}\in \{0\}\times\{0,1\}^{k-1}$. Then
$$\left|\{x \in \{0\}\times \mathbb{Z}^{k-1} : |\{x,x+\vec{v}\}\cap \co(Y)|=1\}\right| \le 2(k-1) \min\{n_i\}^{-1}n_1^{-1}|B|.$$
\end{obs}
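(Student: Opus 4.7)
The strategy is to slice $\mathbb{Z}^{k-1}$ into the lattice lines $\ell = y + \mathbb{Z}\vec{v}$ (the cosets of $\mathbb{Z}\vec{v}$), bound the contribution to the count from each line using convexity, and then bound the number of lines that can contribute at all by a front-face argument. Because $\vec{v}\in\{0,1\}^{k-1}$ has $\gcd$ of entries equal to $1$, the lattice points on any such $\ell$ are $\{y+n\vec{v}:n\in\mathbb{Z}\}$.

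\textbf{Per-line bound.} Fix a line $\ell$. The set $\widetilde{\co}(Y)\cap\ell$ is an interval because $\widetilde{\co}(Y)$ is convex, so $\co(Y)\cap\ell=\widetilde{\co}(Y)\cap\ell\cap\mathbb{Z}^{k-1}$ is a (possibly empty) contiguous block of lattice points on $\ell$. A point $x=y+n\vec{v}\in\ell$ satisfies $|\{x,x+\vec{v}\}\cap\co(Y)|=1$ precisely when exactly one of $n$ and $n+1$ lies in this block, which happens for at most two values of $n$ (one at each endpoint of the block). Hence each line contributes at most $2$ to the count, and only lines meeting $\co(Y)\subseteq\pi(B)$ contribute.

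\textbf{Counting the lines.} To count lines $\ell$ in direction $\vec{v}$ that meet $\pi(B)$, I pass to the ``front face'' $F(\vec{v}):=\{p\in\pi(B):p-\vec{v}\notin\pi(B)\}$. Each coset $\ell$ meeting $\pi(B)$ contains a unique minimum point of $\pi(B)\cap\ell$ (in the $\vec{v}$-direction), and this point lies in $F(\vec{v})$, giving an injection from relevant cosets into $F(\vec{v})$. For $p\in\pi(B)$, $p-\vec{v}\notin\pi(B)$ iff $p_j=1$ for some $j\in\{2,\dots,k\}$ with $v_j=1$, so by the union bound
\[
|F(\vec{v})|\;\le\;\sum_{j:\,v_j=1}|\{p\in\pi(B):p_j=1\}|\;=\;\sum_{j:\,v_j=1}\frac{|\pi(B)|}{n_j}\;\le\;(k-1)\,\frac{|B|}{n_1\min\{n_i\}},
\]
since $|\pi(B)|=|B|/n_1$ and at most $k-1$ indices satisfy $v_j=1$. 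Multiplying by the per-line factor of $2$ yields the claimed inequality.

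\textbf{Main point.} The proof is essentially mechanical once one recognizes that the count is the number of boundary pairs of a convex set relative to a translation direction. The only place slack enters is the union bound in the front-face estimate, which is what produces the factor $(k-1)$ in the statement (a slightly more careful argument even removes it when only a single $v_j$ equals $1$). I expect no serious obstacle here; the observation is a packaging lemma that will be invoked inside the more intricate arguments of later sections.
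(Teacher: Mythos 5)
Your proof is correct and follows exactly the route the paper takes: slice $\{0\}\times\mathbb{Z}^{k-1}$ into lines in the direction $\vec v$, note that convexity of $\widetilde{\co}(Y)$ gives at most two contributing points per line, and bound the number of lines meeting $\pi(B)$ by $(k-1)\min\{n_i\}^{-1}|\pi(B)|$. The paper states this in two sentences; your write-up simply supplies the details (the contiguity of $\co(Y)\cap\ell$ and the front-face injection) behind the same argument.
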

\begin{proof}
Consider all lines in the direction $\vec{v}$ intersecting $\pi(B)$. On each such line there are at most 2 values of $x$ such that $|\{x,x+\vec{v}\}\cap \co(Y)|=1$. Because each such line intersects two facets of $\pi(B)$, and each facet has size at most $\min\{n_i\}^{-1}|\pi(B)|$, there are at most $(k-1) \min\{n_i\}^{-1} |\pi(B)|$ such lines which intersect $\pi(B)$.
\end{proof}
The next observation relates $d_k$ between sets and subsets. In particular, it allows us to guarantee that all auxiliary sets we construct in the proof of \Cref{equivquali} are reduced and have similar $d_k$ solely because they are close in symmetric difference to the original set $A$.
\begin{obs}
\label{dkobs}
If $X\subset Y$, then \begin{align}\label{dk}d_k(X)\le d_k(Y)+2^k|Y\setminus X|.\end{align} In particular, there exists $\ll$-dependencies such that for $\min\{n_i^{-1}\}\delta \ll \epsilon_0$, for  reduced $A\subset B=B(n_1,\ldots,n_k)$ with $|A|\ge \epsilon_0|B|$, $d_k(A)\le \delta |B|$, if $A'\subset B$ has \begin{align}\label{redcond}|A\Delta A'|\le 2^{-(k+1)}\epsilon_0|B|,\end{align} then $A'$ is reduced.
\end{obs}
\begin{proof}
For \eqref{dk}, we have
$$d_k(X)=|X+X|-2^k|X|\le |Y+Y|-2^k|Y|+2^k|Y\setminus X|=d_k(Y)+2^k|Y\setminus X|.$$
For  \eqref{redcond}, it suffices to show $A\cap A'$ is reduced, so we may assume $A'\subset A$. Assume for the sake of contradiction that $A'$ is not reduced. Then there is an $a\in A$ such that $a+A'$ is disjoint from $A'+A'$. Hence, we have $|A+A|\ge |A'+A'|+|A'|$, and in particular, $$\delta |B| \ge d_k(A)\ge d_k(A')-(2^k+1)|A\setminus A'|+|A| \ge \left(-2^{2k}\min \{n_i\}^{-1}+\left(\frac{1}{2}-\frac{1}{2^{k+1}}\right)\epsilon_0\right)|B|.$$
Here the third inequality follows from \Cref{negdk} and the bound $|A|\ge \epsilon_0|B|$ from the hypothesis. The contradiction now comes from the fact that the $n_i^{-1}$ and $\delta$ can be chosen much smaller than $\epsilon_0$.
\end{proof}

We next have an observation which allows us to transition between convex sets and reduced convex progressions with a loss proportional to the surface area of a containing box.

\begin{obs}\label{contdisc}
Let $B=B(n_1,\ldots,n_k)$, and suppose we have a convex polytope $\widetilde{P}\subset \cooo(B)$. Then with $P=\widetilde{P}\cap \mathbb{Z}^k$, we have
$\left||\widetilde{P}|-|P|\right|\leq 2k(k+1) \min\{n_i\}^{-1}|B|$. This is more generally true for any subset $\widetilde{P}\subset\cooo(B)$ given as the intersection of finitely many open and closed half-spaces.
\end{obs}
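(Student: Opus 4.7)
The plan is to induct on $k$, slicing $\widetilde{P}$ along a coordinate direction with smallest $n_i$. The base case $k=1$ is immediate: $\widetilde{P}\subset[1,n_1]$ is a (possibly non-closed) subinterval, and an elementary computation gives $\bigl||\widetilde{P}|-|P|\bigr|\le 3\le 4 = 2k(k+1)\min\{n_i\}^{-1}|B|$.

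For $k\ge 2$, after relabeling coordinates so that $n_1=\min\{n_i\}$, I would define the slice functions $f(t):=\mathrm{vol}_{k-1}(\widetilde{P}\cap\{y_1=t\})$ and $g(x_1):=|\widetilde{P}\cap\{y_1=x_1\}\cap\mathbb{Z}^k|$, so that $|\widetilde{P}|=\int_1^{n_1}f(t)\,dt$ and $|P|=\sum_{x_1=1}^{n_1}g(x_1)$. The triangle inequality then splits the total error into a discretization error $\bigl|\int_1^{n_1} f(t)\,dt-\sum_{x_1=1}^{n_1} f(x_1)\bigr|$ and an induction error $\sum_{x_1=1}^{n_1}|f(x_1)-g(x_1)|$. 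Each slice $\widetilde{P}\cap\{y_1=x_1\}$ is itself an intersection of finitely many open and closed half-spaces inside the $(k-1)$-dimensional box $B(n_2,\ldots,n_k)$ of volume $|B|/n_1$, so the inductive hypothesis bounds the induction error by $2(k-1)k\cdot\min_{i\ge 2}\{n_i\}^{-1}\cdot|B|\le 2k(k-1)\min\{n_i\}^{-1}|B|$, where the last inequality uses $\min_{i\ge 2}\{n_i\}\ge n_1$.

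For the discretization error, the Brunn-Minkowski inequality implies $f^{1/(k-1)}$ is concave on its support $[\alpha,\beta]\subset[1,n_1]$, so $f$ is unimodal with total variation at most $2f_{\max}$, and $f_{\max}\le|B|/n_1$. Splitting the integral as $\int_\alpha^{\lceil\alpha\rceil}+\int_{\lceil\alpha\rceil}^{\lfloor\beta\rfloor}+\int_{\lfloor\beta\rfloor}^\beta$, bounding the two fractional pieces by $f_{\max}$ each, and bounding the middle piece by $\mathrm{TV}(f)+f(\lfloor\beta\rfloor)\le 3f_{\max}$ when compared to $\sum_{x_1=1}^{n_1}f(x_1)$, yields $\bigl|\int_1^{n_1} f(t)\,dt-\sum_{x_1=1}^{n_1} f(x_1)\bigr|\le 5f_{\max}\le 5\min\{n_i\}^{-1}|B|$. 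Combining gives $\bigl||\widetilde{P}|-|P|\bigr|\le(2k(k-1)+5)\min\{n_i\}^{-1}|B|\le 2k(k+1)\min\{n_i\}^{-1}|B|$, since $4k\ge 5$ for $k\ge 2$.

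The main subtlety to verify is that the non-closed setting is inherited correctly by slicing: each slice remains an intersection of finitely many open and closed half-spaces, so induction applies directly, and the Lebesgue volume $f$ is insensitive to the measure-zero discrepancy between $\widetilde{P}$ and its closure, ensuring the Brunn-Minkowski unimodality argument applies as stated.
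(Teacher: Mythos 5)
Your proposal is correct, and it takes a genuinely different route from the paper's. The paper works directly in dimension $k$: it first perturbs the defining half-spaces to reduce to a bona fide polytope, introduces the set $X$ of lattice points whose unit cubes meet $\partial\widetilde{P}$, and bounds $|X|$ by a hands-on combinatorial/convexity argument (tracking, for each coordinate direction, the $k+1$ cubes nearest to the max and min of each fiber, and showing via a determinant/spectral-radius estimate that any cube sufficiently far from these extremes in every direction must lie in the interior). Your argument instead inducts on dimension by slicing along the direction of smallest $n_i$, splitting the error into an induction term (controlled by the $(k-1)$-dimensional statement on each slice, where the smaller ambient box width helps) and a discretization term $|\int f - \sum f(j)|$; the latter is tamed by observing that Brunn--Minkowski makes the slice-volume function $f^{1/(k-1)}$ concave, hence $f$ unimodal with total variation at most $2f_{\max}$, and $f_{\max}\le \min\{n_i\}^{-1}|B|$. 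Your approach is shorter and more conceptual but imports the Brunn--Minkowski section theorem; the paper's is self-contained and more elementary, at the cost of a fiddlier boundary-cube count. One point to make explicit in a final write-up: when the region has an open facet lying exactly in a coordinate hyperplane $\{y_1=a\}$ with $a\in\mathbb{Z}$, the actual slice volume $f(a)$ drops to $0$ while the closure's does not, so $\sum f(x_1)$ can lose up to $2f_{\max}$ relative to the closed case; this bumps your discretization bound from $5f_{\max}$ to $7f_{\max}$, which still fits since $7+2k(k-1)\le 2k(k+1)$ for $k\ge 2$.
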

\begin{proof}
By perturbing the defining half-spaces slightly, we may replace $\widetilde{P}$ with a polytope without changing $P$, so we assume $\widetilde{P}$ is a polytope from now on.

Consider the set $X:=\left\{z\in \mathbb{Z}^k: (z+[0,1]^k)\cap \partial\widetilde{P}\neq \emptyset\right\}.$ We first show $|X|$ is small.
\begin{clm}
$|X|\leq 2k(k+1)\min\{n_i\}^{-1}|B|$
\end{clm}
\begin{proof}[Proof of claim.]
For $1\le i \le k$, let $\pi_i:\mathbb{Z}^k\to\mathbb{Z}^{i-1}\times \{0\}\times \mathbb{Z}^{k-i}$ be the projection $\pi_i(x_1,\ldots,x_k)=(x_1,\ldots,x_{i-1},0,x_{i+1},\ldots,x_k)$. Let     $f_i^+,f_i^{-}:\pi_i(X) \to \mathbb{Z}$ be defined by
\begin{align*}
    f_i^+:x \mapsto \max(\pi_i^{-1}(x)\cap X)\qquad f_i^{-}:x \mapsto \min(\pi_i^{-1}(x)\cap X),
\end{align*}
and for every $x=(x_1,\ldots,x_{i-1},0,x_{i+1},\ldots,x_k)\in \pi_i(X)$, let \begin{align*}X^+_{i,x}&=\{(x_1,\dots,x_{i-1},j,x_{i+1},\dots,x_k):f^+_i(x)-k\leq j\leq f^+_i(x)\}\\ X^-_{i,x}&=\{(x_1,\dots,x_{i-1},j,x_{i+1},\dots,x_k):f^-_i(x)\leq j\leq f^-_i(x)+k\}\end{align*} be the $k+1$ elements of $\mathbb{Z}^k$ in the $x$-row in direction $i$ of $X$ below the maximum element and above the minimum element  respectively. From these definitions, it is immediate that $$\left|\bigcup_{i\in \{1,\ldots,k\},x\in \pi_i(X)} X_{i,x}^+\cup X_{i,x}^-\right|\leq 2k(k+1)\min\{n_i\}^{-1}|B|,$$ so it suffices to show that $X\subset\bigcup X_{i,x}^+\cup X_{i,x}^-$.

Suppose for the sake of contradiction that there is some $z\in X\setminus\left(\bigcup X_{i,x}^+\cup X_{i,x}^-\right)$. Then $$f^+_i(\pi_i(z))\geq z_i+k+1 \text{ and }f_i^-(\pi_i(z))\leq z_i-k-1$$ for all $i$, so there are $r_i^+,r_i^- \ge k+1$ such that $ z+r_i^+e_i+[0,1]^k$ and $z-r_i^-e_i+[0,1]^k$ intersect $\widetilde{P}$. As $z+[0,1]^k$ intersects $\widetilde{P}$ and  $\widetilde{P}$ is convex, for all $i\in [k]$ there are points $$y^+_i\in (z+(k+1)e_i+[0,1]^k)\cap \widetilde{P},\qquad y^-_i\in (z-(k+1)e_i+[0,1]^k)\cap \widetilde{P}.$$

Denoting $int$ for interior, we claim that $$z+[0,1]^k\subset int(\widetilde{co}(\{y^+_1,\dots,y^+_k,y^-_1,\dots,y^-_k\}))\subset int (\widetilde{P}).$$
The second inclusion is immediate, so we focus on the first. Write $y_i^+=z+(\frac{1}{2},\ldots,\frac{1}{2})+p_i^+$ and $ y_i^-=z+(\frac{1}{2},\ldots,\frac{1}{2})+p_i^-$ where $p_i^+= (k+1)e_i+\epsilon_i^+$ and $p_i^-= -(k+1)e_i+\epsilon_i^-$ with $\epsilon_i^{\pm}\in [-\frac{1}{2},\frac{1}{2}]^k$. Then this is equivalent to showing
$$\left[-\frac{1}{2},\frac{1}{2}\right]^k\subset int(\widetilde{\co}(\{p_1^+,\ldots,p_k^+,p_1^-,\ldots,p_k^-\})).$$
We will show that $\cooo(\{p_1^+,\ldots,p_k^+,p_1^-,\ldots,p_k^-\})$ has facets $\cooo(p_1^{\pm},\ldots,p_k^{\pm})$ for the $2^k$ choices of $\pm$, and $\left[-\frac{1}{2},\frac{1}{2}\right]^k$ lies on the same side of these facets as $\cooo(\{p_1^+,\ldots,p_k^+,p_1^-,\ldots,p_k^-\})$. To show this, let $\epsilon\in [-\frac{1}{2},\frac{1}{2}]^k$. We claim that it suffices to show $p_1^{+},\ldots,p_k^{+}$ are affinely independent, and that $\epsilon$ and $p_1^{-}$ lie on the same side of the hyperplane $\widetilde{H}$ through $p_1^{+},\ldots,p_k^{+}$ (and the analogous symmetrical statements where $1$ is replaced by some $j\in \{1,\ldots,k\}$ and the signs above $p_i$ are possible swapped for each $i$). Indeed, if this is the case, then by symmetry, all vertices lie on the same side of $\widetilde{H}$, which implies that  $\cooo(\{p_1^+,\ldots,p_k^+\})$ is a facet, and $\cooo(\{p_1^+,\ldots,p_k^+,p_1^-,\ldots,p_k^-\})$ lies on the same side of this facet as $\epsilon$. This is equivalent in turn to showing that, for $w\in \{p_1^-,\epsilon\}$, the determinants of the matrices whose columns are $p_i^+-w$ for $1 \le i \le k$ have the same signs. We will in fact show that this sign is positive for both.

For $w=\epsilon$, the matrix we are considering is $M+(k+1)I$ where $M$ has as its $i$th column $\epsilon_i^+-\epsilon$. Note that $M$ has entries of magnitude at most $1$, so the spectral radius of $M$ is at most $k$. But if $\det(M+(k+1)I)\le 0$, then, as $\det(M+\lambda I) \to \infty$, as $\lambda \to \infty$ there must exist $\lambda\ge k+1$ with $\det(M+\lambda I)=0$. But this would imply that $-\lambda$ is an eigenvalue and thus $|-\lambda|>k$ is at most the spectral radius, contradicting that the radius is at most $k$. Hence $\det(M+(k+1)I)>0$ as desired.

For $w=p_1^-$, note that we have already shown that $\epsilon_1^-$ lies on the positive side of $\widetilde{H}$, and $p_1^-\in \epsilon_1^-+\mathbb{R}_{\le 0}e_1$. Hence it suffices to show that the point $\epsilon_1^+-Ne_1$ lies on the positive side of $\widetilde{H}$ for all $N>0$ sufficiently large. This is equivalent to saying that the matrix $M_N$ whose $i$th column is $p_i^++Ne_1-\epsilon_1^+$ has positive determinant for all $N> 0$ sufficiently large. Subtracting the first column from all subsequent columns and then considering the coefficient of $N$ in $\det(M_N)$, which is now only contributed by the first column, this follows from an identical argument.

Hence we have $z+[0,1]^k\subset int(\widetilde{P})$, contradicting $(z+[0,1]^k)\cap \partial \widetilde{P}\ne \emptyset$.


\end{proof}

Returning to the proof of \Cref{contdisc}, consider the translates of $[0,1]^k$ by $P$, i.e., $P+[0,1]^k$. Each of these translates is either contained in $\widetilde{P}$ or intersects $\partial{\widetilde{P}}$. Hence, $|P|\leq |\widetilde{P}|+|X|$. On the other hand, consider the set of all integer translates of $[0,1]^k$ intersecting $\widetilde{P}$. All of these translates intersect $\partial{\widetilde{P}}$ or are of the form $a+[0,1]^k$ with $a\in P$. As these clearly cover $\widetilde{P}$, we find $|\widetilde{P}|\leq |P|+|X|$.
\end{proof}
Finally, the following observation implies $A'$ being close to its discrete convex hull implies $d_k(A')$ is small.
\begin{obs}\label{cvxsd}
Given a set $A'\subset B=B(n_1,\ldots,n_k)$, we have
$$d_k(A')\leq 2^k|\co(A')\setminus A'|+2^{k+2}k(k+1)\min\{n_i\}^{-1}|B|.$$
\end{obs}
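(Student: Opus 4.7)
The plan is to control $|A'+A'|$ by the continuous volume of $2\widetilde{\co}(A')$ and then pass back to the lattice count $|\co(A')|$ using \Cref{contdisc} as the black box. The starting point is the chain of inclusions
$$A'+A'\subset \co(A')+\co(A')\subset 2\widetilde{\co}(A')\cap\mathbb{Z}^k,$$
where the second containment uses the convexity of $\widetilde{\co}(A')$. Hence it suffices to estimate $|2\widetilde{\co}(A')\cap\mathbb{Z}^k|$ from above in terms of $|\co(A')|$.

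For this I would apply \Cref{contdisc} twice. First, applied to the polytope $\widetilde{\co}(A')\subset \cooo(B)$, it gives
$$|\widetilde{\co}(A')|\le |\co(A')|+2k(k+1)\min\{n_i\}^{-1}|B|.$$
Second, after translating by $-(1,\ldots,1)$ so that $2\widetilde{\co}(A')$ sits inside $\cooo(B'')$ for $B''=B(2n_1,\ldots,2n_k)$, the same observation yields
$$|2\widetilde{\co}(A')\cap\mathbb{Z}^k|\le |2\widetilde{\co}(A')|+2k(k+1)\min\{2n_i\}^{-1}|B''|.$$
Since $|2\widetilde{\co}(A')|=2^k|\widetilde{\co}(A')|$ and $\min\{2n_i\}^{-1}|B''|=2^{k-1}\min\{n_i\}^{-1}|B|$, combining the two displays gives
$$|2\widetilde{\co}(A')\cap\mathbb{Z}^k|\le 2^k|\co(A')|+3\cdot 2^k k(k+1)\min\{n_i\}^{-1}|B|.$$
Subtracting $2^k|A'|$ from both sides and using $3\cdot 2^k\le 2^{k+2}$ finishes the proof.

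There is no substantive obstacle here; the only minor bookkeeping point is to verify that the dilation $2\widetilde{\co}(A')$ fits inside the convex hull of a discrete box of comparable shape so that \Cref{contdisc} applies with the correct $\min\{n_i\}^{-1}|B|$ error term, which is handled by the translation and the box $B''$ above.
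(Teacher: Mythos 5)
Your proof is correct and follows essentially the same route as the paper: both bound $|A'+A'|$ by $|2\widetilde{\co}(A')\cap\mathbb{Z}^k|$ and then transition to continuous volume and back via two applications of \Cref{contdisc}, using $|2\widetilde{\co}(A')|=2^k|\widetilde{\co}(A')|$. The only cosmetic difference is that the paper first reduces to bounding $d_k(\co(A'))$ via \Cref{dkobs} whereas you subtract $2^k|A'|$ directly at the end; your bookkeeping also yields the slightly sharper constant $3\cdot 2^k k(k+1)$ before rounding up to $2^{k+2}k(k+1)$.
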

\begin{proof}
By \Cref{dkobs}, we only need to show $d_k(\co(A'))\le 2^{k+2}k(k+1)\min\{n_i\}^{-1}|B|$. This follows as, by \Cref{contdisc} applied once to $\cooo(A')+\cooo(A')=2\cooo(A')$ and once to $\cooo(A')$, we have
\begin{align*}
d_k(\co(A'))&= |\co(A')+\co(A')|-2^k |\co(A')|\\
&\leq |(\cooo(A')+\cooo(A'))\cap \mathbb{Z}^k|-2^k |\co(A')|\\
&\leq |\cooo(A')+\cooo(A')|-2^k |\co(A')|+2^{k+1}k(k+1) \min\{n_i\}^{-1}|B|\\
&= 2^k|\cooo(A')|-2^k |\co(A')|+2^{k+1}k(k+1) \min\{n_i\}^{-1}|B|\\
&\leq 2^{k+2}k(k+1) \min\{n_i\}^{-1}|B|.
\end{align*}
\end{proof}

\section{Proof of \Cref{qual} for $k$ given \Cref{quant} for $k-1$}
\label{1.6section}
For $k=1$, \Cref{qual} and \Cref{quant} are implied by Freiman's $3|A|-4$ theorem \cite{Freiman}, \Cref{3A-4}, so we suppose from now on that $k\ge 2$.
In this section, we prove \Cref{qual} for dimension $k$ given \Cref{quant} for dimension $k-1$. A few important notes before we begin.
\begin{itemize}
    \item We be \textbf{exclusively} working in the equivalent reformulations \Cref{equivquali} (of \Cref{quali}) and \Cref{equivmainthm} (of \Cref{mainthm}) as established in \Cref{equivsection}. Our hypotheses on $A$ are therefore the ones from \Cref{equivquali}, that $n_{k,0}^{-1}\ll \delta \ll \epsilon_0\le 1$, and $A\subset B=B(n_1,\ldots,n_k)$ with $\min\{n_i\}\ge n_{k,0}$, $d_k(A)\le \delta |B|$, and $|A| \ge \epsilon_0|B|$, and our desired conclusion is still that $|\coo(A)\setminus A||A|^{-1}\le \omega(\delta)$ with $\omega(\delta)\to 0$ as $\delta\to 0$.
    \item By \Cref{nonreduced} we may and shall assume that $A$ is reduced.
    \item We will denote $\epsilon\ge \epsilon_0$ to be the density of $A$ in $B$, so we have \begin{align}\label{Ainfo}|A|=\epsilon|B|\text{,  and }d_k(A)\le \delta |B|.\end{align}
\end{itemize}

\subsection{Outline of the proof}
We will create sets
$$A\supset A_1\supset A_2\supset A_3\supset A_4\supset A_5\subset A_+\supset A_\star$$
(note that $A_5\subset A_+$)
such that $|A\Delta A_\star|$ is small, and $A_\star$ has a large number of properties which allow us to show that $A_\star$ is close to $\co(A_\star)$. From this we will be able to conclude that $A$ is close to $\co(A)$.

In \Cref{inductiveh}, we derive a general reduction to sets for which the projection under $\pi$ satisfies the induction hypothesis.

In \Cref{prelimreductions}, we construct $A\supset A_1\supset A_2\supset A_3$ such that $A_3$ is reduced, has large rows $R_x$ close to $\coo(R_x)$, and has $\pi(A_3)$  close to $\co(\pi(A_3))$.

In \Cref{reductionssamesize}, we construct $A_3\supset A_4\supset A_5$ such that $A_5$ has the same properties as $A_3$ and the arithmetic progressions $\coo(R_x)$ have the same step size $d$.

In \Cref{almostintervals}, we show that $d=1$, i.e. $\coo(R_x)=\co(R_x)$ is an interval for all rows $R_x$ of $A_5$.

In \Cref{reductionAplus}, we show that filling in the rows of $A_5$ to make a set $A_+\supset A_5$ preserves the properties that $A_5$ had (this is the only step where we deviate from throwing away a subset of rows).

In \Cref{sectionAstarapprox}, we show that we can approximate $A_+$ with a subset $A_\star\subset A_+$ which has simultaneously
\begin{enumerate}
    \item Few vertices on $\widetilde{\co}(A_\star)$
    \item $\pi(A_\star)$ close to $\co(\pi(A_\star))$
    \item The technical condition \Cref{pullover}.
\end{enumerate}
Up to this point, we were able to show that $|A\Delta A_+| \le \delta^{O(1)}|A|$. However obtaining $A_\star$ involves a double recursion, and we are only able to show $|A\Delta A_\star|=o(1)|A|$ where $o(1)\to 0$ as $\delta \to 0$.

In \Cref{Astarclosetoco}, we show that $A_\star$ is close to its convex hull. The key step is to convert the problem to one of bounding the size of the epigraph of a certain infimum-convolution of a function by the size of the epigraph of the original function.

Finally, in \Cref{AclosetocoA} we finish the proof of \Cref{equivquali} by showing that $A_\star$ being close to its convex hull implies $A$ is close to its convex hull.

\subsection{Exploiting the inductive hypothesis}\label{inductiveh}
In this section we prove a result, relying on the inductive hypothesis, which we will frequently apply that allows us to remove a small number of rows from a set $A'$ to ensure that the projection $\pi(A')$ is close to $\coo(\pi(A'))$. Recall we introduced in \Cref{cdef} a small constant $c$.
\begin{prop}
\label{alphaprop'}
There exist $\ll$-dependencies such that for constants
$$n_{k,0}^{-1}\ll\sigma\ll\epsilon_0,\alpha,\lambda\le 1\text{ and }\alpha<\lambda,$$
the following holds.

 Let $A'\subset B$ with $|A'|=\epsilon'|B|\ge \frac{\epsilon_0}{2}|B|$ and $d_k(A')\le \sigma^{\lambda}|B|$. Then there exists  $A''\subset A'$ formed as a union of rows $R_x$ of $A'$ with  \begin{align*} |\coo(\pi(A''))\setminus \pi(A'')| \le\sigma^{\alpha} |\pi(B)|,\quad |A'\setminus A''| \le \sigma^{\lambda-\alpha-c}|B|.
\end{align*}
Furthermore, if $A'$ is reduced then $A''$ is reduced (when nonempty) and in particular $\coo(\pi(A''))=\co(\pi(A''))$.
\end{prop}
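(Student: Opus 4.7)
The plan is to obtain $A''$ by removing ``thin'' rows of $A'$ and then apply the inductive hypothesis \Cref{quant}\,b) for dimension $k-1$ to $\pi(A'')$. Set a threshold $\tau:=\sigma^{\lambda-\alpha-c/2}n_1$, and let $A'':=\bigsqcup_{x\in\pi(A'),\,|R_x|\geq\tau}R_x$. Since at most $|\pi(B)|$ rows (each of size $<\tau$) are removed, $|A'\setminus A''|\leq \tau|\pi(B)|=\sigma^{\lambda-\alpha-c/2}|B|\leq \sigma^{\lambda-\alpha-c}|B|$. Then $|A''|\geq |A'|-\sigma^{\lambda-\alpha-c}|B|\geq\epsilon_0|B|/4$ for $\sigma$ small (by \Cref{nkconv}), and \Cref{dkobs} gives both $d_k(A'')\leq d_k(A')+2^k|A'\setminus A''|=O(\sigma^{\lambda-\alpha-c})|B|$ and reducedness of $A''$ inherited from that of $A'$.

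Next, I would show $d_{k-1}(\pi(A''))\leq \sigma^{\alpha+c/4}|\pi(B)|$. For each $z\in\pi(A'')+\pi(A'')$, picking a decomposition $z=x+y$ with $x,y\in\pi(A'')$, the fiber of $A''+A''$ over $z$ contains $R_x+R_y$, of size at least $|R_x|+|R_y|-1\geq 2\tau-1$; combined with $|A''+A''|\leq 2^k|A''|+d_k(A'')$, this gives the first-pass bound $|\pi(A'')+\pi(A'')|\leq (2^k|A''|+d_k(A''))/(2\tau-1)$. To convert this into a useful bound on $d_{k-1}$, I would invoke the $(+)$-construction $|A''(+)A''|=\sum_{v\in\{0\}\times\{0,1\}^{k-1}}\sum_{x\in \pi(A'')\cap(\pi(A'')-v)}(|R_x|+|R_{x+v}|-1)\leq |A''+A''|$, combined with the directional boundary bound from \Cref{Surfobs}, to extract the desired $d_{k-1}(\pi(A''))\leq \sigma^{\alpha+c/4}|\pi(B)|$. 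Applying \Cref{quant}\,b) for $k-1$ to $\pi(A'')$ (which has density $\geq\epsilon_0/4$ in $\pi(B)$ and is reduced in $\mathbb{Z}^{k-1}$), we conclude $|\coo(\pi(A''))\setminus\pi(A'')|\leq c_{k-1}d_{k-1}(\pi(A''))+o(1)|\pi(A'')|\leq \sigma^\alpha|\pi(B)|$, for $\sigma$ small enough and $n_i$ large enough.

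The main obstacle is obtaining the bound on $d_{k-1}(\pi(A''))$. The pointwise fiber lower bound $2\tau-1$ is essentially sharp only for uniform row sizes near $\tau$; in general the average row size $\bar f:=|A''|/|\pi(A'')|$ can be much larger, so the naive estimate $|\pi(A'')+\pi(A'')|\lesssim |A''|/\tau$ greatly exceeds the target $2^{k-1}|\pi(A'')|$. To overcome this, the $(+)$-construction should be used: on the one hand $|A''(+)A''|$ has average fiber $\approx 2\bar f$, giving $|\pi(A''(+)A'')|\approx |A''(+)A''|/(2\bar f)\leq 2^{k-1}|\pi(A'')|$; on the other hand $|A''+A''|-|A''(+)A''|\leq d_k(A'')+|\pi(A'')|+\sum_{v\neq 0}(N_v+F_v^{(+)}+F_v^{(-)})$, with $N_v=|\pi(A'')\cap(\pi(A'')-v)|$ and $F_v^{(\pm)}=\sum_{x\in\pi(A''),\,x\pm v\notin\pi(A'')}|R_x|$ controlled by \Cref{Surfobs}. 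This bookkeeping should extract $|\pi(A'')+\pi(A'')|-|\pi(A''(+)A'')|\lesssim d_k(A'')/(2\tau-1)$, and hence the desired $d_{k-1}$-bound.
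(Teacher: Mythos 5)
Your high-level plan (remove thin rows, apply the $(k-1)$-dimensional inductive hypothesis to the projection) agrees with the paper's, but the choice of threshold is wrong, and the gap you flag is genuine. The paper does \emph{not} fix a threshold in advance. Writing $E_i = \{x : |R_x|\geq i\}$ for the level sets, it combines the layer-cake formulas $|A'|=\sum_i|E_i|$, $|A'+A'|=\sum_j|F_j|$ with the containment $E_i+E_i\subset F_{2i-1}\cap F_{2i-2}$ to obtain $\sum_i d_{k-1}(E_i)\leq\sigma^{\lambda}|B|$. This controls only the \emph{sum} of projected doublings, not any individual one, so the cutoff $i_0$ is chosen adaptively as the first level with $d_{k-1}(E_{i_0})\leq\sigma^{\alpha+c/2}|E_{i_0}|$; the sum bound together with \Cref{negdk} then forces $i_0\leq\sigma^{\lambda-\alpha-c}n_1$, because levels below $i_0$ would otherwise carry too much total doubling. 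Nothing forces your \emph{fixed} $\tau=\sigma^{\lambda-\alpha-c/2}n_1$ to land on a good level: the constraint $\sum_i d_{k-1}(E_i)\leq\sigma^{\lambda}|B|$ is perfectly compatible with a sparse set of levels $i$ at which $d_{k-1}(E_i)$ is large, and $\tau$ could be one of them.

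The $(+)$-bookkeeping you sketch does not rescue the fixed threshold. Removing up to $|\pi(B)|$ rows each of size $<\tau$ inflates the doubling by $d_k(A'')-d_k(A')\leq 2^k\tau|\pi(B)|$, so $d_k(A'')=O(\sigma^{\lambda-\alpha-c/2})|B|=O(\tau|\pi(B)|)$; dividing through by $2\tau-1$, as your fiber estimate on $(\pi(A'')+\pi(A''))\setminus\pi(A''(+)A'')$ does, yields only $O(|\pi(B)|)$, far weaker than the required $\sigma^{\alpha+c/2}|\pi(B)|$. Separately, invoking \Cref{Surfobs} to control the weighted boundary sums $\sum_{x\in\pi(A''),\,x\pm v\notin\pi(A'')}|R_x|$ is circular: \Cref{Surfobs} bounds boundary counts with respect to $\co(\pi(A''))$, and in the paper's other applications (\Cref{betaprop}, \Cref{rs}) it is always paired with an \emph{a priori} bound on $|\co(\pi(A'))\setminus\pi(A')|$ before it can say anything about the boundary of $\pi(A'')$ itself, but here $|\coo(\pi(A''))\setminus\pi(A'')|$ small is precisely the conclusion rather than a hypothesis. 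The missing idea is the layer-cake inequality $\sum_i d_{k-1}(E_i)\lesssim d_k(A')$, which lets you locate a good level before knowing anything about the geometry of the projection.
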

\begin{proof}[Proof of \Cref{alphaprop'}]
We can take $A''=\emptyset$ if $\lambda \le \alpha+c$, so suppose $\lambda > \alpha+c$. Let
\begin{align*}E_i=\{x\in \pi(A') : |\pi^{-1}(x)\cap A'| \ge i\},\qquad F_i=\{x \in \pi(A'+A') : |\pi^{-1}(x)\cap (A'+A')| \ge i\}.
\end{align*}
Note that $E_1\supset E_2 \supset \ldots$ and $F_1 \supset F_2 \supset \ldots$, and we have 
\begin{align}\label{EiFiequation}
    |A'|=\sum_{i=1}^{n_1} |E_i|,\text{ and }|A'+A'|=\sum_{i=1}^{2n_1-1} |F_i|.
\end{align}
We note that $E_i+E_i \subset F_{2i-1},F_{2i-2}$, so we have (observing $\sigma^{\lambda}\le \sigma$ as $\sigma,\lambda\le 1$)
\begin{align*}|A'+A'| &\ge -2^{k-1}n_1^{-1}|B|+ 2\sum_{i=1}^{n_1} |E_i+E_i|\\&\ge -\sigma^{\lambda}|B|+2\sum_{i=1}^{n_1}|E_i+E_i|.
\end{align*}
Subtracting $2^k|A'|=2\sum_{i=1}^{n_1} 2^{k-1}|E_i|$, we obtain
$d_k(A') \ge -\sigma^{\lambda}|B|+2\sum_{i=1}^{n_1} d_{k-1}(E_i),$
so by the hypothesis $\sigma^{\lambda}|B|\ge d_k(A')$, we see that
\begin{align}\label{sigmalambdagreaterEi}
\sigma^\lambda|B| \ge \sum_{i=1}^{n_1}d_{k-1}(E_i).\end{align}
Let $i_0$ be the first index with $d_{k-1}(E_{i_0})\le \sigma^{\alpha+c/2} |E_{i_0}|$, which exists as otherwise by \eqref{EiFiequation},\eqref{sigmalambdagreaterEi},
$$\sigma^{\lambda}|B|\ge\sigma^{\alpha+c/2}|A'|\ge \sigma^{\alpha+c/2}\frac{\epsilon_0}{2}|B|>\sigma^{\lambda}|B|.$$ Let $A'':=\pi^{-1}(E_{i_0})\cap A'\subset A'$ be the union of all rows of size at least $i_0$. By construction,
\begin{align}
\label{dk-1A''small}
d_{k-1}(\pi(A''))=d_{k-1}(E_{i_0})\le \sigma^{\alpha+c/2}|E_{i_0}|= \sigma^{\alpha+c/2}|\pi(A'')|.
\end{align}
Also as $|E_i|$ is decreasing in $i$, $\sum_{i=1}^{i_0-1} |E_i|\ge \frac{i_0-1}{n_1}|A'|$ by \eqref{EiFiequation}. Thus by \eqref{sigmalambdagreaterEi} and \Cref{negdk}, and the fact that $d_{k-1}(E_i)\ge \sigma^{\alpha+c/2}|E_i|$ by minimality of $i_0$, we have
\begin{align*}
\sigma^{\lambda}|B| \ge\sum_{i=1}^{n_1} d_{k-1}(E_i)\ge& \sigma^{\alpha+c/2}\frac{i_0-1}{n_1}|A'|-n_12^{2(k-1)}n_{k,0}^{-1}n_1^{-1}|B|\\
\ge& \sigma^{\alpha+c/2}\frac{i_0-1}{n_1}\cdot\frac{\epsilon_0}{2}|B|-\sigma^{\lambda}|B|.
\end{align*}
 Thus we obtain $$i_0-1 \le 4\sigma^{\lambda-\alpha-c/2} \epsilon_0^{-1}n_1\le \sigma^{\lambda-\alpha-\frac{2}{3}c}n_1\le \sigma^{c/3}.$$
As the  $|\pi(A'\setminus A'')|\le |\pi(B)|=n_1^{-1}|B|$ nonempty rows in $A'\setminus A''$ have size at most $i_0-1\le \sigma^{c/3}n_1$, we have $$|A'\setminus A''| \le \sigma^{c/3}|B|.$$

We have $|A'\setminus A''|\le 2^{-(k+2)}\epsilon_0|B|$ so $A''$ is reduced by \Cref{dkobs} (applied using $A'$ in place of $A$, assuming $A'$ is reduced), and $|A''|\ge \frac{\epsilon_0}{4}|B|$. In particular, $\pi(A'')$ is reduced (when $A'$ is reduced) and $|\pi(A'')|\ge \frac{\epsilon_0}{4}|\pi(B)|$. The set $\pi(A'')$ has $d_{k-1}(\pi(A''))\le \sigma^{\alpha+c/2}|\pi(A'')|$ by \eqref{dk-1A''small}, and has density at least $\frac{\epsilon_0}{4}$ in $\pi(B)$, which has side lengths at least $n_{k,0}$. By \Cref{hypboxsmall}, the number of parallel hyperplanes needed to cover $\pi(A'')$ is at least $\frac{\epsilon_0}{4}n_{k,0}$. By choosing our $\gg$ dependencies sufficiently strong, we can ensure that we can apply \Cref{maincor} for dimension $k-1$ with $h=\frac{\epsilon_0}{4}n_{k,0}$ and $\delta=\sigma^{\alpha+c/2}$, and deduce that $$|\co(\pi(A''))\setminus \pi(A'')|=|\coo(\pi(A''))\setminus \pi(A'')|\le c_{k-1}\sigma^{\alpha+c/2}|\pi(B)|\le \sigma^{\alpha}|\pi(B)|.$$

\end{proof}

\subsection{Reductions Part 1: All rows are dense in large APs}
\label{prelimreductions}
We start by constructing in a sequence of steps a set $A_3\subset A$ such that $|A\setminus A_3|$ is small, $\pi(A_3)$ is close to  $\co(\pi(A_3))$ and the rows $R_x$ of $A_3$ are large and close to $\coo(R_x)$. In the continuous setting, a similar preliminary reduction was carried out at the beginning of \cite{FigJerJems}.

\subsubsection{$A_1\subset A$ has $\pi(A_1)$ close to its convex progression: Construction }
There exist $\ll$ dependencies $n_{k,0}^{-1}\ll\delta\ll\epsilon_0\le 1$ such that we can apply \Cref{alphaprop'} to $A$ with $\sigma=\delta$, $\alpha=\frac{1}{2}$, $\lambda=1$ and $\epsilon'=\epsilon \ge \frac{\epsilon_0}{2}$ (by \eqref{Ainfo}) to obtain a reduced set $A_1\subset A$ with
\begin{align}\label{A1info}|\co(\pi(A_1))\setminus \pi(A_1)| \le \delta^{\frac{1}{2}} |\pi(B)|,\quad|A\setminus A_1|\le \delta^{\frac{1}{2}-c}|B|.\end{align}
By \Cref{dkobs}, we have \begin{align}\label{A1dk}d_k(A_1)\le \delta |B|+2^k\delta^{\frac{1}{2}-c}|B|\le \delta^{\frac{1}{2}-2c}|B|.\end{align} 
\subsubsection{$A_2$ has large rows close to their convex progressions: Setup }
We show that, assuming $\co(\pi(A'))\setminus \pi(A')$ is small, we can create a subset $A''\subset A'$ by deleting rows with big doubling or small size without changing the size of $A'$ too much.
\begin{prop}
\label{betaprop}
There exist $\ll$-dependencies such that for constants
$$n_{k,0}^{-1}\ll\delta\ll\epsilon_0,\lambda,\alpha,\beta,\gamma\le 1\text{ and } \beta<\alpha<\lambda$$
the following holds.

Let $A'\subset B$ with \begin{align*}d_k(A')
\le \delta^{\lambda} |B|,\qquad
|\co(\pi(A'))\setminus \pi(A')|\le \delta^{\alpha}|\pi(B)|. \end{align*} If $A''\subset A'$ is the union all rows $R_x$ which satisfy $d_1(R_x)\le \delta^{\beta} n_1$ and $|R_x|\geq \delta^{\gamma} n_1$, then $$|A'\setminus A''|\le (\delta^{\alpha-\beta-c}+\delta^{\gamma})|B|.$$
\end{prop}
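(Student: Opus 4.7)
The plan is to establish the key inequality
\[
d_k(A')\ge \sum_{x\in\pi(A')}d_1(R_x)-O(\delta^{\alpha})|B|
\]
and then use it to bound the rows with large doubling, while separately handling the short rows.

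First I will show $|A'+A'|\ge |A'(+)A'|+\sum_x d_1(R_x)$. The pieces $R_x(+)R_{x+\vec v}$ making up $A'(+)A'$ lie in row $2x+\vec v$ of $A'+A'$ under $\pi$, and these rows are pairwise distinct: if $2x+\vec v=2x'+\vec v'$ with $\vec v,\vec v'\in\{0\}\times\{0,1\}^{k-1}$, then reducing mod $2$ forces $\vec v=\vec v'$ and hence $x=x'$. In particular, row $2x$ of $A'+A'$ contains $R_x+R_x\supset R_x(+)R_x$, and the extra $d_1(R_x)=|R_x+R_x|-(2|R_x|-1)$ points lie outside every other piece of $A'(+)A'$, yielding the claimed inequality.

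Next I will lower bound $|A'(+)A'|$. By the disjointness above, $|A'(+)A'|=\sum_{\vec v}\sum_x|R_x(+)R_{x+\vec v}|$. The $\vec v=0$ contribution is exactly $2|A'|-|\pi(A')|$. For $\vec v\neq 0$, writing $\pi(A')_{\vec v}=\{x\in\pi(A'):x+\vec v\in\pi(A')\}$, the contribution is $\sum_{x\in\pi(A')_{\vec v}}(|R_x|+|R_{x+\vec v}|-1)$. I will bound the defect $|\pi(A')\setminus \pi(A')_{\vec v}|$ by $|\co(\pi(A'))\setminus \pi(A')|+|\{x\in\co(\pi(A')):x+\vec v\notin\co(\pi(A'))\}|$; the first summand is $\le \delta^{\alpha}|\pi(B)|$ by hypothesis, and the second is controlled by \Cref{Surfobs} by $2(k-1)\min\{n_i\}^{-1}|\pi(B)|$, which is $O(\delta^{\alpha})|\pi(B)|$ once $n_k(\epsilon_0,\delta)$ is large enough (\Cref{nkconv}). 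Multiplying by $n_1$ gives an $O(\delta^{\alpha})|B|$ error, so the $\vec v\neq 0$ piece is at least $2|A'|-O(\delta^{\alpha})|B|-|\pi(B)|$. Summing over the $2^{k-1}$ choices of $\vec v$ and absorbing the $|\pi(B)|\le \delta^{\alpha}|B|$ term produces $|A'(+)A'|\ge 2^k|A'|-O(\delta^{\alpha})|B|$.

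Combining the two steps gives $d_k(A')\ge \sum_x d_1(R_x)-O(\delta^{\alpha})|B|$, so $\sum_x d_1(R_x)\le (\delta^{\lambda}+O(\delta^{\alpha}))|B|=O(\delta^{\alpha})|B|$ since $\lambda>\alpha$. The rows with $d_1(R_x)>\delta^{\beta}n_1$ number at most $O(\delta^{\alpha-\beta})|\pi(B)|$ and thus contribute at most $O(\delta^{\alpha-\beta})|B|$ elements; the rows with $|R_x|<\delta^{\gamma}n_1$ contribute at most $\delta^{\gamma}|B|$ total. Absorbing the $k$-dependent implicit constant into $\delta^{-c}$ (valid for $\delta$ sufficiently small) yields $|A'\setminus A''|\le (\delta^{\alpha-\beta-c}+\delta^{\gamma})|B|$. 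The main obstacle is ensuring the disjointness of the pieces of $A'(+)A'$ and carefully translating the closeness of $\pi(A')$ to $\co(\pi(A'))$ into the shift-invariance bound on $\pi(A')_{\vec v}$ via \Cref{Surfobs}; everything else is bookkeeping.
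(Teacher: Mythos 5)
Your proof is correct and follows essentially the same route as the paper. Both arguments rest on the observation that the pieces $R_x+R_{x+\vec v}$ (or $R_x(+)R_{x+\vec v}$) occupy pairwise distinct rows $2x+\vec v$ of $A'+A'$, deduce $\sum_{x\in\pi(A')} d_1(R_x)\le O(\delta^{\alpha})|B|$ from $d_k(A')\le\delta^{\lambda}|B|$ after controlling the boundary error via \Cref{Surfobs}, and then discard the few rows with large $d_1$ or small size; the paper simply sums $|R_x+R_{x+\vec v}|$ directly rather than passing through $|A'(+)A'|$ and the $\pi(A')_{\vec v}$ notation, which is cosmetic. Two tiny points of sloppiness, neither a gap: your identity $d_1(R_x)=|R_x+R_x|-(2|R_x|-1)$ is off by one (it equals $d_1(R_x)+1$), which only makes your lower bound looser, so fine; and when bounding the $\vec v\neq 0$ contribution you account only for $x\in\pi(A')$ with $x+\vec v\notin\pi(A')$, whereas the symmetric set ($x\notin\pi(A')$, $x+\vec v\in\pi(A')$) also eats into $\sum_x(|R_x|+|R_{x+\vec v}|)$; both are controlled by the same \Cref{Surfobs} bound, so this is just a factor of two absorbed by $\delta^{-c}$.
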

\begin{proof}
Let $A'''$ be the union all rows $R_x$ of $A'$ which satisfy $d_1(R_x)\le \delta^{\beta} n_1$. For $0\ne\vec{v}\in \{0\}\times\{0,1\}^{k-1}$ and $x\in \{0\}\times \mathbb{Z}^{k-1}$, we have
$$|R_x+R_{x+\vec{v}}|-|R_x|-|R_{x+\vec{v}}|\ge \begin{cases}0&|\{x,x+\vec{v}\}\cap \pi(A')|=0\\-n_1 & |\{x,x+\vec{v}\}\cap \pi(A')|=1\\-1 & |\{x,x+\vec{v}\}\cap \pi(A')|= 2\end{cases}$$ From $|\co(\pi(A'))\setminus \pi(A')|\le\delta^{\alpha}n_1^{-1}|B|$ and \Cref{Surfobs}, we have
\begin{align*}\left|\left\{x\in\{0\}\times \mathbb{Z}^{k-1}: |\{x,x+\vec{v}\}\cap \pi(A')|=1\right\}\right|&\le 2\left|\co(\pi(A'))\setminus \pi(A')\right|+2(k-1)n_1^{-1}n_{k,0}^{-1}|B|\\
&\le \delta^{\alpha-c/4}n_1^{-1}|B|\end{align*}
and
\begin{align*}|\{x\in \{0\}\times\mathbb{Z}^{k-1}: |\{x,x+\vec{v}\}\cap \pi(A')|=2\}|\le |\pi(B)|\le n_{k,0}^{-1}|B|\le \delta^{\alpha-c/4}|B|.\end{align*}

Hence, as $\sum_{x\in\{0\}\times \mathbb{Z}^{k-1}}|R_x|=|A'|$, we have (taking $\vec{v}\in \{0\}\times \{0,1\}^{k-1}$ and $x\in \{0\}\times \mathbb{Z}^{k-1}$)
\begin{align*}
    |A'+A'| \ge& \sum_{\vec{v}}\sum_{x}|R_x+R_{x+\vec{v}}|\\
    =&\left(\sum_{j=0}^2\sum_{0\ne\vec{v}}\sum_{|\{x,x+\vec{v}\}\cap \pi(A')|=j}|R_x+R_{x+\vec{v}}|\right)+\sum_{x\in \pi(A')}|R_x+R_{x}|\\
    \ge& \left(\sum_{0\ne \vec{v}}\sum_{x}|R_{x}|+|R_{x+\vec{v}}|\right)-2(2^{k-1}-1)\delta^{\alpha-c/4}|B|+\sum_{x\in \pi(A')}|R_x+R_{x}|\\
    \geq&(2^{k}-2)|A'|-\delta^{\alpha-c/2}|B|+\sum_{x\in \pi(A')}|R_x+R_x|.
\end{align*}
In particular, as $\sum_{x \in \pi(A')}|R_x|=|A'|$ and $d_1(R_x)\ge -1$ for all $x$, we have
\begin{align*}\delta^{\lambda}|B|\ge d_k(A')\ge& -\delta^{\alpha-c/2}|B|+\sum_{x\in \pi(A')}d_1(R_x)\\\ge&-\delta^{\alpha-c/2}|B|-n_{k,0}^{-1}|B|+ \sum_{x\in \pi(A'\setminus A''')}d_1(R_x)\\
\ge&-\delta^{\alpha-3c/4}|B|+|\pi(A'\setminus A''')|\delta^{\beta} n_1,
\end{align*}
so
$$|A'\setminus A'''|\le n_1|\pi(A'\setminus A''')|\le (\delta^{\alpha-\beta-3c/4}+\delta^{\lambda-\beta})|B|\le (\delta^{\alpha-\beta-3c/4}+\delta^{\alpha-\beta})|B|\le \delta^{\alpha-\beta-c}|B|.$$
Finally note that $A''\subset A'''$ satisfies $|A'''\setminus A''|\leq \delta^{\gamma}n_1|\pi(B)|\leq \delta^{\gamma}|B|$, from which the conclusion follows.
\end{proof}

\subsubsection{$A_2$ has large rows close to their convex progression: Construction }
There exist $\ll$ dependencies $n_{k,0}^{-1}\ll\delta\ll\epsilon_0\le 1$ such that we can apply \Cref{betaprop} $A_1$ with $\lambda=\frac{1}{2}-2c$, $\alpha=\frac{1}{2}-3c$, $\beta=\frac{3}{10}$, and $\gamma=\frac{1}{5}$ (by
\eqref{A1info},\eqref{A1dk}) to obtain a subset $A_2\subset A_1$. Then for all rows $R_x\subset A_2$, we have
\begin{align}\label{A2betagamma}d_1(R_x)\le \delta^{\frac{3}{10}} n_1,\quad|R_x|\ge \delta^{\frac{1}{5}} n_1,\end{align} and by \eqref{A1info} we additionally have \begin{align}\label{AA2}|A\setminus A_2|\le |A\setminus A_1|+|A_1\setminus A_2|\le \left(\delta^{\frac{1}{2}-c}+\delta^{\frac{1}{5}-4c}+\delta^{\frac{1}{5}}\right)|B|\le \delta^{\frac{1}{5}-5c}|B|.\end{align}
By \Cref{dkobs} and \eqref{A1dk}, we have that $A_2$ is reduced and 
\begin{align}\label{A2dk}
    d_k(A_2)\le \left(\delta+2^k\delta^{\frac{1}{5}-5c}\right)|B|\le \delta^{\frac{1}{5}-6c}|B|.
\end{align}
Freiman's $3k-4$ theorem \cite{Freiman}, \Cref{3A-4}, says that for any $R\subset \mathbb{Z}$, we have
$$d_1(R)\ge \min(|R|-3,|\coo(R)\setminus R|-1).$$
Therefore, because $\delta^{\frac{3}{10}} n_1 <\delta^{\frac{1}{5}} n_1-3$, we have by \eqref{A2betagamma} that every row $R_x$ of $A_2$ satisfies
\begin{align}\label{coint}\text{ }|\coo(R_x)\setminus R_x|\le \delta^{\frac{3}{10}}n_1+1\le \delta^{\frac{1}{10}}|R_x|+1\le 2\delta^{\frac{1}{10}}|R_x|.
\end{align}

\begin{rmk}
\label{dxrmk}
In particular, this means that for each non-empty row $R_x$ of $A_2$, we have $|R_x|>\left\lceil \frac{|\coo(R_x)|}{2}\right\rceil$, so there exist two elements $z_1, z_2 \in R_x$  with $z_1-z_2=(d_x, 0, 0 \hdots, 0)$, where $d_x$ is the common difference in the arithmetic progression $\coo(R_x)$.
\end{rmk}

\subsubsection{$A_3\subset A_2$ has $\pi(A_3)$ close to its convex progression: Construction }

There exist $\ll$ dependencies $n_{k,0}^{-1}\ll\delta\ll\epsilon_0\le 1$ such that we can apply \Cref{alphaprop'} to $A_2$ with $\sigma=\delta$, $\alpha=\frac{1}{10}$, $\epsilon'\ge\epsilon-\delta^{\frac{1}{5}-5c}\ge \frac{\epsilon_0}{2}$ and $\lambda=\frac{1}{5}-6c$ (by \eqref{AA2},\eqref{A2dk}) to obtain a reduced set $A_3\subset A_2$ with
\begin{align}\label{A3info}|\co(\pi(A_3))\setminus \pi(A_3)|\le \delta^{\frac{1}{10}}|\pi(B)|, \qquad |A_2\setminus A_3|\le \delta^{\frac{1}{10}-7c}|B|.\end{align}
By \eqref{AA2} and \eqref{A3info}, we have
\begin{align}\label{AA3}|A\setminus A_3|\le |A\setminus A_2|+|A_2\setminus A_3|\le \left(\delta^{\frac{1}{5}-5c}+\delta^{\frac{1}{10}-7c}\right)|B|\le \delta^{\frac{1}{10}-8c}|B|,\end{align}
and by \Cref{dkobs}, we have
\begin{align}\label{A3dk}d_k(A_3)\le \left(\delta+2^k\delta^{\frac{1}{10}-8c}\right)|B|\le \delta^{\frac{1}{10}-9c}|B|.
\end{align}
Finally, as the rows of $A_3$ are a subset of the rows of $A_2$, by \eqref{A2betagamma} and \eqref{coint}, we have
\begin{align}
\label{A3beta}
    |R_x|\ge \delta^{\frac{1}{5}}n_1,\qquad |\coo(R_x)\setminus R_x|\le 2\delta^{\frac{1}{10}}|R_x|
\end{align}
for all rows $R_x$ of $A_3$.
\subsection{Reductions Part 2: All rows are in APs of the same step size}
\label{reductionssamesize}
We now find a set $A_5\subset A_3$ which has the same properties as $A_3$, and furthermore has the property that, for each row $R_x$, the arithmetic progressions $\coo(R_x)$ have the same step sizes. To do this, we carefully analyze a discrete analogue of Voronoi cells.

Let $d_x$ be the smallest consecutive difference between two consecutive elements in $R_x$, which as noted in \Cref{dxrmk} is also the common difference of $\coo(R_x)$, and let $d=\min d_x$.
\subsubsection{$A_4\subset A_3$ has all rows in same step size APs: Setup}
 We now show that the rows with $d_x>d$ carry small weight.

\begin{prop}\label{approp}
There exist $\ll$-dependencies such that for constants
$$n_{k,0}^{-1}\ll\delta\ll\epsilon_0,\alpha,\lambda\le 1\text{ and }\alpha<\lambda$$
the following holds.

Let $A'\subset B$ with $d_k(A')\le \delta^{\lambda}|B|$ and $|\co(\pi(A'))\setminus \pi(A')|\le \delta^{\alpha}|\pi(B)|$. Let $d_x$ be the smallest consecutive difference between two elements of row $R_x\subset A'$, and let $d=\min d_x$. If $A''\subset A'$ is the subset of rows $R_x$ with $d_x=d$, then  $$|A'\setminus A''|\le \delta^{\alpha-c}|B|.$$
\end{prop}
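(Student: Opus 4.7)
The plan is to exploit a Voronoi cell structure on the bad row indices $\mathcal{B}:=\{x\in\pi(A'):d_x>d\}$ to produce extra contributions to $|A'+A'|$ that convert the bound on $d_k(A')$ into one on $|A'\setminus A''|=\sum_{x\in\mathcal{B}}|R_x|$. We may assume $\mathcal{B}\neq\emptyset$, so $\mathcal{G}:=\pi(A')\setminus\mathcal{B}$ is nonempty since $d=\min d_x$ is attained. For each $x\in\mathcal{B}$ let $g(x)\in\mathcal{G}$ be the closest good row in Euclidean distance, and set $\phi(x):=x+g(x)\in\mathbb{Z}^{k-1}$.

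First I would establish that $\phi:\mathcal{B}\to\mathbb{Z}^{k-1}$ is injective. For $y\in\mathcal{G}$ the cell $V_y:=\{x\in\mathcal{B}:g(x)=y\}$ is convex, cut out by perpendicular bisectors between elements of $\mathcal{G}$. If $x\in V_y$, $x'\in V_{y'}$ satisfy $\phi(x)=\phi(x')$ then $x-x'=y'-y$, and substituting into the bisector inequalities $\|x-y\|^2\leq\|x-y'\|^2$ and $\|x'-y'\|^2\leq\|x'-y\|^2$ gives $|y-y'|^2\leq 0$, forcing $y=y'$ and $x=x'$. Consequently the sumsets $\{R_x+R_{g(x)}:x\in\mathcal{B}\}$ live in pairwise distinct rows of $A'+A'$. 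For the arithmetic gain, since $R_{g(x)}$ has smallest consecutive difference $d$ while $R_x$'s is $d_x>d$, Freiman's equality characterization of $|X+Y|\geq|X|+|Y|-1$ forces $|R_x+R_{g(x)}|\geq|R_x|+|R_{g(x)}|$. To boost this to a gain of order $|R_x|$ I would first apply a \Cref{betaprop}-style internal reduction (with parameters chosen to discard only $O(\delta^{\alpha-O(c)}|B|)$ mass) to reduce to the case that every $R_x$ is dense in its convex progression, then count $\{id_x+jd:0\leq i<|R_x|,0\leq j<|R_{g(x)}|\}$ directly to obtain $|R_x+R_{g(x)}|\geq|R_{g(x)}|+(|R_x|-1)\min(|R_{g(x)}|,d_x/\gcd(d_x,d))$; using $|R_{g(x)}|\geq 2$ and $d_x/\gcd(d_x,d)\geq 2$, the gain over $|R_x(+)R_{g(x)}|=|R_x|+|R_{g(x)}|-1$ is at least $|R_x|-1\geq|R_x|/2$.

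To combine these Voronoi contributions with the $(+)$-scheme I would use the estimate $|A'(+)A'|\geq 2^k|A'|-O(\delta^{\alpha-c/4}|B|)$, derived as in \Cref{betaprop} from the hypothesis $|\co(\pi(A'))\setminus\pi(A')|\leq\delta^\alpha|\pi(B)|$ via \Cref{Surfobs}. In each Voronoi row $\phi(x)$, the unique candidate $(+)$-pair $(y_0,y_0+\vec{v}_0)$ in $A'(+)A'$ satisfies $2y_0+\vec{v}_0=x+g(x)$, so $y_0$ lies strictly between $x$ and $g(x)$; by minimality of $g(x)$ we have $y_0\in\mathcal{B}$. Hence $R_{y_0}(+)R_{y_0+\vec{v}_0}$ carries an AP-step pattern tied to $d_{y_0}>d$, which cannot absorb the denser step-$\gcd(d_x,d)$ pattern of $R_x+R_{g(x)}$; a careful comparison of the two patterns in this row shows at least $\frac{1}{2}|R_x|$ elements of $R_x+R_{g(x)}$ fall outside $R_{y_0}(+)R_{y_0+\vec{v}_0}$. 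Summing the disjoint Voronoi contributions yields
\[
|A'+A'|\geq 2^k|A'|+\tfrac{1}{2}|A'\setminus A''|-O(\delta^{\alpha-c/2}|B|),
\]
and combining with $d_k(A')\leq\delta^\lambda|B|$ and $\lambda>\alpha$ delivers $|A'\setminus A''|\leq\delta^{\alpha-c}|B|$.

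The main obstacle will be the overlap analysis, namely verifying in each row $\phi(x)\in\pi(A'(+)A')$ that the Voronoi contribution genuinely exceeds the $(+)$-contribution by the promised $|R_x|/2$. This is the combinatorial manifestation of convexity of the Voronoi cells mentioned in the outline: convexity forces the potential competing $(+)$-pair's center $y_0$ to be trapped between $x$ and $g(x)$, hence bad, so the two AP-step structures in the row are incompatible and the union strictly exceeds either alone by the needed gain.
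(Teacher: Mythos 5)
Your overall architecture matches the paper's: map each bad row index $x$ to its nearest good index $g(x)$ (Voronoi cells), establish injectivity of $x\mapsto x+g(x)$, use $|A'(+)A'|\gtrsim 2^k|A'|$ via \Cref{Surfobs}, and win a gain of order $|R_x|$ per bad row by showing $R_x+R_{g(x)}$ is largely disjoint from the $(+)$-scheme in its row. Your injectivity argument via expanding the bisector inequalities is an equivalent algebraic version of the paper's parallelogram-diagonal proof of \Cref{xfnotyf}. But the disjointness analysis — which you correctly flag as the main obstacle — has two genuine gaps where the paper does more.

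First, when the $(+)$-pair $\{y_0,y_0+\vec v_0\}$ differs from $\{x,g(x)\}$, you assert "$y_0$ lies strictly between $x$ and $g(x)$; by minimality of $g(x)$ we have $y_0\in\mathcal{B}$." That inference doesn't go through as stated: what you actually need is the strict inequality $|x-y_0|<|x-g(x)|$, and in $\mathbb{Z}^{k-1}$ with $k\geq 2$ there is no "betweenness" that gives this for free. The paper's \Cref{diagonal} gets this by observing that $x,y_0,g(x),y_0+\vec v_0$ are a parallelogram whose diagonals are $\,\overline{x\,g(x)}$ and $\,\overline{y_0\,(y_0+\vec v_0)}$; a parity count shows $|x-g(x)|\ge|\vec v_0|$, so $\overline{x\,g(x)}$ is the long diagonal, and the long diagonal is strictly longer than every side, giving $|x-y_0|,|x-(y_0+\vec v_0)|<|x-g(x)|$. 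Crucially, this yields that \emph{both} $y_0$ \emph{and} $y_0+\vec v_0$ are bad. Your argument only claims it for $y_0$. Without $y_0+\vec v_0$ also having step $>d$, the set $R_{y_0}(+)R_{y_0+\vec v_0}$ contains a translate of a good (step-$d$) row, and the "two AP patterns are incompatible" step fails. The paper's \Cref{diagonal2} then concludes cleanly: pick $z_1,z_2\in R_{g(x)}$ with $z_1-z_2=(d,0,\dots,0)$; since both rows entering the $(+)$ are bad, no two elements of $R_{y_0}(+)R_{y_0+\vec v_0}$ differ by exactly $d$, so for each $z\in R_x$ at most one of $z+z_1,z+z_2$ can land in the $(+)$-set, and the two families $\{z+z_1\}$, $\{z+z_2\}$ are disjoint since $R_x$ has step $>d$; this gives a gain of $|R_x|$.

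Second, for the case $\{x,g(x)\}=\{y_0,y_0+\vec v_0\}$, your plan of invoking Freiman's equality criterion (gain of only $1$), then inserting a \Cref{betaprop}-style reduction to make rows dense, then counting $\{id_x+jd\}$ exactly, is an unnecessary detour and as written is not rigorous: the count you give holds for exact arithmetic progressions, not merely for rows dense in their APs, so a further reduction would be needed, and every extra reduction step forces you to re-audit the loss budget. The paper avoids all of this with the elementary \Cref{dxdy}: if $d_X<d_Y$ then $|X+Y|\ge|X|+2|Y|-2$ (take $x,x'\in X$ with $x'-x=d_X$; then $X_{<x}+\min Y$, $Y+x$, $Y+x'$, $X_{>x'}+\max Y$ are disjoint), which needs no density hypothesis at all and gives the gain of $|R_x|-1$ directly. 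Filling these two gaps is exactly what the paper's Claims \ref{dxdy}--\ref{diagonal2} accomplish.
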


\begin{proof}[Proof of \Cref{approp}] We start by first proving some claims. \Cref{dxdy} shows that $|R_x+R_y|$ is large if $d_x \ne d_y$, and \Cref{xfnotyf} creates a large set of disjoint row sums of this form. \Cref{diagonal} is used to prove \Cref{diagonal2}, which shows that this set of disjoint row sums has small intersection with $A'(+)A'$. Finally, \Cref{rs} shows $A'(+)A'$ is large, and we can carry out the proof outline described in \Cref{outlineofpaper}.

\begin{clm}
\label{dxdy}
Let $X,Y \subset \mathbb{Z}$ with $|X| \ge 2$, $Y\ne\emptyset$, such that the smallest differences $d_X,d_Y$ between consecutive elements of $X$ and $Y$ respectively satisfy $d_X<d_Y$. Then $$|X+Y| \ge |X|+2|Y|-2.$$
\end{clm}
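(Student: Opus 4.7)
The plan is to locate a pair of consecutive elements of $X$ realizing the minimum gap $d_X$, use them to produce $2|Y|$ elements of $X+Y$ as two disjoint shifted copies of $Y$, and then extract one further element of $X+Y$ from each of the remaining elements of $X$ via extremal sums.

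Concretely, I would fix consecutive elements $x_1 < x_2$ of $X$ with $x_2 - x_1 = d_X$, which exist by the definition of $d_X$. The key step is the disjointness $(x_1+Y) \cap (x_2+Y) = \emptyset$: if $x_1+y = x_2+y'$ for some $y,y' \in Y$, then $y - y' = d_X$, but two distinct elements of $Y$ differ by at least $d_Y > d_X$, contradicting this. Hence $|(x_1+Y) \cup (x_2+Y)| = 2|Y|$, and this is precisely where the hypothesis $d_X < d_Y$ enters.

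For the other $|X|-2$ elements of $X$: because $x_1$ and $x_2$ are consecutive in $X$, these split into $Z^{-} = \{z \in X : z < x_1\}$ of size $b$ and $Z^{+} = \{z \in X : z > x_2\}$ of size $a$, with $a + b = |X|-2$. Writing $Z^{-} = \{z^{-}_1 < \cdots < z^{-}_b\}$ and $Z^{+} = \{z^{+}_1 < \cdots < z^{+}_a\}$, the sums $z^{-}_i + \min Y$ form a strictly increasing chain lying strictly below $\min((x_1+Y) \cup (x_2+Y)) = x_1 + \min Y$, contributing $b$ new elements to $X+Y$; symmetrically, the sums $z^{+}_j + \max Y$ lie strictly above $x_2 + \max Y$ and contribute $a$ new elements. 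Combining yields $|X+Y| \ge 2|Y| + a + b = |X| + 2|Y| - 2$.

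I do not foresee serious obstacles: the content of the argument is the disjointness, which uses the gap hypothesis in a single line, and the rest is elementary bookkeeping with extremal elements of the two factor sets.
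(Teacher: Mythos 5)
Your proof is correct and is essentially the same as the paper's: both identify a consecutive pair $x_1<x_2$ of $X$ at distance $d_X$, note that $x_1+Y$ and $x_2+Y$ are disjoint (since a collision would force two elements of $Y$ to differ by $d_X<d_Y$), and then adjoin $X_{<x_1}+\min Y$ and $X_{>x_2}+\max Y$, which lie strictly below and above those two copies of $Y$ respectively.
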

\begin{proof}
Consider  elements $x,x' \in X$ such that $x'-x=d_X$. Let $X_{<x}$ be the set of elements less than $x$ in $X$ and analogously $X_{>x'}$ those elements greater than $x'$. Then the following four sets
$$X_{<x}+\min(Y), (Y+x),  (Y+x'),X_{> x'}+\max(Y),$$ are disjoint subsets of $X+Y$.
\end{proof}
Now, we define $$f:\pi(A')\setminus \pi(A'')\to \pi(A'')$$ by letting $f(x)\in \pi(A'')$ be a closest point to $x$ in Euclidean distance (breaking ties arbitrarily). Fibers of $f$ should be thought of as a discrete analogue of Voronoi cells associated to $\pi(A'')$.

\begin{clm}\label{xfnotyf}
We have  $x+f(x)\ne y+f(y)\text{ for distinct }x,y\in \pi(A')\setminus \pi(A'')$.
In particular,
$$Z_1:=\bigsqcup_{x_1\in \pi(A')\setminus \pi(A'')}R_{x_1}+R_{f(x_1)}\subset A'+A'$$
is a disjoint union.
\end{clm}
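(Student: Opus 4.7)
The plan is to prove the first statement by contradiction using a Voronoi-cell style argument, then derive the disjointness of $Z_1$ as an immediate consequence by projecting under $\pi$.

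For the first part, suppose toward contradiction that $x + f(x) = y + f(y) =: 2m$ for some distinct $x, y \in \pi(A') \setminus \pi(A'')$, so that the pairs $\{x, f(x)\}$ and $\{y, f(y)\}$ share the common midpoint $m$. Writing $x = m+u_1$, $f(x) = m-u_1$, $y = m+u_2$, $f(y) = m-u_2$, the defining property of $f$ yields the two Voronoi inequalities $|x-f(x)| \le |x-f(y)|$ and $|y-f(y)| \le |y-f(x)|$. A direct computation shows $x-f(y) = u_1+u_2 = y-f(x)$, so these become $2|u_1| \le |u_1+u_2|$ and $2|u_2| \le |u_1+u_2|$. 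Squaring and summing gives $4(|u_1|^2 + |u_2|^2) \le 2|u_1+u_2|^2$, which rearranges via the parallelogram identity to $2|u_1-u_2|^2 \le 0$. Hence $u_1 = u_2$, forcing $x = y$, a contradiction.

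For the disjointness of $Z_1$, it suffices to observe that every element of $R_{x_1} + R_{f(x_1)}$ has $\pi$-image equal to $x_1 + f(x_1)$, since $\pi(R_{x_1}) \subseteq \{x_1\}$ and $\pi(R_{f(x_1)}) \subseteq \{f(x_1)\}$. By the first part these $\pi$-images are distinct across different $x_1 \in \pi(A') \setminus \pi(A'')$, so the sumsets $R_{x_1} + R_{f(x_1)}$ are pairwise disjoint subsets of $A'+A'$, giving the claimed disjoint union.

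There is no serious obstacle here. The key conceptual move is the midpoint substitution, which reduces both Voronoi inequalities to a parallelogram-type identity and makes the contradiction transparent; the rest is elementary book-keeping in the projection $\pi$.
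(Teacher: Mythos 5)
Your proof is correct and takes essentially the same approach as the paper: both exploit the fact that $x,y,f(x),f(y)$ form a parallelogram with a common midpoint and then contradict the closest-point property of $f$, and the deduction of the disjointness of $Z_1$ from the first part is identical. The only difference is cosmetic — where the paper invokes the geometric fact that the longest diagonal of a (possibly degenerate) parallelogram strictly exceeds every side, you obtain the same contradiction by an explicit computation via the parallelogram identity, which also handles the degenerate case $f(x)=f(y)$ automatically.
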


\begin{proof}
Indeed, otherwise $x,y,f(x),f(y)$ form a parallelogram with distinct vertices with diagonals $xf(x)$ and $yf(y)$. However, in any parallelogram (even degenerate as long as the vertices are distinct), the longest diagonal is longer than all sides. Hence, if say $xf(x)$ is the longest diagonal, then $|x-f(x)|>|x-f(y)|$, a contradiction.
\end{proof}

Let
$$Z:=A'(+)A'=\bigsqcup_{\vec{v}\in \{0\} \times \{0,1\}^{k-1}}\bigsqcup_{x_2\in \{0\} \times \mathbb{Z}^{k-1}}R_{x_2}(+)R_{x_2+\vec{v}}\subset A'+A'.$$
We now analyze when $R_{x_1}+R_{f(x_1)}$ and $R_{x_2}(+)R_{x_2+\vec{v}}$ can intersect. \begin{clm}\label{diagonal} If $x_1\in \pi(A')\setminus \pi(A'')$, $x_2\in\{0\}\times \mathbb{Z}^{k-1}$, $\vec{v}\in\{0\}\times \{0,1\}^{k-1}$ are such that $(R_{x_1}+R_{f(x_1)}) \cap (R_{x_2}(+)R_{x_2+\vec{v}}) \not= \emptyset$,  then either  $\{x_1,f(x_1)\}=\{x_2,x_2+\vec{v}\}$ or $x_2,x_2+\vec{v}\in \pi(A') \setminus \pi(A'')$.
\end{clm}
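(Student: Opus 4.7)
The plan is to use that $f(x_1)$ is a closest point in $\pi(A'')$ to $x_1$, together with a parity-based inequality between the diagonal lengths of the parallelogram that arises, to force a rigid configuration on the four indices $x_1,f(x_1),x_2,x_2+\vec{v}$. Since $R_{x_1}+R_{f(x_1)}$ lies in the $\pi$-fibre indexed by $x_1+f(x_1)$ while $R_{x_2}(+)R_{x_2+\vec{v}}$ lies in the fibre indexed by $2x_2+\vec{v}$, the intersection hypothesis forces $x_1+f(x_1)=2x_2+\vec{v}$, and non-emptiness of the relevant rows gives $x_2,x_2+\vec{v}\in\pi(A')$. First I would dispatch the cases where the four points are not all distinct: each of the coincidences $x_1\in\{x_2,x_2+\vec{v}\}$ or $f(x_1)\in\{x_2,x_2+\vec{v}\}$ immediately yields $\{x_1,f(x_1)\}=\{x_2,x_2+\vec{v}\}$ via the midpoint identity, while the only remaining coincidence $\vec{v}=0$ forces $x_2=\tfrac12(x_1+f(x_1))$, and if such an $x_2$ were in $\pi(A'')$ then $|x_1-x_2|=\tfrac12|x_1-f(x_1)|<|x_1-f(x_1)|$ would contradict minimality of $|x_1-f(x_1)|$ over $\pi(A'')$, so $x_2\in\pi(A')\setminus\pi(A'')$.

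In the non-degenerate case the four points form a parallelogram with diagonals $x_1f(x_1)$ and $x_2(x_2+\vec{v})$ meeting at a common midpoint. The crucial step is the parity inequality $|x_1-f(x_1)|\geq|\vec{v}|$. From $x_1+f(x_1)=2x_2+\vec{v}$ we get $x_1-f(x_1)\equiv\vec{v}\pmod 2$, so $x_1-f(x_1)=\vec{v}+2w$ for some $w\in\mathbb{Z}^{k-1}$. Expanding coordinate-wise and using $v_i\in\{0,1\}$,
\[
|x_1-f(x_1)|^2-|\vec{v}|^2 \;=\; 4\sum_{v_i=0}w_i^2 \;+\; 4\sum_{v_i=1}w_i(w_i+1) \;\geq\; 0,
\]
since $w_i(w_i+1)\geq 0$ for every integer $w_i$. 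Hence $x_1f(x_1)$ is the weakly longer of the two diagonals.

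I then invoke the standard fact that in a parallelogram with four distinct vertices the weakly longer diagonal strictly exceeds either side: writing the two diagonals as $a+b$ and $a-b$ for adjacent side-vectors $a,b$, the identity $|a+b|^2+|a-b|^2=2(|a|^2+|b|^2)$ forces the longer diagonal squared to be at least $|a|^2+|b|^2$, strictly exceeding both $|a|^2$ and $|b|^2$ as $a,b\neq 0$. Combined with the parity bound this yields $|x_1-f(x_1)|>|x_1-x_2|$ and $|x_1-f(x_1)|>|x_1-(x_2+\vec{v})|$. If either of $x_2,x_2+\vec{v}$ were in $\pi(A'')$ the corresponding inequality would contradict minimality of $|x_1-f(x_1)|$ over $\pi(A'')$, so both $x_2,x_2+\vec{v}\in\pi(A')\setminus\pi(A'')$. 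I expect the main obstacle to be establishing the parity inequality $|x_1-f(x_1)|\geq|\vec{v}|$: without it, the case where $|\vec{v}|$ is the longer diagonal leaves no immediate closest-point contradiction available, so this integrality/parity observation is what makes the argument go through.
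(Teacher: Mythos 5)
Your proof is correct and follows the same strategy as the paper: read off the midpoint identity $x_1+f(x_1)=2x_2+\vec v$ from the nonempty intersection, establish the parity inequality $|x_1-f(x_1)|\geq|\vec v|$ so that $x_1f(x_1)$ is the weakly longer diagonal of the parallelogram, and conclude via the minimality defining $f$. If anything you are a bit more careful than the paper, which asserts the parity inequality via a count of odd coordinates and implicitly subsumes the degenerate case $\vec v=0$ (where $x_2=x_2+\vec v$) under the ``parallelogram with distinct vertices'' language, whereas you prove the inequality by the coordinate-wise expansion $|x_1-f(x_1)|^2-|\vec v|^2=4\sum_{v_i=0}w_i^2+4\sum_{v_i=1}w_i(w_i+1)$, give the parallelogram-law argument explicitly, and dispose of $\vec v=0$ directly by the midpoint/closest-point contradiction.
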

\begin{proof}
The points $x_1, x_2, f(x_1), x_2+\vec{v}$ form a (possibly degenerate) parallelogram with diagonals $x_1f(x_1)$ and $x_2(x_2+\vec{v})$. Assuming that $\{x_1,f(x_1)\}\not=\{x_2,x_2+\vec{v}\}$, this parallelogram has distinct diagonals.

The number of odd coordinates of $x_1-f(x_1)$ is the same as the number of odd coordinates of $x_1+f(x_1)=2x_2+\vec{v}$, which is the same as the number of non-zero coordinates of $v$. Hence,
$|x_1-f(x_1)|\ge |\vec{v}|$, or equivalently $|x_1-f(x_1)|\ge |x_2-(x_2+\vec{v})|$. Therefore, $x_1f(x_1)$ is the longest diagonal of the above parallelogram. As in a parallelogram (even degenerate as long as the diagonals do not coincide) the largest diagonal is strictly longer than all sides, we deduce that the diagonal $x_1f(x_1)$ is strictly longer than $x_1x_2$ and $x_1(x_2+\vec{v})$. By definition of $f(x_1)$, this implies $x_2,x_2+\vec{v}\not\in  \pi(A'')$. As $R_{x_2},R_{x_2+\vec{v}}$ are nonempty, we also have $x_2,x_2+\vec{v}\in \pi(A')$ and the result follows.
\end{proof}

\begin{clm}
\label{diagonal2}
 For any $x_1\in \pi(A') \setminus \pi(A'')$, we have $$|(R_{x_1}+R_{f(x_1)})\setminus Z|\ge |R_{x_1}|-1.$$
\end{clm}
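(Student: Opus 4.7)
The plan is to combine a lower bound on $|R_{x_1}+R_{f(x_1)}|$ with an upper bound on the intersection $(R_{x_1}+R_{f(x_1)})\cap Z$. Since $d_{f(x_1)}=d<d_{x_1}$, Claim~\ref{dxdy} applied with $X=R_{f(x_1)}$ and $Y=R_{x_1}$ yields
$$|R_{x_1}+R_{f(x_1)}|\ge |R_{f(x_1)}|+2|R_{x_1}|-2,$$
so it suffices to prove $|(R_{x_1}+R_{f(x_1)})\cap Z|\le |R_{x_1}|+|R_{f(x_1)}|-1$.

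Next, every element of $R_{x_1}+R_{f(x_1)}$ projects to $x_1+f(x_1)$, while each piece $R_{x_2}(+)R_{x_2+\vec{v}}$ of $Z$ projects to $2x_2+\vec{v}$. So only the pairs $(x_2,\vec{v})$ with $2x_2+\vec{v}=x_1+f(x_1)$ can contribute, and there is exactly one such pair: the coordinates of $\vec{v}\in\{0\}\times\{0,1\}^{k-1}$ are forced by the parities of $x_1+f(x_1)$, which then determines $x_2$. A direct computation shows $|R_{x_2}(+)R_{x_2+\vec{v}}|=|R_{x_2}|+|R_{x_2+\vec{v}}|-1$, as the two defining translates $R_{x_2}+\min R_{x_2+\vec{v}}$ and $R_{x_2+\vec{v}}+\max R_{x_2}$ overlap only in the point $\max R_{x_2}+\min R_{x_2+\vec{v}}$. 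If the intersection with $R_{x_1}+R_{f(x_1)}$ is empty the claim is immediate; otherwise, by Claim~\ref{diagonal}, either (i) $\{x_1,f(x_1)\}=\{x_2,x_2+\vec{v}\}$, or (ii) $x_2,x_2+\vec{v}\in \pi(A')\setminus\pi(A'')$.

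In case (i), the identity $|R_{x_2}|+|R_{x_2+\vec{v}}|=|R_{x_1}|+|R_{f(x_1)}|$ immediately gives $|R_{x_2}(+)R_{x_2+\vec{v}}|=|R_{x_1}|+|R_{f(x_1)}|-1$, completing the argument. The expected main obstacle is case (ii), where a priori $|R_{x_2}|+|R_{x_2+\vec{v}}|$ could exceed $|R_{x_1}|+|R_{f(x_1)}|$ and the trivial bound $|(R_{x_1}+R_{f(x_1)})\cap Z|\le |R_{x_2}(+)R_{x_2+\vec{v}}|$ is too weak. To deal with (ii) I plan to exploit a density mismatch: because $d_{x_2},d_{x_2+\vec{v}}>d$, neither of the two translates forming $R_{x_2}(+)R_{x_2+\vec{v}}$ can contain two elements differing by exactly $d$, whereas $R_{x_1}+R_{f(x_1)}$ contains many such $d$-pairs, arising from translating $R_{x_1}$ by consecutive-at-distance-$d$ elements of $R_{f(x_1)}$. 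Counting these $d$-pairs---each translate accommodates at most one member of each pair, and only the single shared point $\max R_{x_2}+\min R_{x_2+\vec{v}}$ permits a pair to straddle both translates---should cap the intersection at $|R_{x_1}|+|R_{f(x_1)}|-1$, possibly with help from the fact (used in the proof of Claim~\ref{diagonal}) that the diagonal $x_1f(x_1)$ of the parallelogram is strictly longer than its sides, together with the nearest-neighbor property of $f$.
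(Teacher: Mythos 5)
Your Case~(i) is correct and matches the paper. The problem is Case~(ii), where your plan has a genuine gap: the intermediate bound $|(R_{x_1}+R_{f(x_1)})\cap Z|\le |R_{x_1}|+|R_{f(x_1)}|-1$ that you are trying to establish is simply false in general, and the $d$-pair counting you sketch cannot close it. To see that the bound can fail, take $d=1$, $R_{f(x_1)}=\{0,\ldots,9\}$, and $R_{x_1}=\{0,20,40,60,80\}$ with $d_{x_1}=20$: then $R_{x_1}+R_{f(x_1)}$ is a union of five blocks of ten consecutive integers, and an AP of step $2$ such as $\{0,2,4,\ldots,88\}$ (a perfectly valid $R_{x_2}+\min R_{x_{2}+\vec v}$ with $d_{x_2}=2$) meets $R_{x_1}+R_{f(x_1)}$ in $25$ points, far more than $|R_{x_1}|+|R_{f(x_1)}|-1=14$. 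Your counting idea cannot repair this: for a fixed consecutive $d$-pair $(z_1,z_2)$ in $R_{f(x_1)}$, the $|R_{x_1}|$ disjoint translated pairs $\{z+z_1,z+z_2\}$ each lose at most one element per translate of $R_{x_2}(+)R_{x_2+\vec v}$, which caps their contribution to the intersection only at $2|R_{x_1}|$, and moreover the intersection need not live inside $R_{x_1}+\{z_1,z_2\}$ at all, so the remainder of $R_{x_1}+R_{f(x_1)}$ is uncontrolled.

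The correct move in Case~(ii), which is what the paper does, is to flip from an upper bound on the intersection to a direct lower bound on the complement. Pick $z_1,z_2\in R_{f(x_1)}$ with $z_1-z_2=(d,0,\ldots,0)$ (these exist since $f(x_1)\in\pi(A'')$ has step $d$ and by \Cref{dxrmk}). For every $z\in R_{x_1}$, the pair $\{z+z_1,z+z_2\}$ has gap exactly $d$, while $R_{x_2}(+)R_{x_{2}+\vec v}$ has minimum gap strictly greater than $d$ (because in Case~(ii) both $d_{x_2},d_{x_2+\vec v}>d$, and the two translates overlap only at $\max R_{x_2}+\min R_{x_2+\vec v}$, so they do not interleave). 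Hence at least one of $z+z_1,z+z_2$ lands in the complement. This assignment $z\mapsto(\text{that element})$ is injective, since $z+z_1=z'+z_2$ would force $z'-z=(d,0,\ldots,0)$, contradicting $d_{x_1}>d$ (i.e.\ $x_1\notin\pi(A'')$). So the complement already has at least $|R_{x_1}|$ elements, which is even stronger than needed, and no lower bound on the sumset from \Cref{dxdy} is required in this case.
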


\begin{proof}
We have $|(R_{x_1}+R_{f(x_1)})\setminus Z|=|(R_{x_1}+R_{f(x_1)})\setminus (R_{x_2}(+)R_{x_2+\vec{v}})|$ for the unique $x_2\in \{0\}\times \mathbb{Z}^{k-1}$ and $\vec{v}\in \{0\}\times \{0,1\}^{k-1}$ such that $x_1+f(x_1)=x_2+(x_2+\vec{v})$. Clearly $|(R_{x_1}+R_{f(x_1)})|\ge |R_{x_1}|-1$, so assume $(R_{x_1}+R_{f(x_1)}) \cap (R_{x_2}(+)R_{x_2+\vec{v}}) \not= \emptyset$. By \Cref{diagonal} we have that $\text{either  }\{x_1,f(x_1)\}=\{x_2,x_2+\vec{v}\} \text{ or }x_2,x_2+\vec{v}\in \pi(A') \setminus \pi(A'')$. In the former case, by \Cref{dxdy} we have that $$|(R_{x_1}+R_{f(x_1)}) \setminus (R_{x_2}(+)R_{x_2+\vec{v}})|=|R_{x_1}+R_{f(x_1)}|-|R_{x_1}|-|R_{f(x_1)}|+1\ge |R_{x_1}|-1.$$
Assume now we are in the latter case. Let $z_1,z_2\in R_{f(x_1)}$ with $z_1-z_2=(d,0,\ldots,0)$. As the smallest difference in $R_{x_2}(+)R_{x_2+\vec{v}}$ is strictly larger than $d$, for every element $z\in R_{x_1}$, either $z+z_1$ or $z+z_2$ is not in $R_{x_2}(+)R_{x_2+\vec{v}}$, and if there were $z,z'\in R_{x_1}$ with $z+z_1=z'+z_2$, then $z'-z=(d,0,\dots,0)$, contradicting $x_1\not\in \pi(A'')$. Hence
$$|(R_{x_1}+R_{f(x_1)}) \setminus (R_{x_2}(+)R_{x_2+\vec{v}})|\ge |R_{x_1}|>|R_{x_1}|-1.$$
\end{proof}

\begin{clm}\label{rs}
We have $|Z| \ge 2^k|A'| - 2^k\delta^{\alpha-c/2}|B|$.
\end{clm}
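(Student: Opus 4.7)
The plan is to compute $|Z|$ directly using the identity $|X(+)Y|=|X|+|Y|-1$ for nonempty $X,Y\subset\mathbb{Z}$ (since the two pieces $X+\min Y$ and $Y+\max X$ meet at exactly one point), and then bound below row-sum contributions using the hypothesis $|\co(\pi(A'))\setminus\pi(A')|\le \delta^{\alpha}|\pi(B)|$ together with \Cref{Surfobs}.

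First I would verify that the union defining $Z$ is genuinely disjoint: any element of $R_{x_2}(+)R_{x_2+\vec{v}}$ projects under $\pi$ to $2x_2+\vec{v}$. Since $\vec{v}\in\{0\}\times\{0,1\}^{k-1}$ is determined by the coordinate-parities of $2x_2+\vec{v}$, and then $x_2$ is recovered by halving, distinct pairs $(\vec{v},x_2)$ give disjoint contributions.

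Next I would split the sum according to $\vec{v}$. For $\vec{v}=\vec 0$, the contribution is $\sum_{x\in\pi(A')}(2|R_x|-1)=2|A'|-|\pi(A')|$. For a fixed nonzero $\vec{v}$, the contribution is $\sum_{x\in S_{\vec{v}}}(|R_x|+|R_{x+\vec v}|-1)$, where $S_{\vec v}=\{x\in\pi(A'):x+\vec v\in\pi(A')\}$. The key estimate is
\begin{equation*}
\sum_{x\in\pi(A')\setminus S_{\vec v}}|R_x|\le n_1\bigl(|\co(\pi(A'))\setminus\pi(A')|+\bigl|\{x:|\{x,x+\vec v\}\cap\co(\pi(A'))|=1\}\bigr|\bigr),
\end{equation*}
because any $x$ with $x\in\pi(A')$ but $x+\vec v\notin\pi(A')$ is either an internal hole ($x+\vec v\in\co(\pi(A'))\setminus\pi(A')$) or a boundary case ($|\{x,x+\vec v\}\cap\co(\pi(A'))|=1$). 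By hypothesis and \Cref{Surfobs}, this is bounded by $n_1(\delta^{\alpha}|\pi(B)|+2(k-1)\min\{n_i\}^{-1}n_1^{-1}|B|)\le 2\delta^{\alpha}|B|$ for $n_k(\epsilon_0,\delta)$ sufficiently large. The symmetric estimate applies to $\sum_{x\in S_{\vec v}}|R_{x+\vec v}|$, and $|S_{\vec v}|\le|\pi(B)|\le\delta^{\alpha}|B|$ again by choosing $n_k(\epsilon_0,\delta)$ large.

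Summing, the $\vec v\neq \vec 0$ contribution is at least $2|A'|-5\delta^{\alpha}|B|$ each, and adding the $\vec v=\vec 0$ contribution yields
\begin{equation*}
|Z|\ge 2^k|A'|-|\pi(A')|-5(2^{k-1}-1)\delta^{\alpha}|B|\ge 2^k|A'|-2^k\delta^{\alpha-c/2}|B|,
\end{equation*}
where the last step absorbs the constants into the $\delta^{-c/2}$ factor (valid by \Cref{nkconv}). The main obstacle is bookkeeping: one must track that the maximum row length is $n_1$ so that the small count of "bad" projected points (of order $\delta^{\alpha}|\pi(B)|=\delta^{\alpha}|B|/n_1$) yields an error of order $\delta^{\alpha}|B|$ after multiplication by $n_1$, and verify that the $|\pi(A')|$ and $|\pi(B)|$ terms are negligible once $n_k(\epsilon_0,\delta)$ is taken large enough.
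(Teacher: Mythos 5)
Your proof is correct and is essentially the same as the paper's: both use the exact identity $|X(+)Y|=|X|+|Y|-1$ for nonempty $X,Y$, bound the missing terms by the hypothesis on $|\co(\pi(A'))\setminus\pi(A')|$ together with \Cref{Surfobs}, and absorb the $|\pi(A')|$, $|\pi(B)|$, and boundary contributions into the $\delta^{\alpha-c/2}$ factor via \Cref{nkconv}. The only cosmetic difference is bookkeeping: you split $\pi(A')$ into $S_{\vec v}$ and its complement and estimate the two missing sums $\sum_{x\notin S_{\vec v}}|R_x|$ separately, while the paper writes a uniform lower bound on $|R_x(+)R_{x+\vec v}|-|R_x|-|R_{x+\vec v}|$ over the three cases $|\{x,x+\vec v\}\cap\pi(A')|\in\{0,1,2\}$ and counts each case; the inputs and resulting bound are the same.
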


\begin{proof}
Note that for $x_2\in \{0\}\times \mathbb{Z}^{k-1}$, $0\ne\vec{v} \in \{0\}\times \{0,1\}^{k-1}$, we have
$$|R_{x_2}(+)R_{{x_2}+\vec{v}}|-|R_{x_2}|-|R_{x_2+\vec{v}}|\ge\begin{cases}0 & |\{x_2,x_2+\vec{v}\}\cap \pi(A')|=0\\ -n_1 & |\{x_2,x_2+\vec{v}\}\cap \pi(A')|=1\\ -1& |\{x_2,x_2+\vec{v}\}\cap \pi(A')|=2.\end{cases}$$
From $|\co(\pi(A'))\setminus \pi(A')|\le\delta^{\alpha}n_1^{-1}|B|$ and \Cref{Surfobs}, we have
\begin{align*}|\{x\in \{0\}\times\mathbb{Z}^{k-1}: |\{x,x+\vec{v}\}\cap \pi(A')|=1\}|&\le 2|\co(\pi(A'))\setminus \pi(A')|+2(k-1)n_1^{-1}n_{k,0}^{-1}|B|\\
&\le \delta^{\alpha-c/4}n_1^{-1}|B|\end{align*}
and
\begin{align*}|\{x\in \{0\}\times\mathbb{Z}^{k-1}: |\{x,x+\vec{v}\}\cap \pi(A')|=2\}|\le |\pi(B)|\le n_{k,0}^{-1}|B|\le \delta^{\alpha-c/4}|B|.\end{align*}

As $\sum_{x\in\{0\}\times \mathbb{Z}^{k-1}}|R_x|=|A'|$, we have (taking $\vec{v}\in \{0\}\times \{0,1\}^{k-1}$ and $x\in \{0\}\times \mathbb{Z}^{k-1}$)
\begin{align*}
    |Z| =& \sum_{\vec{v}}\sum_{x}|R_x(+)R_{x+\vec{v}}|\\
    =&\left(\sum_{j=0}^2\sum_{0\ne\vec{v}}\sum_{|\{x,x+\vec{v}\}\cap \pi(A')|=j}|R_x(+)R_{x+\vec{v}}|\right)+\sum_{x\in \pi(A')}|R_x(+)R_x|\\
    \ge& -2(2^{k-1}-1)\delta^{\alpha-c/4}|B|-2^{k-1}n_{k,0}^{-1}|B|+\sum_{\vec{v} }\sum_{x}|R_{x}|+|R_{x+\vec{v}}|\\
    \geq&2^{k}|A'|-2^k\delta^{\alpha-c/2}|B|.
\end{align*}

\end{proof}

Returning to the proof of \Cref{approp}, note that $$|\pi(A')\setminus \pi(A'')|\le |\pi(B)|\le n_{k,0}^{-1}|B|\le \delta^{\alpha-c/2}|B|.$$
Thus, by \Cref{xfnotyf}, \Cref{diagonal2}, and \Cref{rs}, we have

\begin{align*}|A'+A'|&\ge |Z\cup Z_1|\\
&=|Z|+\sum_{x_1\in \pi(A')\setminus \pi(A'')}|(R_{x_1}+R_{f(x_1)})\setminus Z|\\
&\ge2^k|A'| -2^k\delta^{\alpha-c/2}|B| +\sum_{x_1\in \pi(A')\setminus \pi(A'')}(|R_{x_1}|-1)\\
&\ge2^k|A'|-\delta^{\alpha-3c/4}|B|+|A'\setminus A''|.
\end{align*}

We conclude that
$$|A'\setminus A''| \le d_k(A')+\delta^{\alpha-3c/4}|B|\le \delta^{\lambda} |B|+\delta^{\alpha-3c/4}|B|\le \delta^{\alpha-c}|B|.$$

\end{proof}
\subsubsection{$A_4\subset A_3$ has all rows in same step size APs: Construction}
There exist $\ll$ dependencies $n_{k,0}^{-1}\ll\delta\ll\epsilon_0\le 1$ such that we can apply \Cref{approp} to $A_3$ with $\lambda=\frac{1}{10}-9c$ $\alpha=\frac{1}{10}-10c$ (by \eqref{A3info},\eqref{A3dk}), obtaining $A_4\subset A_3$. Then we have
\begin{align}\label{A4info}
    |A_3\setminus A_4|\le \delta^{\frac{1}{10}-11c}|B|,
\end{align}
and thus by \eqref{AA3} and \eqref{A4info},
\begin{align}\label{AA4}
    |A\setminus A_4|\le |A\setminus A_3|+|A_3\setminus A_4|\le (\delta^{\frac{1}{10}-8c}+\delta^{\frac{1}{10}-11c})|B|\le \delta^{\frac{1}{10}-12c}|B|.
\end{align}
By \Cref{dkobs}, $A_4$ is reduced and we have
\begin{align}\label{A4dk}
    d_k(A_4)\le (\delta+2^k\delta^{\frac{1}{10}-12c})|B|\le \delta^{\frac{1}{10}-13c}|B|.
\end{align}
By construction and \Cref{dxrmk}, $\coo(R_x)$ has the same step size $d$ for all rows $R_x$ of $A_4$. Finally, by \eqref{A3beta}, we have
\begin{align}\label{A4beta}
    |R_x|\ge \delta^{\frac{1}{5}}n_1,\qquad |\coo(R_x)\setminus R_x|\le 2\delta^{\frac{1}{10}}|R_x|
\end{align}
for all rows $R_x$ of $A_4$.

\subsubsection{$A_5\subset A_4$ with $\pi(A_5)$ close to its convex progression: Construction}
There exist $\ll$ dependencies $n_{k,0}^{-1}\ll\delta\ll\epsilon_0\le 1$ such that we can apply \Cref{alphaprop'} to $A_4$ with $\sigma=\delta$, $\lambda=\frac{1}{10}-13c$, $\alpha=\frac{1}{20}$ and $\epsilon'\ge \epsilon-\delta^{\frac{1}{10}-12c}\ge \frac{\epsilon_0}{2}$ (by \eqref{A4dk},\eqref{AA4}) to obtain a reduced set $A_5\subset A_4$ with
\begin{align}\label{A5daggerinfo}
 |\co(\pi(A_5))\setminus \pi(A_5)|\le \delta^{\frac{1}{20}}|\pi(B)|\qquad
    |A_4\setminus A_5|\le \delta^{\frac{1}{20}-14c}|B|.
\end{align}
By \eqref{AA4} and \eqref{A5daggerinfo}, we have
\begin{align}\label{AA5dagger}
    |A\setminus A_5|\le |A\setminus A_4|+|A_4\setminus A_5|\le (\delta^{\frac{1}{10}-12c}+\delta^{\frac{1}{20}-14c})|B|\le \delta^{\frac{1}{20}-15c}|B|,
\end{align}
and by \Cref{dkobs}, we have 
\begin{align}\label{A5dk}
    d_k(A_5)\le (\delta+2^k\delta^{\frac{1}{20}-15c})|B|\le \delta^{\frac{1}{20}-16c}|B|.
\end{align}
Furthermore, all rows of $A_5$ are also rows of $A_4$, so have the same step size $d$ and satisfy \eqref{A4beta}, so
\begin{align}
\label{A5beta}
       |R_x|\ge \delta^{\frac{1}{5}}n_1,\qquad |\coo(R_x)\setminus R_x|\le 2\delta^{\frac{1}{10}}|R_x|
\end{align}
for all rows $R_x$ of $A_5$.

\subsection{Reductions Part 3: Showing the rows of $A_5$ are almost intervals}
\label{almostintervals}
We now show that the arithmetic progressions $\coo(R_x)$ containing the rows $R_x$ of $A_5$ are in fact intervals i.e. $\coo(R_x)=\co(R_x)$.

We suppose by way of contradiction that for all rows $R_x$ of $A_5$, the arithmetic progression $\coo(R_x)$ has the same step size $d\ne 1$.
\begin{defn}Let $\pi':\mathbb{Z}^k\to \mathbb{Z}$ be the projection onto the second coordinate. For a set $A'\subset \mathbb{Z}^k$, we let a ``hyperplane'' $H_y$ be $\pi'^{-1}(y)\cap A'$.
\end{defn}

We shall make a series of temporary reductions in order to arrive at a contradiction, and we shall notate sets used in this proof by contradiction with the dagger symbol $\dagger$.

\begin{rmk}
The hyperplanes $H_y$ of a set $A'$ are unions of rows $R_x$. 
\end{rmk}

\subsubsection{$A_6^{\dagger}\subset A_5$ has big hyperplanes with small doubling: Setup}
First, we show that, assuming that $\co(\pi(A'))\setminus \pi(A')$ is small, we can create a subset $A''\subset A'$ by deleting hyperplanes with big doubling or small size without changing the size of $A'$ too much. This is analogous to \Cref{betaprop} for rows.

\begin{prop}\label{hypbetaprop}
There exist $\ll$-dependencies such that for constants
$$n_{k,0}^{-1}\ll\delta\ll\epsilon_0,\lambda,\alpha,\beta,\gamma\le 1\text{ and } \beta<\alpha<\lambda$$
such that the following holds.

Let $A'\subset B$ with $$d_k(A')\le \delta^{\lambda}|B|,\qquad |\co(\pi(A'))\setminus \pi(A')|\le \delta^{\alpha}|\pi(B)|.$$ If $A''\subset A'$ is the union of all hyperplanes $H_y$ with $d_{k-1}(H_y)\le \delta^{\beta} n_2^{-1}|B|$ and $|H_y|\ge \delta^{\gamma}n_2^{-1}|B|$, then
$$|A'\setminus A''|\le (\delta^{\alpha-\beta-c}+\delta^{\gamma})|B|.$$
\end{prop}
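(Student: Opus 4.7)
The plan is to adapt the strategy of \Cref{betaprop}, replacing rows $R_x$ in the first coordinate direction with hyperplanes $H_y$ in the second coordinate direction. Let $A'''\subset A'$ be the union of all $H_y$ with $d_{k-1}(H_y)\le \delta^{\beta}n_2^{-1}|B|$, so that $A''\subset A'''$ is obtained by further discarding the hyperplanes of $A'''$ of size less than $\delta^{\gamma}n_2^{-1}|B|$. Since there are at most $n_2$ nonempty hyperplanes, this second step contributes at most $n_2\cdot \delta^{\gamma}n_2^{-1}|B|=\delta^{\gamma}|B|$ to $|A'\setminus A''|$, so it suffices to show $|A'\setminus A'''|\le \delta^{\alpha-\beta-c}|B|$.

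The core inequality I would establish is
\[
|A'+A'|\ge |A'(+)A'|+\sum_y \max(0,d_{k-1}(H_y)).
\]
Once this is in hand, combining the lower bound $|A'(+)A'|\ge 2^k|A'|-2^k\delta^{\alpha-c/2}|B|$ from \Cref{rs} with the hypothesis $d_k(A')\le \delta^{\lambda}|B|$ and $\lambda>\alpha$ yields $\sum_y \max(0,d_{k-1}(H_y))\le \delta^{\alpha-c}|B|$. Since each $y$ for which $H_y$ is not in $A'''$ contributes at least $\delta^{\beta}n_2^{-1}|B|$ to this sum, there are at most $\delta^{\alpha-\beta-c}n_2$ such values of $y$, each corresponding to a hyperplane of size at most $n_2^{-1}|B|$, which immediately gives $|A'\setminus A'''|\le \delta^{\alpha-\beta-c}|B|$.

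Deriving the core inequality is the step where the hyperplane analogue becomes particularly clean. The sets $H_y+H_y$ lie in distinct slices $\pi'^{-1}(2y)$, so the ``new'' mass $(H_y+H_y)\setminus A'(+)A'$ is automatically disjoint across different $y$. Moreover $A'(+)A'\cap \pi'^{-1}(2y)\subset H_y+H_y$: the only pairs $(x,\vec v)$ in the definition of $A'(+)A'$ contributing to slice $2y$ satisfy $v_2=0$ and $x_2=y$, which forces both $R_x$ and $R_{x+\vec v}$ to lie in $H_y$. The new mass in slice $2y$ then equals $|H_y+H_y|-|A'(+)A'\cap \pi'^{-1}(2y)|$, and I would bound $|A'(+)A'\cap \pi'^{-1}(2y)|\le 2^{k-1}|H_y|$ by summing the trivial inequality $|R_x(+)R_{x+\vec v}|\le |R_x|+|R_{x+\vec v}|$ over the $2^{k-2}$ valid vectors $\vec v\in\{0\}\times\{0\}\times\{0,1\}^{k-2}$ and over $x$ with $x_2=y$. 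Combined with $|H_y+H_y|=2^{k-1}|H_y|+d_{k-1}(H_y)$, each summand is at least $d_{k-1}(H_y)$ and hence at least $\max(0,d_{k-1}(H_y))$. The main technical care will be routine error-term bookkeeping and confirming that the hypotheses of \Cref{rs} are in place, which is automatic since \Cref{rs} uses precisely the hypothesis on $|\co(\pi(A'))\setminus \pi(A')|$ that we are given.
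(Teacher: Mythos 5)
Your proof is correct, and it is a genuinely different argument from the paper's. The paper decomposes $A'+A'$ directly by the parity of the $\pi'$-coordinate: the even $\pi'$-slices are estimated from below by $\bigcup_y(H_y+H_y)$, giving $\sum_y|H_y+H_y|=2^{k-1}|A'|+\sum_y d_{k-1}(H_y)$, and the odd $\pi'$-slices by the disjoint union of $R_x+R_{x+\vec v}$ over $\vec v\in\{0\}\times\{1\}\times\{0,1\}^{k-2}$, contributing another $2^{k-1}|A'|$ up to a surface error; neither the operation $(+)$ nor \Cref{rs} is invoked, and the possibly negative terms $d_{k-1}(H_y)$ for hyperplanes of $A'''$ are handled via \Cref{negdk}. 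You instead reuse $Z=A'(+)A'$ as the reference set and isolate, per even slice, the excess $(H_y+H_y)\setminus Z$: your observation that $Z\cap\pi'^{-1}(2y)$ consists exactly of the $(+)$-row-sums internal to $H_y$, hence has size at most $2^{k-1}|H_y|$, gives excess at least $\max(0,d_{k-1}(H_y))$, and \Cref{rs} then takes care of all the surface-term bookkeeping in one shot. Your route is a slightly more modular reuse of the existing machinery, and passing to $\max(0,\cdot)$ sidesteps the appeal to \Cref{negdk}; the paper's route is more self-contained but repeats the bookkeeping already present in the proof of \Cref{rs}. Both lead to the same count of at most $\delta^{\alpha-\beta-c}n_2$ hyperplanes discarded for large doubling, each of size at most $n_2^{-1}|B|$, plus the trivial $\delta^{\gamma}|B|$ for the small hyperplanes.
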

\begin{proof}
Let $A'''\subset A'$ be the union of the hyperplanes $H_y$ with $d_{k-1}(H_y)\le \delta^{\beta}n_2^{-1}|B|$. For $0\ne \vec{v}\in \{0\}\times\{1\}\times \{0,1\}^{k-2}$, we have
$$|R_x+R_{x+\vec{v}}|-|R_x|-|R_{x+\vec{v}}|\ge \begin{cases}0&|\{x,x+\vec{v}\}\cap \pi(A')|=0\\-n_1 & |\{x,x+\vec{v}\}\cap \pi(A')|=1\\-1 & |\{x,x+\vec{v}\}\cap \pi(A')|= 2\end{cases}.$$
From $|\co(\pi(A'))\setminus \pi(A')|\le \delta^{\alpha}n_1^{-1}|B|$ and \Cref{Surfobs}, we have
\begin{align*}|\{x\in\{0\}\times \mathbb{Z}^{k-1}: |\{x,x+\vec{v}\}\cap \pi(A')|=1\}|&\le 2|\co(\pi(A'))\setminus \pi(A')|+2(k-1)n_1^{-1}n_{k,0}^{-1}|B|\\
&\le \delta^{\alpha-c/4}n_1^{-1}|B|\end{align*}
and
\begin{align*}|\{x\in\{0\}\times \mathbb{Z}^{k-1}: |\{x,x+\vec{v}\}\cap \pi(A')|=2\}|\le |\pi(B)|\le n_{k,0}^{-1}|B|\le \delta^{\alpha-c/4}|B|.\end{align*}
Hence as $\sum_{x \in \{0\}\times \mathbb{Z}^{k-1}}|R_x|=|A'|$, and $\sum_{y\in \mathbb{Z}} |H_y|=|A'|$, we have (taking $\vec{v}\in \{0\}\times \{1\}\times \{0,1\}^{k-2}$ and $x\in \{0\}\times \mathbb{Z}^{k-1}$)
\begin{align*}|A'+A'|\ge& \left(\sum_{y\in \mathbb{Z}} |H_y+H_y|\right)+\sum_{j=0}^2\sum_{\vec{v}}\sum_{|\{x,x+\vec{v}\}\cap \pi(A')|=j}|R_x+R_{x+\vec{v}}|\\
\ge& \left(\sum_{y \in \mathbb{Z}}|H_y+H_y|\right)+\left(\sum_{\vec{v}}\sum_{x}|R_x|+|R_{x+\vec{v}}|\right) -2\cdot 2^{k-2}\delta^{\alpha-c/4}|B|\\
\ge&\left(\sum_{y\in \mathbb{Z}}|H_y+H_y|\right)+2^{k-1}|A'|-\delta^{\alpha-c/2}|B|\\=&\sum_{y\in \pi(A')}d_{k-1}(H_y)+2^k|A'|-\delta^{\alpha-c/2}\end{align*}

Now, as $\sum_{y \in \mathbb{Z}}|H_y|=|A'|$, by \Cref{negdk} with $H_y$ and the box $\pi'^{-1}(y)\cap B$, we have
\begin{align*}
    \delta^{\lambda}|B|\ge d_k(A')\ge& \left(\sum_{y\in \pi'(A')}d_{k-1}(H_y)\right)-\delta^{\alpha-c/2}|B|\\
    \ge& \left(\sum_{y\in \pi'(A')\setminus \pi'(A''')}d_{k-1}(H_y)\right)-n_2\cdot 2^{2(k-1)}n_{k,0}^{-1}n_2^{-1}|B|-\delta^{\alpha-c/2}|B|\\
    \ge& |\pi'(A'\setminus A''')|\cdot \delta^{\beta} n_2^{-1}|B|-\delta^{\alpha-3c/4}|B|\\
    \ge&\delta^{\beta} |A'\setminus A'''|-\delta^{\alpha-3c/4}|B|.
\end{align*}
Therefore,
$$|A'\setminus A'''| \le (\delta^{\lambda-\beta}+\delta^{\alpha-\beta-3c/4})|B|\le \delta^{\alpha-\beta-c}|B|.$$
Finally, note that $A''\subset A'''$ satisfies $|A'''\setminus A''|\le n_2\delta^{\gamma}n_2^{-1}|B|=\delta^{\gamma}|B|$, from which the conclusion follows.
\end{proof}
\subsubsection{$A_6^{\dagger}\subset A_5$ has big hyperplanes with small doubling: Construction }
There exist $\ll$ dependencies $n_{k,0}^{-1}\ll\delta\ll\epsilon_0\le 1$ such that we can apply  \Cref{hypbetaprop} to $A_5$ with $\lambda=\frac{1}{20}-16c$, $\alpha=\frac{1}{40}$, $\beta=\frac{1}{80}$, and $\gamma=\frac{1}{160}-c$, obtaining $A_6^{\dagger}\subset A_5$ satisfying 
\begin{align}\label{A6daggerinfo}|A_5\setminus A_6^{\dagger}|\le (\delta^{\frac{1}{80}-c}+\delta^{\frac{1}{160}-c})|B|\le \delta^{\frac{1}{160}-2c} |B|
\end{align}
and for every hyperplane $H_y\subset A_6^{\dagger}$ we have
\begin{align}\label{hypdoub}d_{k-1}(H_y)\le \delta^{\frac{1}{80}}n_2^{-1}|B|, \qquad |H_y|\ge \delta^{\frac{1}{160}-c} n_2^{-1}|B|.\end{align}
We also have by \eqref{AA5dagger} and \eqref{A6daggerinfo} that
\begin{align}\label{AA6dagger}
    |A\setminus A_6^{\dagger}|\le |A\setminus A_5|+|A_5\setminus A_6^{\dagger}|\le (\delta^{\frac{1}{20}-15c}+\delta^{\frac{1}{160}-2c})|B|\le \delta^{\frac{1}{160}-3c}|B|,
\end{align}
and by \Cref{dkobs} we have $A_6^{\dagger}$ is reduced and
\begin{align}\label{A6dk}
    d_k(A_6^{\dagger})\le (\delta+2^k\delta^{\frac{1}{160}-3c})|B|\le \delta^{\frac{1}{160}-4c}|B|.
\end{align}
Consider the set $H_y$ contained inside a box $B_y:=\pi'^{-1}(y)\cap B$ with sides at least $n_{k,0}$. We have $|H_y| \ge \delta^{\frac{1}{160}-c} |B_y|$ and $d_{k-1}(H_y) \le \delta^{\frac{1}{160}+c}|H_y|$.  By \Cref{hypboxsmall}, the number of parallel hyperplanes needed to cover $H_y$ is at least $n_{k,0}\delta^{\frac{1}{160}-c}\gg \delta^{\frac{1}{160}+c}$. The $\ll$ dependencies can be chosen strong enough to imply those needed for \Cref{maincor} for dimension $k-1$, and we deduce
\begin{align}\label{convhyp}|\coo(H_y)\setminus H_y|\le c_{k-1}\delta^{\frac{1}{160}+c}|B_y|\le\delta^{\frac{1}{160}}|B_y|.\end{align}

\begin{obs}\label{stepsize}
For a hyperplane $H_y\subset A_6^{\dagger}$, the smallest affine sublattice $\Lambda_{H_y}\subset \pi'^{-1}(y)=\mathbb{Z}\times \{y\}\times \mathbb{Z}^{k-2}$ containing $H_y$ has the property that the nonempty rows of $\Lambda_{H_y}$ have step size $d$.
\end{obs}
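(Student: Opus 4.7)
The plan is to show the common step $s$ of the nonempty rows of $\Lambda_{H_y}$ equals $d$, exploiting that $H_y$ is quantitatively close to its convex progression by \eqref{convhyp}. First, observe that $s \mid d$: each nonempty row $R_x \subset H_y$ lies in $\Lambda_{H_y} \cap \pi^{-1}(x)$, an AP of step $s$, and $\coo(R_x)$ has step exactly $d$ by \Cref{dxrmk}. So either $s = d$, or $s \le d/2$, and the plan is to rule out the latter case.

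Suppose for contradiction $s \le d/2$, so $d/s \ge 2$. For each $x \in \pi(H_y)$, the real interval $\cooo(H_y) \cap \pi^{-1}(x)$ contains $R_x$, and because $R_x$ has $|R_x|$ elements in the coset $a_x + d\mathbb{Z}$, this interval has length at least $d(|R_x|-1)$. Intersecting it with $\Lambda_{H_y} \cap \pi^{-1}(x)$, an AP of step $s$ through $R_x$, the row of $\coo(H_y)$ at $x$ contains at least $(d/s)(|R_x|-1)+1 \ge 2|R_x|-1$ elements. Summing over $x \in \pi(H_y)$ gives
\[ |\coo(H_y)| \;\ge\; 2|H_y| - |\pi(H_y)|. \]

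Combined with the upper bound $|\coo(H_y)| \le |H_y| + \delta^{1/160}|B_y|$ from \eqref{convhyp}, this yields $|H_y| \le |\pi(H_y)| + \delta^{1/160}|B_y|$. Substituting $|\pi(H_y)| \le n_1^{-1}n_2^{-1}|B|$, $|B_y| = n_2^{-1}|B|$, and the lower bound $|H_y| \ge \delta^{1/160-c}n_2^{-1}|B|$ from \eqref{hypdoub}, the inequality becomes $\delta^{1/160-c} \le n_1^{-1} + \delta^{1/160}$, which fails for $\delta$ small and $n_1$ large by \Cref{nkconv}, giving the desired contradiction. Hence $s = d$.

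The main obstacle is quantifying the lower bound on $|\coo(H_y)|$ so that it actually beats \eqref{convhyp}: the exponents in the preceding reductions from $A$ through $A_6^{\dagger}$ were chosen precisely to guarantee simultaneously that the rows $R_x$ are large (so the $-1$ corrections are negligible), $|\coo(H_y)\setminus H_y|$ is much smaller than $|H_y|$, and $d$ is bounded, so that the integer gain factor $d/s \ge 2$ strictly exceeds the near-equality $|\coo(H_y)| \approx |H_y|$ enforced by \eqref{convhyp}.
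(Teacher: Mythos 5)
Your proof is correct and follows essentially the same route as the paper: observe $s\mid d$, assume $s\le d/2$, show that each row of $\coo(H_y)$ is roughly twice the size of the corresponding row of $H_y$, sum, and contradict \eqref{convhyp} using the lower bound on $|H_y|$ from \eqref{hypdoub}. The only cosmetic difference is in the final bookkeeping: the paper uses $|R_x|\ge 2$ (from \eqref{A5beta}) to absorb the ``$-1$'' per row and deduce $|\coo(H_y)\setminus H_y|\ge\frac12|H_y|$ directly, whereas you carry the $-|\pi(H_y)|$ term and discharge it against $n_1^{-1}$ via \Cref{nkconv}; both are fine.
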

\begin{proof}
For each row $R_x$ contained in a hyperplane $H_y$, the arithmetic progression $\coo(R_x)$ has step size $d$. Let $d'$ be the uniform step size of the nonempty rows of $\Lambda_{H_y}$ (which exists by Lagrange's theorem), and hence of the nonempty rows of $\coo(H_y)$. Assume for the sake of contradiction $d'\not=d$. As $d'$ divides $d$, for every row $R_x$ of $H_y$ and corresponding row $R_x'$ of $\coo(H_y)$, we have 
$|R_x'\setminus R_x|\ge |R_x|-1 \ge \frac{1}{2}|R_x|$ (as each row $R_x$ has at least $2$ elements by \eqref{A5beta}). Adding this over all rows $R_x$ of $H_y$, we obtain from \eqref{hypdoub}
$$|\coo(H_y)\setminus H_y| \ge \frac{1}{2}|H_y|\ge \frac{1}{2}\delta^{\frac{1}{160}-c} |B_y|$$ 
contradicting \eqref{convhyp} that
$\delta^{\frac{1}{160}}|B_y|\ge |\coo(H_y)\setminus H_y|.$
\end{proof}
\subsubsection{$A_7^{\dagger}\subset A_6^{\dagger}$ with $\pi(A_7^{\dagger})$ close to its convex progression: Construction}
There exist $\ll$ dependencies $n_{k,0}^{-1}\ll\delta\ll\epsilon_0\le 1$ such that we can apply \Cref{alphaprop'} to  $A_{6}^{\dagger}$ with $\sigma=\delta$, $\lambda=\frac{1}{160}-4c$, $\alpha=\frac{1}{320}$, $\epsilon'\ge\epsilon-\delta^{\frac{1}{160}-3c}\ge \frac{\epsilon_0}{2}$ (by \eqref{AA6dagger},\eqref{A6dk}) to obtain a reduced set $A_7^{\dagger}\subset A_6^{\dagger}$ with
\begin{align}
\label{A7daggerinfo}
 |A_6^{\dagger}\setminus A_7^{\dagger}|\le \delta^{\frac{1}{320}-5c}|B|,\qquad |\co(\pi(A_7^{\dagger}))\setminus \pi(A_7^{\dagger})|\le \delta^{\frac{1}{320}}|\pi(B)|.
\end{align}
By \eqref{AA6dagger} and \eqref{A7daggerinfo}, we have
\begin{align}\label{AA7dagger}
    |A\setminus A_7^{\dagger}|\le |A\setminus A_6^{\dagger}|+|A_6^{\dagger}\setminus A_7^{\dagger}| \le (\delta^{\frac{1}{160}-3c}+\delta^{\frac{1}{320}-5c})|B|\le \delta^{\frac{1}{320}-6c}|B|,
\end{align}
and by \Cref{dkobs} we have 
\begin{align}\label{A7daggerdk}
    d_k(A_7^{\dagger})\le (\delta+2^k\delta^{\frac{1}{320}-6c})|B|\le \delta^{\frac{1}{320}-7c}|B|.
\end{align}
As we pass from $A_6^{\dagger}$ to $A_7^{\dagger}$, the affine sub-lattice $\Lambda_{H_y}$ shrinks when we consider the $H_y$ now with respect to $A_7^{\dagger}$, so by \Cref{stepsize} the nonempty rows of $\Lambda_{H_y}$ have step size at least $d$. As the nonempty rows of $A_7^{\dagger}$ have step size $d$, this forces the nonempty rows of $\Lambda_{H_y}$ to have step size exactly $d$.  

We note that we do not know that the hyperplanes of $A_7^{\dagger}$ have big size or small doubling.
\subsubsection{$A_8^{\dagger}\subset A_7^{\dagger}$ has $\pi(H)$ reduced for all hyperplanes $H$: Construction }
There exist $\ll$ dependencies $n_{k,0}^{-1}\ll\delta\ll\epsilon_0\le 1$ such that the following holds.
Let $A_8^{\dagger}\subset A_7^{\dagger}$ be the union of all hyperplanes $H_y$ of $A_7^{\dagger}$ such that $\pi(H_y)$ is reduced in $\{0\}\times \{y\}\times \mathbb{Z}^{k-2}$. Recall we let $B_y=\pi'^{-1}(y)\cap B$, so $H_y\subset B_y$.


If $\pi(H_y)$ is not reduced inside $\{0\}\times\{y\}\times \mathbb{Z}^{k-2}$, then there is a direction $e_j$ with $j\in \{3,\hdots, k\}$ such that $\pi(H_y)\cap(\pi(H_y)+e_j)=\emptyset$. Hence, letting $\pi_j:\mathbb{Z}^k\to \mathbb{Z}^{k-1}$ be the projection away from the $j$th coordinate and $S_x=\pi_j^{-1}(x)\cap \pi(H_y)$ for $x\in \{0\}\times \{y\}\times \mathbb{Z}^{k-2}$, we have $|\co(S_x)\setminus S_x| \ge |S_x|-1$.
Summing the above inequality over all $x\in \pi_j(\pi(H_y))\subset \pi_j(\pi(B_y))$, we deduce
$$|\co(\pi(H_y))\setminus \pi(H_y)|\ge \sum_{x\in \pi_j(\pi(H_y))}|\co(S_x)\setminus S_x| \ge |\pi(H_y)|-n_{k,0}^{-1}|\pi(B_y)|.$$
Adding this over all $y$ with $\pi(H_y)$ not reduced, we obtain by \eqref{A7daggerinfo} that
\begin{align}\nonumber|\pi(A_7^{\dagger}\setminus A_8^{\dagger})|&=\sum_{\pi(H_y)\text{ not reduced}}|\pi(H_y)|\\\nonumber&\le n_{k,0}^{-1}|\pi(B)|+\sum_{\pi(H_y)\text{ not reduced}}|\co(\pi(H_y))\setminus \pi(H_y)|\\\label{piA7A8}&\le n_{k,0}^{-1}|\pi(B)|+|\co(\pi(A_7^{\dagger}))\setminus \pi(A_7^{\dagger})|\le 2\delta^{\frac{1}{320}}|\pi(B)|\le \delta^{\frac{1}{320}-c}|\pi(B)|,
\end{align}
so in particular we have
$$|A_7^{\dagger}\setminus A_8^{\dagger}|\le \delta^{\frac{1}{320}-c}|B|.$$
Hence by \eqref{AA7dagger} we have
\begin{align}\label{AA8dagger}
    |A\setminus A_8^{\dagger}|\le |A\setminus A_7^{\dagger}|+|A_7^{\dagger}\setminus A_8^{\dagger}|\le  \left(\delta^{\frac{1}{320}-6c}+\delta^{\frac{1}{320}-c}\right)|B|\le \delta^{\frac{1}{320}-7c}|B|,
\end{align}
and \Cref{dkobs} shows  $A_8^{\dagger}$ is reduced and
\begin{align}\label{A8dk}d_k(A_8^{\dagger})\le \left(\delta+2^k\delta^{\frac{1}{320}-7c}\right)|B|\le \delta^{\frac{1}{320}-8c}|B|.\end{align}
Note that $|\co(\pi(A_8^{\dagger}))\setminus \pi(A_8^{\dagger})|\le |\co(\pi(A_7^{\dagger}))\setminus \pi(A_7^{\dagger})|+|\pi(A_7^{\dagger}\setminus A_8^{\dagger})|$, so we have by \eqref{A7daggerinfo},\eqref{piA7A8} that
\begin{align}\label{copiA8dagger}|\co(\pi(A_8^{\dagger}))\setminus \pi(A_8^{\dagger})|\le \left(\delta^{\frac{1}{320}}+\delta^{\frac{1}{320}-c}\right)|\pi(B)|\le \delta^{\frac{1}{320}-2c}|\pi(B)|.\end{align}
As we pass from $A_7^{\dagger}$ to $A_8^{\dagger}$, the affine sub-lattice $\Lambda_{H_y}$ shrinks when we consider the $H_y$ now with respect to $A_8^{\dagger}$, so the nonempty rows of $\Lambda_{H_y}$ have step size at least $d$. As the nonempty rows of $A_8^{\dagger}$ have step size $d$, this forces the nonempty rows of $\Lambda_{H_y}$ to have step size exactly $d$. Furthermore, the reducedness of $\pi(H_y)$ implies $\pi(\Lambda_{H_y})=\{0\}\times \{y\}\times\mathbb{Z}^{k-2}$.

\subsubsection{Contradiction}
There exist $\ll$ dependencies $n_{k,0}^{-1}\ll\delta\ll\epsilon_0\le 1$ such that we are now able to derive a contradiction (we will only use these dependencies after \Cref{clm319}). Let $H_{y_1},\ldots,H_{y_\ell}$ be the nonempty hyperplanes of $A_8^{\dagger}$ with $y_1<\ldots<y_\ell$, and for notational convenience, set $H_{i}:=H_{y_i}$ and $\Lambda_i:=\Lambda_{H_{y_i}}$. As just noted, we have $\pi(\Lambda_i)=\{0\}\times \{y_i\}\times \mathbb{Z}^{k-2}$ for all $i$, and as the nonempty rows of $\Lambda_i$ have step size $d$ the rows of $\Lambda_i$ are all of the form $z'+d\mathbb{Z}$. Let $$\Phi_i:\pi(\Lambda_{i})=\{0\}\times\{y_i\}\times \mathbb{Z}^{k-2}\to \mathbb{Z}/d\mathbb{Z}$$ be the affine-linear function\footnote{meaning there exists $a\in \{0\}\times \{0\}\times(\mathbb{Z}/d\mathbb{Z})^{k-2}$ and $b\in \mathbb{Z}/d\mathbb{Z}$ such that $\Phi_i(w)=w\cdot a+b$} defined by taking $\Phi_i(0,y_i,z)=z'\mod d$ where $(z'+d\mathbb{Z},y_i,z)$ is a row in $\Lambda_{i}$.

We create $r$ subintervals $I_i\subset \{1,\ldots,\ell\}$ satisfying the following properties.
\begin{itemize}
\item $1\in I_1$.
\item $\bigsqcup_{j\in I_i}H_j$ is not reduced in $\mathbb{Z}^k$, and $H_{1+\max I_i}\sqcup\bigsqcup_{j \in I_i}H_j$ is reduced, for $1\leq i\le r-1$.
\item $\min I_{i+1}=\max I_i+\begin{cases} 0 & H_{\max I_i}\sqcup H_{1+\max I_{i}}\text{ is not reduced.}\\1 & H_{\max I_i}\sqcup H_{1+\max I_i}\text{ is reduced.}
    \end{cases}$
\end{itemize}
These conditions uniquely determine intervals $I_1,\ldots,I_r$ which cover $\{1,\ldots,\ell\}$, and as $A_8^{\dagger}$ is reduced we have $r\ge 2$. 

\begin{rmk}\label{minmaxrmk} If $\max I_i=\min I_{i+1}$, then $|I_i|\ge 2$. If instead $\max I_i+1=\min I_{i+1}$, then with $j=\max I_i$ we have $y_{j}+1=y_{j+1}$, and $\Phi_j(w)-\Phi_{j+1}(w+e_2): \{0\}\times \{y_{j}\}\times \mathbb{Z}^{k-2}\to \mathbb{Z}$ is non-constant.
\end{rmk}

For $1\le i \le r-1$, let $z_i\in H_{1+\max I_i}$ be a point not in the affine sub-lattice containing $\bigsqcup_{j \in I_i}  H_{j}$. Let $$f:\bigsqcup_{j\in [1,\min I_r-1]}H_j\to A_8^{\dagger}$$ be defined by setting
$$f\left(\bigsqcup_{j \in [\min I_i,\min I_{i+1}-1]}H_j\right)=\{z_i\}.$$

\begin{clm} If $z',z''\in \bigsqcup_{j\in[1,\min I_r-1]}H_j$ are distinct, then we have
$$z'+f(z')\ne z''+f(z'').$$
\end{clm}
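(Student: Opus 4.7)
The strategy is to read off a contradiction from the second (i.e., $\pi'$-)coordinate alone. Write $z' \in H_{j'}$ and $z'' \in H_{j''}$, and let $i',i''$ be the unique indices with $j' \in [\min I_{i'}, \min I_{i'+1}-1]$ and $j'' \in [\min I_{i''}, \min I_{i''+1}-1]$, so that $f(z') = z_{i'} \in H_{k_{i'}}$ and $f(z'') = z_{i''} \in H_{k_{i''}}$, where I set $k_i := 1 + \max I_i$. Applying $\pi'$ to $z' + f(z') = z'' + f(z'')$ yields the scalar equation
$$ y_{j'} + y_{k_{i'}} \;=\; y_{j''} + y_{k_{i''}}. $$
Since the $y_i$ are strictly increasing in $i$, this can only hold if the multisets $\{j',k_{i'}\}$ and $\{j'', k_{i''}\}$ are incomparable in the componentwise order; that is, one cannot have both $j' < j''$ and $k_{i'} < k_{i''}$, and one cannot have both inequalities reversed.

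Without loss of generality $i' \le i''$. If $i' = i''$ then $f(z') = z_{i'} = z_{i''} = f(z'')$, and the equation forces $z' = z''$, contradicting distinctness. So suppose $i' < i''$. I will show that $j' < j''$ and $k_{i'} < k_{i''}$, which forces $y_{j'}+y_{k_{i'}} < y_{j''}+y_{k_{i''}}$, a contradiction. The ingredient for both strict inequalities is that $\max I_i$ is strictly increasing in $i$: in the case $\min I_{i+1} = \max I_i + 1$ this is obvious, while in the case $\min I_{i+1} = \max I_i$ (Case A) the set $I_{i+1}$ cannot be a singleton, since by construction $\bigsqcup_{j \in I_{i+1}} H_j$ is not reduced but becomes reduced upon adjoining $H_{k_{i+1}}$, whereas Case A is precisely the statement that $H_{\max I_i} \sqcup H_{1+\max I_i}$ is not reduced, forcing $I_{i+1}$ to contain more than one index. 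Hence $\max I_{i'} < \max I_{i''}$, giving $k_{i'} < k_{i''}$.

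To see $j' < j''$, note $j' \le \min I_{i'+1} - 1$ and $j'' \ge \min I_{i''} \ge \min I_{i'+1}$, so $j' \le j''$. Equality would require $\min I_{i'+1} - 1 \ge \min I_{i''}$, which combined with the monotonicity $\min I_{i'+1} \le \min I_{i''}$ (from $i' < i''$) is impossible. So $j' < j''$, completing the argument.

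\textbf{Main obstacle.} None of the steps is difficult once the $y$-coordinate projection is taken; the only subtlety is the careful bookkeeping showing that the intervals $I_i$ produce strictly increasing sequences $\max I_i$ and excluding the boundary overlap case $j' = j''$. The key conceptual point is that the map $f$, although far from the Voronoi-type map of \Cref{xfnotyf}, is still injective on the $y$-coordinate through the sum $y_j + y_{k_i}$ because each ``label'' $z_i$ lives to the right of the whole domain $\bigsqcup_{j \in [\min I_i, \min I_{i+1}-1]} H_j$ of hyperplanes that $f$ collapses onto it.
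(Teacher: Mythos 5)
Your proof is correct and takes essentially the same approach as the paper: project to the $\pi'$-coordinate and exploit the fact that $z\mapsto\pi'(f(z))$ is monotone non-decreasing in $\pi'(z)$. The paper's version is slightly shorter because it only needs \emph{weak} monotonicity — after assuming WLOG $\pi'(z')\le\pi'(z'')$, the equation $\pi'(z'')-\pi'(z')=\pi'(f(z'))-\pi'(f(z''))$ has a nonnegative left side and (by weak monotonicity) a nonpositive right side, forcing $\pi'(z')=\pi'(z'')$ directly, after which $f(z')=f(z'')$ and $z'=z''$. You instead split into the cases $i'=i''$ and $i'<i''$, and for the latter prove \emph{strict} monotonicity of $\max I_i$, which in Case A ($\min I_{i+1}=\max I_i$) requires the auxiliary observation $|I_{i+1}|\ge 2$ — a correct but slightly heavier line of reasoning than the paper needs. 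Your strict-monotonicity argument for Case A is sound: if $I_{i+1}$ were the singleton $\{\max I_i\}$, then adjoining $H_{1+\max I_{i+1}}=H_{1+\max I_i}$ would yield $H_{\max I_i}\sqcup H_{1+\max I_i}$, which is reduced by the defining property of the intervals, contradicting the fact that Case A asserts this union is not reduced. (The paper's Remark 3.20 proves the complementary fact $|I_i|\ge 2$ under the same hypothesis, which it uses in a later claim; your variant is the one you actually need here.) One small wording quibble: your sentence beginning ``Equality would require'' has the inequality reversed relative to how it reads most naturally, though the logic lands correctly once unwound; the direct chain $j'\le\min I_{i'+1}-1<\min I_{i'+1}\le\min I_{i''}\le j''$ would have been cleaner.
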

\begin{proof}Indeed, if they were equal then
$$\pi'(z')+\pi'(f(z'))=\pi'(z'')+\pi'(f(z'')),$$ and if without loss of generality $\pi'(z')\le \pi'(z'')$, then $\pi'(f(z'))\le \pi'(f(z''))$,  so we must have $\pi'(z')=\pi'(z'')$. Therefore $f(z')=f(z')$, so $z'=z''$, a contradiction.
\end{proof}
Hence the set $$Z_1=\bigsqcup_{i=1}^{r-1}\bigsqcup_{j\in [\min I_i,\max I_{i}-1]}(H_j+z_i)\subset A_8^{\dagger}+A_8^{\dagger}$$ is a disjoint union as $[\min I_{i},\max I_i-1]\subset [\min I_i,\min I_{i+1}-1]$.
\begin{clm} For $ \vec{v}\in \{0\}\times\{0,1\}^{k-1}$  and $x\in \{0\}\times \mathbb{Z}^{k-1}$, and $z\in H_t$ with  $t\in [\min I_i, \max I_{i}-1]$ for some $1\le i \le r-1$, we have
$$z+f(z)\not \in R_x + R_{x+\vec{v}}.$$
\end{clm}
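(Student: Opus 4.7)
The plan is to argue by contradiction. Suppose $z+f(z)=z+z_i\in R_x(+)R_{x+\vec{v}}$, so there are $a\in R_x$ and $b\in R_{x+\vec{v}}$ with $a+b=z+z_i$. Projecting by $\pi$ gives $\pi(z)+\pi(z_i)=2x+\vec{v}$, which uniquely determines $(x,\vec{v})$ (using $\vec{v}\in\{0\}\times\{0,1\}^{k-1}$). Assuming $R_x,R_{x+\vec{v}}$ are nonempty, write $R_x\subset H_{j_1}$ and $R_{x+\vec{v}}\subset H_{j_2}$ with $y_{j_1}\le y_{j_2}$; comparing second coordinates of $a+b$ and $z+z_i$ gives $y_{j_1}+y_{j_2}=y_t+y_{1+\max I_i}$ and $y_{j_2}-y_{j_1}=v_2\in\{0,1\}$.

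The first key step is to show $j_1,j_2\in I_i$. Since the $y_j$ are strictly increasing integers and $1+\max I_i-t\ge 2$, we have $y_{1+\max I_i}-y_t\ge 2$. In the case $v_2=0$, $j_1=j_2$ with $y_{j_1}=\tfrac{1}{2}(y_t+y_{1+\max I_i})$ lying strictly between $y_t$ and $y_{1+\max I_i}$, so $j_1\in[t+1,\max I_i]\subset I_i$. In the case $v_2=1$, $j_2=j_1+1$ and $y_{j_1}=\tfrac{1}{2}(y_t+y_{1+\max I_i}-1)\ge y_t+\tfrac12$ forces $y_{j_1}\ge y_t+1$, while $y_{j_2}\le y_{1+\max I_i}-1$, so again $j_1,j_2\in[t+1,\max I_i]\subset I_i$. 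In particular $a,b\in L_i$.

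The final step exploits an affine-linear ``first-coordinate mod step'' function on $L_i$. Let $d'\mid d$ be the common first-coordinate step of the rows of $L_i$; the projection $\pi$ induces an affine-linear bijection $L_i/d'\mathbb{Z} e_1\to\pi(L_i)$, giving an affine-linear $\Psi_i:\pi(L_i)\to\mathbb{Z}/d'\mathbb{Z}$ recording first coordinates mod $d'$ of elements of $L_i$. If $\pi(z_i)\in\pi(L_i)$, then $z_i\notin L_i$ forces the first coordinate of $z_i$ to equal $\Psi_i(\pi(z_i))+\delta\pmod{d'}$ with $\delta\not\equiv 0$. The first coordinate of $a+b$ is $\equiv\Psi_i(\pi(a))+\Psi_i(\pi(b))=\Psi_i(x)+\Psi_i(x+\vec{v})\pmod{d'}$, which by affine-linearity and $x+(x+\vec{v})=\pi(z)+\pi(z_i)$ equals $\Psi_i(\pi(z))+\Psi_i(\pi(z_i))$. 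But the first coordinate of $z+z_i$ is $\equiv\Psi_i(\pi(z))+\Psi_i(\pi(z_i))+\delta$, contradicting $\delta\not\equiv 0\pmod{d'}$. If instead $\pi(z_i)\notin\pi(L_i)$, write $\pi(L_i)=p+L_0$ with $L_0$ a linear sublattice; then $x,x+\vec{v}\in\pi(L_i)$ forces $\vec{v}\in L_0$ and $2x+\vec{v}\in 2p+L_0$, whereas $\pi(z)+\pi(z_i)\in p+p'+L_0$ for some $p'\notin p+L_0$, giving a coset mismatch.

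The most delicate aspect is setting up $\Psi_i$ and verifying the ``pairing identity'' $\Psi_i(a)+\Psi_i(b)=\Psi_i(a')+\Psi_i(b')$ whenever $a+b=a'+b'$, which reduces to the observation that for any affine-linear $\Psi(u)=\alpha u+\beta$ the quantity $\Psi(a)+\Psi(b)=\alpha(a+b)+2\beta$ depends only on $a+b$.
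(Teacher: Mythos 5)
Your proof is correct and follows essentially the same strategy as the paper: first you pin down the hyperplane indices of $R_x$ and $R_{x+\vec v}$ via the $\pi'$-coordinate sum $y_t+y_{1+\max I_i}$ to show both rows lie in $\bigsqcup_{j\in I_i}H_j$, then you derive a contradiction from the fact that $z_i=f(z)$ lies outside the affine sub-lattice $L_i$ spanned by $\bigsqcup_{j\in I_i}H_j$. Where you diverge is in the final step: the paper simply observes that with $a,b,z\in L_i$ and $z+f(z)=a+b$, the identity $f(z)=a+b-z$ places $f(z)\in L_i$ because affine sub-lattices (cosets of subgroups) are closed under the operation $a+b-z$, immediately contradicting the choice of $z_i$. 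Your version re-derives this closure concretely via the first-coordinate mod-$d'$ function $\Psi_i$ and a two-case split on whether $\pi(z_i)\in\pi(L_i)$, which is valid but substantially more machinery than needed; the abstract closure argument subsumes both of your cases at once.
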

\begin{proof}
Assume to the contrary that $z+f(z)\in R_x+R_{x+\vec{v}}$. First note that $\pi'(x)\ge y_{\min I_i}$ since $$2y_{\min I_i}\le \pi'(z)+\pi'(f(z))=\pi'(x)+\pi'(x+\vec{v})\le 2\pi'(x)+1.$$
Next, we note that $\pi'(x+\vec{v})\le y_{\max I_i}$. Indeed, if $\pi'(x+\vec{v})>y_{\max I_i}$, then as $R_x$ and $R_{x+\vec{v}}$ are non-empty, we have $\pi'(x+\vec{v})\geq y_{1+\max I_i}$ and $\pi'(x)\geq y_{\max I_i}$. However, then
$$\pi'(x+\vec{v})+\pi'(x)\geq y_{1+\max I_i}+y_{\max I_i}>\pi'(f(z))+\pi'(z),$$
a contradiction. Hence, $z,R_x,R_{x+\vec{v}}$ are all contained in the affine sub-lattice containing $\sqcup_{j \in I_i}H_j$, and $f(z)$ is not in this affine sub-lattice by construction, contradicting $z+f(z)\in R_x+R_{x+\vec{v}}$.
\end{proof}
Hence the sets
$$Z:=A_8^{\dagger}(+)A_8^{\dagger}= \bigsqcup_{\vec{v}\in \{0\}\times \{0,1\}^{k-1}}\bigsqcup_{x\in \{0\}\times\mathbb{Z}^{k-1}}R_x(+)R_{x+\vec{v}}\subset A_8^{\dagger}+A_8^{\dagger}$$
and $Z_1$ are disjoint.

The set of indices $\mathcal{I}=[1,\min I_{r}-1]\setminus \bigcup_{i=1}^{r-1} [\min I_i,\max I_{i}-1]$ whose corresponding hyperplanes $H_j$ were not accounted for by $Z_1$ are precisely those indices $j$ such that there exists $1 \le i\le r-1$ with  $j=\max I_i=\min I_{i+1}-1$. We will now find a third set $Z_2$ disjoint from $Z$ and $Z_1$ which accounts for the hyperplanes with indices in $\mathcal{I}$.

Consider two consecutive hyperplanes $H_j$ and $H_{j+1}$ with $j\in \mathcal{I}$, and let $i$ be such that $j=\max I_i$ and $j+1=\min I_{i+1}$. Note that by \Cref{minmaxrmk}, we have $y_j+1=y_{j+1}$ and the affine-linear function $\Phi_j(w)-\Phi_{j+1}(w+e_2):\{0\}\times \{y_j\}\times \mathbb{Z}^{k-2}\to \mathbb{Z}/d\mathbb{Z}$ is non-constant. Express $\Phi_j(w)-\Phi_{j+1}(w+e_2)=w\cdot a+b$ for some $a\in \{0\}\times \{0\}\times (\mathbb{Z}/d\mathbb{Z})^{k-2}$ with $a \ne 0$ and $b\in \mathbb{Z}/d\mathbb{Z}$. In particular, there is an index $s_j\in \{3,\ldots,k\}$ such that $a_{s_j}\ne 0$, which implies that the standard basis vector $e_{s_j}$ satisfies $$\Phi_j(w+e_{s_j})-\Phi_{j+1}(w+e_{s_j}+e_2)=(w+e_{s_j})\cdot a +b\ne w\cdot a+b=\Phi_j(w)-\Phi_{j+1}(w+e_2)$$ for all $w$. Rearranging, $$\Phi_j(w+e_{s_j})+\Phi_{j+1}(w+e_2)\ne \Phi_j(w)+\Phi_{j+1}(w+e_{s_j}+e_2)$$ for all $w$. Hence we have
$$\left(R_{w+e_2}+R_{w+e_{s_j}}\right) \cap \left(R_{w}+R_{w+e_2+e_{s_j}}\right) = \emptyset$$ for all $w$ since they lie in different translated $d\mathbb{Z}$-progressions.

\begin{clm} For $w\in \{0\}\times \{y_j\}\times \mathbb{Z}^{k-2}$ with $j\in \mathcal{I}$, we have
$$(R_{w+e_2}+R_{w+e_{s_j}})\cap Z=\emptyset.$$
\end{clm}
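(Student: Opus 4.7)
The plan is to exploit the fact that every element of a nonempty row $R_x \subset H_i$ has first coordinate congruent to $\Phi_i(x) \pmod{d}$ (since $R_x \subset \Lambda_i$ and the rows of $\Lambda_i$ have step $d$), so a sum of two elements coming from rows in $H_j$ and $H_{j+1}$ has a first-coordinate residue mod $d$ that is determined by $\Phi_j, \Phi_{j+1}$ evaluated at the row labels.

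First I would fix $x \in \{0\}\times \mathbb{Z}^{k-1}$ and $\vec{v}\in \{0\}\times\{0,1\}^{k-1}$ and show that if an element of $R_{w+e_2}+R_{w+e_{s_j}}$ lies in $R_x(+)R_{x+\vec{v}} \subset R_x+R_{x+\vec{v}}$, then necessarily $x = w$ and $\vec{v} = e_2 + e_{s_j}$. Indeed, writing the putative common element as $r+r'$ with $r \in R_x$, $r' \in R_{x+\vec{v}}$, one reads off $\pi'(r+r') = 2\pi'(x)+v_2$ and the $i$-th coordinate $2x_i + v_i$ for $i \geq 3$, and from the other representation one reads off $\pi'$-value $y_j + y_{j+1} = 2y_j+1$ and $i$-th coordinate $2w_i + \delta_{i,s_j}$ for $i \geq 3$. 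Since $v_2, v_i \in \{0,1\}$ and $x_1 = w_1 = 0$, matching coordinates forces $v_2 = 1$, $\pi'(x) = y_j$, $v_i = \delta_{i,s_j}$, and $x_i = w_i$ for all $i \geq 3$, giving the claimed unique $(x,\vec{v})$.

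Second, in this one surviving case I would compare first-coordinate residues modulo $d$. Since $R_w \subset H_j$ and $R_{w+e_2+e_{s_j}} \subset H_{j+1}$, every element of $R_w + R_{w+e_2+e_{s_j}}$ has first coordinate congruent to $\Phi_j(w) + \Phi_{j+1}(w+e_2+e_{s_j}) \pmod d$, while every element of $R_{w+e_2}+R_{w+e_{s_j}}$ has first coordinate congruent to $\Phi_{j+1}(w+e_2) + \Phi_j(w+e_{s_j}) \pmod d$. The defining property of $s_j$, namely
$$\Phi_j(w+e_{s_j}) + \Phi_{j+1}(w+e_2) \ne \Phi_j(w) + \Phi_{j+1}(w+e_{s_j}+e_2),$$
says precisely that these two residues differ mod $d$, so the two sum-sets are disjoint.

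The argument is essentially bookkeeping with no real obstacle: the key conceptual input is that the mod-$d$ residue of the first coordinate of any element in a sum $R_x+R_{x+\vec{v}}$ (with both rows nonempty) is determined by $\Phi$-values, and the choice of $s_j$ was made exactly to produce two inequivalent ways to read that residue.
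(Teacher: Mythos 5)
Your proof is correct and follows essentially the same route as the paper: you identify the unique pair $(x,\vec v)$ with $2x+\vec v = 2w+e_2+e_{s_j}$ by matching coordinates (the paper's ``odd coordinates'' argument), and then rule out that one remaining intersection via the mod-$d$ residues $\Phi_j(w+e_{s_j})+\Phi_{j+1}(w+e_2)\ne \Phi_j(w)+\Phi_{j+1}(w+e_{s_j}+e_2)$, which is exactly the disjointness statement the paper establishes just before the claim from the defining property of $s_j$.
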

\begin{proof}
If $(w+e_2)+(w+e_{s_j})=x+(x+\vec{v})$ for some $x\in \{0\}\times \mathbb{Z}^{k-1}$ and $\vec{v}\in \{0\}\times \{0,1\}^{k-1}$, then by looking at the odd coordinates we see that $\vec{v}=e_2+e_{s_j}$ and hence $x=w$. But then
$$(R_{w+e_2}+R_{w+e_{s_j}})\cap Z=(R_{w+e_2}+R_{w+e_{s_j}})\cap (R_{w}(+)R_{w+e_2+e_{s_j}})=\emptyset.$$
\end{proof}
Hence the disjoint union
$$Z_2:=\bigsqcup_{j\in \mathcal{I}}\bigsqcup_{w \in \{0\}\times \{y_j\}\times \mathbb{Z}^{k-2}}R_{w+e_2}(+)R_{w+e_{s_j}}$$
is disjoint from $Z$. Finally, we prove a claim which implies $Z_1$ is disjoint from $Z_2$.

\begin{clm}
\label{clm319}
For any $1 \le s \le r-1$ and for any $z\in H_t$ with $t \in [\min I_s, \max I_s-1]$ and for any $w\in \{0\}\times \{y_j\}\times \mathbb{Z}^{k-2}$ with $j \in \mathcal{I}$, we have $$z+f(z)\not \in R_{w+e_2}+R_{w+e_{s_j}}.$$
\end{clm}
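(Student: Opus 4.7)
The plan is to prove the claim by projecting everything to the second coordinate via $\pi'$ and showing that these projections cannot coincide. Since $s_j\in\{3,\dots,k\}$ and $w\in \{0\}\times\{y_j\}\times\mathbb{Z}^{k-2}$, every element of $R_{w+e_2}+R_{w+e_{s_j}}$ has second coordinate $y_j+y_{j+1}$ (if $R_{w+e_2}$ is empty, then $y_j+1\not\in\{y_1,\dots,y_\ell\}$ but then the sum is empty and the claim is vacuous; and similarly for $R_{w+e_{s_j}}$). Meanwhile $\pi'(z+f(z))=y_t+y_{1+\max I_s}$ since $z\in H_t$ and $f(z)=z_s\in H_{1+\max I_s}$. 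Thus the claim reduces to verifying $y_t+y_{1+\max I_s}\ne y_j+y_{j+1}$.

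The crucial input from the hypothesis $j\in\mathcal{I}$ is that $j=\max I_{s'}$ for some $s'\in\{1,\ldots,r-1\}$ satisfying the non-overlap condition $\min I_{s'+1}=j+1$, which by Remark~\ref{minmaxrmk} forces $y_{j+1}=y_j+1$. So the forbidden sum is precisely $2y_j+1$, and throughout the rest of the argument we use only that the sequence $(y_i)$ is strictly increasing together with this single equality $y_{j+1}=y_j+1$.

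I would then split on the relative size of $s$ and $s'$. If $s\le s'$, monotonicity $\max I_s\le \max I_{s+1}\le\cdots\le \max I_{s'}=j$ (which follows from $\max I_i\le \min I_{i+1}\le \max I_{i+1}$) gives $1+\max I_s\le j+1$, and the hypothesis $t\le \max I_s-1$ gives $t\le j-1$. Strict monotonicity of the $y_i$ then yields $y_t+y_{1+\max I_s}\le y_{j-1}+y_{j+1}\le (y_j-1)+(y_j+1)=2y_j<2y_j+1$. If instead $s>s'$, the non-overlap condition gives $\min I_s\ge \min I_{s'+1}=j+1$, and $t\in[\min I_s,\max I_s-1]$ forces $|I_s|\ge 2$, hence $\max I_s\ge j+2$; thus $y_t\ge y_{j+1}=y_j+1$ and $y_{1+\max I_s}\ge y_{j+2}\ge y_j+2$, giving $y_t+y_{1+\max I_s}\ge 2y_j+3>2y_j+1$.

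The main subtlety I would flag is the border case $s=s'+1$: a priori, via the overlap clause $\min I_{i+1}=\max I_i$ allowed in the construction of the intervals, one could have $\min I_s=j$, which would permit $y_t=y_j$ and potentially enable the unwanted equality. However, this overlap is exactly what $j\in\mathcal{I}$ disables, forcing $\min I_{s'+1}=j+1$ and hence $\min I_s\ge j+1$. Once this is unpacked, the rest is pure bookkeeping in a single coordinate.
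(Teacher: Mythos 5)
Your proof is correct and takes essentially the same approach as the paper: both reduce to comparing $\pi'(z+f(z))=y_t+y_{1+\max I_s}$ against $\pi'(w+e_2)+\pi'(w+e_{s_j})=2y_j+1$, using $y_{j+1}=y_j+1$ (from $j\in\mathcal{I}$ via Remark \ref{minmaxrmk}) together with strict monotonicity of the $y_i$. The only difference is cosmetic: the paper trichotomizes on $y_t$ versus $y_j$ (handling the case $y_t=y_j$ via disjointness of $\mathcal{I}$ from $[\min I_s,\max I_s-1]$), whereas you dichotomize on $s$ versus $s'$ and use $\min I_{s'+1}=j+1$ to show $t\ne j$ automatically — equivalent bookkeeping.
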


\begin{proof} Assume for the sake of contradiction that $z+f(z)\in R_{w+e_2}+R_{w+e_{s_j}}$. Let $i$ be such that $j=\max I_i=\min I_{i+1}-1$. First, suppose that $\pi'(z)=y_t\le y_j-1$. Then $s \le i$ as if $i<s$ then $j=\min I_{i+1}-1< \min I_s\leq t$, and therefore $y_j<y_t$, a contradiction. Therefore $\pi'(f(z))= y_{\max I_s +1}\le y_{\max I_i +1}= y_{j+1}=y_j+1$ by \Cref{minmaxrmk}, so we obtain the contradiction
$$\pi'(z+f(z))\le y_j-1+(y_j+1)<2y_j+1=\pi'(w+e_2)+\pi'(w+e_{s_j}).$$
Next, suppose $\pi'(z)=y_t=y_j$. Then $j=t$, contradicting that $\mathcal{I}$ is disjoint from $[\min I_s,\max I_s-1]$ by construction of $\mathcal{I}$. Finally, suppose that $\pi'(z)=y_t\ge y_j+1$. Then as $\pi'(f(z))> \pi'(z)$, we have the contradiction
$$\pi'(z+f(z))\ge 2y_j+2>2y_j+1=\pi'(w+e_2)+\pi'(w+e_{s_j}).$$
\end{proof}
Hence $Z,Z_1$ and $Z_2$ are disjoint subsets of $A_8^\dagger+A_8^\dagger$. Note for $x_1,x_2\in \{0\}\times \mathbb{Z}^{k-1}$ we have
$$|R_{x_1}(+)R_{x_2}|-|R_{x_1}|-|R_{x_2}|\ge \begin{cases}0&1_{x_1\in \pi(A_8^{\dagger})}+1_{x_2\in \pi(A_8^{\dagger})}=0\\-n_1 & 1_{x_1\in \pi(A_8^{\dagger})}+1_{x_2\in \pi(A_8^{\dagger})}=1\\-1 & 1_{x_1\in \pi(A_8^{\dagger})}+1_{x_2\in \pi(A_8^{\dagger})}= 2\end{cases}$$

For $\vec{v}\in \{-1,0,1\}^k$ we have 
from \eqref{copiA8dagger} and \Cref{Surfobs} (possibly applied after some coordinate hyperplane reflections),
\begin{align*}|\{x\in \{0\}\times\mathbb{Z}^{k-1}: 1_{x\in \pi(A_8^{\dagger})}+1_{x+\vec{v}\in \pi(A_8^{\dagger})}=1\}|&\le 2|\co(\pi(A^\dagger_8))\setminus \pi(A^\dagger_8)|+2(k-1)n_1^{-1}n_{k,0}^{-1}|B|\\
&\le \delta^{\frac{1}{320}-3c}n_1^{-1}|B|\end{align*}
(a bound which in particular applies when $x=w+e_2$ and $x+\vec{v}=w+e_{s_j}$) and
\begin{align*}|\{x\in \{0\}\times \mathbb{Z}^{k-1}: 1_{x\in \pi(A_8^{\dagger})}+1_{x+\vec{v}\in \pi(A_8^{\dagger})}=2\}|\le |\pi(B)|\le n_{k,0}^{-1}|B|\le \delta^{\frac{1}{320}-3c}|B|.\end{align*}

Therefore, we have (taking $\vec{v}\in \{0\}\times \{0,1\}^{k-1}$ and $x\in \{0\}\times \mathbb{Z}^{k-1}$)
\begin{align*}|A_8^{\dagger}+A_8^{\dagger}| \ge& |Z| + |Z_1| + |Z_2|\\
=&\left(\sum_{\vec{v}}\sum_{x} |R_x(+)R_{x+\vec{v}}|\right)+\sum_{i=1}^{r-1}\sum_{j\in [\min I_i,\max I_{i}-1]}|H_j|\\&+\sum_{j \in \mathcal{I}}\sum_{w\in \{0\}\times \{y_j\}\times \mathbb{Z}^{k-2}}|R_{w+e_2}(+)R_{w+e_{s_j}}|\\
\ge&\left(\sum_{\vec{v}}\sum_{x} |R_x|+|R_{x+\vec{v}}|\right)+\sum_{i=1}^{r-1}\sum_{j\in [\min I_i,\max I_{i}-1]}|H_j|\\&+\left(\sum_{j \in \mathcal{I}}\sum_{w\in \{0\}\times \{y_j\}\times \mathbb{Z}^{k-2}}|R_{w+e_{s_j}}|\right)-\delta^{\frac{1}{320}-4c}|B|\\
=&2^k|A_8^{\dagger}|+\left(\sum_{j\in [1,\min I_{r}-1]}|H_j|\right)-\delta^{\frac{1}{320}-4c}|B|.
\end{align*}
Note that in the second inequality above, we only need to apply the bound for $x=w+e_2$ and $x+\vec{v}=w+e_{s_j}$ only once for every one of the $k-2$ possible values of $e_{s_j}-e_2$.

If we consider the same process ran in reverse, we produce another collection of intervals $I'_1, \hdots I'_{r'}\subset \{1,\ldots \ell\}$ with $\ell\in I_1'$ such that $$ |A_8^{\dagger}+A_8^{\dagger}| \ge 2^k|A_8^{\dagger}|+\left(\sum_{j\in [\max I'_{r'}+1,l]}|H_j|\right)-\delta^{\frac{1}{320}-4c}|B|.$$

As $A_8^{\dagger}$ is reduced, we have that $r'\ge 2$. As $\sqcup_{i \in I_r} H_{i}$ is not reduced, we have that $I_r \subset I_1'\subset [\max I_r'+1,\ell]$. Averaging the two inequalities, we get by \eqref{AA8dagger} that
$$d_k(A_8^{\dagger})\ge \frac{1}{2}|A_8^{\dagger}|-\delta^{\frac{1}{320}-4c}|B|\ge \frac{\epsilon_0}{4}|B|,$$
contradicting \eqref{A8dk} that $d_k(A_8^{\dagger})\le \delta^{\frac{1}{320}-8c}|B|$. The conclusion that $d=1$ follows.

\subsection{Reductions Part 4: Filling in the rows to create $A_+\supset A_5$}
\label{reductionAplus}
We recall that we have just shown that $d=1$, or equivalently all rows $R_x$ of $A_5$ satisfy $\co(R_x)=\coo(R_x)$. We now show that filling in all of the rows of $A_5$ does not change the size of $A_5$ or $d_k(A_5)$ too much.

\subsubsection{$A_+ \supset A_5$ with all rows filled in}

Let $A_+ \supset A_5$ be obtained by replacing each row $R_x$ of $A_5$ with $\co(R_x)$. 
Now $A_+$ is reduced as $A_5$ is reduced. Also by \eqref{AA5dagger} and \eqref{A5beta} we have \begin{align}\label{AA+}|A \Delta A_+|\le |A\setminus A_5|+|A_+\setminus A_5|\le \delta^{\frac{1}{20}-15c}|B|+2\delta^{\frac{1}{10}}|A_5|\le \delta^{\frac{1}{20}-16c}|B|.\end{align}
Furthermore, $\pi(A_+)=\pi(A_5)$, so by \eqref{A5daggerinfo} we have
$$|\co(\pi(A_+))\setminus \pi(A_+)|=|\co(\pi(A_5))\setminus \pi(A_5)|\le \delta^{\frac{1}{20}}|\pi(B)|.$$

\begin{obs}
\label{obs:dkAplus}
$d_k(A_+)\leq \delta^{\frac{1}{20}-17c}|B|.$
\end{obs}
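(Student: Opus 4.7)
The plan is to bound $|A_+ + A_+|$ by comparing it to $|A + A|$ via a horizontal retraction argument. For each $y \in A_+ \setminus A \subset A_+ \setminus A_5$, the point $y$ is a filled-in element of the row $R_{\pi(y)}^{(5)} \subset A$; by \eqref{A5beta} this row has total gap $\le 2\delta^{1/10}|R_{\pi(y)}^{(5)}|$ in its convex hull, so in particular some $\nu(y) \in R_{\pi(y)}^{(5)} \subset A$ satisfies $\pi(\nu(y)) = \pi(y)$ and $|y_1 - \nu(y)_1| \le 2\delta^{1/10}n_1$. Extending $\nu$ by the identity on $A \cap A_+$ gives a map $\nu : A_+ \to A$ such that for any $a, b \in A_+$, $a + b = (\nu(a) + \nu(b)) + w$ with $\nu(a) + \nu(b) \in A + A$ and $w$ horizontal with first coordinate bounded by $4\delta^{1/10}n_1$ in absolute value. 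Hence
\[
A_+ + A_+ \subset (A + A) + T', \qquad T' = \{(v, \vec 0) \in \mathbb{Z}^k : |v| \le 4\delta^{1/10}n_1\}.
\]

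The next step is to bound $|(A+A) + T'| - |A+A| \le O(\delta^{1/10})|B|$ using that $T'$ is a thin horizontal strip. Slicing by $\pi$, each column of $A + A$ gains at most $8\delta^{1/10}n_1$ points from dilating by $T'_0 = [-4\delta^{1/10}n_1, 4\delta^{1/10}n_1]$ (extending its one-dimensional convex hull on each side by $4\delta^{1/10}n_1$), and summing over the at most $|\pi(2B)| \le 2^{k-1}n_1^{-1}|B|$ nonempty columns yields the bound $(8\delta^{1/10}n_1) \cdot (2^{k-1}n_1^{-1}|B|) = O(\delta^{1/10})|B|$. Combining with $|A + A| \le 2^k|A| + \delta|B|$ from \eqref{Ainfo} gives $|A_+ + A_+| \le 2^k|A| + \delta|B| + O(\delta^{1/10})|B|$.

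Finally, subtracting $2^k|A_+|$ and using $|A| - |A_+| \le |A \setminus A_+| \le |A \setminus A_5| \le \delta^{1/20-15c}|B|$ from \eqref{AA5dagger}, I conclude
\[
d_k(A_+) \le \delta|B| + O(\delta^{1/10})|B| + 2^k \delta^{1/20-15c}|B| \le \delta^{1/20-17c}|B|
\]
by absorbing constants into the exponent for $\delta$ sufficiently small.

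The main obstacle is the precise slicewise bound on $|(A + A) + T'| - |A + A|$: the naive inequality $|X + [-m, m]| - |X| \le 2m \cdot r(X)$ with $r(X)$ the number of maximal intervals of a slice $X$ is too loose if $A + A$ has many disconnected runs per row. The resolution uses that rows of $A + A$ are, via the sandwich $A_5 + A_5 \subset A + A$, unions of shifted near-intervals $R_{\bar y}^{(5)} + R_{\bar z}^{(5)}$ whose convex-hull intervals largely overlap, so the extended region grows proportionally to the $\pi$-projection rather than to the total number of runs.
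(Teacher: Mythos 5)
The retraction idea ($\nu:A_+\to A_5$ shifting each filled-in point horizontally by at most $2\delta^{1/10}n_1$) is a correct observation, but the proof has a genuine gap at exactly the step you flag as the ``main obstacle.'' The assertion that ``each column of $A+A$ gains at most $8\delta^{1/10}n_1$ points from dilating by $T'_0$'' is not established and is not obviously true: a slice of $A+A$ is not a single near-interval, and a few scattered points of $A\setminus A_5$ can produce many short runs in a given column of $A+A$, each contributing up to $2m$ to the dilation. Your proposed resolution misidentifies the structure --- slices of $A+A$ need not be unions of $R^{(5)}_{\bar y}+R^{(5)}_{\bar z}$; only slices of $A_5+A_5$ are --- and even restricting to $A_5+A_5$, the claim that ``the extended region grows proportionally to the $\pi$-projection'' is asserted without an argument. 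Quantitatively, if a slice $X_{x'}=\bigcup_i Z_i$ (with $Z_i=R_{x_1}+R_{x_2}$, $x_1+x_2=x'$) has many mutually far-apart $Z_i$, then $|X_{x'}+T'_0|-|X_{x'}|$ grows like $2m$ per connected component, and nothing in your sketch bounds the number of components per slice, nor the sum over slices.

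The paper avoids this overshoot entirely by never introducing $T'$. It notes that the slice of $A_++A_+$ at $x'$ is \emph{exactly} $\bigcup_i\co(Z_i)$ (not $\bigcup_i(Z_i+T'_0)$, which is strictly larger), and then controls $|\bigcup_i\co(Z_i)|-|\bigcup_i Z_i|$ directly via the interval-covering lemma (Claim~\ref{disjointunion}): any union of intervals on $\mathbb{Z}$ is the union of two subfamilies each of which is a \emph{disjoint} union of the original intervals, which gives $|\bigcup\co(Z_i)|\le(1+2\rho)|\bigcup Z_i|$ when each $|\co(Z_i)\setminus Z_i|\le\rho|Z_i|$. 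Summing over slices yields $|A_++A_+|\le(1+8\delta^{1/10})|A_5+A_5|$ with no per-interval $2m$ overhead. That combinatorial lemma is the missing ingredient in your proposal; with it in hand you should compare $A_++A_+$ with $A_5+A_5$ directly and drop the $T'$-dilation, which only weakens the estimate.
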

\begin{proof}
We begin the proof with the following general claim.
\begin{clm}\label{disjointunion}
Given a finite family $\mathcal{F}$ of finite subsets $Z\subset \mathbb{Z}$ and a parameter $\rho$ such that $|\co(Z)| \le (1+\rho)|Z|$ for all $Z\in \mathcal{F}$, we have $$\left|\bigcup_{Z\in \mathcal{F}} \co(Z)\right| \le (1+2\rho)\left|\bigcup_{Z\in \mathcal{F}} Z\right|.$$
\end{clm}
\begin{proof}
Consider a minimal subfamily $\mathcal{G}$ such that $\bigcup_{Z\in \mathcal{G}}\co(Z)=\bigcup_{Z\in \mathcal{F}} \co(Z)$. Then we can enumerate the sets $Z_1,Z_2,\ldots,Z_r$ appearing in $\mathcal{G}$ such that for $i<j$ we have $\co(Z_i)\cap \co(Z_{j})\ne \emptyset$ implies $j=i+1$.

Let $\mathcal{G}', \mathcal{G}''$ be the $Z_i$ with odd and even indices respectively. Then $\bigcup_{Z\in \mathcal{F}} \co(Z)=(\bigsqcup_{Z \in \mathcal{G}'} \co(Z))\cup (\bigsqcup_{Z \in \mathcal{G}''}\co(Z))$, where both are disjoint unions. Then
$$\left|\bigcup_{Z\in \mathcal{F}} \co(Z)\setminus \bigcup_{Z\in \mathcal{F}} Z\right|\le \left|\bigcup_{Z\in \mathcal{G}} \co(Z)\setminus \bigcup_{Z\in \mathcal{G}} Z\right| \le \sum_{Z\in \mathcal{G}'\cup \mathcal{G}''} |\co(Z)\setminus Z| \le \sum_{Z\in \mathcal{G}'\cup \mathcal{G}''} \rho|Z|\le 2\rho\left|\bigcup_{Z\in \mathcal{F}} Z\right|.$$
\end{proof}

Now, for nonempty rows $R_{x_{1}},R_{x_{2}}$ of $A_5$, we have by \eqref{A5beta} that $|\co(R_{x_{1}})|\le (1+2\delta^{\frac{1}{10}})|R_{x_{1}}|$ and $|\co(R_{x_{2}})| \le (1+2\delta^{\frac{1}{10}})|R_{x_{2}}|$, so $$|\co(R_{x_{1}}+R_{x_{2}})|=|\co(R_{x_{1}})|+|\co(R_{x_{2}})|-1 \le (1+2\delta^{\frac{1}{10}})(|R_{x_{1}}|+|R_{x_{2}}|)-1\le (1+4\delta^{\frac{1}{10}})|R_{x_{1}}+R_{x_{2}}|.$$
Taking the sets $Z_i$ to be the pairwise row sums $R_{x_{1}}+R_{x_{2}}$ with $x_1+x_2=x$ fixed, and summing the inequality in \Cref{disjointunion} over all $x\in\{0\}\times\mathbb{Z}^{k-1}$, we obtain
$$|A_++A_+| \le (1+8\delta^{\frac{1}{10}})|A_5+A_5|.$$ 
Hence by \eqref{A5dk}, we thus have
\begin{align*}d_k(A_+) &\le (1+8\delta^{\frac{1}{10}})d_k(A_5)+2^k\cdot8\delta^{\frac{1}{10}}|A_5|\\
&\le \delta^{\frac{1}{20}-17c}|B|.\end{align*}
\end{proof}

\subsection{Reductions Part 5: Approximating $A_+$ by $A_\star\subset A_+$ with few vertices in $\co(A_\star)$ and an extra technical condition}
\label{sectionAstarapprox}
We now construct a set $A_\star\subset A_+$ with $|A_+\setminus A_\star|=o(1)|B|$, for which the following $3$ conditions hold.
\begin{enumerate}
    \item $|V(A_\star)|$, which we recall is the number of vertices of $\cooo(A_\star)$, is bounded by a function of $\delta$.
    \item $|\co(\pi(A_\star))\setminus \pi(A_\star)|=o(1)|\pi(B)|$.
    \item The technical condition \Cref{pullover} holds.
\end{enumerate}
We show this using a double recursion, and the bounds we obtain will no longer be powers of $\delta$.

In \Cref{AstarPart1}, we prove \Cref{polyapprox}, which shows that we can ensure that 1 holds. This is accomplished by showing an analogous approximation result for (continuous) polytopes, and then transitioning to the discrete setting using \Cref{contdisc}.

In \Cref{AstarPart2}, we prove \Cref{firstit}, which shows that we can ensure that both 1 and 2 hold. \Cref{alphaprop'} by itself shows that 2 holds, so we alternate applications of \Cref{polyapprox} and \Cref{alphaprop'}, and show that at some point both 1 and 2 hold simultaneously.

In \Cref{AstarPart3}, we prove \Cref{Astarreduction}, which shows that we can ensure that all of 1,2,3 hold. To do this, we show \Cref{overpulling}, which shows that we can ensure 1 and 3 hold. Similarly to the proof of \Cref{firstit}, we alternate applications of \Cref{firstit} and \Cref{overpulling}, and show that at some point 1,2,3 hold simultaneously.

Finally, in \Cref{AstarConstruction}, we apply \Cref{Astarreduction} to $A_+$ to construct $A_\star$.

\subsubsection{$A_\star\subset A_+$ with   $|V(A_\star)|$ small: Setup Part 1}\label{AstarPart1}
\begin{prop}\label{polyapprox}
There exist $\ll$-dependencies such that for constants
$$n_{k,0}^{-1}\ll \epsilon_0\le 1 \text{ and }16 \epsilon_0^{-1} k(k+1) \min\{n_i\}^{-1}\le \alpha \le 1 ,\text{ and }\ell^{-1}\ll \alpha,$$
the following holds.

For any set of points $C\subset B$ with $|\co(C)|\ge \frac{\epsilon_0}{2}|B|$, there exists $Q=\co(Q)\cap C\subset C$ with $|V(Q)|\le \ell$ and $V(Q)\subset V(C)$, such that $|\co(Q)|\ge (1-\alpha)|\co(C)|$, and if $C$ has all rows intervals, then $Q$ has all rows intervals.

Furthermore, if $\alpha=\min\{n_i\}^{-\frac{2}{(k-1)\lfloor k/2\rfloor+2}}$, then there is a constant $\tau_k$ depending only on $k$ (concretely $\tau_k=(\frac{\tau}{2k})^{-\frac{k-1}{2}}$ where $\tau $ is as in \Cref{contpolyapprox}) such that the dependency between $\ell$ and $\alpha$ can be taken to be the (decreasing) function $$\ell\ge \ell(\alpha):=\tau_k\min\{n_i\}^{\frac{k-1}{(k-1)\lfloor k/2\rfloor+2}}$$
\end{prop}

To do this, we first consider a continuous analogue, which was proved constructively by Gordon, Meyer, and Reisner \cite{BestApproximation}.
\begin{lem}\label{contpolyapprox}
For any $\alpha>0$, there exists $\ell'=\ell'(\alpha)$ such that the following is true. For any polytope $\widetilde{C}$, there is a polytope $\widetilde{Q}$ which is the convex hull of at most $\ell'$ vertices of $\widetilde{C}$ with $|\widetilde{Q}|\ge (1-\alpha)|\widetilde{C}|$. There is an absolute constant $\tau$ independent of $k$ such that, for $\alpha$ sufficiently small in terms of $k$, we can take $\ell'=(\frac{\tau}{k}\alpha)^{-\frac{k-1}{2}}$.
\end{lem}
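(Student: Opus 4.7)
The plan is to invoke the classical result of Gordon, Meyer and Reisner \cite{BestApproximation} on approximation of convex bodies by inscribed polytopes, and then convert its output into a polytope whose vertices actually lie in $V(\widetilde{C})$. Specifically, the Gordon-Meyer-Reisner bound gives, for any convex body $\widetilde{K}\subset \mathbb{R}^k$ and any sufficiently small $\alpha>0$, an inscribed polytope $\widetilde{P}\subset \widetilde{K}$ with at most $m\le (\tau'\alpha/k)^{-(k-1)/2}$ vertices (for an absolute constant $\tau'$) satisfying $|\widetilde{K}\setminus \widetilde{P}|\le \alpha|\widetilde{K}|$. I apply this to $\widetilde{K}=\widetilde{C}$ with the input volume fraction $\alpha$ to produce such a $\widetilde{P}$ with vertices $v_1,\ldots,v_m$ lying in $\widetilde{C}$.

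Now comes the conversion step. Since each $v_i\in \widetilde{C}=\widetilde{\co}(V(\widetilde{C}))$, Carath\'eodory's theorem lets me write $v_i=\sum_{j=1}^{k+1}\lambda_{i,j}w_{i,j}$ with $w_{i,j}\in V(\widetilde{C})$, $\lambda_{i,j}\ge 0$, and $\sum_j \lambda_{i,j}=1$. Let $W=\bigcup_{i,j}\{w_{i,j}\}\subset V(\widetilde{C})$ and $\widetilde{Q}=\widetilde{\co}(W)$. Then $V(\widetilde{Q})\subset W\subset V(\widetilde{C})$, $|V(\widetilde{Q})|\le (k+1)m$, and $v_i\in \widetilde{Q}$ for every $i$ so that $\widetilde{P}\subset \widetilde{Q}\subset \widetilde{C}$. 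Consequently $|\widetilde{Q}|\ge |\widetilde{P}|\ge (1-\alpha)|\widetilde{C}|$. Setting $\ell'=(k+1)m=(k+1)(\tau'\alpha/k)^{-(k-1)/2}$ and absorbing the factor $(k+1)$ into the constant, the final bound takes the claimed form $\ell'=(\tau\alpha/k)^{-(k-1)/2}$ for an absolute constant $\tau$ (depending on $\tau'$ but not on $k$).

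The only nontrivial ingredient is the Gordon-Meyer-Reisner bound itself, whose proof proceeds via a Macbeath-region / cap-covering argument on $\partial \widetilde{C}$ to match the sharp exponent $-(k-1)/2$; I would simply cite this rather than reproduce the cap analysis. The main subtlety in my reduction is ensuring $\widetilde{P}\subset\widetilde{Q}$ after the Carath\'eodory replacement, but this is immediate since each vertex of $\widetilde{P}$ is exhibited as an explicit convex combination of points in $W$. The form of the constant $\tau/k$ in the final bound correctly tracks the dimensional dependence coming from \cite{BestApproximation}, with the extra $(k+1)$ factor from Carath\'eodory only changing the hidden absolute constant.
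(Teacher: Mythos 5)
Your proof is correct, but it takes a different route from the paper in the reduction from ``vertices inside $\widetilde{C}$'' to ``vertices of $\widetilde{C}$.'' The paper observes that once Gordon--Meyer--Reisner produces an inscribed polytope $\widetilde{P}$ with $m$ vertices in $\widetilde{C}$, one may move each vertex in turn to a vertex of $\widetilde{C}$ without decreasing the volume: for fixed $v_2,\ldots,v_m$, the map $v\mapsto |\widetilde{\co}(\{v,v_2,\ldots,v_m\})|$ is convex on $\widetilde{C}$ (it equals $|\widetilde{\co}(\{v_2,\ldots,v_m\})|$ plus an integral of $\max(0,\text{linear in }v)$ over the facets of $\widetilde{\co}(\{v_2,\ldots,v_m\})$), so the maximum is attained at an extreme point of $\widetilde{C}$. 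This keeps the vertex count exactly $m$. You instead apply Carath\'eodory to each $v_i$ and take $\widetilde{Q}=\widetilde{\co}(W)$ with $W\subset V(\widetilde{C})$, $|W|\le (k+1)m$; this is more elementary and sidesteps the convexity-of-volume fact, but costs a factor of $k+1$. You then claim this factor can be ``absorbed into the constant,'' which is true but not entirely automatic: since the exponent $-(k-1)/2$ depends on $k$, absorbing $(k+1)$ into $\tau$ requires $\tau\le\tau'/(k+1)^{2/(k-1)}$, and this gives a uniform $\tau$ only because $\sup_{k\ge 2}(k+1)^{2/(k-1)}=9$ is finite. You should say a sentence to this effect, since as written the absorption reads as if a $k$-dependent factor trivially disappears into an absolute constant. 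With that small clarification, your argument is a valid alternative to the paper's; the paper's convexity argument is slightly cleaner in that it never increases the vertex count.
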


\begin{proof}[Proof of \Cref{contpolyapprox}]
It is enough to find such a polytope $\widetilde{Q}$ with vertices contained inside $\widetilde{C}$. Indeed, a simple convexity argument shows that as we vary the vertices of $\widetilde{Q}$ the maximum volume is attained when all vertices of $\widetilde{Q}$ are among the vertices of $\widetilde{C}$. The result then follows from \cite[Theorem 3]{BestApproximation}.
\end{proof}

\begin{proof}[Proof of \Cref{polyapprox}]
Let $\widetilde{C}=\cooo(C)$ be the continuous convex hull of $C$. By \Cref{contpolyapprox}, there exists a polytope $\widetilde{P}$ with $|V(\widetilde{P})|\le \ell'(\frac{\alpha}{2})$,  $V(\widetilde{P})\subset V(\widetilde{C})=V(C)$, and $|\widetilde{P}|\ge (1-\frac{\alpha}{2})|\widetilde{C}|$. Let $Q=\widetilde{P}\cap C$, and note that $\cooo(Q)=\widetilde{P}$.

We thus have by \Cref{contdisc} that
\begin{align*}
|\co(Q)|&\ge |\widetilde{P}|-2k(k+1)\min\{n_i\}^{-1}|B|\\
&\ge \left(1-\frac{\alpha}{2}\right)|\widetilde{C}|-2k(k+1)\min\{n_i\}^{-1}|B|\\
&\ge \left(1-\frac{\alpha}{2}\right)|\co(C)|-4k(k+1)\min\{n_i\}^{-1}|B|\\
&\ge (1-\alpha)|\co(C)|.
\end{align*}
For $\alpha=\min\{n_i\}^{-\frac{2}{(k-1)\lfloor k/2\rfloor+2}}$, we see that $\ell=\ell'(\frac{\alpha}{2})$, yielding  $\ell=\tau_k\min\{n_i\}^{\frac{k-1}{(k-1)\lfloor k/2\rfloor+2}}$.
\end{proof}

\subsubsection{$A_\star\subset A_+$ with $|V(A_\star)|$ and $|\co(\pi(A_\star))\setminus \pi(A_\star)|$ small: Setup part 2}
\label{AstarPart2}

At this point in the proof, we will lose polynomial control over the doubling constant, so for convenience we will work with purely qualitative statements from now on. The following proposition is a simple to use qualitative analogue of \Cref{alphaprop'}.

\begin{prop}\label{qualalphaprop}
There exist $\ll$-dependencies such that for constants
$$n_{k,0}^{-1}\ll f \ll \epsilon_0\le 1\text{ and }f\ll h_1,h_2$$
the following is true.

If $A'\subset B$ and $|A'|\ge \frac{\epsilon_0}{2}$ with $d_k(A')\le f|B|$, then there is a subset of the rows $A''\subset A'$ such that
$$|A'\setminus A''|\le h_{1}|B|,\qquad|\co(\pi(A''))\setminus \pi(A'')|\le h_{2}|\pi(B)|.$$
\end{prop}

\begin{proof}[Proof of \Cref{qualalphaprop}]
Applying \Cref{alphaprop'} with $\sigma=f$, $\lambda=1$ and $\alpha=\frac12$, we see that the result is true with the $\ll$ dependencies $f^{\frac{1}{2}-c}\le h_1$ and $f^{\frac{1}{2}}\le h_2$.

\end{proof}

\begin{prop}\label{firstit}
There exist $\ll$-dependencies such that for constants
$$n_{k,0}^{-1}\ll f \ll \epsilon_0\le 1\text{ and }f\ll h_3,h_4 $$
the following is true.

If $A'\subset B$ has all rows intervals, $|A'|\ge \frac{2\epsilon_0}{3}|B|$ and $d_k(A')\le f|B|$, then there exists a subset $A''\subset A'$ with each row an interval, such that
\begin{enumerate}
    \item $|\co(\pi(A''))\setminus \pi(A'')|\le h_3|\pi(B)|$
    \item $|A'\setminus A''|\le h_4|B|$
    \item $|V(A'')|\le \ell(f)$ with $\ell$ as in \Cref{polyapprox}.
\end{enumerate}
\end{prop}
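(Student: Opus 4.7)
The plan is to iterate \Cref{polyapprox} and \Cref{qualalphaprop} starting from $A^{(0)} := A'$. Fix a parameter $\alpha = \alpha(f)$ with $\alpha \to 0$ as $f\to 0$ to be chosen. Inductively, given $A^{(i)}$ with all rows intervals, first apply \Cref{polyapprox} with parameter $\alpha$ to obtain $P^{(i)}\subset A^{(i)}$ with $|V(P^{(i)})|\leq \ell(\alpha)$, $V(P^{(i)})\subset V(A^{(i)})$, and $|\cooo(P^{(i)})|\geq (1-\alpha)|\cooo(A^{(i)})|$; since rows of $A^{(i)}$ are intervals and polyapprox intersects with a convex region, the rows of $P^{(i)}$ are intervals too. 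Then apply \Cref{qualalphaprop} to $P^{(i)}$ (with parameter equal to the current $d_k$-bound $f_i$) to obtain a union of rows $A^{(i+1)}\subset P^{(i)}$ satisfying $|\co(\pi(A^{(i+1)}))\setminus \pi(A^{(i+1)})|\leq h_2(f_i)|\pi(B)|$ and $|P^{(i)}\setminus A^{(i+1)}|\leq h_1(f_i)|B|$. Using \Cref{dkobs}, one verifies $f_{i+1}\leq f_i+2^k(\alpha+h_1(f_i))$, and the cumulative mass loss $|A'\setminus A^{(i)}|$ grows by at most $(\alpha+h_1(f_i))|B|$ per step.

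The output will be $P^{(i)}$ at the first index $i$ for which also $|\co(\pi(P^{(i)}))\setminus \pi(P^{(i)})|\le h_3(f)|\pi(B)|$ holds. At such a step, property (3) is given by polyapprox with $\ell(f) := \ell(\alpha(f))$, property (1) is the stopping criterion, and property (2) follows from summing the per-step mass losses. The functions $h_3(f)$ and $h_4(f)$ are then defined recursively in terms of $h_1, h_2$ and the maximum iteration count $N=N(f)$, and are easily seen to tend to $0$ as $f\to 0$ since $h_1, h_2, \alpha$ do.

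To prove termination, I will run a volume-decrement argument: as long as $P^{(i)}$ fails the stopping criterion, the volume $|\cooo(A^{(i+1)})|$ drops below $|\cooo(A^{(i)})|$ by at least a fixed positive amount $c(f)|B|$ depending only on $h_3(f)$. Since we maintain $|\cooo(A^{(i)})|\ge |A^{(i)}| \ge \tfrac{\epsilon_0}{3}|B|$ throughout (provided the number of iterations times the per-step loss stays below $\tfrac{\epsilon_0}{3}|B|$), this forces the iteration to stop within $N(f) \le 3/(\epsilon_0 c(f))$ steps, which in turn determines the form of $h_3, h_4$.

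The main obstacle is establishing the volume-decrement claim in the previous paragraph. The difficulty is that \Cref{polyapprox} and \Cref{qualalphaprop} interact awkwardly: polyapprox can truncate rows of $A^{(i)}$ (leaving shorter intervals in $P^{(i)}$), and qualalphaprop can create new extreme points of the ensuing convex hull by deleting rows, so the $\pi$-deficit of $P^{(i)}$ does not obviously translate into a volume loss at the next step. The heart of the argument will be a geometric lemma showing that empty $\pi$-columns of $P^{(i)}$ of total size at least $h_3(f)|\pi(B)|$ force the polytope $\cooo(A^{(i+1)})$, produced at the next iteration, to sit inside a proper sub-polytope of $\cooo(A^{(i)})$ of volume smaller by a quantity proportional to $h_3(f)|B|$; this passage from a $\pi$-defect to a full-dimensional volume drop is where the qualitative (non-polynomial) nature of the bound is unavoidable.
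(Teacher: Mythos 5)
Your framework — alternating \Cref{polyapprox} and \Cref{qualalphaprop}, stopping at the first iterate whose $\pi$-deficit is small, and outputting that iterate — is the same as the paper's. The termination argument, however, differs, and this is where the gap lies. You propose tracking the full $k$-dimensional volume $|\cooo(A^{(i)})|$ and claim that every step failing the stopping criterion costs a macroscopic chunk of this volume. You yourself flag the needed ``geometric lemma'' (that a $\pi$-column deficit of size $h_3(f)|\pi(B)|$ forces a drop of order $h_3(f)|B|$ in $|\cooo(\cdot)|$) as ``the heart of the argument'' and leave it unproved. That lemma is not just unproved but dubious as stated: the rows whose $\pi$-images get peeled off can sit near the boundary of the convex hull and be arbitrarily short intervals, so the lost $k$-dimensional volume need not carry the extra factor of $n_1$ your bound requires — the $\pi$-deficit is a $(k-1)$-dimensional quantity and does not in general promote to a $k$-dimensional one.

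The paper avoids this entirely by never passing to $k$-dimensional volume. Writing $A_i''$ for the output of \Cref{qualalphaprop} and $A_i'\subset A_i''$ for the subsequent output of \Cref{polyapprox}, it uses the purely arithmetic identity
$|\co(\pi(A_i'))\setminus\pi(A_i')| - |\co(\pi(A_i''))\setminus\pi(A_i'')| = |\pi(A_i'')\setminus\pi(A_i')| - |\co(\pi(A_i''))\setminus\co(\pi(A_i'))| \le |\pi(A_i'')\setminus\pi(A_i')|$,
so each ``bad'' iterate (one with $\pi$-deficit exceeding $h_3$) must have deleted at least $(h_3 - h_2)|\pi(B)|$ rows from the projection, and the cumulative projected deletion $\sum_i |\pi(A_i'')\setminus\pi(A_i')|$ cannot exceed $|\pi(B)|$. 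This bounds the number of bad steps without any geometric input. Replacing your volume-decrement potential with this projected row-count potential closes the gap; the rest of your bookkeeping (the recursive functions $g_i$, the cumulative mass-loss estimate, the definitions of $h_3$, $h_4$, and the iteration cap $\gamma(f)$) would then go through essentially as in the paper, modulo which of the two reductions is applied first in each cycle.
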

\begin{proof}
We may assume that $h_3=h_4=h$ for some $h$. We start by taking the dependencies
$$n_{k,0}^{-1}\ll f \ll \epsilon_0 \le 1$$ to work for both \Cref{polyapprox} with $\alpha=f$, and for \Cref{qualalphaprop}, and we take the dependency $f\ll h$ to be the dependency for $f\ll h_2$ from \Cref{qualalphaprop}.

First, we show by induction that for any $\gamma\in \mathbb{N}$ there are dependencies
$$f\ll g_0 \ll \ldots \ll g_{\gamma-1}\ll \epsilon_0,h/2$$ with the composite dependencies $f\ll \epsilon_0,h$ refining the existing dependencies,
such that we can create a nested sequence of sets $$A'=A_0'\supset A_0'' \supset A_1' \supset A_1''\supset \ldots \supset A_\gamma'$$
where
\begin{itemize}
    \item $A_i''\subset A_i'$ has all rows intervals, and $|\co(\pi(A_i''))\setminus \pi(A_i'')|\le g_i |\pi(B)|$ and $|A_i'\setminus A_i''|\le g_i|B|$
    \item $A_{i+1}'\subset A_i''$ has all rows intervals, $|V(A_{i+1}')|\le \ell(f)$, and $|A_i''\setminus A_{i+1}'|\le f|B|$.
    \item $|A'\setminus A'_\gamma|\le h|B|$.
\end{itemize}
Assume we can find such dependencies for $\gamma$, we will show we can find dependencies $g_{\gamma-1}\ll g_{\gamma}\ll \epsilon_0,h/2$ to work for $\gamma+1$. By requiring the dependency $g_{\gamma-1}\ll g_\gamma$ to be at least as strong as the existing $g_{\gamma-1}\ll \epsilon_0,h/2$ dependencies, we can ensure the composite dependency is at least as strong as the existing $g_{\gamma-1}\ll \epsilon_0,h/2$ dependencies. 

We start by showing that we can choose the dependencies so that \Cref{qualalphaprop} can be applied to $A_\gamma'$ to produce the desired $A_\gamma''$. Note that
$|A'\setminus A_\gamma'|\le (\gamma f + g_0 +\ldots + g_{\gamma-1})|B|\le 2\gamma g_{\gamma-1}|B|$, so by requiring $g_{\gamma}\le \frac{1}{2\gamma}\frac{\epsilon_0}{12}$, we can ensure that
$$|A_\gamma'|\ge \left(\frac{2\epsilon_0}{3}-\frac{\epsilon_0}{12}\right)|B|=\frac{7\epsilon_0}{12}|B|.$$
Next, because $|A'\setminus A'_\gamma|\le (2\gamma g_{\gamma-1})|B|$, by \Cref{dkobs} we have
$$d_k(A'_\gamma)\le (f+2^{k+1}\gamma g_{\gamma-1})|B|\le (2^{k+1}\gamma+1)g_{\gamma-1}|B|.$$
By requiring that the dependencies $g_{\gamma-1}\ll \frac{1}{2^{k+1}\gamma+1}g_\gamma$ and $\frac{1}{2^{k+1}\gamma+1}g_\gamma\ll h$ are at least as strong as the $f\ll h_1$ and $f\ll h_2$ dependencies from \Cref{qualalphaprop}, we can apply \Cref{qualalphaprop} to obtain the desired $A_\gamma''$.

To construct $A'_{\gamma+1}$ satisfying the second point, note that because we have ensured the $n_{k,0}^{-1}\ll f \ll \epsilon_0$ dependencies are at least as strong as when we started, we may apply \Cref{polyapprox} to $A_\gamma''$ with $\alpha=f$ to create $A_{\gamma+1}'=\co(A_{\gamma+1}')\cap A_\gamma''$ with $|V(A_{\gamma+1}')|\le \ell(f)$, and
$$|A_\gamma''\setminus A_{\gamma+1}'|=|A_\gamma''\setminus \co(A_{\gamma+1}')|\le |\co(A_\gamma'')\setminus \co(A_{\gamma+1}')|\le f|\co(A_\gamma'')|\le f|B|.$$
Finally, to satisfy the third point, note that by the first and second points we have $|A'\setminus A'_{\gamma+1}|\le ((\gamma+1)f+g_0+\ldots+g_\gamma)|B| \le 2(\gamma+1)g_{\gamma}|B|$, so by requiring $g_{\gamma}\le \frac{h}{2(\gamma+1)}$ we can ensure that $|A'\setminus A'_{\gamma+1}|\le h|B|$.

By construction, for any fixed $\gamma$ if we set in the above construction $A''=A_i'$ for $1 \le i \le \gamma$, then this satisfies all of the requirements of $A''$ except possibly that $|\co(\pi(A''))\setminus \pi(A'')|\le h|B|$. Take now $$\gamma=\left\lceil \frac{2}{h} \right\rceil +2$$ so that $\frac{h(\gamma-1)}{2}>1$. If all of  $A_1',\ldots,A_{\gamma}'$ have the property that
$|\co(\pi(A_i'))\setminus \pi(A_i')| > h|\pi(B)|$, then noting that because $\pi(A_{i+1}')\subset \pi(A_i'')$ we have $|\co(\pi(A'_{i+1}))\setminus\pi(A'_{i+1})|-|\co(\pi(A''_i))\setminus\pi(A''_i)|=|\pi(A''_{i})\setminus \pi(A'_{i+1})|-|\co(\pi(A''_{i}))\setminus \co(\pi(A'_{i+1}))|$, and we deduce that
\begin{align*}
|\co(\pi(A_1'))\setminus \pi(A'_\gamma)|&\ge |\co(\pi(A_1'))\setminus \pi(A_1')|+\sum_{i=1}^{\gamma-1} |\pi(A_i'')\setminus \pi(A_{i+1}')|\\&\geq \sum_{i=1}^{\gamma-1} |\co(\pi(A'_{i+1}))\setminus\pi(A'_{i+1})|-|\co(\pi(A''_i))\setminus\pi(A''_i)|\\
&\geq \sum_{i=1}^{\gamma-1}(h-g_i)|\pi(B)|\\
&\ge \frac{h(\gamma-1)}{2}|\pi(B)|>|\pi(B)|,
\end{align*}
a contradiction. Let $1\le i_0\leq \gamma$ be an index such that $|\co(\pi(A'_{i_0}))\setminus \pi(A'_{i_0})|\leq h|\pi(B)|$. Then we conclude by setting $A''=A'_{i_0}$.
\end{proof}

\subsubsection{$A_\star\subset A_+$ with  $|V(A_\star)|$ and $|\co(\pi(A_\star))\setminus \pi(A_\star)|$ small and one further technical condition: Setup part 3}
\label{AstarPart3}

\begin{defn}For every $A'\subset B$, let $\mathcal{T}^+(A')$ (resp. $\mathcal{T}^-(A')$) be a triangulation of the upper (resp. lower) convex hull of $\cooo(A')$ with respect to the $e_1$ direction, projected under $\pi$ to $\{0\}\times\mathbb{R}^{k-1}$, so in particular every $\widetilde{T}\in\mathcal{T}^+(A')$ has $\widetilde{T}\subset \{0\}\times\mathbb{R}^{k-1}$. We ensure that if $\cooo(A'_1)=\cooo(A'_2)$, then $\mathcal{T}^+(A'_1)=\mathcal{T}^+(A'_2)$ and $\mathcal{T}^-(A'_1)=\mathcal{T}^-(A'_2)$.
\end{defn}

\begin{notn}
For a simplex $\widetilde{T}\subset \{0\}\times \mathbb{R}^{k-1}$, we will write $$T:=\widetilde{T}\cap \{0\}\times \mathbb{Z}^{k-1},\text{ and }T^o:=\widetilde{T}^o\cap (\{0\}\times \mathbb{Z}^{k-1})$$
where $\widetilde{T}^o$ is the interior of $\widetilde{T}$.
\end{notn}

\begin{defn}\label{YofT}
Given $\widetilde{T} \subset \{0\}\times \mathbb{R}^{k-1}$ with integral vertices, and a set $\mathcal{W} \subset \{0\}\times \mathbb{Z}^{k-1}$, for every $x\in T$ we define the set $$Y_{\mathcal{W}}(x):=((x+\mathcal{W})\cap T)\cup V(T)\subset T.$$

\end{defn}

\begin{prop}\label{overpulling}
There exists $\ll$-dependencies on constants
$$n_{k,0}^{-1}\ll f_1,f_2$$
such that the following is true.
Let $\nu>0$, $A'\subset B$ with  $d_k(A')\le f_1|B|$ and $|\co(\pi(A'))\setminus \pi(A')|\le f_2|\pi(B)|$. Suppose we have sets $\mathcal{W}_T\subset \{0\}\times \mathbb{Z}^{k-1}$ with $|\mathcal{W}_T| \le \nu$ for every $\widetilde{T} \in\mathcal{T}^+(A')\cup\mathcal{T}^{-}(A')$. Then there exists a subset of rows $A''\subset A'$ such that $$|A'\setminus A''|\le (2\nu+2)(f_1+2^{k+1}f_2)|B|$$
which satisfies the following additional properties.
\begin{enumerate}
    \item $\co(A'')=\co(A')$.
    \item 
For every $\widetilde{T}\in \mathcal{T}^+(A')\cup \mathcal{T}^-(A')=\mathcal{T}^+(A'') \cup \mathcal{T}^-(A'')$, if $x\in T^o \setminus V_\pi(A'')$,  $y\in Y_{\mathcal{W}_T}(x)$, $z \in \pi(B)$ and $\vec{v}\in\{0\}\times\{0,1\}^{k-1}$ with $x+y=z+z+\vec{v}$ and $R_{x}+R_{y}, R_{z}+R_{z+\vec{v}}$ nonempty, then $(R_{x}+R_{y})\cap (R_{z}+R_{z+\vec{v}})\ne \emptyset$.
\end{enumerate}
\end{prop}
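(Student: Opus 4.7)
The plan is to define $\mathcal{B} \subset \pi(A')$ as the set of $x$ witnessing any bad configuration in $A'$, and to set $A'' := A' \setminus \bigsqcup_{x \in \mathcal{B}} R_x$. Precisely, $x \in \mathcal{B}$ iff there exist $\widetilde{T} \in \mathcal{T}^+(A') \cup \mathcal{T}^-(A')$ with $x \in T^o \setminus V_\pi(A')$, $y \in Y_{\mathcal{W}_T}(x)$, and $(z, \vec{v})$ with $x+y = 2z+\vec{v}$, $\vec{v} \in \{0\}\times\{0,1\}^{k-1}$ such that $R_x + R_y$ and $R_z + R_{z+\vec{v}}$ are nonempty and disjoint. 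Since $\mathcal{B}\cap V_\pi(A')=\emptyset$, all rows at vertex positions are preserved, so $V(A'')=V(A')$ and $\cooo(A'')=\cooo(A')$; in particular $\co(A'')=\co(A')$ (giving (1)) and $\mathcal{T}^{\pm}(A'')=\mathcal{T}^{\pm}(A')$. For (2), any 4-tuple in $A''$ with both row sums nonempty must have $x, y, z, z+\vec{v}\notin\mathcal{B}$, so in particular $x$ was non-bad in $A'$, and since the four rows are unchanged between $A'$ and $A''$, the intersection condition inherited from $A'$ continues to hold in $A''$.

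The core estimate is to bound $|A'\setminus A''|=\sum_{x\in\mathcal{B}}|R_x|$. For each $x \in \mathcal{B}$ fix a witness $(T_x, y_x)$ with canonical $(z_x, \vec{v}_x)$ satisfying $x+y_x = 2z_x + \vec{v}_x$. The key observation is that $R_x + R_{y_x}$, which has size at least $|R_x|$, lies in the fiber of $A'+A'$ at $s_x:=x+y_x$ and is disjoint from $R_{z_x}+R_{z_x+\vec{v}_x}\supset R_{z_x}(+)R_{z_x+\vec{v}_x}$; hence $R_x+R_{y_x}\subset (A'+A')\setminus A'(+)A'$. Adapting the estimate from the proof of \Cref{rs} using \Cref{Surfobs} under the hypothesis $|\co(\pi(A'))\setminus\pi(A')|\le f_2(\delta)|\pi(B)|$ yields $|A'(+)A'|\ge 2^k|A'|-2^{k+1}f_2(\delta)|B|$, and therefore $|(A'+A')\setminus A'(+)A'|\le d_k(A')+2^{k+1}f_2(\delta)|B|\le (f_1+2^{k+1}f_2)|B|$.

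The main technical point is that for different bad $x_1,x_2$ with $s_{x_1}=s_{x_2}$, the contributions $R_{x_1}+R_{y_{x_1}}$ and $R_{x_2}+R_{y_{x_2}}$ may overlap, so one cannot simply sum the lower bounds. I would resolve this via a charging argument showing that each fiber $s$ of $A'+A'$ absorbs at most $2\nu+2$ bad $x$'s, giving
$\sum_{x\in\mathcal{B}}|R_x| \le (2\nu+2)\,|(A'+A')\setminus A'(+)A'|$.
The factor of 2 is from treating $\mathcal{T}^+$ and $\mathcal{T}^-$ separately, in each of which the interiors $T^o$ are pairwise disjoint so that $T(x)$ is uniquely determined by $x$ within that triangulation. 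Within a single triangulation, a witness of the form $y_x=x+w$ with $w\in\mathcal{W}_{T_x}$ forces $x=(s-w)/2$, and since the translates $s-2T^o$ are pairwise disjoint as $T$ varies, such bad $x$'s at fiber $s$ are controlled by $\sum_T|\mathcal{W}_T\cap(s-2T^o)|\le \nu$; the remaining vertex-type witnesses $y_x\in V(T_x)$ contribute the extra $+1$ by the parallel injectivity argument applied to the (shifted) vertex set along $s-V(T)$. Combining the per-fiber cap of $2\nu+2$ with the extras estimate yields the desired bound $|A'\setminus A''|\le (2\nu+2)(f_1+2^{k+1}f_2)|B|$.
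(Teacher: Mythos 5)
Your approach is essentially the paper's, reorganized.  The paper fixes an enumeration $\mathcal{W}_T=\{\vec{w}_{T,1},\ldots,\vec{w}_{T,\nu}\}$, forms for each index $(\star,i)\in\{+,-\}\times\{0,\ldots,\nu\}$ a family $Z_i^\star=\bigsqcup_T\bigsqcup_{x\in X_{T,i}}(R_x+R_{y_i^\star(x)})\subset(A'+A')\setminus A'(+)A'$ whose constituents lie in pairwise distinct fibers (with $i=0$ the vertex case), and then chooses by pigeonhole the $(\star,i)$ carrying at least a $\tfrac{1}{2\nu+2}$-fraction of $\sum_{x\in X}|R_x|$.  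Your per-fiber charging is the dual reading of the same count, and the rest of your plan (definition of $\mathcal B$, properties (1) and (2) via deleting only non-vertex rows, and the estimate $|(A'+A')\setminus A'(+)A'|\le(f_1+2^{k+1}f_2)|B|$ from \Cref{Surfobs}) matches the paper.

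The one place you go wrong is the crucial justification of the per-fiber cap.  The intermediate claim $\sum_T|\mathcal{W}_T\cap(s-2T^o)|\le\nu$ is \emph{false} as stated, and the reason you give for it (pairwise disjointness of the translates $s-2T^o$) does not imply it: the sets $\mathcal{W}_T$ are different for different $T$ and need not lie in a common $\nu$-element set, so two distinct simplices $T_1,T_2\in\mathcal T^+$ can each contribute via some $w_i\in\mathcal{W}_{T_i}\cap(s-2T_i^o)$.  What actually saves the count is the constraint $y_x\in Y_{\mathcal{W}_{T_x}}(x)\subset T_x$ that you dropped: since $x\in T_x^o$ and $y_x=x+w\in T_x$, the midpoint $\tfrac{s}{2}=x+\tfrac{w}{2}$ lies in $T_x^o$, so $T_x$ is the \emph{unique} simplex of $\mathcal T^+$ containing $\tfrac{s}{2}$ in its open interior, and only that one $\mathcal{W}_{T_x}$ (of size $\le\nu$) is in play.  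This midpoint observation is exactly how the paper proves each $Z_i^\star$ is a disjoint union; its analogue for the vertex case (the half-scale corner simplices $\tfrac12(T^o+v)$, $v\in V(T)$, are pairwise disjoint, and $\tfrac{s}{2}$ lies in at most one of them) supplies the $+1$ that you invoked.  With the midpoint fact spelled out, the per-fiber cap of $2\nu+2$ holds and the rest of your argument is correct.
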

\begin{proof}
For every $\widetilde{T} $, write $\mathcal{W}_T:=\{\vec{w}_{T,i} : 1\le i \le \nu \}$. Let $X_{T,i}\subset T^o\setminus V_\pi(A')$ be those $x$ such that $x+\vec{w}_{T,i}\in Y_{\mathcal{W}_T}(x)$, and such that, when writing $x+x+\vec{w}_{T,i}=z+z+\vec{v}$ with $\vec{v}\in\{0\}\times\{0,1\}^{k-1}$, we have $$R_{x}+R_{x+\vec{w}_{T,i}},R_{z}+R_{z+\vec{v}}\text{ nonempty and }(R_{x}+R_{x+\vec{w}_{T,i}}) \cap (R_{z}+R_{z+\vec{v}}) = \emptyset.$$ For $\star\in \{+,-\}$, let $X^\star_i:=\bigsqcup_{\widetilde{T}\in \mathcal{T}^\star(A')} X_{T,i}$, let $y_i^\star$ be defined on $X_i^\star$ by setting $y^\star_i(x)=x+\vec{w}_{T,i}$ for $x\in X_{T,i}$, and set the disjoint union  $$Z^\star_i:=\bigsqcup_{\widetilde{T}\in \mathcal{T}^\star(A') }\bigsqcup_{x\in X_{T,i}} R_x+R_{y^\star_i(x)}\subset A'+A'.$$
Here we note the union is disjoint as $\frac{1}{2}(x+y_i^\star(x))\in \widetilde{T}^{\circ}$, which are disjoint for distinct $\widetilde{T}$, and for a given $\widetilde{T}$ we have $\{2x+\vec{w}_{T,i}\}_{x\in T}$ are distinct.

For $\star \in \{+,-\}$ let $X^\star_0\subset \pi(B)\setminus V_\pi(A')$ be those $x$ such that there exists $\widetilde{T}\in\mathcal{T}^\star(A')$ with $x\in T^o$ and $y^\star_0(x)\in V(T)$, such that writing $x+y^\star_0(x)=z+z+\vec{v}$ with $\vec{v}\in\{0\}\times\{0,1\}^{k-1}$, $$R_{x},R_{z}+R_{z+\vec{v}}\text{ nonempty, and }(R_{x}+R_{y^\star_0(x)}) \cap (R_{z}+R_{z+\vec{v}})= \emptyset.$$ Set the disjoint union
$$Z^{\star}_0:=\bigsqcup_{x\in X^{\star}_0} R_x+R_{y^\star_0(x)}\subset A'+A'.$$
Here, the union is disjoint because $\frac{1}{2}(x+y_0^\star(x))\in \widetilde{T}^{\circ}$, which are disjoint for distinct $\widetilde{T}$, and for a given $\widetilde{T}$ we have $\{\frac{1}{2}(\widetilde{T}^{\circ}+y)\}_{y\in V(T)}$ are disjoint.

Finally, set $X:=\bigcup_{(\star,i) \in \{+,-\}\times \{0,\ldots,\nu\}}X^\star_i$, and let $A''=A'\setminus \bigsqcup_{x\in X} R_x$, so that $|A'\setminus A''|=\sum_{x \in X}|R_x|$. By construction $A''$ satisfies the properties 1 and 2, so it suffices to show $|A'\setminus A''|\le (2\nu+2)(f_1+2^{k+1}f_2)|B|$.

Set the disjoint union
$$Z:=A'(+)A'=\bigsqcup_{\vec{v}\in \{0\}\times \{0,1\}^{k-1}}\bigsqcup_{x\in \{0\}\times\mathbb{Z}^{k-1}}R_x(+)R_{x+\vec{v}}\subset A'+A',$$
and note that by construction $Z\cap Z^\star_i=\emptyset$ for all $\star,i$.  Choose  $(\star,i)\in \{+,-\}\cup \{0,\ldots,\nu\}$ so that $$|A'\setminus A''|=\sum_{x \in X}|R_x| \le (2\nu+2) \sum_{x \in X^{\star}_i}|R_x|.$$

Note that for $0\ne \vec{v}\in \{0\}\times \{0,1\}^{k-1}$ and $x\in \{0\}\times \mathbb{Z}^{k-1}$ we have
$$|R_{x}(+)R_{x+\vec{v}}|-|R_{x}|-|R_{x+\vec{v}}|\ge \begin{cases}0&|\{x_1,x_2\}\cap \pi(A')|=0\\-n_1 & |\{x_1,x_2\}\cap \pi(A')|=1\\-1 & |\{x_1,x_2\}\cap \pi(A')|= 2.\end{cases}$$
By \Cref{Surfobs}, we have for $0\ne \vec{v}\in \{0\}\times\{0,1\}^{k-1}$ that
\begin{align*}|\{x\in \{0\}\times\mathbb{Z}^{k-1}: |\{x,x+\vec{v}\}\cap \pi(A')|=1\}|&\le 2|\co(\pi(A'))\setminus \pi(A')|+2(k-1)n_1^{-1}n_{k,0}^{-1}|B|\\
&\le 3f_2n_1^{-1}|B|\end{align*}
and
\begin{align*}|\{x\in \{0\}\times\mathbb{Z}^{k-1}: |\{x,x+\vec{v}\}\cap \pi(A')|=2\}|\le |\pi(B)|\le n_{k,0}^{-1}|B|\le f_2|B|.\end{align*}
We therefore have (taking $\vec{v}\in \{0\}\times \{0,1\}^{k-1}$ and $x\in \{0\}\times \mathbb{Z}^{k-1}$)
\begin{align*}|A'+A'| \ge& |Z| + |Z^\star_i| \\
=&\left(\sum_{\vec{v}}\sum_{x} |R_x(+)R_{x+\vec{v}}|\right)+\sum_{x\in X^\star_i}|R_x+R_{y^\star_i(x)}|\\
\ge&2^k|A'|-(2^{k-1}-1)(3f_2+f_2)|B|-|\pi(B)|+\sum_{x\in X^\star_i}|R_x|\\\ge& 2^k|A'|-2^{k+1}f_2|B|+\frac{1}{2\nu+2}|A'\setminus A''|.
\end{align*}
Hence, 
$$|A'\setminus A''|\leq (2\nu+2)( d_k(A')+2^{k+1}f_2|B|)\leq (2\nu+2)(f_1+2^{k+1}f_2)|B|.$$
\end{proof}

\begin{prop}
\label{Astarreduction}
There exist $\ll$-dependencies such that for constants
$$n_{k,0}^{-1}\ll e\ll \epsilon_0\le 1,\text{ and }e\ll h_5,h_6$$ and a function $H_7(e)$ with $H_7(e)\to \infty$ as $e\to 0$ (we may take $H_7=\ell$, the function from \Cref{polyapprox}) such that
the following is true. If $A'\subset B$ has all rows intervals and $|A'|\ge \frac{3\epsilon_0}{4}|B|$, $d_k(A')\le e|B|$, and if for every simplex $\widetilde{T}\subset \{0\}\times \mathbb{R}^{k-1}$ with integral vertices we have a set $\mathcal{W}_T\subset \{0\}\times \mathbb{Z}^{k-1}$ with $|\mathcal{W}_T| \le \nu$ for a constant $\nu=\nu(k)$, then there exists $A''\subset A'$ with all rows intervals and $$|A'\setminus A''|\le h_5|B|,\qquad|\co(\pi(A''))\setminus \pi(A'')|\le h_6|\pi(B)|$$
which satisfies the following additional properties. 
\begin{enumerate}
    \item $|V(A'')|\le H_7(e)$
    \item We have for every $\widetilde{T}\in\mathcal{T}^+(A'')\cup\mathcal{T}^{-}(A'')$, if $x\in T^o \setminus V_\pi(A'')$,  $y\in Y_{\mathcal{W}_T}(x)$, $z \in \pi(B)$ and $\vec{v}\in\{0\}\times\{0,1\}^{k-1}$ with $x+y=z+z+\vec{v}$ and $R_{x}+R_{y}, R_{z}+R_{z+\vec{v}}$ nonempty, that $(R_{x}+R_{y}) \cap (R_{z}+R_{z+\vec{v}}) \ne \emptyset$.
\end{enumerate}
\end{prop}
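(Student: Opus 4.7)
We use a double iteration scheme analogous to the proof of \Cref{firstit}, alternating applications of \Cref{firstit} and \Cref{overpulling}. Starting from $A_0:=A'$, recursively obtain $B_i\subset A_i$ by \Cref{firstit} and $A_{i+1}\subset B_i$ by \Cref{overpulling} applied with the given sets $\mathcal{W}_T$. Since overpulling preserves the convex hull $\co(A_{i+1})=\co(B_i)$, both the vertex count $|V(A_{i+1})|=|V(B_i)|$ and the triangulations $\mathcal{T}^\pm(A_{i+1})=\mathcal{T}^\pm(B_i)$ are preserved, so $A_{i+1}$ satisfies condition~2 of the present proposition while retaining the vertex bound from \Cref{firstit}. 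Both operations preserve the ``all rows intervals'' property (overpulling removes only entire rows), and by \Cref{dkobs} the doubling grows controllably along the chain.

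The crucial observation is that $\co(\pi(A_{i+1}))=\co(\pi(B_i))$, so
$$|\co(\pi(A_{i+1}))\setminus \pi(A_{i+1})|=|\co(\pi(B_i))\setminus \pi(B_i)|+|\pi(B_i)\setminus \pi(A_{i+1})|\le h_3(e_i)|\pi(B)|+|\pi(B_i)\setminus\pi(A_{i+1})|,$$
where $e_i$ bounds $d_k(A_i)/|B|$. If this quantity is at most $h_6(e)|\pi(B)|$ at some step $i$, we output $A'':=A_{i+1}$, which then satisfies all three conclusions. Otherwise at every step $|\pi(B_i)\setminus\pi(A_{i+1})|\ge(h_6(e)-h_3(e'))|\pi(B)|$ for a doubling bound $e'$ uniform over all iterations. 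Since $\pi(A_0)\supset\pi(B_0)\supset\pi(A_1)\supset\ldots$ is a nested decreasing chain, the sets $\pi(B_i)\setminus\pi(A_{i+1})$ are pairwise disjoint subsets of $\pi(B)$; summing therefore bounds the number of iterations by $N(e):=\lceil(h_6(e)-h_3(e'))^{-1}\rceil$, yielding termination.

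We define $h_5,h_6,H_7$ recursively in analogy with the proof of \Cref{firstit}: choose $h_6(e)\to 0$ as $e\to 0$ slowly enough that $h_6(e)>2h_3(e')$, with $e'$ obtained by compounding the loss bounds of \Cref{firstit}, \Cref{overpulling}, and \Cref{dkobs} across $N(e)$ iterations (playing the role of the sequence $g_i(t)$ in the proof of \Cref{firstit}); then let $h_5(e)$ be the cumulative loss $|A'\setminus A_{N(e)}|$ and $H_7(e):=\ell(e')$. Restricting $e$ so the cumulative loss stays below $\tfrac{\epsilon_0}{12}|B|$ maintains $|A_i|,|B_i|\ge\tfrac{2\epsilon_0}{3}|B|$ throughout, preserving the hypotheses of \Cref{firstit} and \Cref{overpulling}. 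The main obstacle is precisely this balancing of $h_6$ against $h_3(e')$ so that $h_6\to 0$ while $N(e)$ and $e'$ remain in the admissible range, analogous to the construction of the sequence $r_i$ in the proof of \Cref{firstit}.
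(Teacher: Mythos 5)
Your proposal is correct and follows essentially the same double-iteration scheme as the paper's proof: alternate \Cref{firstit} and \Cref{overpulling}, use the fact that overpulling fixes $\co(A)$ (hence also $\co(\pi(A))$, $V(A)$, and the triangulations) so that each $A_{i+1}$ satisfies properties 1 and 2, and then terminate via the telescoping bound on removed rows in $\pi(B)$. The only place you gloss over is the definition of the worst-case doubling bound $e'$ and the threshold sequence $r_i$; the paper handles this by recursively defining $g_i(t)$ and tuning $r_i$ so that $\gamma(t)=\max\{i: t\le r_i\}\to\infty$ with the relevant sums still tending to $0$, but you explicitly flag this as analogous to the $r_i$ construction in \Cref{firstit}, which is exactly what the paper does.
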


\begin{proof}
We may assume that $h_5=h_6=h$ for some $h$. Taking $H_7=\ell$ ensures that $H_7(e)\le \ell(h)$ because $\ell$ is a decreasing function. What follows below is essentially the same proof as \Cref{firstit} with \Cref{polyapprox} replaced with \Cref{overpulling} and \Cref{qualalphaprop} replaced with \Cref{firstit}.

We start by taking the dependencies
$$n_{k,0}^{-1}\ll e \ll \epsilon_0 \ll 1\text{ and }\epsilon_0\ll h$$ to work for both \Cref{overpulling} with $f_1=f_2=e$, and for \Cref{firstit} with $f=e$ and $h_3=h_4=h$.

First, we prove by induction that for any $\gamma\in \mathbb{N}$ there are dependencies
$$e=g_{-1}'\ll g_0 \ll g_0'\ll g_1 \ll g_1'\ll \ldots \ll g_{\gamma-1} \ll g_{\gamma-1}'\ll \epsilon_0,h/2$$ with the composite dependencies $e\ll \epsilon_0,h$ refining the existing dependencies,
such that we can create a nested sequence of sets $$A'=A_0'\supset A_0'' \supset A_1' \supset A_1''\supset \ldots \supset A_\gamma'$$
where
\begin{itemize}
    \item $A_i''\subset A_i'$ has all rows intervals with $|\co(\pi(A_i''))\setminus \pi(A_i'')|\le g_i |\pi(B)|$, $|A_i'\setminus A_i''|\le g_i|B|$, and $|V(A_i'')|\le \ell(e)$, with $\ell$ as in \Cref{polyapprox}.
    \item $A_{i+1}'\subset A_i''$ has all rows intervals, $\co(A_{i+1}')=\co(A_i'')$ (so in particular $|V(A_{i+1}')|\le \ell(e)$), property 2 holds for $A_{i+1}'$, and $|A_i''\setminus A_{i+1}'|\le g_i'|B|$.
    \item $|A'\setminus A'_\gamma|\le h|B|$.
\end{itemize}
Assume we can find such dependencies for $\gamma$, we will show we can find dependencies $g_{\gamma-1}'\ll g_{\gamma}\ll g_{\gamma}'\ll \epsilon_0,h/2$ to work for $\gamma+1$. By requiring the dependency $g_{\gamma-1}'\ll g_\gamma$ to be at least as strong as the existing $g_{\gamma-1}'\ll \epsilon_0,h/2$ dependencies, we can ensure the composite dependency is at least as strong as the existing $g_{\gamma-1}'\ll \epsilon_0,h/2$ dependencies.

We start by showing that we can choose the dependencies so that \Cref{firstit} can be applied to $A_\gamma'$ to produce the desired $A_\gamma''$. Note that
$|A'\setminus A_\gamma'|\le (g_0 + g_0'+\ldots + g_{\gamma-1}+g_{\gamma-1}')|B|\le 2\gamma g_{\gamma-1}'|B|$, so by requiring $g_{\gamma}'\le \frac{1}{2\gamma}\frac{\epsilon_0}{12}$, we can ensure that
$$|A_\gamma'|\ge \left(\frac{3\epsilon_0}{4}-\frac{\epsilon_0}{12}\right)|B|=\frac{2\epsilon_0}{3}|B|.$$
Next, because $|A'\setminus A'_\gamma|\le (2\gamma g_{\gamma-1}')|B|$, by \Cref{dkobs} we have 
$$d_k(A'_\gamma)\le (e+2^{k+1}\gamma g_{\gamma-1}')|B|\le (2^{k+1}\gamma+1)g_{\gamma-1}'|B|.$$
By requiring that  $g_{\gamma-1}'\le \frac{1}{2^{k+1}\gamma+1}g_\gamma$, the dependency $g_\gamma\ll h$ is at least as strong as the $f\ll h_4$ and $f\ll h_3$ dependencies from \Cref{firstit}, so we can apply \Cref{firstit} with $h_3=h_4=h$ and $f=g_\gamma$ to obtain the desired $A_\gamma''$ (noting $\ell(e)\ge \ell(g_\gamma)$).

Because we chose the dependency $n_{k,0}^{-1}\ll e$ to be strong enough to apply \Cref{overpulling} whenever $f_1,f_2\ge e$, to construct $A'_{\gamma+1}$ satisfying the second point we can apply \Cref{overpulling} to $A_\gamma''$ with $f_1=f_2=g_\gamma$, setting the $g_\gamma \ll g_\gamma'$ dependency to be $(2\nu+2)(g_\gamma+2^{k+1}g_\gamma) \le g_\gamma'$.

Finally, to satisfy the third point, note that by the first and second points we have $|A'\setminus A'_{\gamma+1}|\le (g_0+g_0'+\ldots+g_\gamma+g_\gamma')|B|\le 2(\gamma+1)g_\gamma'|B|$, so by requiring $g_\gamma'\le \frac{h}{2(\gamma+1)}$ we can ensure that $|A'\setminus A'_{\gamma+1}|\le h|B|$.

The end of the proof of \Cref{firstit} applied verbatim then yields the desired result.
\end{proof}

\subsubsection{$A_\star\subset A_+$ with $|V(A_\star)|$ and $|\co(\pi(A_\star))\setminus \pi(A_\star)|$ small and one further technical condition: Construction}
\label{AstarConstruction}

Before we proceed we need to introduce the following definition.

\begin{defn}\label{sij}
Given a simplex $\widetilde{T}\subset \mathbb{R}^k$ with vertices $x_0, \hdots, x_k$, construct inductively a family of translates of $\frac{1}{2^i}\widetilde{T}$ inside $\widetilde{T}$ as follows. Set 
\begin{align*}
\mathcal{S}_{i,0}(\widetilde{T})&:= \left\{\left(1-\frac{1}{2^i}\right)x_r+ \frac{1}{2^i}\widetilde{T} : 0\le r \le k \right\}\\
\mathcal{S}_{i,j+1}(\widetilde{T})&:= \left\{ \frac{\widetilde{S}_1+ \widetilde{S}_2}{2}: \widetilde{S}_1, \widetilde{S}_2 \in \mathcal{S}_{i,j} \right\}
\end{align*}
\end{defn}
We demonstrate these definitions with a graphic. For a simplex $\widetilde{T}\subset \mathbb{R}^2$, on the left one triangle from $\mathcal{S}_{2,0}(\widetilde{T})$ is shaded, and on the right all triangles from $\mathcal{S}_{2,1}(\widetilde{T})$ are shaded.
\begin{center}
\resizebox{5.77cm}{5.0cm}{
\begin{tikzpicture}
\draw (0,0) -- (4,4) -- (8,0) -- (0,0);
\draw (2,2)--(6,2)--(4,0)--(2,2);

\draw (3,3) -- (5,3);
\draw (3.5,3.5) -- (4.5,3.5);
\draw(3.75,3.75)--(4.25,3.75);

\draw (1,1)--(2,0);
\draw (0.5,0.5)--(1,0);
\draw (0.25,0.25)--(0.5,0);

\draw (7,1)--(6,0);
\draw (7.5,0.5)--(7,0);
\draw (7.75,0.25)--(7.5,0);

\draw[fill=black, opacity=0.2] (4,4)--(3,3)--(5,3)--cycle;
\end{tikzpicture}
}
\resizebox{5.77cm}{5.0cm}{
\begin{tikzpicture}
\draw (0,0) -- (4,4) -- (8,0) -- (0,0);
\draw[fill=black, opacity=0.2] (4,4)--(3,3)--(5,3)--cycle;
\draw[fill=black, opacity=0.2] (0,0)--(1,1)--(2,0)--cycle;
\draw[fill=black, opacity=0.2] (8,0)--(7,1)--(6,0)--cycle;

\draw[fill=black, opacity=0.2] (3,0)--(4,1)--(5,0)--cycle;
\draw[fill=black, opacity=0.2] (1.5,1.5)--(2.5,2.5)--(3.5,1.5)--cycle;
\draw[fill=black, opacity=0.2] (4.5,1.5)--(5.5,2.5)--(6.5,1.5)--cycle;
\end{tikzpicture}
}
\end{center}

An important fact about $\mathcal{S}_{i,j}(\widetilde{T})$ is that for fixed $i$ this family of translates is dense in the sense that, for any translate $\frac{1}{2^i}\widetilde{T}+y\subset \widetilde{T}$, there are $\frac{1}{2^i}\widetilde{T}+y_j\in \mathcal{S}_{i,j}(\widetilde{T})$ such that $y_j \to y$.

\begin{defn}
Given a simplex $\widetilde{T}\subset \mathbb{R}^k$ we define 
\begin{align*}
\mathcal{U}_{i,j}({\widetilde{T}}):=\{\vec{u}:\exists \widetilde{S}_1,\widetilde{S}_2\in \mathcal{S}_{i,j}\text{ with }\widetilde{S}_1+\vec{u}=\widetilde{S}_2\}.
\end{align*}
\end{defn}
Before proceeding, we remark that we will now need a future result, \Cref{coveringfamily}, to define certain constants $\mu_1$ and $\mu_2$ depending only on $k$. The proof is entirely self-contained, and while we could include the result and its proof at this point, we feel it is better to defer them.
\begin{defn}\label{VofT}
We define constants $\mu_1=\mu_1(k), \mu_2=\mu_2(k)$ as those produced by \Cref{coveringfamily}. 
Given a simplex $\widetilde{T}\subset \mathbb{R}^k$ we set $$\mathcal{W}_T:= \bigcup_{\vec{u}\in \mathcal{U}_{\mu_1, \mu_2}(\widetilde{T})} \mathcal{R}(\vec{u})\cup (-\mathcal{R}(\vec{u})),$$  where $\mathcal{R}(\vec{u})=\lfloor \vec{u} \rfloor + \{0\} \times \{0,1\}^{k-1} $ (here $\lfloor \vec{u} \rfloor:=(\lfloor \vec{u}_1\rfloor,\ldots, \lfloor \vec{u}_k\rfloor)$). This satisfies $|\mathcal{W}_T| \le \nu$ for $\nu=\nu(k)$ taken to be the constant $2^k|\mathcal{U}_{\mu_1,\mu_2}(\widetilde{T})|$, which is independent of the simplex $\widetilde{T}$. Note that $0\in\mathcal{U}_{\mu_1, \mu_2}(\widetilde{T})$ so $\{0\}\times \{0,1\}^{k-1}\subset \mathcal{W}_T$, and $\mathcal{W}_T=-\mathcal{W}_T$.\end{defn}
Throughout \Cref{1.6section}, we note that we will be taking $\widetilde{T}\subset \{0\}\times \mathbb{R}^{k-1}$.

We now fix functions $h_5(e),h_6(e)$ from $\mathbb{R}_{>0}\to \mathbb{R}_{>0}$ with $h_5,h_6\to 0$ as $e\to 0$ such that these functions realize the $\ll$ dependency between $e$ and the constants $h_5,h_6$ in \Cref{Astarreduction} (note that in \Cref{Astarreduction} there is a dependency $e\ll 1$ which says that $e$ is smaller than some absolute constant - above this constant we note that we can simply take $h_5,h_6$ to be an arbitrary constant. Alternatively, note that the conclusion of \Cref{Astarreduction} follows trivially when $h_5=h_6=1$ so we can take these functions for $e$ large). There exist $\ll$ dependencies $n_{k,0}^{-1}\ll\delta\ll\epsilon_0\ll 1$ that imply the $\ll$ dependencies $n_{k,0}^{-1}\ll e =\delta^{\frac{1}{20}-17c}\ll \epsilon_0\le 1$ needed to apply \Cref{Astarreduction} to $A_+$ taking $e=\delta^{\frac{1}{20}-17c}$, so we obtain a subset $A_\star\subset A_+$ with all rows intervals satisfying the following properties. By \eqref{AA+}, we have
\begin{align}\label{AAstar}|A\Delta A_\star|\le|A\Delta A_{+}|+|A_+\setminus A_\star|\le (\delta^{\frac{1}{20}-16c}+h_5(\delta^{\frac{1}{20}-17c}))|B|=:h_8(\delta)|B|,
\end{align}
and by \Cref{dkobs} and \Cref{obs:dkAplus}, $A_\star$ is reduced and
\begin{align}\label{dkAstar}d_k(A_{\star}) \le (\delta^{\frac{1}{20}-17c} + 2^kh_8(\delta) )|B| = :h_9(\delta)|B|.
\end{align} Also,
\begin{align}\label{copiAstar}|\co(\pi(A_{\star})) \setminus \pi(A_{\star})|&\le h_6(\delta^{\frac{1}{20}-17c})|\pi(B)|,\text{ and }\\
\label{Astarfewvert}|V(A_\star)|&\le H_7(\delta^{\frac{1}{20}-17c}).\end{align} Here, $h_6,h_8,h_9 \rightarrow 0$ as $\delta \rightarrow 0$.
\begin{obs}\label{pullover}
Finally, we have for every $\widetilde{T}\in\mathcal{T}^+(A_\star)\cup\mathcal{T}^{-}(A_\star)$, if $x\in T^o \setminus V_\pi(A_\star)$,  $y\in Y_{\mathcal{W}_T}(x)$, $z \in \pi(B)$ and $\vec{v}\in\{0\}\times\{0,1\}^{k-1}$ with $x+y=z+z+\vec{v}$ and $R_{x}+R_{y}, R_{z}+R_{z+\vec{v}}$ nonempty, then $(R_{x}+R_{y}) \cap (R_{z}+R_{z+\vec{v}}) \ne \emptyset$.
\end{obs}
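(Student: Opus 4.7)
The plan is to observe that \Cref{pullover} is essentially a tautology: it is literally property 2 of \Cref{Astarreduction} applied to the set $A_+$ with the specific choice of sets $\mathcal{W}_T$ from \Cref{VofT}. So the proof amounts to verifying that \Cref{Astarreduction} applies, i.e.\ that the hypotheses are met, and then reading off the conclusion.

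First I would verify the hypotheses of \Cref{Astarreduction} for $A'=A_+$. The set $A_+$ has all rows intervals by construction (we filled in each row of $A_5$). The density lower bound $|A_+|\geq\tfrac{3\epsilon_0}{4}|B|$ follows from \eqref{AA+}, since $|A_+|\geq |A|-|A\Delta A_+|\geq (\epsilon_0-\delta^{\frac{1}{20}-16c})|B|$, which exceeds $\tfrac{3\epsilon_0}{4}|B|$ for $\delta$ small by \Cref{nkconv}. We also have $d_k(A_+)\leq \delta^{\frac{1}{20}-17c}|B|$ from the previous subsection, so the doubling hypothesis holds with $e(\delta)=\delta^{\frac{1}{20}-17c}\to 0$.

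Next I would check the bound on $|\mathcal{W}_T|$. By \Cref{VofT}, the set $\mathcal{W}_T=\bigcup_{\vec{u}\in\mathcal{U}_{\mu_1,\mu_2}(\widetilde{T})}\mathcal{R}(\vec u)\cup(-\mathcal{R}(\vec u))$ has cardinality at most $2^k|\mathcal{U}_{\mu_1,\mu_2}(\widetilde{T})|$. Since the set $\mathcal{S}_{\mu_1,\mu_2}(\widetilde{T})$ consists of at most $(k+1)^{\mu_2}$ translates of $2^{-\mu_1}\widetilde{T}$ and $\mathcal{U}_{\mu_1,\mu_2}(\widetilde{T})$ records pairwise translation vectors, its cardinality is bounded above by $(k+1)^{2\mu_2}$, a constant depending only on $k$. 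Hence $|\mathcal{W}_T|\leq \nu$ for a constant $\nu=\nu(k)$ that is independent of $\widetilde{T}$, exactly as required by the hypothesis of \Cref{Astarreduction}.

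With the hypotheses verified, \Cref{Astarreduction} applied to $A_+$ produces the subset $A_\star\subset A_+$ used to define $A_\star$ in \Cref{AstarConstruction}. Property 2 of \Cref{Astarreduction} is, word for word, the statement of \Cref{pullover}: for every $\widetilde{T}\in\mathcal{T}^+(A_\star)\cup\mathcal{T}^-(A_\star)$ and every $x\in T^o\setminus V_\pi(A_\star)$, $y\in Y_{\mathcal{W}_T}(x)$, $z\in\pi(B)$, $\vec v\in\{0\}\times\{0,1\}^{k-1}$ with $x+y=2z+\vec v$ and $R_x+R_y$, $R_z+R_{z+\vec v}$ both nonempty, we have $(R_x+R_y)\cap(R_z+R_{z+\vec v})\neq\emptyset$. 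There is no obstacle; the only nontrivial point is the independent of $\widetilde{T}$ bound on $|\mathcal{W}_T|$, which is why the combinatorial construction in \Cref{sij} and \Cref{VofT} was made, and the eventual use of \Cref{coveringfamily} to fix the constants $\mu_1,\mu_2$ only appears in later sections where \Cref{pullover} is actually invoked.
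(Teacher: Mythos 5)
Your proposal is correct and matches the paper exactly: the paper gives no separate proof of \Cref{pullover} either, since it is precisely property 2 of \Cref{Astarreduction} applied to $A_+$ with the $\mathcal{W}_T$ of \Cref{VofT}, and your verification of the hypotheses (rows are intervals, density and doubling bounds from \eqref{AA+} and the preceding observation, uniform bound on $|\mathcal{W}_T|$) is the right checklist. The only immaterial slip is your count $|\mathcal{S}_{\mu_1,\mu_2}(\widetilde{T})|\le (k+1)^{\mu_2}$ — the recursive pairwise averaging actually gives a doubly exponential bound of the form $(k+1)^{2^{\mu_2}}$ — but either way it depends only on $k$, which is all the hypothesis of \Cref{Astarreduction} requires.
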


\subsection{$A_\star$ is close to $\co(A_\star)$}
\label{Astarclosetoco}
In this section, we show that $|\co(A_\star)\setminus A_\star|= o(1)|B|$. It will be easy to show that $2^k|\co(A_\star)\setminus A_\star|\le |\co(A_\star+A_\star)\setminus (A_\star+A_\star)|+o(1)|B|$. This will follow from $|A_\star+A_\star|\le (2^k+o(1))|A_\star|$ and $(2^k+o(1))|\co(A_\star)|\le |\co(A_\star+A_\star)|$. Hence it will suffice to show that \begin{align}\label{vaguedescription2kck}|\co(A_\star+A_\star)\setminus (A_\star+A_\star)|\le (2^k-c_k')|\co(A_\star)\setminus A_\star|+o(1)|B|\end{align} for some constant $c_k'>0$. We now give a motivating outline.

For $\widetilde{T}\in \mathcal{T}^+(A_\star)\cup \mathcal{T}^-(A_\star)$, we will define functions $g_{T}^+,g_T^-:T\to [0,n_1]$ (actually we will need $[0,2n_1]$ for technical reasons), which encode the distances from the nonempty rows of $A_\star$ in $T$ to the upper and lower convex hulls of $A_\star$ respectively. Then we can estimate
\begin{align}\label{vaguedesc1}|\co(A_\star)\setminus A_\star|=o(1)|B|+\sum_{\ast \in \{+,-\}}\sum_{\widetilde{T}\in \mathcal{T}^\ast} \sum_{x\in T}g_T^\ast(x).\end{align}
Moreover, we will define functions $g^{+\square}_T,g^{-\square}_T$ as certain restricted infimum convolutions of $g^+_T$ and $g^-_T$ with themselves. These will encode the distance between the rows of a certain subset of $A_\star+A_\star$ (which we will guarantee to be intervals by \Cref{pullover}) to the upper and lower convex hulls of $A_\star+A_\star$ respectively. This subset accounts for almost all rows. Then we can similarly estimate
\begin{align}\label{vaguedesc2}|\co(A_\star+A_\star)\setminus (A_\star+A_\star)| \le o(1)|B|+\sum_{\ast \in \{+,-\}}\sum_{\widetilde{T}\in \mathcal{T}^\ast}\sum_{x'\in 2\widetilde{T}\cap \{0\}\times \mathbb{Z}^{k-1}} g^{\ast\square}_{T}(x').\end{align}
To prove the inequality, it will therefore suffice to show for every $\ast \in \{+,-\}$ and $\widetilde{T}\in \mathcal{T}^\ast$, given a function $g:T\to [0,2n_1]$ which is $0$ at the vertices of $T$, that
\begin{align}\label{vaguedesc3}\sum_{x\in T}g(x) \le o(1)|B|+ (2^k-c_k')\sum_{x'\in 2\widetilde{T}\cap \{0\}\times \mathbb{Z}^{k-1}}g^{\square}(x').\end{align}

In \Cref{Transitioningsubsection}, we properly define the functions $g^\ast_T(x)$ and $g^{\ast \square}_T(x)$ and show \eqref{vaguedesc1} in \Cref{gsimple} and \eqref{vaguedesc2} in \Cref{gsquare}, thus reducing the problem to showing \eqref{vaguedesc3}. 

In \Cref{infconvolutionsubsection}, we prove \eqref{vaguedesc3} in \Cref{squareprop}.

Finally, in \Cref{AstarclosetocoAstarsubsection}, we combine these results and conclude \eqref{vaguedescription2kck} in \Cref{AstarclosetocoAstarprop}.
\subsubsection{Transitioning from $A_\star$ to functions and their infimum convolutions}
\label{Transitioningsubsection}

We focus on the gaps in the $e_1$-direction between $A_\star$ and the convex hull of $A_\star$ via functions on $\pi(A_\star)$. 
\begin{notn}We denote by $V_\pi=V_\pi(A_\star)=\pi(V(A_\star))$ the projection of the vertices of $\cooo(A_{\star})$ under $\pi$ to $\{0\}\times \mathbb{Z}^{k-1}$. We denote the empty rows  by $E:=\co(\pi(A_\star))\setminus \pi(A_\star).$ Finally, we write $\mathcal{T}^+:=\mathcal{T}^+(A_\star)$ and $\mathcal{T}^-:=\mathcal{T}^-(A_\star)$.
\end{notn}

Recall that for a simplex $\widetilde{T}\subset \cooo(\pi(B))$, we set $T=\widetilde{T}\cap  \mathbb{Z}^{k}=\widetilde{T}\cap (\{0\}\times \mathbb{Z}^{k-1})$.

\begin{defn}
\label{gplusgminusdefn}
Letting $\Psi_{A_\star}^+,\Psi_{A_\star}^-:\cooo(\pi(A_\star))\to \mathbb{R}$ be the upper and lower convex hull functions of $A_\star$ in the $e_1$-direction respectively, we define for $\widetilde{T}^+\in \mathcal{T}^+$ and $\widetilde{T}^-\in \mathcal{T}^-$ the functions
$$g^+_{T^+}:T^+\to [0,2n_1],\qquad g^-_{T^-}:T^-\to [0,2n_1]$$
according to the formulas
$$g^+_{T^+}(x)=\begin{cases}\Psi_{A^\star}^+(x)-\max R_x & \text{if }x\in  V(T^+)\text{ or }x\not\in V_\pi\cup E\\
2n_1 & \text{otherwise}.\end{cases},$$
and 
$$g^-_{T^-}(x)=\begin{cases}\min R_x-\Psi_{A^\star}^-(x) & \text{if }x\in  V(T^-)\text{ or }x\not\in V_\pi\cup E\\
2n_1 & \text{otherwise.}\end{cases}.$$
\end{defn}
\begin{rmk}
\label{gplusgminusrmk}
For $x\in V(T^\ast)$ or $x\not\in V_{\pi}\cup E$, $g^\ast_{T^\ast}(x)\in [0,n_1-1]$ is the distance in the $e_1$-direction from the row $R_x$ to the upper convex hull of $A_\star$ for $\ast=+$ and lower convex hull of $A_\star$ for $\ast=-$, and we always have $g^\ast_{T^\ast}(x)\le 2n_1$. 
In particular, for $x\in V(T^\ast)$ we note that $g^\ast_{T^\ast}(x)=0$.
\end{rmk}

\begin{obs}\label{gsimple}
There exist $\ll$ dependencies $n_{k,0}^{-1}\ll\delta\ll\epsilon_0\ll 1$ such that the following holds.
We have the following estimate for some function $h_{10}(\delta)\to 0$ as $\delta\to 0$:
\begin{align*}
\left(\sum_{\ast \in \{+,-\}}\sum_{\widetilde{T}^\ast\in\mathcal{T}^\ast}\sum_{x\in T^\ast}g^\ast_{T^\ast}(x)\right)-|\co(A_\star)\setminus A_\star|\le h_{10}(\delta)|B|.\end{align*}
\end{obs}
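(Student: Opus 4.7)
The plan is to decompose both sides of the inequality over $x\in\co(\pi(A_\star))$ and compare contributions row-by-row, using that all rows of $A_\star$ are intervals. For each $x\in\co(\pi(A_\star))$ we have $\co(A_\star)\cap\pi^{-1}(x)=[\lceil\Psi_{A_\star}^-(x)\rceil,\lfloor\Psi_{A_\star}^+(x)\rfloor]\cap\mathbb{Z}$ and, when $x\in\pi(A_\star)$, $R_x=[\min R_x,\max R_x]\cap\mathbb{Z}$, so the row missing count $|\co(A_\star)\cap\pi^{-1}(x)|-|R_x|$ equals $(\lfloor\Psi^+\rfloor-\max R_x)+(\min R_x-\lceil\Psi^-\rceil)$ for $x\in\pi(A_\star)$ and $\lfloor\Psi^+\rfloor-\lceil\Psi^-\rceil+1$ for $x\in E$; summing over $x$ recovers $|\co(A_\star)\setminus A_\star|$.

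Next I would partition $\co(\pi(A_\star))=(\pi(A_\star)\setminus V_\pi)\sqcup V_\pi\sqcup E$ (observing $V_\pi\subset\pi(A_\star)$ since any vertex of $\cooo(A_\star)$ lies in $A_\star$) and bound $\sum_{\ast,T^\ast,x}g^\ast_{T^\ast}(x)$ by analyzing each part. For $x\in\pi(A_\star)\setminus V_\pi$ in the interior of a unique pair of simplices (one each from $\mathcal{T}^+,\mathcal{T}^-$), the pair contributes $(\Psi^+(x)-\max R_x)+(\min R_x-\Psi^-(x))$, which exceeds the row missing count by the rounding slack $\Psi^+-\lfloor\Psi^+\rfloor+\lceil\Psi^-\rceil-\Psi^-\le 2$; summed over $|\pi(A_\star)|\le|\pi(B)|=n_1^{-1}|B|$ rows this is $o(|B|)$. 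For $x\in V_\pi$, each $g^\ast\le 2n_1$, and since $|V_\pi|\le V(A_\star)\le H_7(\delta^{1/20-17c})$ with each $x$ incident to a bounded (in terms of $H_7$) number of simplices, the total contribution from $V_\pi$ is $O(n_1H_7^{O(1)})$. For $x\in E$, the definition forces $g^\ast_{T^\ast}(x)=2n_1$ for each incident simplex, so the generic contribution is at most $4n_1|E|\le 4n_1\cdot h_6(\delta^{1/20-17c})|\pi(B)|=4h_6(\delta^{1/20-17c})|B|$.

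The remaining issue is the boundary overcounting: a lattice point $x$ lying on a shared $(k-2)$-face of two simplices in $\mathcal{T}^\ast$ is summed multiple times. Such faces are $(k-2)$-dimensional simplices inside $\pi(B)$ and each contains at most $O(\min n_i^{-1}|\pi(B)|)$ lattice points by a projection argument analogous to \Cref{hypboxsmall}; with $O(H_7^{O(1)})$ faces (by Stanley's upper bound theorem for the number of simplices in terms of the vertex count $H_7$) and each lattice point contributing at most $O(n_1)$ to the excess, the total boundary overcount is $O(H_7^{O(1)}\min n_i^{-1}|B|)$.

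Combining these contributions, the excess of $\sum g^\ast$ over $|\co(A_\star)\setminus A_\star|$ is bounded by $h_{10}(\delta)|B|$, where $h_{10}(\delta)$ aggregates $h_6(\delta^{1/20-17c})$ and terms involving $H_7(\delta^{1/20-17c})/\min n_i$. The main obstacle is the mismatch that $h_6\to 0$ while $H_7\to\infty$ as $\delta\to 0$: all $H_7$-dependent terms must be absorbed, which is accomplished by invoking \Cref{nkconv} to take $n_k(\epsilon_0,\delta)$ so large that $H_7(\delta^{1/20-17c})^{O(1)}/\min n_i$ itself tends to $0$ as $\delta\to 0$, yielding $h_{10}(\delta)\to 0$ as required.
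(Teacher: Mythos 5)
Your proof is correct and follows essentially the same route as the paper's: decompose row-by-row over $x\in\co(\pi(A_\star))$, note that for $x$ in the interior of a unique simplex pair and $x\notin V_\pi\cup E$ the quantity $g^+_{T^+}(x)+g^-_{T^-}(x)$ matches the row's missing count up to rounding slack at most $2$, bound the contributions from $V_\pi$ and $E$ crudely by $O(n_1)$ per point, bound the overcounting on simplex boundaries by the lattice-point count of \Cref{hypboxsmall}, and absorb all $H_7$-dependent terms via \Cref{nkconv}. The only cosmetic differences from the paper are organizational: you split $V_\pi$ and $E$ into separate cases rather than lumping them, and you invoke Stanley's upper bound theorem for the number of $(k-2)$-faces where the paper uses the cruder $\binom{|V_\pi|}{k}$ bound; both are immaterial since the entire boundary contribution is swallowed by \Cref{nkconv} anyway. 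One small imprecision: a boundary lattice point can lie in up to $O(|\mathcal{T}^\ast|)$ simplices, so "each lattice point contributing at most $O(n_1)$" should really read $O(H_7^{O(1)}n_1)$, but this only changes the hidden exponent in your $H_7^{O(1)}$ and the conclusion is unaffected.
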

\begin{proof}
Writing $|\co(A_\star)\setminus A_\star|=\sum_{x\in \co(\pi(A_\star))}|(\co(A_\star)\setminus A_\star)\cap \pi^{-1}(x)|$, we upper bound the contribution separately on the left hand side for each $x\in \co(\pi(A_\star))$.
\begin{itemize}\item We estimate the contribution of $x\in \partial\widetilde{T}^+$ for some $\widetilde{T}^+\in \mathcal{T}^+$ or with $x\in\partial\widetilde{T}^-$ for some $\widetilde{T}^-\in \mathcal{T}^-$. There are at most $\binom{|V_\pi|}{k}$ simplices, each with $k$ facets, with each facet having at most $n_{k,0}^{-1}|\pi(B)|$ integral points by \Cref{hypboxsmall} applied to the box $\pi(B)$, each of which can in turn contribute $2n_1+2n_1$ to the left hand side.
\item We estimate the contribution of $x \in V_\pi\cup E$ that lie in the interior of at most one simplex in $\mathcal{T}^+$ and at most one simplex in $\mathcal{T}^-$. There are at most $|E|+|V_\pi|$ of these points, each of which can in turn contribute $2n_1+2n_1$ to the left hand side.
\item Finally, each remaining $x$ lies in a unique simplex of $\widetilde{T}^+\in\mathcal{T}^+$ and $\widetilde{T}^-\in\mathcal{T}^-$, and $x\not \in V_\pi\cup E$. For such $x$, we have $|(\co(A_\star)\setminus A_\star)\cap R_x|=\lfloor g^+_{T^+}(x)\rfloor + \lfloor g^-_{T^-}(x) \rfloor$. This discrepancy with $g^+_{T^+}(x)+g^-_{T^-}(x)$ is crudely bounded by $2|\pi(B)|$ for each of the at most $\binom{|V_\pi|}{k}$ simplices. 
\end{itemize}
Combining these errors, and noting that $|E|$ and $|V_\pi|$ are bounded by \eqref{copiAstar} and \eqref{Astarfewvert}, we conclude by choosing $h_{10}(\delta)$ and $n_{k,0}^{-1}\ll \delta$ so that
$$4k\binom{|V_\pi|}{k}n_{k,0}^{-1}|B|+(|E|+|V_\pi|)4n_1+2\binom{|V_\pi|}{k}|\pi(B)|\le h_{10}(\delta)|B|.$$
\end{proof}

Recall in \Cref{YofT}, we introduced for a simplex $\widetilde{T}\subset \{0\}\times \mathbb{R}^{k-1}$ with integral vertices and a subset $\mathcal{W}\subset \{0\}\times \mathbb{Z}^{k-1}$ the notation $Y_{\mathcal{W}}(x)=((x+\mathcal{W})\cap T)\cup V(T)$. We now define a restricted infimum convolution with respect to  $\mathcal{W}$. 

\begin{defn}\label{infconv}
Given a simplex $\widetilde{T}\subset \{0\}\times \mathbb{R}^{k-1}$ with integral vertices, a subset $\mathcal{W}\subset \{0\}\times \mathbb{Z}^{k-1}$, and a function $g : T \rightarrow \mathbb{R}_{\ge 0}$, we define the restricted infimum convolution $g^{\square}_{\mathcal{W}}:T+T\to \mathbb{R}_{\ge 0}$ by
$$g^{\square}_{\mathcal{W}}(x)=\begin{cases}\min\{g(x_1)+g(x_2)\}& \text{over all }x_1+x_2=x \text{ with } x_1\in T^{\circ}\text{ and }x_2 \in Y_{\mathcal{W}}(x_1)\\
0& \text{no such $x_1,x_2$ exist.}
\end{cases}$$
\end{defn}
\begin{notn}
For $g=g^\ast_{T^\ast}$ for some $\ast\in \{+,-\}$, we will always take $\mathcal{W}=\mathcal{W}_T$ as defined in \Cref{VofT} in the infimum convolution $(g^{\ast}_{T^{\ast}})^{\square}_{\mathcal{W}}=g^{\ast\square}_{T^\ast,\mathcal{W}}$. We will always omit the subscript $\mathcal{W}$, writing $g^{\ast\square}_{T^\ast}$ instead.
\end{notn}

\begin{obs}\label{gsquare}
There exist $\ll$ dependencies $n_{k,0}^{-1}\ll\delta\ll\epsilon_0\ll 1$ such that the following holds.
We have the following estimate for some function $h_{11}(\delta)\to 0$ as $\delta\to 0$:
\begin{align*}|\co(A_\star+A_\star)\setminus (A_\star+A_\star)|-\sum_{\ast\in \{+,-\}} \sum_{\widetilde{T}^\ast\in\mathcal{T}^\ast}\sum_{x'\in (T^\ast+T^\ast)}g_{T^\ast}^{\ast\square}(x')\le h_{11}(\delta)|B|.\end{align*}
\end{obs}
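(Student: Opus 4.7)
My plan is to mirror the proof of \Cref{gsimple}, decomposing the LHS by column. For $x'\in \widetilde{\co}(\pi(A_\star+A_\star))$, set $M(x'):=|\pi^{-1}(x')\cap(\co(A_\star+A_\star)\setminus(A_\star+A_\star))|$, so the LHS equals $\sum_{x'}M(x')$. Since $\mathcal{T}^\ast$ triangulates $\widetilde{\co}(\pi(A_\star))$, the dilation $\{2\widetilde{T}^\ast:\widetilde{T}^\ast\in\mathcal{T}^\ast\}$ triangulates $\widetilde{\co}(\pi(A_\star+A_\star))=2\widetilde{\co}(\pi(A_\star))$. I would partition the columns into three classes: \textbf{(i) boundary} columns lying on some $\partial(2\widetilde{T}^\ast)$; \textbf{(ii) degenerate} columns for which either the unique standard decomposition $x'=2z+\vec{v}$ with $\vec{v}\in\{0\}\times\{0,1\}^{k-1}$ has $z$ or $z+\vec{v}$ in $V_\pi\cup E$, or every valid pair in the definitions of $g^{\pm\square}_{T^\pm}(x')$ hits $V_\pi\cup E$ away from the vertex set $V(T^\pm)$; and \textbf{(iii) good} columns, i.e.\ all remaining ones.

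For (i) and (ii) I estimate crudely. By \Cref{hypboxsmall} each facet of a $2\widetilde{T}^\ast$ contains at most $n_k(\delta,\epsilon_0)^{-1}|\pi(B)|$ integer points, the number of facets per $\mathcal{T}^\ast$ is at most $k\binom{|V_\pi|}{k}$, and each column contributes $M(x')\leq 2n_1$. Columns failing the standard-decomposition test number at most $2(|V_\pi|+|E|)$, each again contributing $\leq 2n_1$; the remaining degenerate columns will be controlled via the carefully constructed $\mathcal{W}_T$ and the forthcoming \Cref{coveringfamily}. Using \eqref{copiAstar}, \eqref{Astarfewvert}, and the implicit largeness of $n_k(\delta,\epsilon_0)$, the total contribution from (i) and (ii) is $o(1)|B|$, which I absorb into $h_{11}(\delta)|B|$.

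For a good $x'\in 2T^{+\circ}\cap 2T^{-\circ}$ I exploit the sup-convolution identity: since $\widetilde{\co}(A_\star+A_\star)=\widetilde{\co}(A_\star)+\widetilde{\co}(A_\star)$, the upper convex hull function of $A_\star+A_\star$ in the $e_1$-direction is the sup-convolution of $\Psi^+_{A_\star}$ with itself, and affineness of $\Psi^+_{A_\star}$ on $\widetilde{T}^+$ forces $\Psi^+_{A_\star+A_\star}(x')=\Psi^+_{A_\star}(x_1)+\Psi^+_{A_\star}(x_2)$ for any $x_1,x_2\in\widetilde{T}^+$ with $x_1+x_2=x'$, and symmetrically for $\Psi^-$ on $2\widetilde{T}^-$. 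Hence for any valid pair $(x_1^+,x_2^+)$ realizing $g^{+\square}_{T^+}(x')$ with $R_{x_i^+}$ nonempty and with $g^+_{T^+}(x_i^+)$ in the real-distance case of \Cref{gplusgminusdefn}, the interval $J^+:=R_{x_1^+}+R_{x_2^+}$ satisfies $\max J^+=\Psi^+_{A_\star+A_\star}(x')-g^+_{T^+}(x_1^+)-g^+_{T^+}(x_2^+)$, and analogously $\min J^-=\Psi^-_{A_\star+A_\star}(x')+g^-_{T^-}(x_1^-)+g^-_{T^-}(x_2^-)$ for a valid lower pair. The technical condition in \Cref{pullover} forces both $J^+$ and $J^-$ to meet the standard interval $I:=R_z+R_{z+\vec{v}}$, so $J^-\cup I\cup J^+$ is a single interval inside $(A_\star+A_\star)\cap\pi^{-1}(x')$ with endpoints $\min J^-$ and $\max J^+$. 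Thus $M(x')\leq g^+_{T^+}(x_1^+)+g^+_{T^+}(x_2^+)+g^-_{T^-}(x_1^-)+g^-_{T^-}(x_2^-)+O(1)$, the $O(1)$ accounting for integer rounding between $\Psi^\pm_{A_\star+A_\star}(x')$ and the column size; minimizing over valid pairs gives $M(x')\leq g^{+\square}_{T^+}(x')+g^{-\square}_{T^-}(x')+O(1)$, and summing over good columns (each lying in a unique $2\widetilde{T}^+$ and a unique $2\widetilde{T}^-$) yields the claimed bound with the $O(|\pi(B)|)=o(1)|B|$ error again absorbed into $h_{11}(\delta)|B|$.

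The main obstacle I expect is verifying that the degenerate class (ii) has size $o(1)|B|$, in particular controlling those columns where no valid pair with both rows nonempty and both $g^\pm$-values in the real-distance case of \Cref{gplusgminusdefn} exists. This is precisely the role played by the dense translate family $\mathcal{S}_{\mu_1,\mu_2}(\widetilde{T})$ of \Cref{sij} used to define $\mathcal{W}_T$ in \Cref{VofT}: the forthcoming \Cref{coveringfamily} should guarantee that $Y_{\mathcal{W}_T}(x_1)$ provides sufficiently varied translates to supply valid pairs for all but $o(1)|\pi(B)|$ values of $x_1$, so that the remaining exceptional columns contribute only to the negligible error term already bounded above.
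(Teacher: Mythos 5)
Your overall decomposition by column matches the paper's proof, but you have misidentified how one of the cases closes, and you appeal to machinery (\Cref{coveringfamily}, the density of $\mathcal{W}_T$) that plays no role in this observation.

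The issue is your subclass (ii-b): columns $x'$ where every (or the minimizing) valid pair $(x_1^\ast,x_2^\ast)$ in the definition of $g^{\ast\square}_{T^\ast}(x')$ hits $(V_\pi\cup E)\setminus V(T^\ast)$. You treat this as a set of columns whose \emph{cardinality} you must show is $o(1)|\pi(B)|$, and you say the ``main obstacle'' is doing so via \Cref{coveringfamily}. This is not how the paper handles it, and indeed trying to bound its cardinality is the wrong move. The resolution is simpler and comes directly from \Cref{gplusgminusdefn}: $g^\ast_{T^\ast}$ is \emph{defined} to take the value $2n_1$ at every point of $(V_\pi\cup E)\setminus V(T^\ast)$. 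Hence if the minimizing pair for $g^{\ast\square}_{T^\ast}(x')$ has a coordinate in that exceptional set, then $g^{\ast\square}_{T^\ast}(x')\geq 2n_1$, which already exceeds the trivial bound $|\co(A_\star+A_\star)\cap\pi^{-1}(x')|\leq 2n_1$ on the column contribution. So for such $x'$ the per-column inequality $M(x')\leq g^{+\square}_{T^+}(x')+g^{-\square}_{T^-}(x')$ holds trivially, with no need to show these columns are few. This is exactly the sentence in the paper's claim: ``If either $x_1^\ast$ or $x_2^\ast$ are in $(V_\pi\cup E)\setminus V(T^\ast)$, the corresponding term on the right hand side is at least $2n_1$ and the inequality is trivially true.'' The dense family $\mathcal{S}_{\mu_1,\mu_2}$ and $\mathcal{W}_T$ are needed later (in \Cref{squareprop}), not here; here the only property of $\mathcal{W}_T$ used is that $\{0\}\times\{0,1\}^{k-1}\subset\mathcal{W}_T$, which makes the standard decomposition $x'=x+(x+\vec{v})$ a valid pair so that the infimum is taken over a nonempty set.

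Two minor points. First, your per-column $O(1)$ rounding error is unnecessary: the paper gets the clean bound by writing the interval-complement count as $\lfloor\Psi^+_{A_\star+A_\star}(x')\rfloor-\max J^+ + \min J^- -\lceil\Psi^-_{A_\star+A_\star}(x')\rceil$ and then using $\lfloor\Psi^+\rfloor\leq\Psi^+$ and $-\lceil\Psi^-\rceil\leq-\Psi^-$; your version still gives $o(1)|B|$ after summing, so it is harmless but slightly wasteful. Second, your ``boundary'' class (i) is phrased in terms of $\partial(2\widetilde{T}^\ast)$, whereas the paper works with the hyperplane through the facet of the simplex $\widetilde{T}^\ast$ containing $\tfrac{1}{2}x'$ separating $x$ and $x+\vec{v}$; after the dilation by $2$ these are the same picture, and your counting ($k\binom{|V_\pi|}{k}$ facets, $O(n_k^{-1}|\pi(B)|)$ points each, $O(n_1)$ per column) matches the paper's.

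So: replace your attempt to bound $|\text{(ii-b)}|$ with the observation that on those columns the right-hand side is automatically $\geq 2n_1\geq M(x')$, and drop the appeal to \Cref{coveringfamily}; then your argument coincides with the paper's.
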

\begin{proof}
Writing $|\co(A_\star+A_\star)\setminus (A_\star+A_\star)|=\sum_{x'\in \co(\pi(A_\star+A_\star))}|(\co(A_\star+A_\star)\setminus (A_\star+A_\star))\cap \pi^{-1}(x')|$, we upper bound the contribution for each $x'\in \co(\pi(A_\star+A_\star))$, which we express uniquely as $x'=x+x+\vec{v}$ with $\vec{v}\in\{0\}\times\{0,1\}^{k-1}$, $x \in\{0\}\times\mathbb{Z}^{k-1}$.
\begin{clm}
Fix $x'=x+x+\vec{v}$ as above. Suppose for each $\ast \in \{+,-\}$ there exists $\widetilde{T}^\ast\in \mathcal{T}^\ast$ with $x,x+\vec{v}\in T^{\ast o}\setminus (V_\pi\cup E)$. Then
$$|\co(A_\star+A_\star)\cap \pi^{-1}(x')|\le g_{T^+}^{+\square}(x')+g_{T^-}^{-\square}(x').$$ 
\end{clm}
\begin{proof}
Note that we can write $x'=x+(x+\vec{v})$, with $x\in T^{\ast o}$ and $x+\vec{v}\in Y_{\mathcal{W}_{T^\ast}}(x)$, since $\vec{v}\in \mathcal{W}_{T^{\ast}}$. Hence by \Cref{infconv} there exists $x_1^*\in T^{\ast o}$ and $x_2^*\in Y_{\mathcal{W}_{T^\ast}}(x_1^\ast)$ so that $x_1^\ast+x_2^\ast=x'$ and $g^{\ast\square}_{T^\ast}(x')=g_{T^\ast}^\ast(x_1^\ast)+g_{T^\ast}^\ast(x_2^\ast)$.  If either $x_1^\ast$ or $x_2^\ast$ are in $(V_\pi\cup E)\setminus V(T^\ast)$, the corresponding term on the right hand side is at least $2n_1$ and the inequality is trivially true. Hence, we may assume $x_1^*,x_2^*\not \in (V_\pi\cup E)\setminus V(T^\ast)$. In particular, we have $x_1^*\in T^{\ast\circ}\setminus V_\pi$ and both $x_1^\ast,x_2^\ast\not\in E$. Let $\Psi_{A_\star+A_\star}^+,\Psi_{A_\star+A_\star}^-:\cooo(\pi(A_\star+A_\star))\to \mathbb{R}$ be the upper and lower convex hull function on $A_\star+A_\star$ in the $e_1$-direction respectively. Note that $\Psi^\star_{A_\star+A_\star}(x')=2\Psi^\star_{A_\star}(\frac{x'}{2})$ for any $x'$, and for $x,y\in \widetilde{T}$ we have $\frac{1}{2}(\Psi^\star_{A_\star}(x)+\Psi^\star_{A_\star}(y))=\Psi^\star_{A_\star}(\frac{x+y}{2})$. Hence we have
\begin{align*}
    \Psi^+_{A_\star+A_\star}(x')-\max (R_{x_1^+}+R_{x_2^+})&=\Psi_{A_\star}(x_1^+)+\Psi_{A_\star}(x_2^+)-\max R_{x_1^+}-\max R_{x_2^+}\\
    &=g_{T^+}^+(x_1^+)+g_{T^+}^+(x_2^+)=g^{+\square}_{T^+}(x'),
\end{align*}
since $x_1^+,x_2^+\in \widetilde{T}^+$, and
\begin{align*}
        \min (R_{x_1^-}+R_{x_2^-})-\Psi^-_{A_\star+A_\star}(x')&=\min R_{x_1^-}+\min R_{x_2^-}-\Psi_{A_\star}(x_1^-)-\Psi_{A_\star}(x_2^-)\\
    &=g_{T^-}^-(x_1^-)+g_{T^-}^-(x_2^-)=g^{-\square}_{T^-}(x'),
\end{align*}
since $x_1^-,x_2^-\in \widetilde{T}^-$.

 By \Cref{pullover}, as $x_1^*\in T^\circ\setminus V_\pi$, $x_2^\ast\in Y_{\mathcal{W}_{T^\ast}}(x_1^\ast)$, and $x_1^\ast,x_2^\ast,x,x+\vec{v}\not\in E$, the intervals $R_{x_1^+}+R_{x_2^+}$ and $R_{x_1^-}+R_{x_2^-}$ both overlap $R_x+R_{x+\vec{v}}$, so $$I_{x'}:=(R_{x_1^-}+R_{x_2^-})\cup (R_{x}+R_{x+\vec{v}})\cup (R_{x_1^+}+R_{x_2^+})$$ is an interval. Therefore \begin{align*}&|(\co(A_\star+A_\star)\setminus (A_\star+A_\star))\cap \pi^{-1}(x')|\\\le &|(\co(A_\star+A_\star)\cap \pi^{-1}(x'))\setminus I_{x'}|\\
 \le&\lfloor \Psi^+_{A_\star+A_\star}(x')\rfloor-\max (R_{x_1^+}+R_{x_2^+})+\min (R_{x_1^-}+R_{x_2^-})-\lceil \Psi^-_{A_\star+A_\star}(x')\rceil\\
 \le& g^{+\square}_{T^+}(x')+g^{-\square}_{T^-}(x').\end{align*}
 \end{proof}

 Returning to the proof of \Cref{gsquare}, we have the following estimates. Recall that we uniquely write $x'=x+x+\vec{v}$ with $x'\in \co(\pi(A_\star+A_\star))$, $x\in \{0\}\times \mathbb{Z}^{k-1}$ and $\vec{v}\in \{0\}\times \{0,1\}^{k-1}$.
\begin{itemize}
\item We estimate the contribution of $x,\vec{v}$ such that, for $\ast\in \{+,-\}$ there is some $\widetilde{T}^\ast \in \mathcal{T}^\ast$ so that $x$ lies in a facet of $\widetilde{T}^{\ast}$.  There are at most $\binom{|V_\pi|}{k}$ simplices, each with $k$ facets, each facet yields at most $n_{k,0}^{-1}|\pi(B)|$ locations for $x$ by \Cref{hypboxsmall}, and for each such $x$ there are $2^{k-1}$ possible choices for $v$, and each such $x,\vec{v}$ contributes at most $2n_1$ to the left hand side.
\item We estimate the contribution of $x,\vec{v}$ such that, for some $\ast \in \{+,-\}$, there is no $\widetilde{T}^\ast\in \mathcal{T}^\ast$ with $x,x+\vec{v}\in T^{\ast\circ}$. For the simplex $\widetilde{T}^\ast$ containing $\frac{1}{2}(x+(x+\vec{v}))=\frac{1}{2}x'\in \pi(\cooo(A_\star))$, there is a hyperplane $\widetilde{H}$ containing a facet of $\widetilde{T}^\ast$ that separates (or contains one of) $x$ and $x+\vec{v}$. For each $\vec{v}$, there are at most $\binom{|V_\pi|}{k}$ simplices, each with $k$ facets, and each facet separating (or containing) at most $2$ such $x,x+\vec{v}$ pairs on each of the at most $(k-1)n_{k,0}^{-1}|\pi(B)|$ lines in direction $\vec{v}$ intersecting $\pi(B)$, and each such $x,\vec{v}$ contributes at most $2n_1$ to the left hand side.

\item We now estimate the contribution for those $x,\vec{v}$ such that one of $x,x+\vec{v}$ lies in $V_\pi\cup E$. There are $2^{k-1}$ choices of $v$, and for each of these choices there are at most $2(|V_\pi|+|E|)$ such values of $x$, each of which contributes at most $2n_1$ to the left hand side.

\item The remaining $x,\vec{v}$ have $x,x+\vec{v}\in T^{+o}\setminus (V_\pi\cup E), T^{-o}\setminus (V_\pi\cup E)$ for unique simplices $\widetilde{T}^+\in \mathcal{T}^+$ and $\widetilde{T}^-\in \mathcal{T}^-$. But the above claim shows that the contribution of such $x'$ is non-positive.
\end{itemize}
Combining these errors, and noting that $|E|$ and $|V_\pi|$ are bounded by \eqref{copiAstar} and \eqref{Astarfewvert}, we conclude by taking $h_{11}(\delta)$ and $n_{k,0}^{-1}\ll \delta$ so that
\begin{align*}2^{k}k\binom{|V_\pi|}{k}n_{k,0}^{-1}|B|+2^{k+1}\binom{|V_\pi|}{k}k(k-1) n_{k,0}^{-1}|B|+2^{k+1}(|V_\pi|+|E|)n_1\le h_{11}(\delta)|B|.\end{align*}
\end{proof}

\subsubsection{Infimum convolution of functions}
\label{infconvolutionsubsection}
In this section we prove a general result about the infimum convolution (see \Cref{infconv}) of functions, related to the fact that small doubling implies being close to the convex hull.

\begin{prop}\label{squareprop}
There exist constants $c_k',c_k''>0$ such that the following is true.  Let $T\subset \pi(B)$ be a discrete simplex with integral vertices $x_0,\ldots,x_{k-1}\in \{0\}\times\mathbb{Z}^{k-1}$, and let $g:T\to [0,2n_1]$ with $g(x_i)=0$ for all $i$. Then
$$\sum_{x' \in T+T}g_{\mathcal{W}_T}^{\square}(x')\le (2^k-c_k')\sum_{x \in T}g(x)+c_k''\min\{n_i\}^{-1}|B|.$$
\end{prop}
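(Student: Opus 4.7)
The strategy is to construct for each $x'\in T+T$ an explicit admissible splitting $x'=x_1(x')+x_2(x')$ with $x_2\in Y_{\mathcal{W}_T}(x_1)$, bound $g^{\square}_{\mathcal{W}_T}(x')\le g(x_1(x'))+g(x_2(x'))$, and then interchange the order of summation to write $\sum_{x'}g^{\square}_{\mathcal{W}_T}(x')\le\sum_{x\in T}N(x)g(x)$ for a multiplicity function $N$, and show $N(x)\le 2^k-c_k'$ uniformly. Two families of admissible splittings are naturally available. The \emph{vertex splittings} $(x'-x_i,x_i)$, valid whenever $x'-x_i\in T^\circ$, are admissible because $V(T)\subset Y_{\mathcal{W}_T}(x'-x_i)$, and contribute only $g(x'-x_i)$ to the bound since $g(x_i)=0$. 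The \emph{subsimplex splittings} come from pairs $(\widetilde{S}_1,\widetilde{S}_2)\in\mathcal{S}_{\mu_1,\mu_2}(\widetilde{T})^2$ with difference $\vec{u}=\widetilde{S}_2-\widetilde{S}_1\in\mathcal{U}_{\mu_1,\mu_2}$: for $x_1\in S_1$ and $x_2\in S_2$ with $x_2-x_1\in\mathcal{R}(\vec{u})$, admissibility follows from $\mathcal{R}(\vec{u})\subset\mathcal{W}_T$.

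The key geometric input is \Cref{coveringfamily}, which guarantees that the sums $\widetilde{S}_1+\widetilde{S}_2$ over pairs $(\widetilde{S}_1,\widetilde{S}_2)\in\mathcal{S}_{\mu_1,\mu_2}^2$ cover $\widetilde{T}+\widetilde{T}$ with bounded overlap. Using this covering I would assign a subsimplex splitting to each $x'\in T+T$ lying in the ``middle'' of $T+T$ (i.e., outside every corner region $x_i+T^\circ$); this contributes multiplicity $2$ per $x\in T$ (once as $x_1$, once as $x_2$), giving the baseline bound $N(x)\le 2^k$ via the Jacobian factor of $2^{k-1}$ between $T$ and $T+T$. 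For $x'$ in a corner region $x_i+T^\circ$, I would instead use the vertex splitting $(x'-x_i,x_i)$, which contributes multiplicity $1$ at $x'-x_i$ rather than $2$. Since the union of corner regions $\bigcup_i(x_i+T^\circ)$ exhausts the portion of $T+T$ where some barycentric coordinate is at least $1/2$ (a positive constant fraction of $|T+T|$ depending only on $k$; indeed the full $T+T$ when $k=2$), switching from subsimplex to vertex splittings on this region saves a definite fraction of the total multiplicity, yielding $c_k'>0$ depending only on $k$.

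The main obstacle is to make this corner-vs-middle assignment of splittings rigorous and verify that the savings $c_k'$ are indeed bounded below by a positive constant independent of $g$, $T$, and the box sidelengths. One must carefully break ties in the overlap regions where multiple vertex splittings are simultaneously available, and control the overcounting arising from the bounded overlap of the covering family $\mathcal{S}_{\mu_1,\mu_2}^2$, for which the recursive midpoint structure of \Cref{sij} is essential. Finally all the discretization errors---the rounding implicit in $\mathcal{R}(\vec{u})=\lfloor\vec{u}\rfloor+\{0\}\times\{0,1\}^{k-1}$, the boundary effects where $x_1\in T^\circ$ may fail for $x'$ on the facets of $T+T$, and the passage between continuous volumes of the coverings and their lattice-point counts via \Cref{contdisc}---are each of order $\min\{n_i\}^{-1}|B|$ with constants depending only on $k$ and on $|\mathcal{S}_{\mu_1,\mu_2}|$, combining to give the final error term $c_k''\min\{n_i\}^{-1}|B|$.
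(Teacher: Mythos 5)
Your proposal takes a genuinely different route from the paper, but I believe it has a real gap that goes beyond the "main obstacle" you flag. The paper's proof is a \emph{bootstrap} argument: it introduces the functional defect $d_k'(\widetilde{P})=2^k g(\widetilde{P})-g^\square(\widetilde{P}+\widetilde{P})$, proves a "fragmentation" lemma (via two recursive steps: corner-contraction and midpoint-averaging, exactly matching the recursion in \Cref{sij}) showing $g(S')\le 2^{-\mu_1 k}g(T)+O(d_k'(T))+\text{error}$ for $\widetilde{S}'\in\mathcal{S}_{\mu_1,\mu_2}$, and then sums this over the covering family of \Cref{coveringfamily}. Because $\sum_{\widetilde{S}\in\mathcal{F}}|\widetilde{S}|\le\frac12|\widetilde{T}|$, one obtains $g(T)\le\frac12 g(T)+C d_k'(T)+\text{error}$, which rearranges to $d_k'(T)\ge\frac{1}{2C}g(T)-\text{error}$, i.e.\ the statement with $c_k'=\frac{1}{2C}$. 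There is no explicit construction of splittings for individual $x'\in T+T$; the covering is applied on the $T$ side, not on the $T+T$ side.

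The gap in your argument is in the multiplicity accounting. Write $N(x)$ for the multiplicity; then $\sum_{x\in T}N(x)=2|T+T|\approx 2^k|T|$ \emph{no matter how you choose the splittings}, so the only way to win is to concentrate multiplicity at the vertices (where $g=0$). Your corner splittings do exactly that, but consider what they do to the non-vertex points. For $x'\in x_i+T^\circ$, replacing the midpoint splitting $x'=\lfloor x'/2\rfloor+(\lfloor x'/2\rfloor+\vec{v})$ by the vertex splitting $x'=(x'-x_i)+x_i$ removes multiplicity $\approx 2$ from the point $\lfloor x'/2\rfloor$, which ranges over the small homothetic copy $\frac12(x_i+\widetilde{T})$ (volume fraction $2^{1-k}$ of $T$), but \emph{adds} multiplicity $1$ to $x'-x_i$, which ranges over \emph{all} of $T^\circ$. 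So the savings are concentrated near each corner while the cost is spread over the whole simplex. A point $y$ near the barycenter of $T$ retains its full midpoint multiplicity $\approx 2^k$ (since $2y+\vec{v}$ stays in the "middle" of $T+T$ for all $\vec{v}$) and additionally picks up $\approx k$ from the corner splittings (one for each $x'=y+x_i$), giving $N(y)\approx 2^k+k>2^k$. Since the target bound must hold for $g$ a delta function at such a $y$, the scheme as described fails. The subsimplex splittings from pairs $(\widetilde{S}_1,\widetilde{S}_2)\in\mathcal{F}^2$ do not obviously rebalance this: each fixed pair already gives a $2^{k-1}$-to-$1$ map $(S_1+S_2)\cap\mathbb{Z}^{k-1}\to S_1$, so one pair alone saturates the budget $2^k$, and using several pairs through the covering overlaps only threatens to push $N(x)$ higher, not lower. (Note also that \Cref{coveringfamily} controls $\sum_{\widetilde{S}\in\mathcal{F}}|\widetilde{S}|$, not overlaps of the Minkowski sums $\widetilde{S}_1+\widetilde{S}_2$, so the "bounded overlap of $\mathcal{S}_{\mu_1,\mu_2}^2$" you invoke is not quite what that proposition supplies.) Your plan works cleanly for $k=2$ because there the two corner regions already cover all of $T+T$ except the three points $\{2x_0,x_0+x_1,2x_1\}$, but for $k\ge 3$ there is a genuine middle region, and that is where the argument breaks down.
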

We omit the subscript $\mathcal{W}_T$ from now on. Throughout the entire proof we shall consider the sets $\mathcal{S}_{i,j}=\mathcal{S}_{i,j}(\widetilde{T})$ in \Cref{sij} with parameters $i,j$ bounded above by $\mu_1,\mu_2$, respectively.

\begin{defn}
For a continuous or discrete subset $X \subset \widetilde{T}$, we define $g(X)= \sum_{x \in X\cap T} g(x)$, and for a continuous or discrete subset $Y\subset \widetilde{T}+\widetilde{T}$, define $g^{\square}(Y)=\sum_{x\in Y\cap (T+T)}g^{\square}(x)$.
\end{defn}

\begin{defn}
For a continuous or discrete subset $X \subset \widetilde{T}$, we define $d'_k(X)=-g^{\square}(X+X)+2^kg(X)$.
\end{defn}

\begin{obs}\label{dk'obs}
For a polytope $\widetilde{P} \subset \widetilde{T}$, we have $d'_k(\widetilde{P}) \ge -2^{2k+5}\min\{n_i\}^{-1}|B|$. In particular, with $P=\widetilde{P}\cap (\{0\}\times\mathbb{Z}^{k-1})$ we have $d'_k(P) \ge -2^{2k+5}\min\{n_i\}^{-1}|B|$. More generally, the same conclusion holds for any region $\widetilde{P}\subset \widetilde{T}$ defined as the intersection of open and closed half-spaces.
\end{obs}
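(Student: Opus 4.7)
The plan is to upper-bound $g^\square(\widetilde{P}+\widetilde{P})$ by $2^k g(\widetilde{P})$ plus a boundary error. Every $x' \in (\widetilde{P}+\widetilde{P})\cap(T+T)$ has a unique representation $x' = 2x+\vec{v}$ with $\vec{v} \in \{0\}\times\{0,1\}^{k-1}\subset \mathcal{W}_T$, and since $\widetilde{P}$ and $\widetilde{T}$ are convex (or intersections of half-spaces), the midpoint satisfies $x+\vec{v}/2 = x'/2 \in \widetilde{P} \cap \widetilde{T}$. I shall call $(x,\vec{v})$ \emph{good} if $x \in T^o$ and $x, x+\vec{v} \in P$, and \emph{bad} otherwise. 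For a good pair, $(x_1,x_2) = (x,x+\vec{v})$ is an admissible decomposition in \Cref{infconv} (noting $\vec{v} \in \mathcal{W}_T$), so $g^\square(x') \le g(x) + g(x+\vec{v})$.

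Summing over good $x'$, each value $g(x_0)$ for $x_0 \in P$ appears at most $2^{k-1}$ times with $x_0 = x$ and $2^{k-1}$ times with $x_0 = x+\vec{v}$, for a total of at most $2^k$ occurrences. Hence $\sum_{\text{good}} g^\square(x') \le 2^k g(\widetilde{P})$, which will furnish the main term. For bad $x'$, I use the trivial bound $g^\square(x') \le 4n_1$ coming from $g(T) \subset [0,2n_1]$ (setting $g^\square(x') = 0$ if no admissible decomposition exists).

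It remains to bound the number of bad pairs. If $(x,\vec{v})$ is bad, then one of $x$, $x+\vec{v}$ either fails to lie in $T$ or in $\widetilde{P}$, or $x \in \partial \widetilde{T}$; since $x+\vec{v}/2 \in \widetilde{T} \cap \widetilde{P}$ and $|\vec{v}|/2 \le \sqrt{k-1}/2$, in each case one of $x$, $x+\vec{v}$ lies within Euclidean distance $\sqrt{k-1}/2$ of $\partial \widetilde{T} \cup \partial\widetilde{P}$. Applying the $(k-1)$-dimensional analogue of \Cref{contdisc} separately to $\widetilde{T}$ and $\widetilde{P}$ bounds the number of unit lattice cubes in $\{0\}\times\mathbb{R}^{k-1}$ meeting $\partial\widetilde{T}$ or $\partial\widetilde{P}$ by $O_k(\min\{n_i\}^{-1}|\pi(B)|)$. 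Dilating to the $\sqrt{k-1}/2$-neighbourhood, then multiplying by the $2^{k-1}$ choices of $\vec{v}$ and by the per-pair bound $4n_1$, yields a bad contribution of $O_k(\min\{n_i\}^{-1}|B|)$, which after tuning constants fits within the claimed $2^{2k+5}\min\{n_i\}^{-1}|B|$.

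The main obstacle is that a priori $\widetilde{P}$ can have arbitrarily complicated boundary, yet we need a bound independent of its combinatorial complexity; this is precisely what the surface-area-type estimate in \Cref{contdisc} provides, depending only on the ambient box. The discrete statement $d_k'(P) \ge -2^{2k+5}\min\{n_i\}^{-1}|B|$ follows since $P + P \subseteq (\widetilde{P}+\widetilde{P}) \cap \{0\}\times\mathbb{Z}^{k-1}$, so $g^\square(P+P) \le g^\square(\widetilde{P}+\widetilde{P})$ while $g(P) = g(\widetilde{P})$; the extension to arbitrary intersections of open and closed half-spaces is identical, using that \Cref{contdisc} also applies in that generality.
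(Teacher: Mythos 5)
Your proof is essentially correct and follows the same strategy as the paper's: decompose each $x' \in (\widetilde{P}+\widetilde{P})\cap(T+T)$ uniquely as $x+(x+\vec{v})$ with $\vec{v}\in\{0\}\times\{0,1\}^{k-1}\subset\mathcal{W}_T$, split into a ``good'' case where the decomposition is admissible for $g^{\square}$ and feeds into $g(\widetilde{P})$, and a ``bad'' case bounded via the crude estimate $g^{\square}\le 4n_1$ times a surface-area count. The good-case bookkeeping (each $x_0\in P$ appears at most $2^k$ times) is exactly the paper's.

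The one place where you diverge is the bad count. The paper identifies bad pairs with $|(\widetilde{P}\Delta(\widetilde{P}-\vec{v}))\cap(\{0\}\times\mathbb{Z}^{k-1})|$, applies \Cref{contdisc} to convert this to the continuous volume $|\widetilde{P}\Delta(\widetilde{P}-\vec{v})|\le|\vec{v}|\cdot|\partial\widetilde{P}|$, and then uses that a convex body inside the box has $|\partial\widetilde{P}|\le|\partial\widetilde{\pi(B)}|$. You instead count lattice points within distance $\sqrt{k-1}/2$ of $\partial\widetilde{T}\cup\partial\widetilde{P}$. That works, but beware two small points. First, the statement of \Cref{contdisc} bounds $||\widetilde{P}|-|P||$, not directly the number of unit cubes meeting $\partial\widetilde{P}$; what you actually need is the intermediate claim $|X|\le 2k(k+1)\min\{n_i\}^{-1}|B|$ established inside its proof. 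Second, your ``dilating to the $\sqrt{k-1}/2$-neighbourhood'' step costs an extra factor that is roughly $(O(\sqrt{k}))^{k-1}$ in the worst case, which is not obviously $\le 2^{2k+5}$ without some care; the paper's route via $|\vec{v}|\cdot|\partial\widetilde{P}|\le\sqrt{k-1}\cdot|\partial\widetilde{\pi(B)}|$ sidesteps this by only paying $|\vec{v}|$ linearly. Neither issue is a real gap --- your approach gives $O_k(\min\{n_i\}^{-1}|B|)$, which is all that is structurally needed --- but the paper's symmetric-difference argument is the cleaner way to hit the advertised constant.
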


\begin{proof}
Note that if $\widetilde{P}\subset \widetilde{T}$ is defined as the intersection of open and closed half-spaces, then we can perturb the open half-spaces to closed ones without changing the lattice points in $\widetilde{P}$ or $\widetilde{P}+\widetilde{P}$, so we may assume that $\widetilde{P}$ is a polytope.
Recall that $\{0\}\times \{0,1\}^{k-1} \subset \mathcal{W}_T$. Note that if we write $z\in (\widetilde{P}+\widetilde{P})\cap T=2\widetilde{P}\cap T$ as $z=x+(x+\vec{v})$ with $\vec{v}\in \{0\}\times \{0,1\}^{k-1}$, then as $\widetilde{P}$ is convex, either $x,x+\vec{v}\in P^o$ or the segment $[x,x+\vec{v}]$ intersects $\partial\widetilde{P}$. Therefore with $\vec{v}$ ranging over $\{0\}\times\{0,1\}^{k-1}$ we have
\begin{align}
    d'_k(\widetilde{P})&= 2^kg(\widetilde{P})-g^{\square}(\widetilde{P}+\widetilde{P}) \nonumber\\
    &\ge 2^k\sum_{x\in P} g(x) - \sum_{\vec{v}}\sum_{x,x+\vec{v}\in P^o}g^{\square}(x+x+\vec{v}) - \sum_{\vec{v}}\sum_{[x,x+\vec{v}]\cap\partial \widetilde{P}\ne \emptyset}g^{\square}(x+x+\vec{v})\nonumber\\\label{3.44line3}
    &\ge 2^k\sum_{x\in P} g(x) - \sum_{\vec{v}}\sum_{x,x+\vec{v} \in P^o  }(g(x)+g(x+\vec{v}))- \sum_{\vec{v}}\sum_{[x,x+\vec{v}]\cap\partial \widetilde{P}\ne \emptyset}4n_1\\
    &\ge-\sum_{\vec{v}}\sum_{[x,x+\vec{v}]\cap\partial \widetilde{P}\ne \emptyset}4n_1 \nonumber\\
    &\ge -4n_1 \sum_{\vec{v}}|((\widetilde{P}+[0,v])\setminus (\widetilde{P}\cap (\widetilde{P}+v))^{\circ}))\cap \{0\}\times \mathbb{Z}^{k-1}|\nonumber\\
    &\ge -4n_1\sum_{\vec{v}}(|\widetilde{P}+[0,v]|-|\widetilde{P}\cap (\widetilde{P}+v)|+2\cdot 2(k-1)k \min\{n_i\}^{-1} |\pi(B)|)\label{3.44line6}\\
    &\ge -4n_1\sum_{\vec{v}}(|\vec{v}||\partial \widetilde{P}|+2\cdot 2(k-1)k \min\{n_i\}^{-1})\nonumber\\
    &\ge -4n_12^{k-1}(\sqrt{k-1}\cdot2(k-1)\min\{n_i\}^{-1} |\pi(B)|+2\cdot 2(k-1)k \min\{n_i\}^{-1} |\pi(B)|)\label{3.44line8}\\
    &\ge -2^{2k+5}\min\{n_i\}^{-1}|B|.\nonumber
\end{align}

In \eqref{3.44line3} we have used the fact that $\vec{v}\in \mathcal{W}_T$ so $x+\vec{v}\in Y_{\mathcal{W}_T}(x)$ by  definition, and the trivial bounds $g^{\square}\le 2\max g \le 4n_1$. In \eqref{3.44line6} we have used \Cref{contdisc} to upper bound $$|((\widetilde{P}+[0,v])\setminus (\widetilde{P}\cap (\widetilde{P}+v))^{\circ}))\cap \{0\}\times \mathbb{Z}^{k-1}|\le |(\widetilde{P}+[0,v])\cap \{0\}\times \mathbb{Z}^{k-1}|-|(\widetilde{P}\cap (\widetilde{P}+v)) \cap \{0\}\times \mathbb{Z}^{k-1}|,$$ and in \eqref{3.44line8} the facts that $|\vec{v}|\le k-1$ and $|\partial \widetilde{P}|\le |\partial \widetilde{\pi(B)}|\le 2(k-1)\min\{n_i\}^{-1}|\pi(B)|$.
\end{proof}

\begin{lem}
For a vertex $x\in V(\widetilde{T})$ and simplices $\widetilde{S}=(1-2^{-i})x+2^{-i}\widetilde{T} \in \mathcal{S}_{i,0}$ and   $\widetilde{S}'=\frac{1}{2}(x+\widetilde{S}) \in \mathcal{S}_{i+1,0}$, we have
$$g(S')\leq 2^{-k} g(S)+2^{-k}d_k'(T) +2^{k+6}\min\{n_i\}^{-1}|B|.$$
\end{lem}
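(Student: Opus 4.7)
The plan is to invoke \Cref{dk'obs} on the polytope $\widetilde{S}'\subset \widetilde{T}$, exploit the identity $\widetilde{S}'+\widetilde{S}' = x+\widetilde{S}$, and bound the resulting sum $g^{\square}(x+\widetilde{S})$ by $g(S)$ plus a low-dimensional boundary correction, leveraging the hypothesis $g(x)=0$ at the vertex $x$.

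First, the definition $\widetilde{S}' = \frac{1}{2}(x+\widetilde{S})$ yields $\widetilde{S}'+\widetilde{S}' = x+\widetilde{S}\subset 2\widetilde{T}$, so applying \Cref{dk'obs} to $\widetilde{S}'$ gives
\begin{align*}
2^k g(S') \le g^{\square}(x+\widetilde{S}) + 2^{2k+5}\min\{n_i\}^{-1}|B|.
\end{align*}
I would then upper bound $g^{\square}(x+\widetilde{S})$ by parameterizing its lattice points as $z = x+y$ with $y \in S$. When $y\in T^\circ$, the pair $(y,x)$ is a valid decomposition in the sense of \Cref{infconv}, since $x\in V(T)\subset Y_{\mathcal{W}_T}(y)$ automatically from the construction of $\mathcal{W}_T$ in \Cref{VofT}; combined with $g(x)=0$ this yields $g^{\square}(z)\le g(y)+g(x)=g(y)$. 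For $y\in S\cap\partial\widetilde{T}$, I would apply the trivial bound $g^{\square}(z)\le 2\max g\le 4n_1$. Summing over all such $y$ produces $g^{\square}(x+\widetilde{S}) \le g(S) + 4n_1|S\cap\partial\widetilde{T}|$.

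The boundary estimate follows from \Cref{hypboxsmall}: the set $\partial\widetilde{T}$ is a union of at most $k$ facets each contained in a hyperplane of $\{0\}\times\mathbb{R}^{k-1}$, so $|S\cap\partial\widetilde{T}|\le k\min\{n_i\}^{-1}|\pi(B)|$ and hence the boundary contribution is at most $4k\min\{n_i\}^{-1}|B|$. Combining,
\begin{align*}
g(S') \le 2^{-k}g(S)+(2^{-k}\cdot 4k + 2^{k+5})\min\{n_i\}^{-1}|B|\le 2^{-k}g(S)+2^{k+6}\min\{n_i\}^{-1}|B|,
\end{align*}
from which the claim follows after adding the slack $2^{-k}d_k'(T)$, which is non-negative in the typical regime and in any event is bounded below by $-2^{k+5}\min\{n_i\}^{-1}|B|$ via \Cref{dk'obs}, so can be absorbed into the generous error constant.

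The main obstacle is verifying that the decomposition $(y,x)$ is valid for \Cref{infconv}; this is the payoff of including $\{0\}\times\{0,1\}^{k-1}\subset \mathcal{W}_T$ in \Cref{VofT} (which guarantees $V(T)\subset Y_{\mathcal{W}_T}(y)$ for every $y$) together with $g$ vanishing at the vertices of $\widetilde{T}$; the remaining boundary estimate is routine.
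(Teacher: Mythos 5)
Your argument captures the key ideas correctly: the identity $\widetilde{S}'+\widetilde{S}'=x+\widetilde{S}$, the fact that $x\in V(T)\subset Y_{\mathcal{W}_T}(y)$ for $y\in T^\circ$, the vanishing $g(x)=0$, and \Cref{hypboxsmall} to estimate $|S\cap\partial\widetilde{T}|$. The estimate $g^\square(x+\widetilde{S})\le g(S)+4k\min\{n_i\}^{-1}|B|$ is sound. However, there is a genuine gap exactly at the step where you try to reinsert the $2^{-k}d_k'(T)$ term, and it traces back to a structural choice that differs from the paper's.

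You apply \Cref{dk'obs} to $\widetilde{S}'$, which yields the unconditional bound $g(S')\le 2^{-k}g(S)+(2^{-k}\cdot 4k + 2^{k+5})\min\{n_i\}^{-1}|B|$, with $d_k'(T)$ nowhere in sight. Tacking $2^{-k}d_k'(T)$ onto the right-hand side afterwards is not ``adding slack'' when $d_k'(T)<0$: in the worst case allowed by \Cref{dk'obs}, $d_k'(T)=-2^{2k+5}\min\{n_i\}^{-1}|B|$, the stated lemma's right-hand side drops to $2^{-k}g(S)+2^{k+5}\min\{n_i\}^{-1}|B|$, which is \emph{strictly below} your bound (you are off by $2^{-k}\cdot 4k$). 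Your claim that $d_k'(T)$ is ``non-negative in the typical regime'' is not something you are entitled to assume here.

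The paper avoids this by applying \Cref{dk'obs} to the \emph{complement} $\widetilde{S}'^c=\widetilde{T}\setminus\widetilde{S}'$ instead of to $\widetilde{S}'$. Starting from $d_k'(T)\ge d_k'(\widetilde{T})=2^kg(\widetilde{T})-g^\square(\widetilde{T}+\widetilde{T})$, splitting $\widetilde{T}=\widetilde{S}'\sqcup\widetilde{S}'^c$ and using $2\widetilde{T}\subset(\widetilde{S}'+\widetilde{S}')\cup(\widetilde{S}'^c+\widetilde{S}'^c)$ gives $d_k'(T)\ge 2^kg(\widetilde{S}')-g^\square(x+\widetilde{S})+d_k'(\widetilde{S}'^c)$. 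Then \Cref{dk'obs} absorbs $d_k'(\widetilde{S}'^c)$, leaving $d_k'(T)$ intrinsically in the inequality. The error bookkeeping happens at the scale of $2^kg(S')$, so all errors are measured against a budget of $2^{2k+6}$ before the division by $2^k$, and the constant $2^{k+6}$ is achieved. You should either restructure to apply the observation to $\widetilde{S}'^c$, or be explicit that your route only yields the lemma with constant $2^{k+6}+2^{-k}\cdot 4k$ (which is harmless downstream but is not what the lemma states).
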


\begin{proof}
By \Cref{dk'obs} we have (letting $\widetilde{S}'^c$ be the complement of $\widetilde{S}'$ inside $\widetilde{T}$) that
\begin{align}
    \nonumber d_k'(T)\ge d_k'(\widetilde{T})&=2^k g(\widetilde{T})-g^\square(\widetilde{T}+\widetilde{T})\\\nonumber
    &\ge 2^k g(\widetilde{S}')-g^\square(\widetilde{S}'+\widetilde{S}') + 2^k g(\widetilde{S}'^c)-g^\square(\widetilde{S}'^c+\widetilde{S}'^c)\\\nonumber
    &= 2^k g(\widetilde{S}')-g^\square(\widetilde{S}'+\widetilde{S}') + d_k'(\widetilde{S}'^c)\\\nonumber
    &\ge  2^k g(\widetilde{S}')-g^\square(\widetilde{S}'+\widetilde{S}')-2^{2k+5}\min\{n_i\}^{-1}|B|\\\nonumber
    &\ge  2^k g(\widetilde{S}')-g^\square(\widetilde{S}^{\circ}+x)-g^{\square}(\partial\widetilde{S}+x)-2^{2k+5}\min\{n_i\}^{-1}|B|\\\label{3.49line6}
    &\ge 2^k g(\widetilde{S}')-g(\widetilde{S}^{\circ})-k(4n_1)\min\{n_i\}^{-1}|\pi(B)|-2^{2k+5}\min\{n_i\}^{-1}|B|\\\nonumber
    &\ge2^k g(S')-g(S)-2^{2k+6}\min\{n_i\}^{-1}|B|,
\end{align}
where in \eqref{3.49line6} we used that $x\in V(\widetilde{T})\subseteq Y_{\mathcal{W}_T}(y)$ for all $y\in \widetilde{T}^{\circ}$,  \Cref{hypboxsmall} to estimate the number of lattice points on each facet of $\widetilde{S}$, and the fact that $\max g^{\square}\le 4n_1$.

\end{proof}

\begin{cor}
For $\widetilde{S}'\in\mathcal{S}_{i,0}$, we have 
\begin{align*}g(S')&\leq 2^{-ik} g(T)+\max\left(0,\frac{1}{2^k-1} d_k'(T)\right)+ \frac{2^k}{2^k-1}2^{k+6}\min\{n_i\}^{-1}|B|\\&\le 2^{-ik} g(T)+\frac{1}{2^k-1} d_k'(T)+2^{2k+6}\min\{n_i\}^{-1}|B|.\end{align*}
\end{cor}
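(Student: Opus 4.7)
The corollary is an immediate iteration of the preceding lemma along the chain of nested simplices obtained by repeatedly halving toward a fixed vertex. Concretely, any $\widetilde{S}'\in \mathcal{S}_{i,0}$ has the form $\widetilde{S}' = (1-2^{-i})x + 2^{-i}\widetilde{T}$ for some vertex $x$ of $\widetilde{T}$. I would set $\widetilde{S}_0 := \widetilde{T}$ and $\widetilde{S}_j := \tfrac{1}{2}(x + \widetilde{S}_{j-1})$, observe by induction that $\widetilde{S}_j = (1-2^{-j})x + 2^{-j}\widetilde{T}\in\mathcal{S}_{j,0}$, and note that $\widetilde{S}_i = \widetilde{S}'$.

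Applying the preceding lemma to each consecutive pair $(\widetilde{S}_{j-1},\widetilde{S}_j)$ gives the one-step recurrence
$$g(S_j)\;\le\;2^{-k}g(S_{j-1}) + 2^{-k}d_k'(T) + 2^{k+6}\min\{n_i\}^{-1}|B|.$$
Unrolling this $i$ times and summing the geometric series $\sum_{j=0}^{i-1} 2^{-jk} = \tfrac{1-2^{-ik}}{1-2^{-k}}$, together with the identity $\tfrac{2^{-k}}{1-2^{-k}} = \tfrac{1}{2^k-1}$, yields
$$g(S_i)\;\le\; 2^{-ik}g(T) + \frac{1-2^{-ik}}{2^k-1}\,d_k'(T) + \frac{2^k(1-2^{-ik})}{2^k-1}\cdot 2^{k+6}\min\{n_i\}^{-1}|B|.$$

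To deduce the first stated inequality, I would split on the sign of $d_k'(T)$: when $d_k'(T)\ge 0$, the factor $\tfrac{1-2^{-ik}}{2^k-1}\le \tfrac{1}{2^k-1}$ gives the upper bound $\tfrac{1}{2^k-1}d_k'(T) = \max(0,\tfrac{1}{2^k-1}d_k'(T))$, while when $d_k'(T)<0$ the middle term is non-positive and is bounded above by $0 = \max(0,\tfrac{1}{2^k-1}d_k'(T))$. In either case the error term $\tfrac{2^k}{2^k-1}\cdot 2^{k+6}\min\{n_i\}^{-1}|B|$ suffices (for $k=1$ the simplex $\widetilde{T}$ degenerates to a point and the statement is vacuous).

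The second (looser) inequality follows from the first: when $d_k'(T)\ge 0$ both sides of the $\max$ agree and I only need the trivial estimate $\tfrac{2^k}{2^k-1}\cdot 2^{k+6}\le 2^{2k+6}$; when $d_k'(T)<0$, I would invoke \Cref{dk'obs}, which gives $d_k'(T)\ge -2^{2k+5}\min\{n_i\}^{-1}|B|$, so $-\tfrac{1}{2^k-1}d_k'(T)\le \tfrac{2^{2k+5}}{2^k-1}\min\{n_i\}^{-1}|B|$, and for $k\ge 2$ this extra error is absorbed into $2^{2k+6}\min\{n_i\}^{-1}|B|$ since $\tfrac{3\cdot 2^{2k+5}}{2^k-1}\le 2^{2k+6}$. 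The only mild subtlety is this last book-keeping step of absorbing the negative part of $d_k'(T)$ into the $\min\{n_i\}^{-1}|B|$ error, but \Cref{dk'obs} is tailor-made for it.
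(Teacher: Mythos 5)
Your proof is correct and is essentially the only sensible route: the paper leaves this corollary as an unproved iteration of the preceding lemma, and you have supplied exactly that iteration, telescoping the recurrence $g(S_j)\le 2^{-k}g(S_{j-1})+2^{-k}d_k'(T)+2^{k+6}\min\{n_i\}^{-1}|B|$ and summing the geometric series. The case split on the sign of $d_k'(T)$ together with the appeal to \Cref{dk'obs} to absorb the negative part into the $\min\{n_i\}^{-1}|B|$ error is exactly the book-keeping needed for the second inequality; your constant check $\tfrac{3\cdot 2^{2k+5}}{2^k-1}\le 2^{2k+6}$ (valid for $k\ge 2$, which is the standing hypothesis of this section) is right. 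The paper's notation $g(S'')$ versus $\widetilde{S}'\in\mathcal{S}_{i,0}$ is a typo, and you correctly read it as $g(S')$.
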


\begin{lem}\label{averageg}
Let $i\leq \mu_1$ and $j\leq \mu_2-1$.
For $\widetilde{S}'_1,\widetilde{S}'_2\in\mathcal{S}_{i,j}$ and $\widetilde{S}''=\frac12(\widetilde{S}'_1+\widetilde{S}'_2)\in\mathcal{S}_{i,j+1}$, we have
$$g(S'')\leq \frac12( g(S'_1)+g(S'_2))+2^{-k}d_k'(T)+ 2^{5k}\min\{n_i\}^{-1}|B|.$$
\end{lem}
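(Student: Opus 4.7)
The plan is to mirror the proof of the preceding lemma closely, with the translation vector $\vec{u}:=\widetilde{S}_2'-\widetilde{S}_1'$ playing the role of the vertex $x$; the crucial identity to exploit is $\widetilde{S}''+\widetilde{S}''=\widetilde{S}_1'+\widetilde{S}_2'$.

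First I would partition $\widetilde{T}=\widetilde{S}''\sqcup (\widetilde{T}\setminus\widetilde{S}'')$. Every integer point $w\in 2\widetilde{T}\cap(\{0\}\times \mathbb{Z}^{k-1})$ can be written as $w=2q$ for some $q\in \widetilde{T}$, and depending on whether $q\in\widetilde{S}''$ or $q\in\widetilde{T}\setminus\widetilde{S}''$, $w$ lies in $\widetilde{S}''+\widetilde{S}''=\widetilde{S}_1'+\widetilde{S}_2'$ or in $(\widetilde{T}\setminus\widetilde{S}'')+(\widetilde{T}\setminus\widetilde{S}'')$. Since $g^\square\ge 0$ this yields $g^\square(\widetilde{T}+\widetilde{T})\le g^\square(\widetilde{S}_1'+\widetilde{S}_2')+g^\square((\widetilde{T}\setminus\widetilde{S}'')+(\widetilde{T}\setminus\widetilde{S}''))$, and combined with $g(\widetilde{T})=g(\widetilde{S}'')+g(\widetilde{T}\setminus\widetilde{S}'')$ this gives
$$d_k'(T)=d_k'(\widetilde{T})\ge 2^kg(S'')-g^\square(\widetilde{S}_1'+\widetilde{S}_2')+d_k'(\widetilde{T}\setminus\widetilde{S}'').$$
Applying \Cref{dk'obs} to the region $\widetilde{T}\setminus\widetilde{S}''$ (an intersection of half-spaces) then gives $d_k'(\widetilde{T}\setminus\widetilde{S}'')\ge -2^{2k+5}\min\{n_i\}^{-1}|B|$.

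The core of the argument is bounding $g^\square(\widetilde{S}_1'+\widetilde{S}_2')$. Because $i\le \mu_1$ and $j+1\le \mu_2$, one has $\vec{u}\in \mathcal{U}_{i,j}\subset \mathcal{U}_{\mu_1,\mu_2}$, and hence $\mathcal{R}(\vec{u})\subset \mathcal{W}_T$ by the definition of $\mathcal{W}_T$. For each pair $(y_1,\vec{v})\in (S_1')^{\circ}\times (\{0\}\times \{0,1\}^{k-1})$ with $y_2:=y_1+\lfloor\vec{u}\rfloor+\vec{v}\in S_2'$, the relation $y_2-y_1\in \mathcal{R}(\vec{u})\subset \mathcal{W}_T$ together with the definition of $g^\square$ yields $g^\square(y_1+y_2)\le g(y_1)+g(y_2)$. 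A parity check shows these sums $w=2y_1+\lfloor\vec{u}\rfloor+\vec{v}$ are distinct across pairs, so each integer $w\in (S_1'+S_2')\cap (\{0\}\times\mathbb{Z}^{k-1})$ admits at most one such valid decomposition. Double-counting, each $y_1\in S_1'$ and each $y_2\in S_2'$ appears in at most $|\mathcal{R}(\vec{u})|=2^{k-1}$ valid pairs, so the total contribution of valid pairs to $g^\square(\widetilde{S}_1'+\widetilde{S}_2')$ is at most $2^{k-1}(g(S_1')+g(S_2'))$. The ``bad'' integer points $w$ admitting no valid pair lie within $O(1)$ of $\partial\widetilde{S}_1'$ or $\partial\widetilde{S}_2'$, and by a boundary lattice-point estimate analogous to \Cref{contdisc} there are at most $2^{O(k)}\min\{n_i\}^{-1}|\pi(B)|$ of them, each contributing at most $g^\square\le 4n_1$. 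This gives
$$g^\square(\widetilde{S}_1'+\widetilde{S}_2')\le 2^{k-1}(g(S_1')+g(S_2'))+2^{O(k)}\min\{n_i\}^{-1}|B|.$$

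Substituting back, dividing by $2^k$, and rearranging yields $g(S'')\le \tfrac12(g(S_1')+g(S_2'))+2^{-k}d_k'(T)+2^{5k}\min\{n_i\}^{-1}|B|$, the desired inequality, with all accumulated boundary errors comfortably absorbed into the $2^{5k}\min\{n_i\}^{-1}|B|$ term. The main technical obstacle will be the precise accounting of bad $w$'s: integrality forces a unique $\vec{v}(w)\in\{0\}\times\{0,1\}^{k-1}$ once $w$ is given, and $w$ is bad precisely when $y_1(w)=\tfrac12(w-\lfloor\vec{u}\rfloor-\vec{v}(w))$ lies outside $(S_1')^{\circ}$ or $y_2(w)$ lies outside $S_2'$; by convexity this happens only in a thin integer shell around the boundaries of $\widetilde{S}_1'$ and $\widetilde{S}_2'$, reducing the count to a boundary lattice-point estimate in the style of the earlier observations.
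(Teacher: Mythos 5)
Your overall strategy — using the translation vector $\vec{u}$, exploiting $\widetilde{S}''+\widetilde{S}''=\widetilde{S}_1'+\widetilde{S}_2'$, and bounding $g^\square(\widetilde{S}_1'+\widetilde{S}_2')$ by the fact that $\mathcal{R}(\vec{u})\subset\mathcal{W}_T$ together with a boundary lattice-point estimate — is essentially the paper's approach, and that part of the proposal reads correctly.

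However, there is a genuine gap in the first step. You apply \Cref{dk'obs} directly to $\widetilde{T}\setminus\widetilde{S}''$, calling it ``an intersection of half-spaces.'' It is not: $\widetilde{T}\setminus\widetilde{S}''$ is $\widetilde{T}$ intersected with the \emph{union} of the reversed half-spaces defining $\widetilde{S}''$, and this is not convex (it is the region between two nested simplices). The hypothesis of convexity in \Cref{dk'obs} is essential and not a formality: the proof of that observation uses that $\widetilde{P}+\widetilde{P}=2\widetilde{P}$ for convex $\widetilde{P}$, so that the sum $g^\square(\widetilde{P}+\widetilde{P})$ only ranges over $2\widetilde{P}$, and uses convexity again to classify each pair $(x,x+\vec{v})$ as either both interior to $\widetilde{P}$ or having $[x,x+\vec{v}]$ cross $\partial\widetilde{P}$. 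For a non-convex region like $\widetilde{T}\setminus\widetilde{S}''$, the Minkowski sum $(\widetilde{T}\setminus\widetilde{S}'')+(\widetilde{T}\setminus\widetilde{S}'')$ can cover most or all of $2\widetilde{T}$, so the sum $g^\square\bigl((\widetilde{T}\setminus\widetilde{S}'')+(\widetilde{T}\setminus\widetilde{S}'')\bigr)$ is not controlled by $2^k g(\widetilde{T}\setminus\widetilde{S}'')$, and $d_k'(\widetilde{T}\setminus\widetilde{S}'')$ has no a priori lower bound. The paper avoids this by first partitioning $\widetilde{T}\setminus\widetilde{S}''$ into $k$ convex pieces $\widetilde{P}_1,\dots,\widetilde{P}_k$ (for the $j$th piece, keep the first $j-1$ defining inequalities of $\widetilde{S}''$ and reverse the $j$th), applying \Cref{dk'obs} to each, and summing the $k$ error terms. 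If you replace your single application of \Cref{dk'obs} by this $k$-piece decomposition — and note that equality $g^\square(\widetilde{T}+\widetilde{T})=g^\square(\widetilde{S}''+\widetilde{S}'')+\sum_j g^\square(\widetilde{P}_j+\widetilde{P}_j)$ actually holds because the regions $2\widetilde{S}''$ and $2\widetilde{P}_j$ partition $2\widetilde{T}$ — the rest of your argument goes through and matches the paper's proof in outline.
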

\begin{proof}
Let $\vec{u}$ be the vector such that $\widetilde{S}'_1+\vec{u}=\widetilde{S}'_2$. Then by \Cref{VofT}, we have  $\mathcal{R}(\vec{u})=\lfloor \vec{u} \rfloor +\{0\}\times\{0,1\}^{k-1}\subset \mathcal{W}_{T}$.
Let $$\widetilde{S}''^{c}=\widetilde{P}_1\sqcup\dots\sqcup \widetilde{P}_{k}$$ be a partition into convex regions, the intersection of open and closed half-spaces. Indeed this can be obtained by taking the defining equations $x \cdot \vec{c}_i\le \vec{b}_i$ for $1 \le i \le k$ of $\widetilde{S}''$, and defining $\widetilde{P}_j$ by setting $x \cdot \vec{c}_i\le\vec{b}_i$ for $1 \le i <j$ and $x \cdot \vec{w}_j > \vec{b}_j$ inside $\widetilde{T}$. Hence, by \Cref{dk'obs} we find
\begin{align*}
    d_k'(T)\ge d_k'(\widetilde{T})=2^k g(\widetilde{T})-g^\square(\widetilde{T}+\widetilde{T})
    &=2^kg(\widetilde{S}'')-g^\square(\widetilde{S}''+\widetilde{S}'')+ \sum_j (2^kg( \widetilde{P}_{j})-g^\square( \widetilde{P}_{j}+\widetilde{P}_{j}))\\
    &=2^kg(\widetilde{S}'')-g^\square(\widetilde{S}_1'+\widetilde{S}_2')+ \sum_jd_k'(\widetilde{P}_{j})\\
    &\ge 2^kg(\widetilde{S}'')-g^\square(\widetilde{S}_1'+\widetilde{S}_2')  -k2^{2k+5}\min\{n_i\}^{-1}|B|.
\end{align*}

Note that every point $x' \in (\widetilde{S}_1'+\widetilde{S}_2') \cap \{0\}\times\mathbb{Z}^{k-1}$ can be written uniquely as $x'=x+x+\vec{w}$ for some $\vec{w}\in \mathcal{R}(\vec{u})$ (this is true in fact for every $x' \in \{0\}\times \mathbb{Z}^{k-1}$), and for this $\vec{w}$ (in fact for any $\vec{w}\in \mathcal{R}(u)$) we have $\vec{w}-\vec{u}\in\{0\}\times (-1,1]^{k-1}$. We have $\vec{u}\in \{0\}\times \prod_{i=2}^k[-n_i+1,n_i-1]$ and $x'\in (\pi(\widetilde{B})+\pi(\widetilde{B}))\cap \{0\}\times \mathbb{Z}^{k-1}=\{0\}\times \prod_{i=2}^k \{2,\ldots,2n_i\}$, so $$x=\left\lfloor \frac{x'-\lfloor\vec{u}\rfloor}{2}\right\rfloor\in B':=\{0\}\times\prod_{i=2}^k \left\{\left\lfloor\frac{3-n_{i}}{2}\right\rfloor,\ldots,\left\lfloor\frac{3n_i-1}{2}\right\rfloor\right\},$$ where $B'$ is a translate of $B(2n_2-1,\ldots,2n_k-1)$. Also, as the midpoint $\frac{1}{2}(x+(x+(\vec{w}-\vec{u})))$ lies in $\widetilde{S}'_1$ (as it is equal to $\frac{1}{2}(x'-\vec{u})$), either $x\in \widetilde{S}_1'$ and $x+(\vec{w}-\vec{u}) \in \widetilde{S}_1'$ (equivalently $x+\vec{w}\in \widetilde{S}_2'$), or $x$ and $x+(\vec{w}-\vec{u})$  are separated by some hyperplane $\widetilde{H}$ containing one of the $k$ facets of $\widetilde{S}_1'$.

Given $\vec{w}\in \mathcal{R}(\vec{u})$ and a hyperplane $\widetilde{H}$, there are at most $2^{2k}\min\{n_i\}^{-1}|\pi(B)|$ many choices of $x \in B'$ with $x,x+(\vec{w}-\vec{u})$ separated by $\widetilde{H}$. Indeed, set $\widetilde{G}_{\vec{w},\widetilde{H}}$ to be the convex region of the box $\widetilde{B'}$ between the hyperplanes $\widetilde{H}$ and $\widetilde{H}-(\vec{w}-\vec{u})$. Note that $$|\widetilde{G}_{\vec{w},\widetilde{H}}|\le |\vec{w}-\vec{u}|\cdot|\partial \widetilde{B'}|\le (k-1)^{\frac{1}{2}} 2(k-1)2^{k-2}\min\{n_i\}^{-1}|\pi(B)|\le 2^{2k-1}\min\{n_i\}^{-1}|\pi(B)|.$$ By \Cref{contdisc} applied to $B'$,
\begin{align}\label{discGsize}|\widetilde{G}_{\vec{w},\widetilde{H}}\cap (\{0\}\times \mathbb{Z}^{k-1})|\le (2^{2k-1}  + 2(k-1)k2^{k-2})\min\{n_i\}^{-1}|\pi(B)| \le 2^{2k}\min\{n_i\}^{-1}|\pi(B)|.\end{align}

From the above discussion, if $x+x+\vec{w}\in \widetilde{S}_1'+\widetilde{S}_2'$, and either $x\not \in \widetilde{S}_1'$ or $x+\vec{w}\not\in \widetilde{S}_2'$, then $x\in \widetilde{G}_{\vec{w},\widetilde{H}}$ for some $\widetilde{H}$ containing a facet of $\widetilde{S_1}'$. Hence from \eqref{discGsize} (taking $\vec{w}\in \mathcal{R}(\vec{u})$ and $x\in \{0\}\times\mathbb{Z}^{k-1}$) we deduce $$\sum_{\vec{w}}\sum_{\substack{x+x+\vec{w} \in \widetilde{S}_1'+\widetilde{S}_2'  \\ x \not\in \widetilde{S}_1' \text{ or }x+\vec{w} \not\in \widetilde{S}_2' }} g^{\square}(x+x+\vec{w}) \le 2^{k-1}k2^{2k}\min\{n_i\}^{-1}|\pi(B)|\max g^{\square} \le k2^{3k+1}\min\{n_i\}^{-1}|B|.$$
Also, as $\mathcal{R}(u) \subset \mathcal{W}_T$ and $\max g^{\square}\le 4n_1$, we have \begin{align}\sum_{\vec{w}}\sum_{\substack{x+x+\vec{w} \in \widetilde{S}_1'+\widetilde{S}_2'  \\ x \in \widetilde{S}_1' \text{ and }x+\vec{w} \in \widetilde{S}_2' }} g^{\square}(x+x+\vec{w}) &\le \sum_{\vec{w}}\left(\sum_{\substack{x+x+\vec{w} \in \widetilde{S}_1'+\widetilde{S}_2'\nonumber  \\ x \in (\widetilde{S}_1')^{\circ} \text{ and }x+\vec{w} \in \widetilde{S}_2' }} (g(x)+g(x+\vec{w}))+\sum_{x\in \partial \widetilde{S}_1'}4n_1\right)\\ &\le 2^{k-1}(g(\widetilde{S}_1')+ g(\widetilde{S}_2'))+2^{k+1}k\min\{n_i\}^{-1}|B|\label{3.51},\end{align}
where in \eqref{3.51} we used \Cref{hypboxsmall} on each of the facets of $\widetilde{S}_1'$. Putting this all together,
\begin{align*}
    d_k'(T)\ge d_k'(\widetilde{T})\ge& 2^kg(\widetilde{S}'')-k2^{2k+5}\min\{n_i\}^{-1}|B|\\&-\sum_{\vec{w}}\sum_{\substack{x+x+\vec{w} \in \widetilde{S}_1'+\widetilde{S}_2'  \\ x \in \widetilde{S}_1' \text{ and }x+\vec{w} \in \widetilde{S}_2' }} g^{\square}(x+x+\vec{w})- \sum_{\vec{w}}\sum_{\substack{x+x+\vec{w} \in \widetilde{S}_1'+\widetilde{S}_2'  \\ x \not\in \widetilde{S}_1' \text{ or }x+\vec{w} \not\in \widetilde{S}_2' }} g^{\square}(x+x+\vec{w})\\
    \ge& 2^{k}g(\widetilde{S}'')-2^{k-1}(g(\widetilde{S}_1')+ g(\widetilde{S}_2'))-(2^{k+1}k+k2^{2k+5}+k2^{3k+1})\min\{n_i\}^{-1}|B|\\
    \ge& 2^{k}g(S'')-2^{k-1}(g(S_1')+ g(S_2'))-2^{6k}\min\{n_i\}^{-1}|B|.
\end{align*}

\end{proof}

\begin{cor}\label{mu1mu2}
For $\widetilde{S}'\in\mathcal{S}_{\mu_1,\mu_2}$ we have
$$g(S')\leq 2^{-\mu_1 k} g(T)+\left(\frac{1}{2^k-1}+ \mu_22^{-k}\right)d_k'(T)+(2^{2k+6}+\mu_22^{5k})\min\{n_i\}^{-1}|B|.$$
\end{cor}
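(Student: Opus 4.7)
The plan is a straightforward induction on $j$, iterating \Cref{averageg} starting from the base case $j=0$ already established in the preceding corollary. Specifically, I will show by induction on $0\le j \le \mu_2$ that every $\widetilde{S}'\in \mathcal{S}_{\mu_1,j}$ satisfies
\begin{align*}
g(S')\le 2^{-\mu_1 k} g(T)+\left(\frac{1}{2^k-1}+ j\cdot 2^{-k}\right)d_k'(T)+(2^{2k+6}+j\cdot 2^{5k})\min\{n_i\}^{-1}|B|,
\end{align*}
and then specialize to $j=\mu_2$ to obtain the desired inequality.

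The base case $j=0$ is exactly the statement of the preceding corollary applied with $i=\mu_1$. For the inductive step, suppose the bound holds for all elements of $\mathcal{S}_{\mu_1,j}$, and let $\widetilde{S}''\in\mathcal{S}_{\mu_1,j+1}$, which by definition is of the form $\tfrac{1}{2}(\widetilde{S}_1'+\widetilde{S}_2')$ for some $\widetilde{S}_1',\widetilde{S}_2'\in\mathcal{S}_{\mu_1,j}$. By \Cref{averageg} (whose hypotheses are satisfied since $\mu_1\le \mu_1$ and $j\le \mu_2-1$),
\begin{align*}
g(S'')\le \tfrac{1}{2}\bigl(g(S_1')+g(S_2')\bigr)+2^{-k}d_k'(T)+ 2^{5k}\min\{n_i\}^{-1}|B|.
\end{align*}
Applying the inductive hypothesis to each of $g(S_1'),g(S_2')$ and averaging (the bound is uniform across all of $\mathcal{S}_{\mu_1,j}$, so the average satisfies the same bound), we obtain exactly the claimed inequality with $j$ replaced by $j+1$.

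There is no genuine obstacle here since the iteration is linear and the averaging cleanly preserves the $2^{-\mu_1 k} g(T)$ leading term while incrementing only the $d_k'(T)$ coefficient and the error term. The only thing to double-check is that the error term $2^{5k}\min\{n_i\}^{-1}|B|$ in \Cref{averageg} dominates any additional contribution picked up by comparing $S''$ (lattice points in $\widetilde{S}''$) with the continuous averaging, but this is already absorbed in the statement of \Cref{averageg}. Since $j=\mu_2$ yields the corollary's statement, we are done.
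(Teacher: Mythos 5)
Your proof is correct and matches the (implicit) argument in the paper: the base case $j=0$ is the preceding corollary applied with $i=\mu_1$, and each increment of $j$ costs exactly one application of \Cref{averageg}, which adds $2^{-k}$ to the $d_k'(T)$ coefficient and $2^{5k}\min\{n_i\}^{-1}|B|$ to the error term. Since the induction is linear and both $g(S_1'),g(S_2')$ satisfy the same numerical bound, averaging them preserves it, so the argument goes through as you describe.
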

Finally, before we prove \Cref{squareprop}, we prove the following result \Cref{coveringfamily} which as mentioned before constructs the constants $\mu_1,\mu_2$.
\begin{prop}\label{coveringfamily}
Let $\widetilde{T}\subset \mathbb{R}^{k-1}$ be a simplex. Then
 there exist $\mu_1=\mu_1(k)$ and $\mu_2=\mu_2(k)$ and a family $\mathcal{F}\subset \mathcal{S}_{\mu_1,\mu_2}(\widetilde{T})$ such that $\widetilde{T}\subset \bigcup_{\widetilde{S}\in \mathcal{F}}\widetilde{S}$ and $\sum_{\widetilde{S}\in \mathcal{F}} |\widetilde{S}|\leq 2^{\mu_1-1}|\widetilde{T}|$, i.e. $|\mathcal{F}|\leq 2^{\mu_1k-1}$.
\end{prop}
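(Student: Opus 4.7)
The plan is to produce $\mathcal F$ as an explicit dyadic family, bypassing compactness. The key is to identify the translation vectors of simplices in $\mathcal{S}_{\mu_1,\mu_2}$; throughout I treat $\widetilde T$ as a $(k-1)$-simplex with vertices $x_0,\ldots,x_{k-1}$ in a $(k-1)$-dimensional ambient space, consistent with the application in \Cref{VofT}.

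\textbf{Step 1 (Identify the translation vectors).} Let $W_j\subset\mathbb{R}^{k-1}$ be defined so that $\mathcal{S}_{\mu_1,j}=\{w+2^{-\mu_1}\widetilde T:w\in W_j\}$; this makes sense because $\mathcal{S}_{i,j+1}$ is stable under Minkowski midpoints (the midpoint of two translates of $2^{-\mu_1}\widetilde T$ is again a translate of $2^{-\mu_1}\widetilde T$, since $\widetilde T$ is convex). I would show by induction on $j$ that
\[
W_j=\Bigl\{(1-2^{-\mu_1})\sum_{r=0}^{k-1}\tfrac{c_r}{2^{j}}x_r:c_r\in\mathbb{Z}_{\ge 0},\ \textstyle\sum_r c_r=2^{j}\Bigr\},
\]
i.e.\ $W_{\mu_2}$ is the full set of $(1-2^{-\mu_1})$-scaled depth-$\mu_2$ dyadic convex combinations of the vertices of $\widetilde T$. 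The inductive step reduces to the elementary ``greedy halving'' claim: any composition $c=(c_0,\ldots,c_{k-1})$ of $2^{j+1}$ into $k$ non-negative parts can be written as $c=c^{(1)}+c^{(2)}$ with $c^{(i)}_r\ge 0$, $\sum_r c^{(i)}_r=2^j$, and $c^{(i)}_r\le c_r$ componentwise.

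\textbf{Step 2 (Explicit covering).} Set $a(c):=(1-2^{-\mu_1})\sum_r(c_r/2^{\mu_2})x_r\in W_{\mu_2}$ and take
\[
\mathcal F:=\bigl\{a(c)+2^{-\mu_1}\widetilde T:c\in\mathbb{Z}_{\ge 0}^{k},\ \textstyle\sum_r c_r=2^{\mu_2}\bigr\}\subset\mathcal{S}_{\mu_1,\mu_2}.
\]
For $y=\sum_r\lambda_r x_r\in\widetilde T$, the condition $y\in a(c)+2^{-\mu_1}\widetilde T$ becomes $c_r\le \lambda_r\cdot 2^{\mu_2}/(1-2^{-\mu_1})$ for all $r$ (the sum of the resulting barycentric coefficients of $y-a(c)$ in the basis $\{x_r\}$ is automatically $2^{-\mu_1}$). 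Taking $m_r:=\lfloor\lambda_r\cdot 2^{\mu_2}/(1-2^{-\mu_1})\rfloor$ gives $\sum_r m_r\ge 2^{\mu_2}/(1-2^{-\mu_1})-k\ge 2^{\mu_2}$ provided $2^{\mu_2-\mu_1}\ge k$, so choosing $\mu_2=\mu_1+\lceil\log_2 k\rceil$ allows one to pick $c_r\in[0,m_r]\cap\mathbb{Z}$ with $\sum_r c_r=2^{\mu_2}$ greedily.

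\textbf{Step 3 (Count).} The family has cardinality
\[
|\mathcal F|=\binom{2^{\mu_2}+k-1}{k-1}\le\frac{(2\cdot 2^{\mu_2})^{k-1}}{(k-1)!}\le C_k\cdot 2^{\mu_1(k-1)}
\]
with $C_k\le(4k)^{k-1}/(k-1)!$. Choosing $\mu_1$ large enough that $C_k\le 2^{\mu_1-1}$ (which costs only $\mu_1=O(k\log k)$) gives $|\mathcal F|\le 2^{\mu_1 k-1}$; the volume bound $\sum_{\widetilde S\in\mathcal F}|\widetilde S|\le 2^{\mu_1-1}|\widetilde T|$ follows from $|\widetilde S|=2^{-\mu_1(k-1)}|\widetilde T|$.

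The main obstacle is Step 1: verifying that midpoint-averaging in \Cref{sij} reaches every depth-$\mu_2$ dyadic convex combination of the vertices (rather than only a sparse subset). Once the structural identification of $W_j$ is in hand, Steps 2 and 3 are routine arithmetic; the bound $|\mathcal F|\le 2^{\mu_1 k-1}$ is exponentially larger than the volumetric lower bound $\lceil|\widetilde T|/|\widetilde S|\rceil= 2^{\mu_1(k-1)}$ on any covering, which gives precisely the slack required to absorb the inefficiencies of the explicit dyadic construction.
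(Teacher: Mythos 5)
Your proof is correct, and it is a genuinely different argument from the one in the paper. The paper argues by compactness: it covers $[0,1]^{k-1}$ by $q_k$ translates of $\widetilde{T}$ for some unspecified constant $q_k=q_k(k)$, periodizes this to a covering of $\mathbb{R}^{k-1}$ with average density $q_k$, rescales by $2^{-\mu_1-1}$, shifts by a well-chosen $\vec{u}$ to control the total mass on $2\widetilde{T}$, pushes the covering simplices that clip $\partial\widetilde{T}$ back inside $\widetilde{T}$, and finally invokes density of $\bigcup_{j\ge 0}\mathcal{S}_{\mu_1,j}$ among all translates of $2^{-\mu_1}\widetilde{T}$ inside $\widetilde{T}$ (including the delicate refinement near faces of $\widetilde{T}$) to absorb each covering simplex into a dyadic one; the level $\mu_2$ only emerges a posteriori from the finiteness of the covering. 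You instead pin down $\mathcal{S}_{\mu_1,j}$ exactly: the greedy-halving lemma shows its translation vectors are precisely the $(1-2^{-\mu_1})$-scaled, depth-$j$ dyadic barycentric combinations of the vertices, and with $\mu_2=\mu_1+\lceil\log_2 k\rceil$ the entire family $\mathcal{S}_{\mu_1,\mu_2}$ already covers $\widetilde{T}$ via the elementary greedy choice of $c_r\in[0,m_r]$. This buys full explicitness---concrete $\mu_1=O(k\log k)$, $\mu_2=\mu_1+O(\log k)$, and $\mathcal{F}=\mathcal{S}_{\mu_1,\mu_2}$ itself---and eliminates the periodic-covering and density-near-faces steps entirely. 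Your accounting is also tight: $|\mathcal{F}|=\binom{2^{\mu_2}+k-1}{k-1}$ versus the volumetric lower bound $2^{\mu_1(k-1)}$ on any covering leaves exactly the factor $2^{\mu_1-1}$ of slack the statement permits, and you choose $\mu_1$ to absorb the combinatorial constant $C_k$ into that slack.
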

\begin{proof}
We follow the proof strategy of \cite{HomoBM}, giving an essentially equivalent argument to \cite[Claim 4.2]{HomoBM}.


Without loss of generality assume $\widetilde{T}$ is regular of volume $1$ centered at the origin.
Extend a finite covering of $[0,1]^{k-1}$ with $q_k$ translates of $\widetilde{T}$  to a periodic covering $\mathcal{C}$ of $\mathbb{R}^{k-1}$ with average density $q_k$, and let $\mu_1(k):= \lceil\log_2(q_k)\rceil+2k-1$.

We will now produce a covering of $\widetilde{T}$ by translates of $2^{-\mu_1-1}\widetilde{T}$ (which we will call $\mathcal{C}'$), the sum of whose volumes is at most $2^{k-1}q_k$.

We have that $2^{-\mu_1-1} \mathcal{C}$ is a periodic covering of $\mathbb{R}^{k-1}$ by translates of $2^{-\mu_1-1}\widetilde{T}$ with average density $q_k$, so for any polytope $\widetilde{P}$ there exists a $\vec{u}$ with $\sum_{\widetilde{S}\in \vec{u}+2^{-\mu_1+1}\mathcal{C}}|\widetilde{S}\cap \widetilde{P}|\le q_k|\widetilde{P}|$. Take $\widetilde{P}=2\widetilde{T}$, and let $\mathcal{C}'\subset \vec{u}+2^{-\mu_1-1}\mathcal{C}$ be the set of simplices which intersect $\widetilde{T}$, so that  $\widetilde{T}\subset \bigcup_{\widetilde{S}\in\mathcal{C}'} \widetilde{S}$. Each $\widetilde{S}\in \mathcal{C}'$ is contained in $\widetilde{T}+2^{-\mu_1-1}\widetilde{T}-2^{-\mu_1-1}\widetilde{T}\subset \widetilde{T}+2^{-\mu_1-1}\widetilde{T}+2^{-\mu_1-1}(k-1)\widetilde{T}\subset 2\widetilde{T}$, so $$\sum_{\widetilde{S}\in\mathcal{C}'}|\widetilde{S}|=\sum_{\widetilde{S}\in \mathcal{C}'}|\widetilde{S}\cap 2\widetilde{T}|\leq q_k|2\widetilde{T}|=2^{k-1}q_k.$$ 

 For each $\widetilde{S} \in \mathcal{C}'$, there exists a translate $f(\widetilde{S})$ of $2^{-\mu_1-1}\widetilde{T}$ such that $\widetilde{S} \cap \widetilde{T} \subset f(\widetilde{S}) \subset \widetilde{T}$ (since the intersection of two homothetic simplices is a simplex homothetic to both), and we construct  $\mathcal{C}'':=\{f(S) : \widetilde{S} \in \mathcal{C}'\}$. Then  $\sum_{\widetilde{S}'\in\mathcal{C}''}|\widetilde{S}'|\leq 2^{k-1}q_k$, all simplices in $\mathcal{C}''$ are contained in $\widetilde{T}$, and $\widetilde{T}\subset \bigcup _{\widetilde{S}'\in\mathcal{C}''}\widetilde{S}'$.

We now roughly follow the proof strategy from \cite[Lemma 3.3]{HomoBM}. The collection $\cup_{j\ge 0}\mathcal{S}_{\mu_1,j}(\widetilde{T})$ is a dense collection of translates of $2^{-\mu_1}\widetilde{T}$ contained inside $\widetilde{T}$, and in fact for every (possibly lower dimensional) face $\widetilde{F}$ of $\widetilde{T}$, the sub-collection of simplices in $\cup_{j\ge 0}\mathcal{S}_{\mu_1,j}(\widetilde{T})$ intersecting $\widetilde{F}$ is dense among all translates of $2^{-\mu_1-1}\widetilde{T}$ contained in $\widetilde{T}$ which intersect $\widetilde{F}$. Therefore for each element $\widetilde{S} \in \mathcal{C''}$, there exist a translate $h(\widetilde{S})\in\cup_{j\ge 0}\mathcal{S}_{\mu_1,j}(\widetilde{T}')$  which contains $\widetilde{S}$. Finally, we can construct the family $\mathcal{F} := \{h(\widetilde{S}) : \widetilde{S} \in \mathcal{C}'' \}$. As $\mathcal{C''}$ is a fixed finite set, there exist $\mu_2=\mu_2(k)$ such that $\mathcal{F} \subset \mathcal{S}_{\mu_1,\mu_2}(\widetilde{T}')$. Hence, $\sum_{\widetilde{S}\in\mathcal{F}} |\widetilde{S}|\leq 2^{2k-2}q_k\leq 2^{\mu_1-1}$ as desired.
\end{proof}

\begin{proof}[Proof of \Cref{squareprop}]
Recall by \Cref{coveringfamily} we find a family $\mathcal{F}\subset \mathcal{S}_{\mu_1,\mu_2}$ such that $\widetilde{T}\subset \bigcup_{\widetilde{S}\in \mathcal{F}}\widetilde{S}$ and $\sum_{\widetilde{S}\in \mathcal{F}} |\widetilde{S}|\leq 2^{\mu_1-1}|\widetilde{T}|$, i.e. $|\mathcal{F}|\leq 2^{\mu_1k-1}$.
By \Cref{mu1mu2}, we conclude that 
\begin{align*}
g(T)&\leq \sum_{\widetilde{S}\in\mathcal{F}} g(S)\\
&\leq\sum_{\widetilde{S}\in\mathcal{F}}  \left[2^{-\mu_1 k} g(T)+\left(\frac{1}{2^k-1}+ \mu_22^{-k}\right)d_k'(T)+(2^{2k+6}+\mu_22^{5k})\min\{n_i\}^{-1}|B|\right]\\
&\leq 2^{\mu_1k-1}\left[2^{-\mu_1 k} g(T)+\left(\frac{1}{2^k-1}+ \mu_22^{-k}\right)d_k'(T)+(2^{2k+6}+\mu_22^{5k})\min\{n_i\}^{-1}|B|\right].
\end{align*}
Hence as $d_k'(T)=-g^{\square}(T+T)+2^kg(T)$, we have
$$g^\square(T+T)\leq \left(2^{k}-\frac{2^{-\mu_1k}}{\frac{1}{2^k-1} +\mu_22^{-k}}\right)g(T)+ \frac{2^{2k+6}+\mu_22^{5k}}{\frac{1}{2^k-1}+ \mu_22^{-k}}\min\{n_i\}^{-1}|B|. $$
\end{proof}

\subsubsection{$A_\star$ is close to $\co(A_\star)$: Construction}
\label{AstarclosetocoAstarsubsection}
In this section we prove that $|\co(A_\star)\setminus A_\star||B|^{-1}\to 0$ as $\delta \to 0$.
\begin{prop}
\label{AstarclosetocoAstarprop}
There exist $\ll$ dependencies $n_{k,0}^{-1}\ll\delta\ll\epsilon_0\ll 1$ such that the following holds. We have for some function $h_{\star}(\delta) \rightarrow 0$ as $\delta \rightarrow 0$ that $|\co(A_\star) \setminus A_\star| \le h_{\star}(\delta) |B|$.
\end{prop}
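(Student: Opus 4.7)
The plan is to combine two inequalities to obtain the bound, as sketched in the outline at the start of Section~3.6. Specifically, I will show
\begin{align*}
2^k |\co(A_\star) \setminus A_\star| &\le |\co(A_\star+A_\star) \setminus (A_\star+A_\star)| + o(1)|B|, \\
|\co(A_\star+A_\star) \setminus (A_\star+A_\star)| &\le (2^k - c_k') |\co(A_\star) \setminus A_\star| + o(1)|B|,
\end{align*}
where the $o(1)$ terms tend to $0$ as $\delta \to 0$. Substituting the second into the first and rearranging gives $c_k'|\co(A_\star)\setminus A_\star|\le o(1)|B|$, so we may take $h_\star(\delta)$ to absorb the error terms divided by $c_k'$.

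For the first inequality, I will use that $\cooo(A_\star+A_\star) = 2\cooo(A_\star)$, so by \Cref{contdisc} applied to both $\cooo(A_\star)$ and $2\cooo(A_\star)$ we obtain $|\co(A_\star+A_\star)| \ge 2^k |\co(A_\star)| - O_k(\min\{n_i\}^{-1})|B|$. Combined with $|A_\star+A_\star| = 2^k|A_\star| + d_k(A_\star)$ and the bound $d_k(A_\star) \le h_9(\delta)|B|$ from \eqref{dkAstar}, subtracting yields the desired inequality with the error term $h_9(\delta)|B| + O_k(\min\{n_i\}^{-1}|B|)$.

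For the second inequality, I will chain together \Cref{gsquare}, \Cref{squareprop}, and \Cref{gsimple} in that order. Applying \Cref{gsquare},
$$|\co(A_\star+A_\star)\setminus(A_\star+A_\star)| \le \sum_{\ast \in \{+,-\}}\sum_{\widetilde{T}\in\mathcal{T}^\ast}\sum_{x'\in T+T} g^{\ast\square}_{T}(x') + h_{11}(\delta)|B|.$$
For each simplex $\widetilde{T}$, the function $g^\ast_T$ satisfies the hypotheses of \Cref{squareprop}: it is nonnegative, bounded by $2n_1$, and vanishes at the vertices of $T$ by \Cref{gplusgminusrmk}. Applying \Cref{squareprop} to each $g^\ast_T$ gives
$$\sum_{x'\in T+T} g^{\ast\square}_{T}(x') \le (2^k-c_k')\sum_{x\in T} g^\ast_T(x) + c_k''\min\{n_i\}^{-1}|B|.$$
Summing over all simplices in $\mathcal{T}^+\cup \mathcal{T}^-$, whose total number is bounded by a function of $|V(A_\star)|\le H_7(\delta^{1/20-17c})$ (via the upper bound theorem for triangulations of the boundary), and applying \Cref{gsimple}, we conclude
$$|\co(A_\star+A_\star)\setminus(A_\star+A_\star)| \le (2^k-c_k')\bigl(|\co(A_\star)\setminus A_\star| + h_{10}(\delta)|B|\bigr) + h_{12}(\delta)|B|$$
for some $h_{12}(\delta)\to 0$ as $\delta\to 0$, absorbing $h_{11}(\delta)|B|$ and the accumulated $\min\{n_i\}^{-1}|B|$ error into $h_{12}$ using \Cref{nkconv}.

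The main obstacle is really just bookkeeping: ensuring that the various error functions $h_9, h_{10}, h_{11}$ and the sum of $\min\{n_i\}^{-1}|B|$ errors (multiplied by the number of simplices, which is itself a function of $\delta$ via $H_7$) all remain $o(1)|B|$ when we solve for $|\co(A_\star)\setminus A_\star|$. Since the number of simplices depends only on $\delta$ through $H_7$, and $\min\{n_i\}^{-1}$ can be made arbitrarily small compared to any prescribed function of $\delta$ by \Cref{nkconv}, all error contributions tend to zero with $\delta$. Defining $h_\star(\delta)$ as $c_k'^{-1}$ times the sum of these error terms gives the desired bound.
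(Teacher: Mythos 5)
Your proof is correct and follows essentially the same route as the paper: both chain together \Cref{gsquare}, \Cref{squareprop}, and \Cref{gsimple} to get the $(2^k - c_k')$ inequality, compare against the trivial $2^k$ bound, and absorb the error terms (with the number of simplices controlled through $H_7$ and $\min\{n_i\}^{-1}$ controlled through \Cref{nkconv}). The only cosmetic difference is in the first inequality: you derive it from $\cooo(A_\star+A_\star)=2\cooo(A_\star)$ together with \Cref{contdisc}, whereas the paper writes it more compactly as $2^k|\co(A_\star)\setminus A_\star|\le d_k(A_\star)-d_k(\co(A_\star))+|\co(A_\star+A_\star)\setminus(A_\star+A_\star)|$ and then bounds $-d_k(\co(A_\star))$ via \Cref{negdk}; these are equivalent.
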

\begin{proof}
By \eqref{dkAstar}, \eqref{Astarfewvert},  and by \Cref{squareprop}, \Cref{gsimple}, \Cref{gsquare} and \Cref{negdk}, we have that \begin{align*} 2^k|\co(A_\star) \setminus A_\star|\le& d_k(A_\star) -d_k(\co(A_\star))+ |\co(A_\star+A_\star) \setminus (A_\star+A_\star)| \\
\le&  h_9(\delta)|B| +2^{2k}n_{k,0}^{-1}|B|+h_{11}(\delta)|B|\\
&+\sum_{T^+}\sum_{x\in T^++T^+}g_{T^+}^{+\square}(x)
+\sum_{T^-}\sum_{x\in T^-+T^-}g_{T^-}^{-\square}(x)\\
\le& h_9(\delta)|B| + 2^{2k}n_{k,0}^{-1}|B|+h_{11}(\delta)|B|\\
&+\binom{H_7(\delta^{\frac{1}{20}-17c})}{k}c_k''n_{k,0}^{-1}|B|\\&+(2^k-c_k')\left(\sum_{T^+}\sum_{x\in T^+}g^+_{T^+}(x)+\sum_{T^-}\sum_{x \in T^-}g^-_{T^-}(x)\right)\\
\le& h_9(\delta)|B| + 2^{2k}n_{k,0}^{-1}|B|+h_{11}(\delta)|B|\\
&+\binom{H_7(\delta^{\frac{1}{20}-17c})}{k}c_k''n_{k,0}^{-1}|B|+ (2^k-c_k') |\co(A_\star) \setminus A_\star|
\\&+ (2^k-c_k') h_{10}(\delta)|B|. 
\end{align*}
The first inequality follows by definition. The second makes use of \eqref{dkAstar} and \Cref{gsquare}. The third makes use of \eqref{Astarfewvert}, \Cref{squareprop} and \Cref{negdk}. The fourth makes use of \Cref{gsimple}.
We conclude that $|\co(A_\star) \setminus A_\star| \le h_{\star}(\delta) |B|$, for a function $h_{\star} \rightarrow 0$ as $\delta \rightarrow 0$.
\end{proof}

\subsection{$A$ is close to $\co(A)$}
\label{AclosetocoA}

Recall that we have $d_k(A)\le \delta|B|, |A|\ge \epsilon_0|B|$, and for some functions $h_8,h_9,h_\star\to 0$ as $\delta\to 0$ that $$|\co(A_\star)\setminus A_\star|\le h_\star(\delta)|B|\text{, }\quad |A\Delta A_\star|\le h_8(\delta)|B|\text{, and}\quad d_k(A_\star)\le h_9(\delta)|B|.$$
We note that for $\delta$ sufficiently small, these imply $|\co(A_\star)|\ge \frac{2}{3}\epsilon_0|B|$.

We will now show that $|\co(A)\setminus A|\le h(\delta)|B|$ for some function $h\to 0$ as $\delta \to 0$.

\begin{lem}\label{closestarlem}
Given a convex polytope $\widetilde{Q}\subset \mathbb{R}^k$ and $0<\lambda\le 2k+1$, let $o$ be the center of the John's ellipsoid $\widetilde{E}\subset \widetilde{Q}$. If $p\not \in (1+\lambda)\widetilde{Q}$ (the homothety being taken with respect to $o$), then there is a convex polytope $\widetilde{P}\subset \widetilde{Q}$ with $|\widetilde{P}|= \left(\frac{\lambda}{2k+1}\right)^k|\widetilde{Q}|$ such that $$\frac{\widetilde{P}+p}{2}\cap\widetilde{Q}=\emptyset.$$
\end{lem}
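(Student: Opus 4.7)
The plan is to use a classical shape estimate for the barycenter of the largest inscribed simplex to reduce the hypothesis $p\notin(1+\lambda)\widetilde{Q}$ (via Hahn--Banach) to a one-dimensional separation statement, and then take $\widetilde{P}$ to be a small homothetic copy of $\widetilde{Q}$ pressed against the corresponding extremal vertex. After translating so that $o$ lies at the origin, I would first establish the key geometric fact
\[
\widetilde{Q}\subset -k\,\widetilde{T}.
\]
This is a standard extremality argument: if some $q\in\widetilde{Q}$ lay strictly beyond the hyperplane through a vertex $v_i$ of $\widetilde{T}$ parallel to the opposite facet $F_i$, then replacing $v_i$ by $q$ would yield a simplex still contained in $\widetilde{Q}$ (by convexity) with the same base $F_i$ but a strictly farther vertex, hence strictly larger volume, contradicting the maximality of $\widetilde{T}$. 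Combined with $\widetilde{T}\subset\widetilde{Q}$, this gives $\widetilde{Q}\subset-k\widetilde{Q}$, so for any unit vector $\vec{n}$, writing $M:=\max_{q\in\widetilde{Q}}\vec{n}\cdot q$ and $m:=\min_{q\in\widetilde{Q}}\vec{n}\cdot q$, we obtain $m\ge -kM$, and therefore $M-m\le(k+1)M$.

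Since $p\notin(1+\lambda)\widetilde{Q}$, Hahn--Banach provides a unit vector $\vec{n}$ with $\vec{n}\cdot p>(1+\lambda)M$. Choose any $q^*\in\widetilde{Q}$ with $\vec{n}\cdot q^*=M$ and define the sub-polytope
\[
\widetilde{P}:=q^*+\frac{\lambda}{2k^2}\bigl(\widetilde{Q}-q^*\bigr),
\]
a homothetic copy of $\widetilde{Q}$ of scale $\lambda/(2k^2)$ centered at $q^*$. Convexity of $\widetilde{Q}$ gives $\widetilde{P}\subset\widetilde{Q}$, and the scaling immediately yields $|\widetilde{P}|=\bigl(\tfrac{\lambda}{2k^2}\bigr)^{k}|\widetilde{Q}|$.

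It remains to verify that $(\widetilde{P}+p)/2\cap\widetilde{Q}=\emptyset$. For any $q\in\widetilde{P}$, the bound on $M-m$ above yields
\[
\vec{n}\cdot q\ge M-\frac{\lambda}{2k^2}(M-m)\ge M-\frac{\lambda(k+1)}{2k^2}M\ge(1-\lambda)M,
\]
using $(k+1)/(2k^2)\le 1$ for $k\ge 1$. Combined with the strict inequality $\vec{n}\cdot p>(1+\lambda)M$ this gives $\vec{n}\cdot\tfrac{q+p}{2}>M$, so $(q+p)/2\notin\widetilde{Q}$ since $\vec{n}\cdot q'\le M$ for every $q'\in\widetilde{Q}$.

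The main step where genuine care is required is the shape estimate $\widetilde{Q}\subset-k\widetilde{T}$ for the barycenter of the largest inscribed simplex, which must be handled carefully enough to apply to an arbitrary polytope and not only to a generic convex body; once this is in hand the remaining work is routine linear algebra. The constant $1/(2k^2)$ is slightly smaller than the natural threshold $1/(k+1)$ produced by the above estimates, which leaves a comfortable buffer for the strict separation (including in the boundary case $k=1$).
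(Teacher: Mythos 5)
Your proof is correct, and while it shares the pivotal containment $\widetilde{Q}\subset -k\widetilde{T}$ with the paper, the mechanism you build on top of it is genuinely different. The paper proceeds metrically: it normalizes $\widetilde{T}$ to be regular with inradius $1$, bounds $\operatorname{diam}(\widetilde{Q})<2k^2$, takes $\widetilde{P}$ to be a homothetic copy of $\widetilde{Q}$ pressed against the boundary point $q=op\cap\partial\widetilde{Q}$, and then argues via a second (reflected) homothety and the supporting hyperplane at $q$ that $\frac{\widetilde{P}+p}{2}$ lands strictly on the far side. You instead work affinely: deduce $\widetilde{Q}\subset -k\widetilde{Q}$, pull a separating functional $\vec{n}$ from $p\notin(1+\lambda)\widetilde{Q}$, press $\widetilde{P}$ against a maximizer $q^*$ of $\vec{n}$, and run the one-line width estimate $M-m\le(k+1)M$ to get $\vec{n}\cdot\frac{q+p}{2}>M$ directly. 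Your route avoids any metric normalization and the two-homothety bookkeeping, replacing them with a single support-functional inequality; it also makes transparent that the constant $\tfrac{\lambda}{2k^2}$ is not tight for this argument (anything up to $\tfrac{\lambda}{k+1}$ would work), whereas in the paper's version $2k^2$ arises as a diameter bound and is already convenient rather than optimal. One small point worth making explicit in a write-up is that $o$ lies in the interior of $\widetilde{Q}$ (so $M>0$), which you implicitly use when passing from the Hahn--Banach separation to $\vec{n}\cdot p>(1+\lambda)M$; this holds because $\widetilde{Q}$ is full-dimensional in the intended application, so $o$ is interior to $\widetilde{T}\subset\widetilde{Q}$.
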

\begin{proof}
We may assume $p\in \partial (1+\lambda)\widetilde{Q}$, and by taking an affine transformation that the John's ellipsoid $\widetilde{E}$ is a ball of radius $1$ with $o$ at the origin. Then by John's Lemma \cite{John}, $$\widetilde{E}\subset \widetilde{Q}\subset -k\widetilde{E}.$$
Hence we estimate the diameter of $\widetilde{Q}$ is at most $2k$, so it strictly less than $2k+1$.
Let $q=op\cap \partial \widetilde{Q}$, let $H$ be the homothety with center $q$ and ratio $\frac{\lambda}{2k+1}\le 1$, and let $\widetilde{P}=H(\widetilde{Q})$. Clearly $\widetilde{P}\subset \widetilde{Q}$ has the desired volume, and has diameter strictly less than $\lambda$. Let $H'$ be the homothety with center $q$ and ratio $-\frac{\lambda}{2}$. Because $H'$ is a negative homothety at $q$, it leaves any plane through $q$ invariant and swaps the two halfspaces determined by such a plane. Hence $H'(\widetilde{Q})$ and $\widetilde{Q}$ are separated by the supporting hyperplane to $\widetilde{Q}$ at the point $q$, so it is enough to show that $\frac{\widetilde{P}+p}{2}$ is contained in the interior of $H'(\widetilde{Q})$.

As the distance from $o$ to $\partial \widetilde{Q}$ is at least $1$ (the radius of $\widetilde{E}$), the distance from $H'(o)$ to $\partial H'(\widetilde{Q})$ is at least $\frac{\lambda}{2}$. As
$\frac{\widetilde{P}+p}{2}$ is a set of diameter strictly less than $\frac{\lambda}{2}$ containing $H'(o)=\frac{p+q}{2}$, it is contained in the interior of $H'(\widetilde{Q})$ as desired.
\end{proof}

\begin{proof}[Proof of \Cref{equivquali}] Recall from the beginning of \Cref{1.6section} that we may assume that $A$ is reduced, so $\coo(A)=\co(A)$.
There exist $\ll$ dependencies $n_{k,0}^{-1}\ll\delta\ll\epsilon_0\ll 1$ such that all of the following holds. First, the functions $h_8,h_9,h_\star$ exist and have the previously established properties. Second, there are functions $h_{12}(\delta)$ and $\lambda(\delta)$ with $h_{12}(\delta)\to 0$ and $\lambda(\delta)\to 0$ as $\delta\to 0$ such that $$h_{12}(\delta)\ge 2^kh_8(\delta)+h_\star(\delta)+(2^{2k}+2k(k+1))n_{k,0}^{-1}$$ and  $$\frac{\epsilon_0}{2}\left(\frac{\lambda}{2k+1}\right)^k-h_{12}(\delta)>\delta.$$
Indeed, we can take $h_{12}(\delta)=2^kh_8(\delta)+h_9(\delta)+h_\star(\delta)+2k(k+1)\delta$, and $\lambda(\delta)=(2k+1)(\frac{4}{\epsilon_0}(\delta+h_{12}(\delta)))^{\frac{1}{k}}$.
Third, $$(1+(1+\lambda)^k)2k(k+1)n_{k,0}^{-1}\le \frac{\epsilon_0}{2}((1+2\lambda)^k-(1+\lambda)^k).$$ Indeed, this definition of $\lambda(\delta)$ also makes this hold.

Note that $|\cooo(A_\star)|\ge \frac{\epsilon_0}{2}|B|$ by \Cref{contdisc}. Let $o$ be its barycenter of the John's ellipsoid $\widetilde{E}\subset \cooo(A_\star)$. Consider the homothety $H$ with center $o$ and ratio $1+\lambda(\delta)$. Let $\widetilde{R}=H(\cooo(A_{\star}))$. We will show now that $A\subset \widetilde{R}$. Indeed, suppose not, and let $x \in A\setminus \widetilde{R}$. Then by \Cref{closestarlem}, there is a subset $\widetilde{P}\subset \cooo(A_\star)$ with volume $(\frac{\lambda}{2k+1})^k|\cooo(A_\star)|$ such that $\widetilde{P}+x$ is disjoint from $2\cooo(A_\star)$. Then, by \Cref{negdk}, \Cref{dkobs}, and \Cref{contdisc},
\begin{align*}|A+A| &\ge |x+(\widetilde{P}\cap A_\star)|+|A_\star+A_\star|\\&\ge|x+(\widetilde{P}\cap \co(A_\star))|-h_{\star}(\delta)|B|+2^k|A_\star|+d_k(A_\star)\\
&\ge|\widetilde{P}\cap \mathbb{Z}^k|-h_{\star}(\delta)|B|+ 2^k(|A|-h_8(\delta)|B|)-2^{2k}n_{k,0}^{-1}|B|\\
&\geq|\widetilde{P}|+2^k|A|-h_{12}(\delta)|B|\\
&\ge \left(\frac{\lambda}{2k+1}\right)^k|\cooo(A_\star)|+2^k|A|-h_{12}(\delta)|B|\\
&\ge 2^k|A|+\left(\frac{\epsilon_0}{2}\left(\frac{\lambda}{2k+1}\right)^k-h_{12}(\delta)\right)|B|.
\end{align*}
Hence,
$$\delta |B|\ge d_k(A)\ge \left(\frac{\epsilon_0}{2}\left(\frac{\lambda}{2k+1}\right)^k-h_{12}(\delta)\right)|B|>\delta|B|,$$
a contradiction.
Therefore $A\subset H(\cooo(A_\star))$, so $\cooo(A)\subset H(\cooo(A_\star))$. Recalling that $H(\cooo(A_\star))$ is a translate of $(1+\lambda)\cooo(A_\star)$, by \Cref{contdisc} applied to $\cooo(A)$ and $\cooo(A_\star)$, we have
\begin{align*}|\co(A)|&\le (1+\lambda)^k|\co(A_\star)|+(1+(1+\lambda)^k)2k(k+1)n_{k,0}^{-1}|B|\\
&\le (1+2\lambda)^k|\co(A_\star)|\\
&\le |\co(A_\star)|+((1+2\lambda)^k-1)|B|\\
&\le |A_\star|+((1+2\lambda)^k-1+h_\star(\delta))|B|\\
&\le |A|+(h_8(\delta)+(1+2\lambda)^k-1+h_\star(\delta))|B|.
\end{align*}
And hence for $\omega'(\delta)=h_8(\delta)+(1+2\lambda(\delta))^k-1+h_\star(\delta)$, which tends to $0$ as $\delta\to 0$, we have
$$|\co(A)\setminus A|\le \omega'(\delta)|B|\le \omega'(\delta)\epsilon_0^{-1}|A|.$$
Taking $\omega(\delta)=\sqrt{\omega'(\delta)}$, then $\omega(\delta)\to 0$ as $\delta\to 0$ and $\omega(\delta)\ge \epsilon_0^{-1} \omega'(\delta)$ for $\delta$ sufficiently small in terms of $\epsilon_0$, so 
$$|\co(A)\setminus A|\le \omega(\delta)|A|.$$
\end{proof}

\section{Proof of \Cref{quant} for $k$ given \Cref{qual} for $k$}
\label{quantfromqualsection}
In this section, we prove \Cref{quant} for dimension $k$ given \Cref{quali} for dimension $k-1$. A few important notes before we begin.
\begin{itemize}
    \item We be \textbf{exclusively} working in the equivalent reformulations \Cref{equivquali} (of \Cref{quali}) and \Cref{equivmainthm} (of \Cref{mainthm}) as established in \Cref{equivsection}. Our hypotheses on $A$ are therefore the ones from \Cref{equivquali}, that $\delta \ll \epsilon_0 \le 1$ and $A\subset B=B(n_1,\ldots,n_k)$ with $|A| \ge \epsilon_0|B|$ and $d_k(A)\le \delta |B|$, and our desired conclusion is that
$$|\coo(A)\setminus A|\le c_kd_k(A)+g_k(\epsilon_0)\min\{n_i\}^{-\frac{1}{1+\frac{1}{2}(k-1)\lfloor k/2 \rfloor}}|A|.$$
    \item As will be shown at the beginning of the proof of \Cref{equivmainthm}, we will be able to assume a $\min\{n_i\}^{-1}\ll \epsilon_0$ dependency. Hence by \Cref{nonreduced} we may and shall assume that $A$ is reduced.
\end{itemize}

To prove \Cref{equivmainthm}, we first prove the following closely related proposition. 
\begin{prop}\label{quantprop}
 There are constants $c_k<(4k)^{5k}$, $f_k$ and  $\rho_k(\epsilon_0),n_{k}(\epsilon_0)$ for all $\epsilon_0>0$ such that the following is true. For every box $B=B(n_1,\ldots,n_k)$ with $n_1,\ldots,n_k \ge n_k(\epsilon_0)$, and for $A'\subset B$ a reduced set with $|A'|\ge \epsilon_0|B|$, $|\co(A')\setminus A'|\le \rho_k(\epsilon_0)|A'|$, and a triangulation $\mathcal{T}$ of $\partial\cooo(A')$, we have that
$$|\co(A')\setminus A'|\le c_{k}d_k(A')+f_k|\mathcal{T}|\min\{n_i\}^{-1}|B|.$$
\end{prop}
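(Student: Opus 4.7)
My plan is to prove \Cref{quantprop} by establishing two inequalities and combining them, following the blueprint of \cite{HomoBM} transposed to the discrete/lattice setting.

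\medskip
\noindent\textbf{Step 1 (Brunn--Minkowski reduction).} Starting from $A'+A'\subset\co(A'+A')\subset(\widetilde{\co}(A')+\widetilde{\co}(A'))\cap\mathbb{Z}^k$ and the continuous Brunn--Minkowski equality $|\widetilde{\co}(A')+\widetilde{\co}(A')|=2^k|\widetilde{\co}(A')|$, combined with \Cref{contdisc} applied to $\widetilde{\co}(A')$ and $2\widetilde{\co}(A')$ (whose facet counts are controlled by $|\mathcal{T}|$), I would derive
\[
2^k|\co(A')\setminus A'|\le d_k(A')+|\co(A'+A')\setminus(A'+A')|+C_k\,|\mathcal{T}|\min\{n_i\}^{-1}|B|.
\]
This step is essentially routine bookkeeping once the right Brunn--Minkowski-type inclusion is written down.

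\medskip
\noindent\textbf{Step 2 (the sharp gain over $2^k$).} The main task is to show
\[
|\co(A'+A')\setminus(A'+A')|\le (2^k-c_k')\,|\co(A')\setminus A'|+C_k'\,|\mathcal{T}|\min\{n_i\}^{-1}|B|
\]
for an explicit constant $c_k'>0$ with $1/c_k'<(4k)^{5k}$. Combined with Step 1, this gives $c_k'|\co(A')\setminus A'|\le d_k(A')+\text{error}$, yielding the proposition with $c_k=1/c_k'$.

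The strategy for Step 2 has two parts. First, by choosing $\rho_k(\epsilon_0)$ small enough, the hypothesis $|\co(A')\setminus A'|\le\rho_k(\epsilon_0)|A'|$ (which is where \Cref{qual} enters) forces $|\co(A')\setminus A'|$ to be a negligible fraction of $|A'|$. For any $z\in 2\widetilde{\co}(A')$ at distance $\ge\eta\operatorname{diam}(\widetilde{\co}(A'))$ from $\partial 2\widetilde{\co}(A')$, the set $S_z:=\widetilde{\co}(A')\cap(z-\widetilde{\co}(A'))$ has volume $\Omega_k(|\widetilde{\co}(A')|)$ parametrising representations $z=a+b$ with $a,b\in\widetilde{\co}(A')$; since $|\co(A')\setminus A'|\ll|S_z\cap\mathbb{Z}^k|$, pigeon-holing forces some representation $a,b\in A'$, so $z\in A'+A'$. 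Hence $\co(A'+A')\setminus(A'+A')$ lies in a thickened boundary of $2\widetilde{\co}(A')$ of width $O(\eta)\operatorname{diam}(\widetilde{\co}(A'))$.

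Second, I would cover this thickened boundary by $|\mathcal{T}|$ homothetic copies of $\widetilde{\co}(A')$: for each $T\in\mathcal{T}$, take the homothety $H_T$ of ratio $\eta$ centered at a chosen vertex of $T$, and use the copies $2H_T(\widetilde{\co}(A'))$ inside $2\widetilde{\co}(A')$. For each such copy, restrict the defect $\co(A'+A')\setminus(A'+A')$ to the corresponding slab and compare it to $\co(A')\setminus A'$ inside $H_T(\widetilde{\co}(A'))$. The map $x\mapsto 2x$ scales the convex body by $2^k$ in volume, but the thickened boundary is covered by $(k-1)$-dimensional faces, so the effective scaling of the defect is bounded by a factor strictly smaller than $2^k$; this is the source of $c_k'>0$.

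\medskip
\noindent\textbf{Main obstacle.} The hard part is Step 2, and specifically the quantitative geometric argument that produces $c_k'>0$. One must verify that, uniformly over positions in the thickened boundary, each point of $\co(A'+A')\setminus(A'+A')$ ``uses up" at least $2^k/(2^k-c_k')>1$ worth of defect in $\co(A')\setminus A'$ after accounting for overlaps of the homothetic cover. The delicate point is that the parameter $\eta$ must be chosen small enough for the pigeon-hole argument of Step~2(a) to fill the deep interior, yet large enough that the homothetic cover has bounded multiplicity and genuinely captures the boundary slab; the dependence of this tradeoff on $|\mathcal{T}|$ is precisely what produces the error term $f_k|\mathcal{T}|\min\{n_i\}^{-1}|B|$ in the conclusion. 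Tracking the constants carefully — essentially repeating the explicit calculations from \cite{HomoBM} but with discrete volume replaced by lattice-point counts via \Cref{contdisc} — should yield the claimed $c_k<(4k)^{5k}$.
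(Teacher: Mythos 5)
Your Step 1 and your Step 2(a) match the paper: the final bookkeeping is exactly the combination $2^k|\co(A')\setminus A'|\le d_k(A')+|\co(A'+A')\setminus(A'+A')|+\text{error}$, and the pigeonhole argument filling the deep interior of $2\widetilde{\co}(A')$ is precisely \Cref{containedheart} (the paper uses a John ellipsoid recentred at a lattice point $o$ rather than your $S_z=\widetilde{\co}(A')\cap(z-\widetilde{\co}(A'))$, but the symmetric-pigeonhole idea is the same). The gap is in Step 2(b), which is where the constant $c_k'$ actually has to come from. Your covering scheme — homothetic copies $2H_T(\widetilde{\co}(A'))$ of ratio $\eta$ centred at one vertex of each $T\in\mathcal{T}$ — does not cover the boundary annulus of $2\widetilde{\co}(A')$ for small $\eta$ (it only covers neighbourhoods of the vertices), and the heuristic you offer for the gain ("the thickened boundary is covered by $(k-1)$-dimensional faces, so the effective scaling of the defect is smaller than $2^k$") is not the operative mechanism: the annulus is full-dimensional, and nothing in your argument prevents the defect of $A'$ from being entirely concentrated in it, in which case the naive scaling really is $2^k$ and no gain results.

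What the paper actually does is cone $\mathcal{T}$ at the interior point $o$ (which may be assumed to lie in $A'$) to triangulate $\cooo(A')$ into $|\mathcal{T}|$ simplices with all vertices in $A'$, and then proves the halving statement \Cref{magicprop}: for each such simplex $\widetilde{T}$, the defect of $A'$ in the annulus $\widetilde{T}\setminus(1-\eta_k)\widetilde{T}$ is at most $\tfrac12|T\setminus A'|+c_k^1d_k(A')+\text{error}$. This rests on two ingredients absent from your proposal: (i) sub-doubling estimates (\Cref{FinalAScor}) showing that for each small simplex $\widetilde{S}$ in the recursively averaged family $\mathcal{S}_{i,j}(\widetilde{T})$ one has $|S\setminus A_S|\le\frac{|\widetilde{S}|}{|\widetilde{T}|}|T\setminus A'|+O(d_k(A'))$ — these use sumset inclusions like $x+A_S\subset(A'+A')\cap 2\widetilde{S}'$ and therefore need the simplex vertices to lie in $A'$; and (ii) a purely geometric covering lemma from \cite{HomoBM} producing a subfamily of $\mathcal{S}_{\mu_1,\mu_2}$ that covers the annulus with total volume at most $\tfrac12|\widetilde{T}|$. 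Summing (i) over the covering (ii) gives the factor $\tfrac12$, hence $|\co(A'+A')\setminus(A'+A')|\le 2^{k-1}|\co(A')\setminus A'|+2^kc_k^1d_k(A')+\text{error}$, i.e.\ your $c_k'=2^{k-1}$. Without a substitute for both ingredients your Step 2(b) does not close, and the tradeoff you describe between $\eta$ and the cover multiplicity cannot by itself produce a constant $c_k'>0$.
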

We will see that this result follows from the following result.
\begin{prop}
\label{magicprop}
There are constants $c^1_k,c^2_k,\eta_k>0$ (we can take $c^1_k\leq 2^{2k}(2k)^{5k}$) with $\eta_k\le \frac{1}{2}$ such that the following is true. For every box $B=B(n_1,\ldots,n_k)$ and for $\widetilde{T}\subset \widetilde{B}$ a simplex with vertices $o=x_0,x_1,\ldots,x_k$, and $A\subset T=\widetilde{T}\cap \mathbb{Z}^k$ with $\{o,x_1,\ldots,x_k\}\subset A$ we have
$$|(T\setminus (1-\eta_k)\widetilde{T})\setminus A|\le \frac{1}{2}|T\setminus A|+c_k^1d_k(A)+c^2_k\min\{n_i\}^{-1}|B|,$$
where the $(1-\eta_k)$-scaling is done with respect to $o$.
\end{prop}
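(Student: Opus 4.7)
The statement is equivalent to $|X| \le |Y| + 2 c_k^1 d_k(A) + 2 c_k^2 \min\{n_i\}^{-1} |B|$, where $X := (T \setminus (1-\eta_k)\widetilde T) \setminus A$ and $Y := ((1-\eta_k)\widetilde T \cap T) \setminus A$, so that $T \setminus A = X \sqcup Y$. In the barycentric coordinates determined by $x_0 = o = 0$, writing $p = \sum_{i=1}^k t_i(p) x_i$, a point $p$ lies in $(1-\eta_k)\widetilde T$ iff $\sum_i t_i(p) \le 1 - \eta_k$, so $X$ is a slab of thickness on the order of $\eta_k$ adjacent to the facet $\widetilde{\co}(x_1, \ldots, x_k)$ opposite $o$. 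The interior region $(1-\eta_k)\widetilde T$ is itself a homothet of $\widetilde T$ with vertex at $o$.

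My plan is a double-counting argument in $A + A \subset 2\widetilde T$ that exploits the fact that each vertex $x_i \in A$, so that $A + x_i \subset A + A$ for every $i$. Starting from $|A+A| = 2^k|A| + d_k(A)$ together with the inclusion $A + A \subset 2T \cap \mathbb{Z}^k$ and \Cref{contdisc} applied to $2\widetilde T$, I would extract the deficit bound
$$|2T \setminus (A+A)| \le 2^k|T \setminus A| - d_k(A) + O(\min\{n_i\}^{-1}|B|).$$
For each $p \in X$ and each $i \in \{1, \ldots, k\}$, the translate $p + x_i$ has barycentric coordinates summing to $\sum t_j(p) + 1 > 2 - \eta_k$, placing it in the outermost slab $2\widetilde T \setminus 2(1-\eta_k)\widetilde T$ of $2\widetilde T$. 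The translates $\widetilde T + x_i$ for different $i$ are essentially disjoint inside $2\widetilde T$ (their pairwise intersections are lower-dimensional in barycentric coordinates), so the $k$ points $p + x_1, \ldots, p + x_k$ produced from a single $p$ land in disjoint corner-regions of the outer slab.

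I would then split the pairs $(p, i) \in X \times \{1, \ldots, k\}$ according to whether $p + x_i \in A + A$ or not. The first family contributes at most $k \cdot |2T \setminus (A+A)|$ (each $q$ in the complement has at most $k$ preimages via translation by a vertex), yielding a term absorbed into $d_k(A)$ together with a controlled fraction of $|T \setminus A|$. For the second family, a representation $p + x_i = a + a'$ with $a, a' \in A$ must have $\{a, a'\} \neq \{p, x_i\}$ since $p \notin A$; the simplex geometry should then allow an injection from such data into either interior holes in $Y$ (with bounded multiplicity) or additional sumset collisions already absorbed into $d_k(A)$. Summing over all $k$ values of $i$ and dividing by $k$ should yield the claimed inequality.

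The main obstacle is producing the injection in the second case cleanly enough to extract the explicit factor $\frac{1}{2}$ in front of $|T \setminus A|$, rather than a constant close to $1$. This $\frac{1}{2}$ is essential for the recursive application of this lemma in the proof of \Cref{quantprop}, where the factor is summed as a geometric series converging to a finite constant. The correct choice of $\eta_k$ (presumably of order $1/k$, small enough that the outer slabs $2\widetilde T \setminus 2(1-\eta_k)\widetilde T$ associated with distinct vertices $x_i$ separate cleanly) and careful tracking of constants through the $k$ vertex translations should yield the stated bound $c_k^1 \le 2^{2k}(2k)^{5k}$, with $c_k^2$ arising from the $O(\min\{n_i\}^{-1}|B|)$ errors generated by each application of \Cref{contdisc}.
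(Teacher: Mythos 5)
Your approach is genuinely different from the paper's, but it has a real gap, which you yourself identify. The paper's proof does not attempt a direct injection of ``outer holes'' into ``inner holes plus sumset deficit.'' Instead it exploits the recursive family $\mathcal{S}_{i,j}(\widetilde T)$ of small homothetic simplices from \Cref{sij}: it first proves a size-transfer lemma (for a vertex contraction $\widetilde S' = \frac12(x+\widetilde S)$, one has $|A_{S'}|\ge 2^{-k}|A_S| - 2^{-k}d_k(A) - O(\min\{n_i\}^{-1}|B|)$, using that $x+A_S \subset (A+A)\cap 2\widetilde S'$), then a similar estimate for midpoints $\widetilde S'=\frac12(\widetilde S_1+\widetilde S_2)$, and combines these in \Cref{FinalAScor} to get $|S\setminus A_S|\le 2^{-ik}|T\setminus A| + c^1_{i,j}d_k(A) + c^2_{i,j}\min\{n_i\}^{-1}|B|$ for every $\widetilde S\in\mathcal S_{i,j}$. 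The factor $\frac12$ then comes from a purely geometric covering lemma imported from \cite[Claim 4.2]{HomoBM}: there is a family $\mathcal F\subset \mathcal S_{i,j}$ of $\le(2k)^{5k}$ simplices covering $\widetilde T\setminus(1-\eta_k)\widetilde T$ with \emph{total volume} $\sum_{\widetilde S\in\mathcal F}|\widetilde S|\le\frac12|\widetilde T|$. Summing the per-simplex estimate over $\mathcal F$ and noting $2^{-ik}=|\widetilde S|/|\widetilde T|$ gives the $\frac12$ coefficient directly.

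Concretely, the gap in your argument is the ``second family.'' Knowing $p+x_i = a+a'$ with $\{a,a'\}\ne\{p,x_i\}$ tells you only that the sumset value is achieved by a different pair; it does not produce a hole of $A$ in $Y$, nor a collision that is chargeable against $d_k(A)$, and there is no visible multiplicity bound that would make such an injection well-defined. Without some geometric averaging mechanism, a double-count of pairs $(p,i)$ will only yield a coefficient of order $1$ on $|T\setminus A|$, exactly as you observe. The paper circumvents this because the averaging is built into the covering: the slab is chopped into small simplices whose aggregate volume fraction is $\frac12$, and the per-simplex size transfer is proportional to that fraction. If you want to salvage your line of attack, you would need to replace the injection with a volumetric averaging of your own (e.g., integrate the hole-transfer inequality over a measure-$\frac12$ sub-region of the slab), which in effect reconstructs the covering lemma.
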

Until the end of the proof of \Cref{magicprop} later in this section, we fix the hypotheses of \Cref{magicprop} (so in particular, $A\subset T$).
\begin{notn}
We shall write $A_S:=A\cap S$.
\end{notn}

Recall \Cref{sij} in \Cref{AstarConstruction}, which recursively constructs a family of simplices $\mathcal{S}_{i,j}(\widetilde{T})$ such that $\mathcal{S}_{0,0}=\{\widetilde{T}\}$, such that every simplex in $\mathcal{S}_{i,0}$ is the average of a vertex in $V(T)$ with a simplex in $\mathcal{S}_{i-1,0}$, and every simplex in $\mathcal{S}_{i,j}$ is the average of two simplices in $\mathcal{S}_{i,j-1}$.
\begin{lem}
For $x$ a vertex of $\widetilde{T}$ and $\widetilde{S}=(1-2^{-i})x+2^{-i}\widetilde{T} \in \mathcal{S}_{i,0}$ and $\widetilde{S}'=\frac{1}{2}(x+\widetilde{S}) \in \mathcal{S}_{i+1,0}$, we have
$$|A_{S'}| \ge 2^{-k}|A_{ S}|-2^{-k}d_k(A)-2^k\min\{n_i\}^{-1}|B|.$$
\end{lem}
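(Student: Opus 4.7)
The plan is to sandwich $|A+A|$ by combining the hypothesis upper bound $|A+A|\le 2^k|A|+d_k(A)$ with a two-piece lower bound obtained by exhibiting disjoint subsets of $A+A$, one inside $2\widetilde{S}'$ and one in its complement. For the inside piece I will use $x+A_S\subset (A+A)\cap 2\widetilde{S}'$, which has size $|A_S|$ and uses $x\in A$ (inherited from the ambient \Cref{magicprop}, since $x$ is a vertex of $\widetilde{T}$) together with the identity $x+\widetilde{S}=2\widetilde{S}'$. For the outside piece I will use the sumset $(A\setminus A_{S'})+(A\setminus A_{S'})$, which by \Cref{negdk} has size at least $2^k|A\setminus A_{S'}|-2^{2k}\min\{n_i\}^{-1}|B|$.

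The only non-routine step, and the main obstacle, is showing that $(A\setminus A_{S'})+(A\setminus A_{S'})$ is actually disjoint from $2\widetilde{S}'$. I will establish the (seemingly stronger) claim that for any $a,b\in\widetilde{T}$ with $a+b\in 2\widetilde{S}'$ one already has $a\in\widetilde{S}'$ or $b\in\widetilde{S}'$, whose contrapositive gives the disjointness. Working in affine coordinates centered at $x$ in which $\widetilde{T}$ becomes the standard simplex $\{y\ge 0:\sum y_j\le 1\}$, the corner simplex $\widetilde{S}'$ is $\{y\ge 0:\sum y_j\le 2^{-i-1}\}$; the condition $(a+b)/2\in \widetilde{S}'$ then forces $\sum a_j+\sum b_j\le 2^{-i}$, so the smaller of $\sum a_j$ and $\sum b_j$ is at most $2^{-i-1}$ and the corresponding point lies in $\widetilde{S}'$.

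Assembling the two disjoint pieces,
\begin{align*}
2^k|A|+d_k(A)\;\ge\;|A+A|\;\ge\;|A_S|+2^k|A\setminus A_{S'}|-2^{2k}\min\{n_i\}^{-1}|B|,
\end{align*}
and substituting $|A|=|A_{S'}|+|A\setminus A_{S'}|$ lets me cancel the common $2^k|A\setminus A_{S'}|$ term to obtain $|A_S|\le 2^k|A_{S'}|+d_k(A)+2^{2k}\min\{n_i\}^{-1}|B|$. Dividing by $2^k$ yields the stated inequality. The proof is thus short once the geometric observation is in hand; everything else is a bookkeeping argument using \Cref{negdk}.
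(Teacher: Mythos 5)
Your proof is correct and follows essentially the same route as the paper's: partition the sumset by $2\widetilde{S}'$ versus its complement in $2\widetilde{T}$, lower-bound the inner piece by $|x+A_S|$ and the outer piece via \Cref{negdk} applied to $A_{S'^c}+A_{S'^c}$, then rearrange. The one place where you add value is in making explicit the geometric fact that $A_{S'^c}+A_{S'^c}$ avoids $2\widetilde{S}'$ (via the corner-simplex coordinate computation), a containment the paper uses silently in its chain of inequalities.
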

\begin{proof}
By \Cref{negdk}, with $\widetilde{S}'^c$ the complement of $\widetilde{S}'$ in $\widetilde{T}$,
\begin{align*}d_k(A)=|A+A|-2^k|A|&= |(A+A)\cap 2\widetilde{S'}|-2^k|A_{S'}|+|(A+A)\cap 2\widetilde{S'}^c|-2^k|A_{S'^c}|\\
&\ge |(A+A)\cap 2\widetilde{S'}|-2^k|A_{S'}|+|A_{S'^c}+A_{S'^c}|-2^k|A_{S'^c}|\\
&\ge |x+A_{S}|-2^k|A_{S'}|-2^{2k}\min\{n_i\}^{-1}|B|\\
&=|A_{S}|-2^k|A_{S'}|-2^{2k}\min\{n_i\}^{-1}|B|.\end{align*}
\end{proof}
\begin{cor}
For $\widetilde{S}\in \mathcal{S}_{i,0}$ we have
$$|A_{S}| \ge 2^{-ik}|A|-\frac{1-2^{-ik}}{2^k-1}d_k(A)-2^{k+1}\min\{n_i\}^{-1}|B|.$$
\end{cor}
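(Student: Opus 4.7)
The corollary is immediate from the preceding lemma by induction on $i$. Here is the plan.

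The key geometric observation is that $\mathcal{S}_{i,0}$ consists of the $k+1$ simplices $\widetilde{S}_r^{(i)} := (1-2^{-i})x_r + 2^{-i}\widetilde{T}$ for $0 \le r \le k$, and these fit into nested chains: fixing a vertex $x = x_r$, the one-parameter family $\widetilde{S}^{(i)} := (1-2^{-i})x + 2^{-i}\widetilde{T}$ satisfies $\widetilde{S}^{(0)} = \widetilde{T}$ and the recursion $\widetilde{S}^{(i+1)} = \tfrac{1}{2}(x + \widetilde{S}^{(i)})$ that is exactly the hypothesis of the preceding lemma. So for any $\widetilde{S} \in \mathcal{S}_{i,0}$ we have a telescoping chain $\widetilde{T} = \widetilde{S}^{(0)} \supset \widetilde{S}^{(1)} \supset \cdots \supset \widetilde{S}^{(i)} = \widetilde{S}$ along which the lemma can be iterated.

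The plan is to prove by induction on $i \ge 0$ that for every $\widetilde{S} \in \mathcal{S}_{i,0}$,
\[
|A_S| \ge 2^{-ik}|A| - \frac{1-2^{-ik}}{2^k-1}d_k(A) - C_i \min\{n_i\}^{-1}|B|,
\]
where $C_i$ is a constant satisfying $C_0 = 0$ and $C_{i+1} = 2^{-k}C_i + 2^k$, so that $C_i = \tfrac{2^{2k}(1-2^{-ik})}{2^k-1}$. The base case $i=0$ is the trivial equality $|A_T| = |A|$. For the inductive step, given $\widetilde{S}' \in \mathcal{S}_{i+1,0}$, write $\widetilde{S}' = (1-2^{-(i+1)})x + 2^{-(i+1)}\widetilde{T}$ and set $\widetilde{S} = (1-2^{-i})x + 2^{-i}\widetilde{T} \in \mathcal{S}_{i,0}$, so the preceding lemma gives $|A_{S'}| \ge 2^{-k}|A_S| - 2^{-k}d_k(A) - 2^k\min\{n_i\}^{-1}|B|$. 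Substituting the inductive bound on $|A_S|$ and simplifying the geometric-series coefficient yields
\[
\frac{2^{-k}(1 - 2^{-ik})}{2^k-1} + 2^{-k} = \frac{1 - 2^{-(i+1)k}}{2^k-1},
\]
which is the desired coefficient of $d_k(A)$ at level $i+1$.

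The only remaining step is to bound $C_i$ uniformly by $2^{k+1}$: since $C_i$ is increasing in $i$ and $C_i \le \tfrac{2^{2k}}{2^k-1} \le 2^{k+1}$ (using $2^k - 1 \ge 2^{k-1}$ for $k \ge 1$), we obtain the stated error term. There is no real obstacle here — the only thing one might flag is ensuring the recursion $C_{i+1} = 2^{-k}C_i + 2^k$ telescopes cleanly, which is a one-line geometric series computation.
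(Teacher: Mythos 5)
Your proof is correct and is exactly the intended argument: the corollary is an iterated application of the preceding lemma along the telescoping chain $\widetilde{T}=\widetilde{S}^{(0)}\supset\widetilde{S}^{(1)}\supset\cdots\supset\widetilde{S}^{(i)}=\widetilde{S}$, with the geometric series giving the $d_k(A)$ coefficient and the recursion $C_{i+1}=2^{-k}C_i+2^k$ bounded uniformly by $2^{2k}/(2^k-1)\le 2^{k+1}$. The paper states the corollary without proof, and your argument is the one it implicitly intends.
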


\begin{lem}
For $\widetilde{S}_1, \widetilde{S}_2\in \mathcal{S}_{i,j}$, and $\widetilde{S}'=\frac{1}{2}(\widetilde{S}_1+\widetilde{S}_2)\in \mathcal{S}_{i,j+1}$, we have
$$|A_{S'}| \ge \min(|A_{S_1}|,|A_{S_2}|)-2^{-k}d_k(A)-(k+2)2^{k}\min\{n_i\}^{-1}|B|.$$
\end{lem}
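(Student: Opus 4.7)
The plan is to mimic the argument of the previous lemma, but instead of exploiting a vertex $x$ of $\widetilde{T}$ to embed $x + A_S$ into $2\widetilde{S}'$, I would use the fact that $\widetilde{S}_1 + \widetilde{S}_2 = 2\widetilde{S}'$ to obtain $A_{S_1} + A_{S_2} \subset (A+A) \cap 2\widetilde{S}'$. By the subset form of \Cref{negdk} applied to $A_{S_1}, A_{S_2} \subset B$, this yields
$$|A_{S_1}+A_{S_2}| \ge 2^k \min(|A_{S_1}|, |A_{S_2}|) - 2^{2k}\min\{n_i\}^{-1}|B|.$$

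Next, since $\widetilde{S}'$ is a translate of $2^{-i}\widetilde{T}$ lying inside $\widetilde{T}$, it is cut out by at most $k+1$ half-space inequalities $x \cdot \vec{c}_\ell \le \vec{b}_\ell$ (the ones whose bounding hyperplanes meet the interior of $\widetilde{T}$). Following the pattern in \Cref{dk'obs}, I would partition $\widetilde{T} \setminus \widetilde{S}'$ into at most $k+1$ convex regions
$$\widetilde{P}_j := \widetilde{T} \cap \{x\cdot\vec{c}_\ell \le \vec{b}_\ell \text{ for } \ell < j,\ x\cdot\vec{c}_j > \vec{b}_j\}.$$
The key (and really the only nontrivial) geometric point is that the doubled regions $2\widetilde{S}', 2\widetilde{P}_1, \ldots, 2\widetilde{P}_{k+1}$ are pairwise disjoint: any $y \in 2\widetilde{P}_j$ satisfies $y\cdot \vec{c}_j > 2\vec{b}_j$, whereas both $2\widetilde{S}'$ and every $2\widetilde{P}_{j'}$ with $j' > j$ force $y\cdot \vec{c}_j \le 2\vec{b}_j$. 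Since $A_{P_j}+A_{P_j} \subset 2\widetilde{P}_j$ and $A_{S_1}+A_{S_2} \subset 2\widetilde{S}'$, these sumsets are pairwise disjoint subsets of $A+A$.

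To finish I would apply \Cref{negdk} to each piece via $|A_{P_j}+A_{P_j}| \ge 2^k|A_{P_j}| - 2^{2k}\min\{n_i\}^{-1}|B|$, add the lower bound for $|A_{S_1}+A_{S_2}|$, and subtract $2^k|A| = 2^k|A_{S'}| + 2^k\sum_j |A_{P_j}|$. The $\sum_j |A_{P_j}|$ terms cancel, yielding
$$d_k(A) \ge 2^k\min(|A_{S_1}|, |A_{S_2}|) - 2^k|A_{S'}| - (k+2)\cdot 2^{2k}\min\{n_i\}^{-1}|B|,$$
and rearranging gives the claimed bound with the stated constants.
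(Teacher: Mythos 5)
Your argument is correct and is essentially the paper's proof: the paper also observes $A_{S_1}+A_{S_2}\subset (A+A)\cap 2\widetilde{S}'$, partitions $\widetilde{T}\setminus\widetilde{S}'$ into $k+1$ convex pieces $\widetilde{P}_1,\ldots,\widetilde{P}_{k+1}$ via the same telescoping half-space construction (borrowed from the proof of \Cref{averageg}), and applies \Cref{negdk} to each piece before rearranging. The only difference is that you spell out the disjointness of the doubled pieces, which the paper leaves implicit.
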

\begin{proof}
Let $\widetilde{P}_1,\ldots,\widetilde{P}_{k+1}$ be a partition of $\widetilde{S}'^c$ into convex sets as in the proof of \Cref{averageg}. Then by \Cref{negdk}, we have
\begin{align*}
d_k(A)&=|A+A|-2^k|A|\\
&= |(A+A)\cap 2\widetilde{S'}|-2^k|A_{S'}|+|(A+A)\cap 2\widetilde{S'}^c|-2^k|A_{S'^c}|\\
&\ge|A_{S_1}+A_{S_2}|-2^k|A_{S'}|+\sum_{i=1}^{k+1}\left(|A_{\widetilde{P}_i}+A_{\widetilde{P}_i}|-2^k|A_{\widetilde{P}_i}|\right)\\
&\ge 2^k(\min(|A_{S_1}|,|A_{S_2}|)-2^k|A_{S'}|-(k+2)2^{2k}\min\{n_i\}^{-1}|B|.\end{align*}
\end{proof}
\begin{cor}\label{FinalAScor}
For $\widetilde{S}\in \mathcal{S}_{i,j}$ we have \begin{align*}|A_{S}|&\ge 2^{-ik}|A|-\left(\frac{1-2^{-ik}}{2^k-1}+j2^{-k}\right)d_k(A)-\left(2^{k+1}+j(k+2)2^{k}\right)\min\{n_i\}^{-1}|B|.
\end{align*}
In particular, by \Cref{contdisc} applied to $S$ and $T$, we have
$$|S\setminus A_{S}|\leq 2^{-ik}|T\setminus A|+c^{1}_{i,j}d_k(A)+c^{2}_{i,j}\min\{n_i\}^{-1}|B|,$$
with $c^{1}_{i,j}=\frac{1-2^{-ik}}{2^k-1}+j2^{-k}$ and $c^{2}_{i,j}=(1+2^{-ik})2k(k+1)+2^{k+1}+j(k+2)2^{k}$.
\end{cor}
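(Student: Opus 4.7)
The plan is to prove the corollary in two parts: first the lower bound on $|A_S|$ by induction on $j$, then the upper bound on $|S\setminus A_S|$ by a straightforward application of \Cref{contdisc} to convert between continuous and discrete volumes.

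For the first inequality, I would fix $i$ and induct on $j\ge 0$. The base case $j=0$ is exactly the preceding Corollary for $\widetilde{S}\in \mathcal{S}_{i,0}$, which gives the bound $|A_S|\ge 2^{-ik}|A|-\tfrac{1-2^{-ik}}{2^k-1}d_k(A)-2^{k+1}\min\{n_i\}^{-1}|B|$, matching the claim with $j=0$. For the inductive step, any $\widetilde{S}\in \mathcal{S}_{i,j+1}$ can by definition be written as $\widetilde{S}=\tfrac{1}{2}(\widetilde{S}_1+\widetilde{S}_2)$ for some $\widetilde{S}_1,\widetilde{S}_2\in \mathcal{S}_{i,j}$. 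Applying the immediately preceding lemma and the inductive hypothesis to whichever of $\widetilde{S}_1,\widetilde{S}_2$ achieves the minimum yields
\[
|A_S|\ge \min(|A_{S_1}|,|A_{S_2}|)-2^{-k}d_k(A)-(k+2)2^{k}\min\{n_i\}^{-1}|B|,
\]
and plugging in the inductive bound increases the $d_k(A)$-coefficient by $2^{-k}$ (taking $\tfrac{1-2^{-ik}}{2^k-1}+j2^{-k}$ to $\tfrac{1-2^{-ik}}{2^k-1}+(j+1)2^{-k}$) and the error coefficient by $(k+2)2^k$, which is exactly what is claimed.

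For the second inequality, I would just use \Cref{contdisc} applied to $\widetilde{S}\subset \widetilde{B}$ and to $\widetilde{T}\subset \widetilde{B}$, each introducing an error of at most $2k(k+1)\min\{n_i\}^{-1}|B|$. Combined with the homothety relation $|\widetilde{S}|=2^{-ik}|\widetilde{T}|$, this gives $|S|\le 2^{-ik}|T|+(1+2^{-ik})\cdot 2k(k+1)\min\{n_i\}^{-1}|B|$. Subtracting the first inequality from this bound on $|S|$ gives
\[
|S\setminus A_S|=|S|-|A_S|\le 2^{-ik}|T\setminus A|+c^1_{i,j}d_k(A)+c^2_{i,j}\min\{n_i\}^{-1}|B|,
\]
with $c^2_{i,j}$ gathering the three error contributions $(1+2^{-ik})2k(k+1)+2^{k+1}+j(k+2)2^{k}$, as stated.

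There is no substantive obstacle here: the result is essentially an unwinding of the recursive definition of $\mathcal{S}_{i,j}$ combined with the two lemmas proved immediately above. The only thing to be a bit careful about is making sure that the error accumulates only additively in $j$ (rather than say multiplicatively), which is immediate since each step adds a fixed $2^{-k}d_k(A)$ and a fixed $(k+2)2^k\min\{n_i\}^{-1}|B|$, and that one doesn't double-count the $2k(k+1)\min\{n_i\}^{-1}|B|$ discretization error when transitioning from $|A_S|$ bounds to $|S\setminus A_S|$ bounds.
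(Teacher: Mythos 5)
Your proof is correct and is exactly the intended argument: the paper states this corollary without proof, as the immediate induction on $j$ from the base case $\mathcal{S}_{i,0}$ and the preceding averaging lemma, with the discretization errors from \Cref{contdisc} handled as you describe. No issues.
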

\begin{proof}[Proof of \Cref{magicprop}]
Let $i=\left\lceil\log_{\frac12}\left(\frac{k^{1/k}}{(2k)^5}\right)\right\rceil$ and  $j=\lfloor 16k\log(2k) \rfloor$. Let $c_k^1=(2k)^{5k}c^1_{i,j}\leq 2^{2k}(2k)^{5k}$ and $c_k^2=(2k)^{5k}c_{i,j}^2$ where $c^1_{i,j}$ and $c^2_{i,j}$ are as in \Cref{FinalAScor}.
By \cite[Claim 4.2]{HomoBM}, there exists a constant $\eta_k>0$ and a family of simplices $\mathcal{F}\subset \mathcal{S}_{i,j}$ with $|\mathcal{F}|\leq (2k)^{5k}$ such that $$\sum_{\widetilde{S}\in \mathcal{F}}|\widetilde{S}|\le \frac{1}{2}|\widetilde{T}|,$$ and $$\widetilde{T}\setminus (1-\eta_k)\widetilde{T}\subset \bigcup_{\widetilde{S}\in \mathcal{F}} \widetilde{S}.$$
Note that by taking volumes, $(1-(1-\eta_k)^k)\le \frac{1}{2}$, so in particular $\eta_k\le \frac{1}{2}$.
We prove \Cref{magicprop} with parameters $c_k^1,c_k^2, \eta_k$ as above. Noting that $2^{-ik}=\frac{|\widetilde{S}|}{|\widetilde{T}|}$, we have
\begin{align*}
    |(T\setminus A)\setminus (1-\eta_k)\widetilde{T}|&\le \sum_{\widetilde{S}\in \mathcal{F}}|S\setminus A_{S}|\\
    &\le \sum_{\widetilde{S}\in \mathcal{F}}\left(2^{-ik}|T\setminus A|+c^1_{i,j}d_k(A)+c^2_{i,j}\min\{n_i\}^{-1}|B|\right)\\
    &\le \frac{1}{2}|T\setminus A|+c_{k}^1d_k(A)+c_{k}^2\min\{n_i\}^{-1}|B|.
\end{align*}
\end{proof}

We fix  $\eta_k$ as in \Cref{magicprop}. We need one final lemma to prove \Cref{quantprop}.
\begin{lem}\label{containedheart}
For every $\epsilon_0>0$, there exists a constant $\rho_k(\epsilon_0)>0$ such that if $A'\subset B$ with $|A'|\ge \epsilon_0|B|$,  $|\co(A')\setminus A'|\le \rho_k(\epsilon_0)|B|$, and $n_1,\ldots,n_k$ larger than some constant depending on $k$ and $\epsilon_0$, then there exists $o\in\mathbb{Z}^k\cap \cooo(A')$, such that (recalling the constant $\eta_k$ from \Cref{magicprop}) we have
$$(1-\eta_k)(\cooo(A'+A')-2o)\cap \mathbb{Z}^k\subset A'+A'-2o.$$
\end{lem}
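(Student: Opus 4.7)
The plan is to choose $o\in\mathbb{Z}^k$ to be a lattice point closest to $2g$, where $g$ is the centroid of $\widetilde K:=\cooo(A')$, and then to show by a counting argument that every lattice point $p$ in the shrunken region $o+(1-\eta_k)(2\widetilde K-o)$ admits a representation $p=r+s$ with $r,s\in A'$. The key observation is that such a $p$ has many representations as a sum of two elements of $\co(A')=\widetilde K\cap\mathbb{Z}^k$ (using that $A'$ is reduced), so when $A'$ fills up nearly all of $\co(A')$, at least one such representation must land entirely in $A'$.

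From $|A'|\ge \epsilon_0|B|$, $|\co(A')\setminus A'|\le \rho_k(\epsilon_0)|B|$ with $\rho_k(\epsilon_0)$ chosen smaller than $\epsilon_0/2$, and \Cref{contdisc}, we will get $|\widetilde K|\ge \epsilon_0|B|/3$ for $n_i\ge n_k(\epsilon_0)$; in particular the inradius of $\widetilde K$ is much larger than $\sqrt{k}$ for $n_i$ large, so $o$ lies deep in the interior of $2\widetilde K$ and the entire shrunken region sits inside $2\widetilde K$ by convexity. For $p$ in this region, the number of decompositions $p=r+s$ with $r,s\in\co(A')$ equals $|\widetilde K\cap(p-\widetilde K)\cap\mathbb{Z}^k|$, while the ``bad'' decompositions (those with $r$ or $s$ in $\co(A')\setminus A'$) number at most $2\rho_k(\epsilon_0)|B|$, so it suffices to show the total count exceeds $2\rho_k(\epsilon_0)|B|$.

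To lower bound the total count I will use that $f(x):=|\widetilde K\cap(x-\widetilde K)|^{1/k}$ is concave on its support $2\widetilde K$ by Brunn-Minkowski. Writing $p=\eta_k o+(1-\eta_k)q$ with $q\in 2\widetilde K$, concavity gives $f(p)\ge \eta_k f(o)$; translating by the centroid to $L:=\widetilde K-g$ (centroid at the origin), the classical symmetry-of-centroid inequality $|L\cap(-L)|\ge c_k|L|$ (a consequence of Minkowski's bound $-L/k\subset L$) yields $f(2g)\ge c_k^{1/k}|\widetilde K|^{1/k}$, and concavity together with $|o-2g|\le \sqrt{k}/2$ being negligible compared to the inradius forces $f(o)\ge f(2g)/2$. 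Combining with \Cref{contdisc} then gives $|\widetilde K\cap(p-\widetilde K)\cap\mathbb{Z}^k|\ge c_k'\eta_k^k \epsilon_0|B|$ for some $c_k'>0$ once $n_i$ is sufficiently large, and choosing $\rho_k(\epsilon_0)<c_k'\eta_k^k \epsilon_0/4$ makes this strictly exceed the bad count, so $p\in A'+A'$. The main obstacle is the symmetry-of-centroid inequality $|L\cap(-L)|\ge c_k|L|$ together with the concavity step quantifying the effect of the lattice shift $|o-2g|$; both are handled by classical convex-geometric tools, exploiting that the inradius of $\widetilde K$ grows with $n_k(\epsilon_0)$.
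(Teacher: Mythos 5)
Your proof is correct, and it takes a genuinely different technical route from the paper's. Both arguments share the same high-level skeleton: for a lattice point $p$ in the shrunken region, one lower bounds the number of representations $p = r+s$ with $r,s\in\co(A')$ and observes that the number of ``bad'' representations (where $r$ or $s$ lies in $\co(A')\setminus A'$) is at most $2\rho_k(\epsilon_0)|B|$, so a good one must exist. Where you differ is in the lower bound on the representation count and the choice of centering point. The paper applies John's theorem to produce an ellipsoid $\widetilde{F}'\subset\cooo(A')$ of volume $\ge k^{-k}\frac{\epsilon_0}{2}|B|$, pulls its center to a lattice point via an auxiliary homothety centered on $\partial\widetilde{F}'$, and then uses a second homothety centered on $\partial\cooo(A')$ to place a large centrally symmetric copy of the ellipsoid near $p/2$. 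You avoid both homotheties and the ellipsoid altogether by working directly with $f(x) := |\widetilde{K}\cap(x-\widetilde{K})|^{1/k}$, which is concave on $2\widetilde{K}$ by the Brunn--Minkowski inequality; the Minkowski centroid bound $-L\subset kL$ yields $f(2g)\ge k^{-1}|\widetilde{K}|^{1/k}$, and concavity propagates a comparable bound to $f(p)$. This is cleaner in that the quantity being lower bounded, $|\widetilde{K}\cap(p-\widetilde{K})\cap\mathbb{Z}^k|$, is exactly the count of representations, rather than the count in an auxiliary subset. Your choice of $o$ as a lattice point near $2g$ (the centroid of $2\widetilde{K}=\cooo(A'+A')$) is also the natural scaling center for that body, whereas the paper centers near the John center of $\widetilde{K}$. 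Both choices are legitimate for proving the lemma since it only asserts existence of some $o$. The one thing you should double-check is that your choice of $o$ is compatible with how the lemma is consumed in \Cref{quantprop}: there the paper adjoins $o$ to $A'$ and cones off a triangulation of $\partial\cooo(A')$ at $o$, which implicitly wants $o\in\cooo(A')$; your $o\approx 2g$ generally lies in $2\widetilde{K}$ but not in $\widetilde{K}$ itself, so the coning step would need to be rephrased (e.g. coning at $o/2$, or tracking the factor of two through the shrinking in \Cref{magicprop}). This does not affect the correctness of your proof of the lemma as stated.
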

\begin{proof}
By \Cref{contdisc}, we have $|\cooo(A')|\geq \left(\epsilon_0-2k(k+1) \min\{n_i\}^{-1}\right)|B|\ge \frac{\epsilon_0}{2}|B|$, so by John's Lemma \cite{John}, there exists an ellipsoid $\widetilde{F'}\subset \cooo(A')$ with $|\widetilde{F'}|\geq k^{-k}\frac{\epsilon_0}{2}|B|$. Let $o'$ be the centre of this ellipsoid $\widetilde{F'}$. Let $o\in\mathbb{Z}^k$ be a point closest to $o'$. If $o\not\in \widetilde{F'}$ then the smallest axis of $\widetilde{F'}$ would have length less than $2$. But because the largest cross-section of $\widetilde{F}'$ spanned by the remaining axes has area at most $\frac{1}{2}|\partial{\widetilde{B}}|$, we have $|\widetilde{F'}|\le |\partial{\widetilde{B}}|\le 2k\min\{n_i\}^{-1}|B|$, which is strictly less than $k^{-k}\frac{\epsilon_0}{2}$ provided $\min\{n_i\}\ge 2k(k^k)\frac{2}{\epsilon_0}$. Hence we may assume that $o\in \widetilde{F'}$. Let $p\in\partial \widetilde{F'}$ be the intersection of the ray $o'o$ with  $\partial\widetilde{F'}$. Let $H'$ be the homothety centred at $p$ with ratio $\frac{|op|}{|o'p|}\geq 1-\frac{\sqrt{k}}{|o'p|}$, so that $H'(o')=o$. If $|o'p|\le 2\sqrt{k}$, then as the cross-sectional area of $\widetilde{F}'$ perpendicular to $o'p$ is at most $|\partial \widetilde{B}|$, we see that $|\widetilde{F}'|\le  2\sqrt{k}|\partial\widetilde{B}|\le 4k^{3/2}\min\{n_i\}^{-1}|B|<k^{-k}\frac{\epsilon_0}{2}|B|$, a contradiction. Hence $\widetilde{F}=H'(\widetilde{F'})\subset \cooo(A')$ is an ellipse with center $o$ and  $|\widetilde{F}|\ge(2k)^{-k}\frac{\epsilon_0}{2}|B|$.

Taking a point $x'\in(1-\eta_k)(\cooo(A'+A')-2o)\cap \mathbb{Z}^k$, we want to show that $x'\in A'+A'-2o$. Let $x=\frac{1}{2}x'\in (1-\eta_k)\cooo(A'-o)$, and let $y$ be the intersection of the ray $\mathbb{R}_{\ge 0}x$ with $\partial \cooo(A'-o)$. Note that the ratio $r=|xy|/|y| \ge \eta_k$. Let $H$ be the homothety with center $y$ and ratio $r$. This homothety sends $0$ to $x$ and $\cooo(A'-o)$ to $H(\cooo(A'-o))$.
Note that $H(\cooo(A'-o)) \subset \cooo(A'-o)$ because $\cooo(A'-o)$ is convex. Note that $H(\widetilde{F}-o)$ is symmetric around $x$ and satisfies $|H(\widetilde{F}-o)|= r^k |\widetilde{F}|$. By \Cref{contdisc}, $$|H(\widetilde{F}-o)\cap \mathbb{Z}^k|\geq r^k |\widetilde{F}|-2k(k+1) \min\{n_i\}^{-1}|B|\ge \eta_k^k(2k)^{-k}\frac{\epsilon_0}{4}|B|> 2\rho_k(\epsilon_0)|B|,$$
for $\rho_k(\epsilon_0)$ sufficiently small. In particular, as $H(\widetilde{F}-o)\subset\cooo(A'-o)$, 
\begin{align*}
    |H(\widetilde{F}-o)\cap (A'-o)|&\geq |H(\widetilde{F}-o)\cap \mathbb{Z}^k|-|\co(A'-o)\setminus (A'-o)|> \frac{1}{2}|H(\widetilde{F}-o)\cap \mathbb{Z}^k|.
\end{align*}
By the symmetry of $H(\widetilde{F}-o)$ around $x$, we have that $z\in H(\widetilde{F}-o)\cap \mathbb{Z}^k$ implies that also $2x-z \in H(\widetilde{F}-o)\cap \mathbb{Z}^k$. Hence, as $H(\widetilde{F}-o)\cap (A'-o)$ contains more than half the elements in $H(\widetilde{F}-o)\cap \mathbb{Z}^k$, we can find $z,z'\in H(\widetilde{F}-o)\cap (A'-o)$, such that $x'=z+z'$ and thus $x'\in A'+A'-2o$.
\end{proof}

\begin{proof}[Proof of \Cref{quantprop}]
Let $c_k=2c_k^1+2^{1-k}\leq (4k)^{5k}$ and $f_k=(2(k+1)(\frac{1}{2}+c_k^1)+8k(k+1)+2^{k+1}+2c_k^2+(2+2^{1-k})2k(k+1)$.
Let $o$ be the point supplied by \Cref{containedheart}, $\eta_k$ the constant supplied by \Cref{magicprop}. Note that $A'-o\subset \frac{1}{2}(\cooo(A'+A')-2o)\cap \mathbb{Z}^k \subset A'+A'-2o$ as $\frac{1}{2}\le 1-\eta_k$, so we have $o+A'\subset A'+A'$. Therefore we find $d_k(A'\cup\{o\})\leq d_k(A')$ and $|\co(A')\setminus A'|\ge |\co(A'\cup\{o\})\setminus (A'\cup\{o\})|\geq |\co(A')\setminus A'|-1 $, so we may assume $o\in A'$. Let $\mathcal{T}'$ be a triangulation of $\cooo(A')$ obtained by considering the $k-1$ dimensional simplices in $\mathcal{T}$ and adding $o$ as a vertex to each, so in particular $|\mathcal{T}|=|\mathcal{T}'|$. For each $\widetilde{T}\in \mathcal{T}'$ all vertices are in $A'$. For $x=o$ or $x=2o$, let $\Gamma_{x,1-\eta_k}$ denote the homothety with center $x$ scaling by $1-\eta_k$. Then by \Cref{containedheart}, we have
\begin{align*}|\co(A'+A')\setminus (A'+A')| &\le \sum_{\widetilde{T}\in \mathcal{T}'}|\co(T+T)\setminus(A'+A')|\\
&= \sum_{\widetilde{T}\in\mathcal{T}'}|(\co(T+T)\setminus (A'+A'))\setminus \Gamma_{2o,1-\eta_k}2\widetilde{T}|\\
&\le \sum_{\widetilde{T}\in\mathcal{T}'}|(2\widetilde{T}\setminus \Gamma_{2o,1-\eta_k}2\widetilde{T})\cap \mathbb{Z}^k)\setminus (A'_{\widetilde{T}\setminus \Gamma_{o,1-\eta_k}\widetilde{T}}+A'_{\widetilde{T}\setminus \Gamma_{o,1-\eta_k}\widetilde{T}})|\\
&=\sum_{\widetilde{T}\in \mathcal{T}'}\left(|(2\widetilde{T}\setminus \Gamma_{2o,1-\eta_k}2\widetilde{T})\cap \mathbb{Z}^k|-|A'_{\widetilde{T}\setminus \Gamma_{o,1-\eta_k}\widetilde{T}}+A'_{\widetilde{T}\setminus \Gamma_{o,1-\eta_k}\widetilde{T}}|\right),
\end{align*}
where in the final equality we use that $\widetilde{T}\setminus \Gamma_{o,1-\eta_k}\widetilde{T}$ is convex (as $o$ is a vertex of $\widetilde{T}$), so $A'_{\widetilde{T}\setminus \Gamma_{o,1-\eta_k}\widetilde{T}}+A'_{\widetilde{T}\setminus \Gamma_{o,1-\eta_k}\widetilde{T}}\subset 2\widetilde{T}\setminus \Gamma_{2o,1-\eta_k}2\widetilde{T}$. By \Cref{negdk}, \Cref{contdisc}, and \Cref{magicprop}, this is
\begin{align*}
\le& \sum_{\widetilde{T}\in\mathcal{T}'}\left(|2\widetilde{T}\setminus\Gamma_{2o,1-\eta_k}2\widetilde{T}|-2^k|A'_{\widetilde{T}\setminus \Gamma_{o,1-\eta_k}\widetilde{T}}|+(2^{k}2k(k+1)+2^{2k})\min\{n_i\}^{-1}|B|\right)\\
=&2^k\sum_{\widetilde{T}\in \mathcal{T}'}\left(|\widetilde{T}\setminus\Gamma_{o,1-\eta_k}\widetilde{T}|-|A'_{\widetilde{T}\setminus \Gamma_{o,1-\eta_k}\widetilde{T}}|\right)+|\mathcal{T}|(2^{k+1}k(k+1)+2^{2k})\min\{n_i\}^{-1}|B|\\
\le& 2^k\sum_{\widetilde{T}\in \mathcal{T}'}\left(|(T\setminus A')\setminus \Gamma_{o,1-\eta_k}\widetilde{T}|\right)+|\mathcal{T}|(2^{k+2}k(k+1)+2^{2k})\min\{n_i\}^{-1}|B|\\
\le& 2^{k}\sum_{\widetilde{T}\in \mathcal{T}'}\left(\frac{1}{2}|T\setminus A'| + c_k^1d_k(A'\cap T)\right)+|\mathcal{T}|(2^{k+2}k(k+1)+2^{2k}+2^kc_k^2)\min\{n_i\}^{-1}|B|\\
\le& 2^{k-1}|\co(A')\setminus A'|+2^kc_k^1d_k(A')\\&+|\mathcal{T}|\left(2^k(k+1)\left(\frac{1}{2}+c_k^1\right)+2^{k+2}k(k+1)+2^{2k}+2^kc_k^2\right)\min\{n_i\}^{-1}|B|,
\end{align*}
where in the last line we estimated the errors coming from the boundaries of the facets of the simplices in $\mathcal{T}$ (noting that there are at most $(k+1)|\mathcal{T}|$ boundary simplices, and each of them is contained in a hyperplane so contains at most $\min\{n_i\}^{-1}|B|$ points by \Cref{hypboxsmall}).
By \Cref{contdisc}, we have
\begin{align*}2^k| \co(A')|-|\co(A'+A')|\le& 2^k|\co(A')|-|\cooo(A'+A')|+2k(k+1)\min\{n_i\}^{-1}|B|\\
=&2^k|\co(A')|-2^k|\cooo(A')|+2k(k+1)\min\{n_i\}^{-1}|B|\\
\le& 2^k|\co(A')|-2^k|\co(A')|+2^k2k(k+1)\min\{n_i\}^{-1}|B|\\&+2k(k+1)\min\{n_i\}^{-1}|B|\\
=&(2^k+1)2k(k+1)\min\{n_i\}^{-1}|B|,\end{align*} so as $d_k(A')=|A'+A'|-2^k|A'|$ we conclude
\begin{align*}
|\co(A')\setminus A'|=&\frac{1}{2^{k-1}}|\co(A'+A')\setminus (A'+A')|-|\co(A')\setminus A'|\\&+\frac{1}{2^{k-1}}(|A'+A'|-2^k|A'|)+\frac{1}{2^{k-1}}(2^k|\co(A')|-|\co(A'+A')|)\\\le& (2c_k^1+2^{1-k})d_k(A')\\
&+|\mathcal{T}|\left(2(k+1)\left(\frac{1}{2}+c_k^1\right)+8k(k+1)+2^{k+1}+2c_k^2+(2+2^{1-k})2k(k+1)\right)\\&\quad\min\{n_i\}^{-1}|B|\\
=&c_kd_k(A')+f_k|\mathcal{T}|\min\{n_i\}^{-1}|B|.
\end{align*}
\end{proof}
\begin{proof}[Proof of \Cref{equivmainthm}]
Let $\alpha=\min\{n_i\}^{-\gamma}$ with $\gamma=\frac{1}{1+\frac{1}{2}(k-1)\lfloor k/2 \rfloor}$ and $\ell=\tau_{k}\alpha^{-\frac{k-1}{2}}$ with $\tau_{k}$ as in \Cref{polyapprox}. Note that this $\gamma$ satisfies $-\gamma=\frac{k-1}{2}\lfloor k/2 \rfloor \gamma-1$. Let $\omega$ be the function from \Cref{equivquali}. Let $\delta(\epsilon_0)$ be a function of $\epsilon_0$ which realizes the $\delta\ll \epsilon_0$ dependency from \Cref{equivquali}, and also satisfies $\omega(\delta(\epsilon_0))\le \rho_k(\frac{\epsilon_0}{2})$ for $\rho_k$ the function from \Cref{quantprop}.

Because $g_k(\epsilon_0)\min\{n_i\}^{-\gamma}|A| \ge g_k(\epsilon_0)\epsilon_0\min\{n_i\}^{-\gamma}|B|$, if $g_k(\epsilon_0)\epsilon_0\min\{n_i\}^{-\gamma}\ge 1$ the statement holds trivially. Hence by choosing the function $g_k(\epsilon_0)$ we may assume any $\min\{n_i\}^{-1}\ll \epsilon_0$ dependency we need (in particular to apply previous results) in the remainder of the proof. In particular, by \Cref{nonreduced} we may assume that $A$ is reduced.

Take the dependencies $\min\{n_i\}^{-1}\ll\ \delta(\epsilon_0) \ll \epsilon_0\le 1$ sufficiently strong so that we may apply \Cref{equivquali}. Applying \Cref{equivquali}, we conclude $|\co(A)\setminus A|\le \rho_k(\frac{\epsilon_0}{2})|B|$. By \Cref{polyapprox}, we obtain the function $\alpha=\alpha(\min\{n_i\})$ and a subset $A'\subset A$ such that $|\co(A)\setminus \co(A')|\le \alpha |B|$ and $A'=A\cap \co(A')$ such that $\co(A')$ has at most $\ell$ vertices. In particular, we have $|A\setminus A'|\le \alpha  |B|$. By \Cref{dkobs}, we have $d_k(A')\le 2^k\alpha |B|+d_k(A)$.

Note also that $A'$ is reduced provided our $\ll$-dependencies are strong enough. Indeed, if $A'$ is not reduced, let $a\in A\setminus A'$ be an element not in the coset of $A'$. Note that by \Cref{negdk},  $(2^k+\delta)|A|\ge |A+A| \ge |(a\cup A')+A'| \ge |A'+A'|+|A'| \ge (2^k+1)|A'|-2^{2k}\min\{n_i\}^{-1}|B|$, which contradicts $|A|-|A'| \le \alpha |B|$ and $|A| \ge \epsilon_0 |B|$.

Now, by the upper bound theorem \cite{Stanley}, if we take a triangulation $\mathcal{T}$ of $\partial \cooo(A')$ we have $|\mathcal{T}|\le f_k'\ell^{\lfloor k/2 \rfloor}=f_k''\alpha^{-\frac{k-1}{2}\lfloor k/2\rfloor}$ for some constants $f_k',f_k''$. Also $|\co(A')\setminus A'|\le |\co(A)\setminus A| \le \rho_k(\frac{\epsilon_0}{2})|B|$, where the first inequality is because $A'=\co(A')\cap A$, so we may apply \Cref{quantprop} (as we may take $\min\{n_i\}$ to be greater than the function $n_k(\epsilon_0)$ from \Cref{quantprop}) to $A'=\co(A')\cap A\subset A$, which gives a constant $f_k$ such that $$|\co(A')\setminus A'|\le c_kd_k(A')+\alpha^{-\frac{k-1}{2}\lfloor k/2\rfloor}f_k''f_k\min\{n_i\}^{-1}|B|.$$ Because $d_k(A')\le 2^k\alpha|B|+d_k(A)$, $|\co(A)\setminus \co(A')|\le \alpha|B|$, and $-\gamma=\frac{k-1}{2}\lfloor k/2 \rfloor \gamma-1$, we conclude that
\begin{align*}
|\co(A)\setminus A|&\le |\co(A')\setminus A'|+|\co(A)\setminus \co(A')|\\&\le (2^kc_k+1)\alpha |B|+c_kd_k(A)+\alpha^{-\frac{k-1}{2}\lfloor k/2 \rfloor}f_k''f_k\min\{n_i\}^{-1}|B|\\
&\leq c_kd_k(A)+g_k\min\{n_i\}^{-\frac{1}{1+\frac{1}{2}(k-1)\lfloor k/2 \rfloor}}|A|,
\end{align*}
where we take $g_k=g_k(\epsilon_0)$ sufficiently large in terms of $\epsilon_0$ to guarantee this last inequality.
\end{proof}

\bibliographystyle{abbrv}
\bibliography{references}

\end{document}